\newtheorem{thm}{Theorem}[section]
\newtheorem*{thm*}{Theorem}%[section]
\newtheorem{prop}[thm]{Proposition}
\newtheorem{lem}[thm]{Lemma}
\newtheorem{lemma}[thm]{Lemma}
\newtheorem{cor}[thm]{Corollary}
\newtheorem{conj}{Conjecture}
\theoremstyle{definition}
\newtheorem{definition}[thm]{Definition}
\newtheorem{nn}[thm]{Notation}
\theoremstyle{remark}
\newtheorem{rmk}[thm]{Remark}
\newtheorem{rem}[thm]{Remark}
\newtheorem{ex}[thm]{Example}
\newtheorem{obs}[thm]{Observation}
\newtheorem{defn}[thm]{Definition}
\newcommand{\R}{\mathbb R}
\newcommand{\C}{\mathbb C}
\newcommand{\RP}{{\mathbb {RP}}}
\newcommand{\CP}{{\mathbb {CP}}}
\newcommand{\EEE}{R}
\newcommand{\TTT}{\mathcal{T}}
\newcommand{\PPP}{\mathcal{P}}
\newcommand{\AAA}{\mathcal{A}}
\newcommand{\AAH}{\hat{\mathcal{A}}}
\newcommand{\N}{\mathbb N}
\newcommand{\Z}{\mathbb Z}
\newcommand{\CM}{{\mathcal{M}}}
\newcommand{\oCM}{{\overline{\mathcal{M}}}}
\newcommand{\CL}{{\mathbb{L}}}
\newcommand{\Univ}{\Upsilon}
\newcommand{\DIAMOND}{\Diamond}
\DeclareMathOperator{\id}{Id}
\DeclareMathOperator{\deg1}{deg}
\newcommand{\val}{\operatorname{val}}
\newcommand{\ev}{\mathrm{ev}}
\newcommand{\<}{\left<}
\renewcommand{\>}{\right>}
\newcommand{\pt}{\mathrm{pt}}
\newcommand{\Aut}{\mathrm{Aut}}
\newcommand{\OGW}{\mathrm{OGW}}
\newcommand{\Orb}{\mathrm{Orb}}
\renewcommand{\for}{\mathrm{for}}
\newcommand{\Vol}{\mathrm{Vol}}
\newcommand{\Contract}{\mathrm{Contract}}
\newcommand{\Graphs}{\mathrm{Graphs}}
\newcommand{\Enum}{\mathrm{Enum}}
\newcommand{\Maps}{\mathrm{Maps}}
\newcommand{\fibp}[2]{\tensor[_{#1\thinspace}]{\times}{_{\thinspace #2}}}
\newcommand{\aso}{\mathcal{A}_{S^1}}
\newcommand{\Sym}{\operatorname{Sym}}
\newcommand{\For}{\operatorname{For}}
\newcommand{\Or}{\operatorname{Or}}
\newcommand{\bb}{\mathcal{B}}
\newcommand{\ee}{\mathcal{E}}
\newcommand{\sstar}{\smallstar}
\newcommand{\lf}{\mathsf{l}}
\newcommand{\vd}{\vec{d}}
\newcommand{\ts}{\mathcal{T}}
\newcommand{\hts}{\hat{\ts}}
\newcommand{\rr}{\mathbb{R}}
\newcommand{\FFF}{{F}}
\newcommand{\pp}{\mathbb{P}}
\newcommand{\ff}{\mathcal{F}}
\newcommand{\cc}{\mathbb{C}}
\newcommand{\zz}{\mathbb{Z}}
\newcommand{\qq}{\mathbb{Q}}
\newcommand{\ii}{\mathcal{I}}
\newcommand{\xx}{\mathcal{X}}
\newcommand{\yy}{\mathcal{Y}}
\newcommand{\mm}{\mathcal{M}}
\newcommand{\im}{\mbox{Im }}
\newcommand{\Hom}{\mathrm{Hom}}
\newcommand{\cl}{\mathcal{C}}
\newcommand{\aru}[2]{\ar[#1]^-{#2}}
\newcommand{\kf}{\mathsf{k}}
\newcommand{\Prr}{{\text{pr}}}
\newcommand{\inv}{\operatorname{inv}}
\begin{document}
\title{Open $\CP^1$ descendent theory I: The stationary sector}
\author{Alexandr Buryak\,$^{1,2,3}$ Amitai Netser Zernik\,$^{4}$ Rahul Pandharipande\,$^5$ and Ran J. Tessler\,$^6$}

\begin{abstract}
We define stationary descendent integrals on the moduli space
  of stable maps from disks to $(\CP^1,\rr\pp^1)$.
  We prove a localization formula for
  the stationary theory involving contributions from the fixed points
  and from all the corner-strata. We use the localization formula
  to prove a recursion relation and a closed formula for all genus $0$ disk cover
  invariants in the stationary case.
  For all higher genus invariants, we propose a conjectural formula.\end{abstract}

\maketitle

\pagestyle{plain}
\centerline{\tiny{$^1$ Faculty of Mathematics, National Research University Higher School of Economics, 6 Usacheva str., Moscow, 119048, Russian Federation}}
\centerline{\tiny{$^2$ Center for Advanced Studies, Skolkovo Institute of Science and Technology, 1 Nobel str., Moscow, 143026, Russian Federation}}
\centerline{\tiny{$^3$ Novosibirsk State University, 1 Pirogova str., Novosibirsk, 630090, Russian Federation}}
\centerline{\tiny{$^4$ Via Transportation, Inc.}}
\centerline{\tiny{$^5$ Department of Mathematics, ETH Zurich, Raemistrasse 101, Zurich, Switzerland}}
\centerline{\tiny{$^6$Department of Mathematics, Weizmann Institute of Science, POB 26, Rehovot 7610001, Israel}}

\tableofcontents

\section{Introduction}\label{sec:ext_intro}
\subsection{Background}\label{sec:back}
Closed Gromov-Witten theory concerns integrals over the moduli spaces
of stable maps to nonsingular projective target varieties $X$.
The Gromov-Witten theory of a point, the simplest target variety,
consists of all descendent integrals
over the moduli spaces of curves,
$$\int_{\overline{\mm}_{g,n}} \psi_1^{a_1} \cdots \psi_n^{a_n}\, .$$
The associated
generating series satisfies the KdV equation as conjectured by Witten~\cite{Wit91} and
proven by Kontsevich~\cite{Kon92} (see \cite{OP1,Mir,KL07} for other proofs).
In case the dimension of the target $X$ is 1, the closed Gromov-Witten
theory has been solved completely in a series of papers
\cite{OP06,OPequiv,OPvir} more than
10 years ago. Fundamental to the solution is a special study
of the target $\CP^1$ via virtual localization
\cite{pandharipande-virtual-loc},
Hodge integrals \cite{FabPan}, and the ELSV formula \cite{ELSV}.
The complete solution for the closed Gromov-Witten theory of
all 1-dimensional targets
has two steps:
\begin{enumerate}[(i)]
  \item Virasoro constraints are used to reduce all descendent integrals to the {\em stationary sector} consisting of descendents of points,
  \item the stationary sector is solved via the Gromov-Witten/Hurwitz correspondence.
\end{enumerate}

More recently, there have been parallel developments in {\em open} Gromov-Witten
theory. The open case concerns integrals over the moduli spaces
of maps of Riemann surfaces with boundary $(\Sigma,\partial \Sigma)$ to a symplectic
target $(X,L)$, where the boundary $\partial \Sigma$ is required to map
to the Lagrangian
$$
L\subset X.
$$
The open Gromov-Witten theory of a point has been defined and solved in a series of papers \cite{BT17,PST14,ST1, Comb} resulting in open analogues of the KdV equations and Virasoro constraints.

Our goal here is to begin a systematic study of the open Gromov-Witten
theory of 1-dimensional targets in the basic case
$$ (X,L) = (\CP^1,\rr\pp^1)\, .$$
The open theory is technically much more difficult to define
than the closed theory because of nontrivial boundary, corner, and orientation
issues for the moduli spaces of maps. Our main results are the definition
and calculation of
the stationary descendent theory of disk maps to $(\CP^1,\rr\pp^1)$.

One of the main driving forces for our work was the localization formula of
the second author \cite{fp-loc-OGW}. We develop here
an analogous localization formula from first principles. Even though in our geometry
the fixed-point components never intersect the boundary, our formula
involves contributions from the corners of the moduli spaces (which contribute also to the formula of \cite{fp-loc-OGW}).

The construction of the
stationary descendent theory of $(\CP^1,\rr\pp^1)$ for open Riemann
surfaces of higher genus is not yet complete. However, using our solution of
the disk theory,
we conjecture a full solution for the open stationary descendent
invariants $(\CP^1,\rr\pp^1)$ in all genera.

\subsection{General notations}
Throughout the paper we consider closed Riemann surfaces as a special case of Riemann surfaces with boundary.
For a smooth or stable Riemann surface $\Sigma$ with non-empty boundary, let the \emph{double} $\Sigma_\C$ be the Riemann surface obtained by gluing the surface and the conjugate surface (same surface but with opposite complex structure) along the common boundary, using the Schwartz reflection principle. Whenever $\Sigma$ has non-empty boundary, its \emph{doubled genus}, which we will simply call the \emph{genus}, is the genus of $\Sigma_\C,$ which we denote by
$$
g=g(\Sigma).
$$
The Riemann surface $\Sigma_\C$ is endowed a canonical involution $b$ called \emph{conjugation}.
We denote by $h(\Sigma)$ the number of boundary components. The \emph{small genus} $g_s(\Sigma)$ is the genus of the closed surface obtained by capping each boundary component of $\Sigma$ with a disk. When $\Sigma$ is connected with $\partial\Sigma\neq\emptyset$, the numbers $g,~h$ and $g_s$ are related by
\begin{equation}\label{eq:small_g}
g_s=\frac{g-h+1}{2}\, ,
\end{equation}
and hence
\begin{equation}\label{eq:g,h}
1\leq h\leq g+1\,,\qquad h=g+1 \mod~2\, .
\end{equation}
When $\partial\Sigma=\emptyset$, we write $g_s=g,$ the usual genus and then $h=0.$
Two elements from the set $\{g,h,g_s\}$ determine, in the connected case, the third. When working with fixed point graphs, to be defined below, $g_s,h$ are more convenient parameters than $g,h$, but when considering dimensions of moduli, working with the parameter $g$ simplifies the equations. We shall therefore work interchangeably with different elements from $\{g,h,g_s\}$, according to the context.

%Let $\left(X,L\right)=\left(\cc\pp^{1},\rr\pp^{1}\right)$.
We view  $S^1$ as given by the unit circle in the complex plane,
$e^{\sqrt{-1}\theta}\in S^{1}$.
The circle $S^1$ acts on $\left(\cc\pp^{1},\rr\pp^{1}\right)$ by the rule
\[
e^{\sqrt{-1}\theta}\cdot \left[z_{0}:z_{1}\right]=\left[\cos\theta\cdot z_{0}-\sin\theta\cdot z_{1}\, :\, \sin\theta\cdot z_{0}+\cos\theta\cdot z_{1}\right],
\]
which preserves the Lagrangian $\rr\pp^1\subset \cc\pp^1$. The $S^1$-action has two fixed points
$$\left\{ p_{+},p_{-}\right\} \subset\cc\pp^{1}$$
labeled in such a
way that $e^{\sqrt{-1}\theta}\in S^{1}$ acts on $T_{p_{\pm}}\cc\pp^{1}$
by $z\mapsto e^{\pm2\sqrt{-1}\theta}z$.
The form $d\theta$ will denote the canonical normalized angular form on $\RP^1,$ which satisfies $\int_{\RP^1}d\theta=1,$ where $\RP^1$ is oriented as the boundary of the upper hemisphere.

We identify the relative homology group
\[
H_{2}\left(\cc\pp^{1},\rr\pp^{1}\right)=\zz_{+}\oplus\zz_{-}
\]
so that the two oriented hemispheres containing $p_{+}$ and $p_{-}$
represent the generators $\left(1,0\right)$ and $\left(0,1\right)$,
respectively.
The \emph{degree} of a map $u:(\Sigma,\partial\Sigma)\to(\CP^1,\RP^1)$ is the element
\[\vd=(d_{+},d_{-})=u_*[\Sigma]\in H_{2}\left(\cc\pp^{1},\rr\pp^{1}\right).\]
We write $\partial^{H}$ for the connecting map $H_{2}\left(\cc\pp^{1},\rr\pp^{1}\right)\to H_{1}\left(\rr\pp^{1}\right)$,
\begin{equation}\label{eq:connecting}
\partial^{H}\left(a,b\right)=a-b.
\end{equation}
We also set
\[
\left|\left(a,b\right)\right|=\max\left\{ a,b\right\}.
\]

A map $u:(\Sigma,\partial\Sigma)\to(\CP^1,\RP^1)$ induces a map $u_\C:\Sigma_\C\to\CP^1,$ which satisfies
$$
u_\C(b(z))=\overline{u(z)}\,,
$$
where, written in the standard projective coordinates, $\overline{[w:v]}=[\bar w: \bar v],$ and $z\to\bar{z}$ is the standard conjugation.

For a positive integer $n$, let
$[n]=\{1,\ldots,n\}$.
% Labels for marked points will come from a fixed set $\Univ$ which contains $\N$% and some other symbols, such as $\star,\sstar.$
We will use natural numbers to label usual marked points and
symbols such as $\{\star,\sstar\}$
to label marked points which are created from normalizing nodes in
nodal surfaces.
We  will consider stable disk maps%maps from stable marked disks
$$u:(\Sigma,\partial\Sigma,z_1,\ldots,z_n)\rightarrow (\CP^1,\RP^1)\, .$$
We will only consider the stationary sector, meaning that marked points are constrained to fixed points in the target.

In a sequel \cite{BPTZ2}, we will relate the stationary theory developed
here to the
enumeration of open Hurwitz covers.
We will only consider internal markings. Boundary markings play no interesting role in the stationary theory, since the divisor relation removes
boundary markings. In a future work, where we will consider the full equivariant theory, which includes also non-stationary insertions, boundary markings will play a non-trivial role.

\begin{nn}\label{nn:a_eps}
For a set $\lf$ of labels, we use the notation
$\vec{a}=(a_i)_{i\in \lf}\in \zz_{\geq 0}^\lf$ to denote a \emph{vector of descendents}, which indicates the number of descendents the $i^{th}$ marking carries, and
$$\vec\epsilon=(\epsilon_i)_{i\in\lf}\in\{\pm\}^\lf$$ to denote a \emph{vector of point constraints}, which indicates to which component of $\CP^1\backslash\RP^1$ the point constraining the $i^{th}$ marking belongs.
\end{nn}
%Thus, the $i^{th}$ interior marked point will always carry $\psi_{i}^{a_i}\cdot\ev_{i}^{*}\rho_{\epsilon_i}$ where $\rho_{\pm}$ is a form which represents the equivariant Poincar\'e dual to the north / south fixed-point, and whose support is contained in the corresponding hemisphere.

\subsection{Stationary open Gromov-Witten theory of $(\CP^1,\RP^1)$ in genus $0$}\label{subsec:intro-OGW_0}
The goal of the paper is to construct and study
the stationary open GW theory of maps to $(\CP^1,\RP^1)$ with descendents.
In Section \ref{sec:GW-defs}, we will present
the main geometric construction of this paper:
a geometric definition of the equivariant stationary descendent invariants
for the moduli of maps from disks to $(\CP^1,\RP^1)$,
\begin{equation}\label{integrand1st}
\<\prod_{i\in\lf}\tau^{\epsilon_i}_{a_i}\>_{0,\vd}=
\int_{\overline{\mm}_{0,\emptyset,\lf}\left(\vd\right)}\prod_{i\in\lf}\psi_{i}^{a_i}\ev_{i}^{*}\rho_{\epsilon_i}\, .
\end{equation}
Here, $\overline{\mm}_{0,\emptyset,\lf}\left(\vd\right)$ is the moduli space
of stable disk maps with internal markings labeled by $\lf$ and no boundary markings (indicated by $\emptyset$).
The moduli space
$\overline{\mm}_{0,\emptyset,\lf}\left(\vd\right)$
is an orbifold with corners which is oriented with the orientation defined
in  Section \ref{sec:or}, the integrand
$\psi_i$ is the equivariant Chern form associated with a connection on the $i^{th}$ cotangent line bundle $\CL_i$, and $\rho_{\pm}$ is the
equivariant Poincar\'e dual of the corresponding point (see Section \ref{subsec:coherent_from_psi}).

Because $\partial\overline{\mm}_{0,\emptyset,\lf}\neq\emptyset,$
the integral
\eqref{integrand1st} depends on a choice of boundary conditions. There is an infinite dimensional space of such choices which gives rise to different integrals. It is natural to ask whether there exist distinguished boundary conditions which are better than others in some sense.
Criteria for "good" boundary conditions might be that they can be
described in ways which interact with the stratification of the moduli, so
that the resulting intersection numbers are related to the closed Gromov-Witten
theory of $\CP^1,$ or that a connection with an interesting integrable hierarchy exists.

The construction we provide satisfies most of these criteria. The
main exception is that we do not know, at the moment, the precise relation to
the Toda hierarchy which governs the closed theory of $\CP^1$.
We define the integrals
\eqref{integrand1st}
in terms of equivariant forms. These forms are defined in a recursive way
with respect to the stratification of the moduli. Although the space of
these recursive integrands will also be infinite dimensional,
we will show (in the first part of Theorem~\ref{thm:int_nums_equal_tree_sum})
that different choices yield the same integrals.
We will compute these integrals and find closed formulas in special cases.
In the sequel \cite{BPTZ2},  we will show (in genus 0) a correspondence with
a Hurwitz theory with completed cycles which is the open analog
of the Gromov-Witten/Hurwitz correspondence of \cite{OP06}.

\subsection{Boundary conditions on sections}
We postpone the full definition via recursive equivariant forms
to Section \ref{sec:GW-defs}. Instead, we briefly describe here
an alternative definition in the language of sections of bundles.
The approach using sections is simpler to describe
and is useful for building intuition, but describes only the non-equivariant
limit of our theory. The equivalence of the two definitions can be proven along the lines of \cite[Corollary 15]{fp-loc-OGW}.
%at any rate the discussion of multisections is given only for motivation, and as an intuitive easily stated definition of the invariants, although later in this paper
To prove our main results, we will work exclusively with the full
definition in terms of equivariant forms.

We would like to describe the integral \eqref{integrand1st} as a weighted count of zeroes of a multisection \[s=\bigoplus_{i\in\lf\, , \, j\in[a_i]} s_{ij}\] of
$\bigoplus\CL_i^{\oplus a_i}$
restricted to the locus
\[
\bigcap \ev^{-1}_i(p_i)\subset \overline{\mm}_{0,\emptyset,\lf}\left(\vd\right)\, ,
\]
where $p_i\in\CP^1\backslash\RP^1$ are arbitrary different points
and
$$
\ev_i:\overline{\mm}_{0,\emptyset,\lf}\left(\vd\right)\to\CP^1
$$
are the evaluation maps. Since \eqref{integrand1st} depends on the boundary behaviour of $s,$ we must describe the boundary conditions for a multisection of the line bundle $\CL_i$.

We will concentrate only on real codimension $1$ boundary components.
There are two types of real codimension $1$ boundary strata. First, there are strata which parameterize stable disks with two disk components that share a common boundary node. The second type concerns
strata which parameterize stable disks whose boundary is contracted to a point. These are equivalent to spheres with one special point which lies in $\RP^1.$

In the first case, let $\bb$ be such a boundary stratum described by
\begin{equation*}
\bb=\oCM_{0,\sstar_{1},\lf_{1}}\left(\vd_{1}\right)\times_{L}\oCM_{0,\sstar_{2},\lf_{2}}\left(\vd_{2}\right)\xrightarrow{i_{\bb}}\partial\oCM_{0,\emptyset,\lf}\left(\vd\right)\, ,
\end{equation*}
where the fibered product $\times_L$ is taken with respect to the evaluation maps at $\sstar_1,\sstar_2,$ see the discussion which precedes Proposition \ref{prop:corners of moduli} for details and notations.
There is a map
\[f_\bb:\bb\to\oCM_1^{\DIAMOND}\times\oCM_2^{\DIAMOND},\] defined by \emph{forgetting the node}, where $$\oCM_i^{\DIAMOND}= \oCM_{0,\emptyset,\lf_{i}}\left(\vd_{i}\right)\, .$$
A section $s\in C^\infty(\CL_i\to\bb)$ is said to be \emph{coherent} if there exists a section
$$s'\in C^\infty(\CL_i\to\oCM_1^{\DIAMOND}\times\oCM_2^{\DIAMOND})$$
with $s = f_\bb^*s'.$
If $\lf_j=\{k\}$ and $\vd_{j}=\vec{0}$, then $\oCM_j^{\DIAMOND}$ is not stable and the line bundle $\CL_k$ is not pulled back from it. However, this boundary is not
relevant for the \emph{stationary} sector. Overcoming this issue in a theory which also involves non-stationary descendent insertions requires more work.

The second type of codimension $1$ boundaries is the exceptional boundary
\[
\ee=\ev_{\star}^{-1}\left(L\right)\subset{\oCM}_{0,\lf\sqcup\star}\left(d_{+}\right),
\]
which may appear only when  $d_{+}=d_{-}.$ We say that a section $s\in C^\infty(\CL_i\to\ee)$ is coherent if there exists a section $s'\in C^\infty(\CL_i\to{\oCM}_{0,\lf}\left(d_{+}\right))$ with $s=f^*s',$ where
$$f:\ee\to{\oCM}_{0,\lf}\left(d_{+}\right)$$ is the forgetful map.

A (multi)section is coherent if its restrictions to all codimension $1$ boundaries are coherent.
An example of a coherent section in the stationary theory is the following.
Consider the $i^{th}$ marked point $q_i$ which is mapped to $p_i.$
In a generic point of the moduli space
given by the stable map $u:\Sigma\to\CP^1$ and the markings data,  the map $u$ induces an isomorphism
near $p_i$
of the cotangent lines $T^*_{q_i}\Sigma$ and $T^*_{p_i}\CP^1\simeq \C$.
Choose some fixed $v\in \C$. Then the section $s\in\Gamma(\CL_i)$ given by $s_u=u^*v,$ whenever $u^*$ maps $T^*_{p_i}$ to $T^*_{q_i}$ isomorphically, and extended by $0$ otherwise is coherent. The zero locus of this section is exactly the subspace where the map $u$ is ramified at $q_i.$ This simple observation is the first hint of the open Gromov-Witten/Hurwitz correspondence to be discussed in the sequel \cite{BPTZ2}, and may also serve as a different path to proving it in genus $0$ (see also Proposition 1.1 in \cite{OP06}).

Consider now the closed setting and assume $d-1=\sum_{i\in \lf} a_i.$
We may calculate the closed Gromov-Witten invariant
\[
\left<\prod_{i\in \lf} \tau_i^{\pt}\right>_{0,d}
\] as
\[\#\left(Z(\bigoplus_{i\in\lf,j\in [a_i]} s_{ij})\cap \bigcap \ev^{-1}_i(p_i)\right),\]
where $p_i\in\CP^1$ are generic points, $ s_{ij}$ are generic
smooth multisections of $\CL_i$, and $\#Z( s)$ is the properly defined zero count of $ s.$

Consider next the open setting and assume $\sum_{i\in\lf} a_i=d_{+}+d_{-}-1.$
Suppose we have coherent multisections $s_{ij}$ of $\CL_i$ without a
common zero on the boundary.
Let $p_i\in\CP^1\backslash\RP^1,~i\in \lf$ be distinct points which
$p_i$ in the $\epsilon_i$ hemisphere ($+$ and $-$
stand for the upper and lower hemispheres, respectively).
We may then consider the number
\begin{equation}\label{eq:naive_int_num}\#\left(Z(\bigoplus_{i\in\lf,j\in [a_i]}\ s_{ij})\cap \bigcap \ev^{-1}_i(p_i)\right).\end{equation}
It turns out that such coherent multisections exist.
The remarkable fact, which is one of our main results here,
is that the count
\eqref{eq:naive_int_num} is independent of the generic choices.
The number \eqref{eq:naive_int_num}
will be denoted by
\[
\left<\prod \tau_{a_i}^{\epsilon_i}\right>_{0,\vd}.
\]

The main idea behind coherent sections is that over the boundary they are pulled back from moduli spaces of dimensions which are strictly smaller than the dimension of $\partial{\oCM}_{0,\emptyset,\lf}\left(\vd\right).$ This fact, and transversality arguments, show that if such multisections can be found, the intersection numbers that they define will be independent of the choice of the specific coherent multisections.
A similar idea has already appeared in \cite{BCT1,BCT2,PST14} and related papers in their definitions of open intersection numbers. In those papers, and here
as well, the scheme of boundary conditions gives rise to a beautiful enumerative theory.

In order to compute the invariants  \eqref{eq:naive_int_num}
using torus localization, we are lead to impose more refined conditions involving the recursive structure of the corners of the space, see Section
\ref{sub:coherent integrands}. Instead of coherent sections,
we will work with \emph{coherent equivariant forms}. Although there are no
fixed points on the boundary, we will see that the localization
formula expresses the invariants
%(including the ones at the non-equivariant limit)
as polynomials involving contributions from all the moduli spaces appearing at the corners.
% after having forgotten the special boundary points coming from the nodes.
A similar situation occurs when computing Welschinger's invariants in terms of fixed-point contributions, see \cite{fp-loc-OGW}.
\subsection{Equivariant localization formula for stationary descendents}
In order to write the equivariant localization formula for the open stationary descendents in $g=0,$ we begin with some definitions which are relevant also for the $g>0$ case. In Section \ref{sec:high_genus}, we will use these definitions to write the all-genera formula.
\subsubsection{Moduli specifications}\label{subsec:mod_spec}
The discrete data which classifies the connected components of moduli spaces of maps from marked surfaces with boundary, but without boundary markings, to $(\CP^1,\RP^1)$ can be encapsulated in the definition of a \emph{moduli specification}. Denote by $\Univ$ the set of possible labels of marked points (containing
$\N$ and other symbols such as $\star,\sstar$).
\begin{defn}\label{def:moduli_spec}
A moduli specification is a decorated graph $S=(V_b\sqcup V_w,E,g_s,\lf,\vd,d)$, where
\begin{enumerate}[(i)]
\item $V_b$ is the set of black vertices, $V_w$ is the set of white vertices, and $E$ is the set of edges,
\item $g_s$, $\lf$, and $\vd$ are functions,
  $g_s:V_b\to\zz_{\geq0}$ ,\ \
  $\lf:V_b\to 2^{\Univ}$ ,
  $$\vd=(d^+,d^-):V_b\to H_2(\CP^1,\RP^1;\zz)\simeq\zz\oplus\zz\, ,$$
\item $d$ is a function  $d:V_w\to \zz.$
\end{enumerate}
We require
\begin{enumerate}[(a)]
\item Each connected component of the graph contains exactly one black vertex.
  The graph is bipartite and each white vertex has a single (black) neighbor.
  We write $n(S)$ for the number of connected components. For a connected component $S'\subset S$, we denote the
  set of white vertices by $V_w(S')$ which we call
  the set of \emph{boundaries}. Let
  $$h(S')=|V_w(S')|$$ be the number of \emph{boundaries}.
  For the unique black vertex $v$ of $S'$, we define
  $$\lf(S')=\lf(v)\, ,\ \ \vd(S')=\vd(v)\, ,\ \ g_s(S')=g_s(v)$$
  which we call
  the \emph{boundary labels, total degree} and \emph{small genus}, respectively, of the component. The \emph{genus} of $S'$ is defined via \eqref{eq:small_g} to be $g(S')=2g_s(S')+h(S')-1.$ The \emph{boundary degrees} of $S'$ is the multiset $d|_{V_w(S')}.$
\item The sets $\lf(S')$ for different connected components are pairwise disjoint.
\item The stability condition for every connected component $S'$ holds:
\[
3\left(d^{+}(S')+d^{-}(S')\right)+2g_s(S')+2\left|\lf(S')\right|+h(S')> 2.
\]
\item For every connected component $S'$, we have
\[d^+(S') - \sum_{\{w\in V_w(S') | d(w) > 0\}}{d(w)} = d^-(S') + \sum_{\{w\in V_w(S') | d(w) < 0\}}{d(w)} \geq 0.\]
In particular,  $d^+(S')-d^-(S')=\sum_{w\in V_w(S')}{d(w)}$.
\end{enumerate}

An isomorphism between two moduli specifications $S_1$ and $S_2$
is a graph isomorphism which respects the decorations.
An automorphism is similarly defined, and we write $\Aut(S)$ for the automorphism group of $S$. A moduli specification
$S$ is \emph{connected} if $n(S)=1$.
It is \emph{closed} if for every connected component $S'\subset S$, we have
$h(S')=0$.
\end{defn}

There is an obvious way to read the data of
a moduli specification off of a holomorphic map
$$\left(\Sigma,\partial\Sigma\right)\to\left(\cc\pp^{1},\rr\pp^{1}\right)$$
from a Riemann surface with boundary $\Sigma$. We
associate a black vertex for each connected component of $\Sigma,$ a white vertex for each connected component of the boundary, and an edge from a white vertex to a black vertex if the corresponding boundary belongs to the corresponding connected component. The functions $\vd,\lf,g_s$ are the total degree, internal labels and small genus of the connected component which corresponds to the vertex, and $d$ is the degree of the corresponding boundary. The degree $d$ is with respect to the orientation on $\partial\Sigma$ induced from $\Sigma,$ and the chosen orientation of $\RP^1.$
%\begin{rmk}
%One can also define a moduli specification as a set whose elements, which correspond to connected components are themselves sets of genus, number of boundary components, degree and boundary degrees. The advantage in describing this data as a graph will become clear throughout the paper.
%\end{rmk}

A moduli specification $S$ is
\begin{enumerate}[\textbullet]
\item A \emph{disk moduli specification }if $n(S)=1,~g_s(S)=0$, and $h(S)=1$. In case we consider such a moduli specification, we denote it simply by $(\lf,\vd),$ omitting the rest of the notation.
\item \emph{sphere moduli specification }if $n(S)=1,~g_s(S)=0$, and $h(S)=0$. In case we consider such a moduli specification, we denote it simply by $(\lf,d),$ where $d=|\vd(S)|$ is the usual degree of a map from $\CP^1$ to $\CP^1.$
\item \emph{disk-cover moduli specification }if for every black vertex $v$, $\vd(v)$ is of the form $\left(d,0\right),$
or for every black vertex $v$, $\vd(v)$ is of the form $\left(0,d\right).$
\end{enumerate}
We denote the set of all disk moduli specifications with $\lf\subseteq\mathbb{N}$ by
$\mathcal{D}$
and the set of all sphere moduli specifications with $\lf\subseteq\mathbb{N}$ by
$\mathcal{S}$.

%We write $\mathcal{S}(\subseteq\lf),\mathcal{D}(\subseteq\lf)$ for the subsets of $\mathcal{S},\mathcal{D}$ made of pairs whose first coordinate is a subset of $\lf.$

\subsubsection{Fixed-point graphs}\label{subsubsec:fpg}

\begin{defn}\label{def:fpg}
A\emph{ fixed-point graph} $\Gamma$ is a tuple $\Gamma=\left(V\sqcup V_\circ,\FFF,\delta,\gamma,\mu,\lambda%,\kappa,\sigma
\right)$, where
\begin{enumerate}[(i)]
\item $V\sqcup V_\circ$ is a set of \emph{vertices}.
\item $\FFF$ is the set of \emph{hemisphere edges}.
\item $\delta:V_\circ\to\zz_{\geq0}^{2}\backslash\left\{ \left(0,0\right)\right\} $
is the \emph{degree function}.
\item $\gamma:V\to\zz_{\geq0}$ is the \emph{(vertex)} \emph{genus function}.
\item $\mu:V\to\left\{ \pm1\right\} $ is the \emph{(vertex)} \emph{map
function}.
\item $\lambda:V\to 2^{\Univ}$ is the association of \emph{vertex
labels}.
\end{enumerate}
The data is required to satisfy the following conditions:
\begin{enumerate}[(a)]
\item Every hemisphere edge is incident to one vertex of $V$ and one vertex of $V_\circ$.
\item Every vertex $v\in V_\circ$ is incident to either one hemisphere edge or two. In the former case, $v$ is called a \emph{boundary}, otherwise $v$ is called an \emph{equator}.
\item $\delta\left(v\right)=\left\{ \left(d,d\right)|d\in\zz_{>0}\right\} $ for an equator and
$\delta\left(v\right)\in\left\{ \left(d,0\right),\left(0,d\right)|d\in\zz_{>0}\right\} $
for a boundary.
\item $\mu\left(v_{1}\right)\neq\mu\left(v_{2}\right)$
whenever $v_{1},v_{2}$ are neighbors of the same equator.
\item For $v\neq u\in V$, the set $\lambda(v)$ is finite and $\lambda(u)\cap\lambda(v)=\emptyset.$
\end{enumerate}
There is an equivalence relation on $\FFF$ defined by declaring two hemisphere edges to be equivalent if they are incident to the same element of $V_\circ.$
Equivalence classes of this relation which are of size one are called \emph{disk edges}. We denote their set by $H$ and consider $H$ as a subset of $\FFF.$ The remaining equivalence classes are of size $2$ and are called \emph{sphere edges}. We denote their set by $E.$
The set of sphere edges is in bijection with equators, and the set of disk edges is in bijection with boundaries.

The \emph{contraction} of $\Gamma$ is the moduli specification $S$ obtained as follows. Associate a black vertex $v'$ to every connected component $\Gamma'$ of $\Gamma.$ Define
\[g_s(v')=h_1(\Gamma')+\sum_{v\in V(\Gamma')}\gamma(v),~\lf(v')=\bigsqcup_{v\in V(\Gamma')}\lambda(v),~\vd(v')=\sum_{u\in V_\circ(\Gamma')}\delta(u)\, .\]
Associate a white vertex  $v_h$ for any disk edge $h'$ of $\Gamma',$ connect it to $v'$, and set $d(v_h)=\partial^H(\delta(h)).$
A fixed-point graph $\Gamma$ is said to be of type $S,$ for a moduli specification $S,$ if its contraction yields~$S.$
\end{defn}
\begin{figure}[t]
\centering
\includegraphics[scale=.65]{./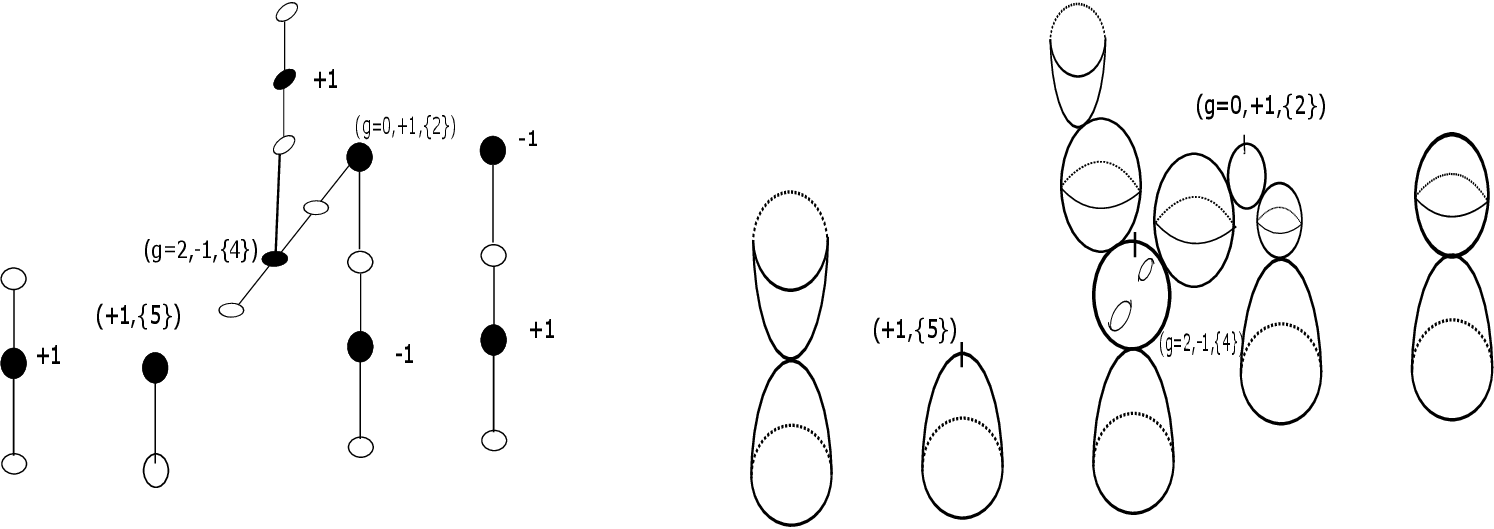}
\caption{Fixed points graphs and the corresponding geometric strata.
  For the purpose of illustration,
  we have drawn degree $0$ components as closed curves rather than points.
  Empty vertices stand for boundary and equator vertices. Next to the remaining vertices we write their genus, if they correspond to contracted component, $\mu$ and the markings.}
\label{fig:fixed_pts}
\end{figure}

\begin{rmk}
Every $f\in\FFF$ is incident to a unique $v\in V,$ which we denote by $v\left(f\right).$ We extend the domain of $\mu$ to include $\FFF$ by setting $\mu(f)=\mu\left(v\left(f\right)\right).$
We can also define degrees for elements of $\FFF\sqcup E$. For $e\in E$ we define $\delta(e)=\delta(v_\circ),$ where $v_\circ$ is the equator of $e.$ For $f\in\FFF$ if $\mu(f)=+1$ we put $\delta(f)=(|\delta(v_\circ)|,0),$ where $v_\circ$ is the unique vertex in $V_\circ$ incident to $f,$ otherwise we set $\delta(f)=(0,|\delta(v_\circ)|).$
\end{rmk}
Given an $S^{1}$-invariant stable open map $\left(\Sigma,\partial\Sigma\right)\xrightarrow{f}\left(\cc\pp^{1},\rr\pp^{1}\right)$,
we can read off its fixed-point graph $\Gamma$ as follows.
\begin{enumerate}[\textbullet]
\item A sphere edge $e\in E$ corresponds to a rational irreducible component
$\cc\pp^{1}\simeq C\subset\Sigma$ with $f|_{C}$ given by the degree $d$
cover of $\cc\pp^{1}$ branched at $p_{\pm}$  and  $\delta\left(e\right)=\left(d,d\right)$.
\item A disk edge $h\in H$ corresponds to a disk component $D^{2}\simeq C\subset\Sigma$
with $f|_{C}$ given by the standard degree $d$ cover of either the northern or
southern hemisphere (branched at the pole).  And $\delta\left(h\right)$
is $\left(d,0\right)$ or $\left(0,d\right)$, accordingly.
\item Vertices correspond either to a contracted component of the domain or to a single point of the
domain. Either way, $f$ maps
such components to $p_{\pm}$ according to $\mu\left(v\right)=\pm1$.
\end{enumerate}
\begin{rmk}
$E$ can be naturally thought of as the edge set of a graph with vertices $V,$ and $H$ can be thought of as half-edges, or tails of such a graph. From the data of $V,E,H,\delta|_{E\sqcup H}$, we  can reconstruct $\FFF,V_\circ,\delta.$

We include $\FFF,V_\circ$ in the definition because later, in Section \ref{sec:high_genus}, where we will have to work with more complicated graphs, the addition of $\FFF, V_\circ$ will simplify definitions. However,
we will sometimes denote a fixed-point graph by the
more standard notation $\left(V,E,H,\delta|_{E\sqcup H},\gamma,\mu,\lambda\right).$
\end{rmk}

An \emph{isomorphism} of fixed-point graphs
\[
\left(V,E,H,\delta,\gamma,\mu,\lambda\right)\to\left(V',E',H',\delta',\gamma',\mu',\lambda'%,\kappa',\sigma'
\right)
\]
is a triple of bijections $V\to V',E\to E',H\to H'$ that preserve
the rest of the structure.
Let $\Aut(\Gamma)$ be the
 finite group  automorphisms of the fixed-point graph
$\Gamma$.

We define the finite group of \emph{covering automorphisms},
\[
A^0_\Gamma=\prod_{e\in E}\zz/\left|\delta\left(e\right)\right|\times\prod_{h\in H}\zz/\left|\delta\left(h\right)\right|.
\]
We define the finite group $A_\Gamma$ of \emph{geometric automorphisms},
as the semidirect product
\[
1\to A^0_\Gamma\to A_{\Gamma}\to\Aut\left(\Gamma\right)\to1,
\]
where $\Aut(\Gamma)$ acts on $A^0_\Gamma$
by permuting the hemisphere edges in the obvious way.

%A \emph{flag} $f$ of $\Gamma$ is either a half edge $h\in H\left(\Gamma\right)$
%or an edge $e\in E\left(\Gamma\right)$ together with an incident
%vertex. Either way, there is a unique vertex $v\in V$ associated with every
%flag $f$ which we denote $v\left(f\right)$. We write $\mu\left(f\right)=\mu\left(v\left(f\right)\right)$,
% and $\delta(f)$ for the degree of the corresponding edge or half edge. The set of flags is denoted $F\left(\Gamma\right)$.

For $v\in V\left(\Gamma\right)$,
we let
\[
\val\left(v\right)=\left\{ f\in F\left(\Gamma\right)|v\left(f\right)=v\right\}
\]
denote the \emph{set} of incident hemisphere edges (so $\left|\val\left(v\right)\right|$
is the usual ``valency'' of $v$ in the graph). We write
\[
\val_{+}\left(v\right)=\val\left(v\right)\sqcup\lambda\left(v\right).
\]
We set

\emph{
\[
V_{-}\left(\Gamma\right)=\left\{ v\in V\left(\Gamma\right)|\left(\gamma\left(v\right),\left|\val_{+}\left(v\right)\right|\right)\in\left\{ \left(0,0\right),\left(0,1\right),\left(0,2\right),\left(1,0\right)\right\} \right\}
\]
}and $V_{+}\left(\Gamma\right)=V\left(\Gamma\right)\backslash V_{-}\left(\Gamma\right)$.
$v\in V_{+}\left(\Gamma\right)$ corresponds to a contracted component of the domain, while $v\in V_{-}\left(\Gamma\right)$ corresponds to a single point of the domain.

We let
\[
\overline{\mm}_{\Gamma}=\prod_{v\in V_{+}\left(\Gamma\right)}\overline{\mm}_{\gamma\left(v\right),\val_{+}\left(v\right)}.
\]

\begin{obs}
Given an $S^{1}$-invariant stable open map $\left(\Sigma,\partial\Sigma\right)\xrightarrow{f}\left(\cc\pp^{1},\rr\pp^{1}\right)$,
which corresponds
to a fixed-point graph $\Gamma$ as above, the isotropy group of $f$ can be identified with the group of geometric
automorphisms $A_{\Gamma}$. The space of all stable open maps corresponding
to $\Gamma$ is isomorphic to the stack quotient of $\overline{\mm}_{\Gamma}$
by the group $A_{\Gamma}$.
\end{obs}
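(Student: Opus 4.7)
The plan is to analyze the structure of an $S^1$-invariant stable open map component by component, and then to identify its automorphism group with $A_\Gamma$.

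First, I would decompose the domain $\Sigma$ into its irreducible components. Because $f$ is $S^1$-equivariant and each fixed-point set of $S^1$ on a curve is finite, a non-contracted component $C$ of $\Sigma$ must map to an $S^1$-invariant rational curve in $\CP^1$, i.e. to $\CP^1$ itself. The $S^1$-equivariant covers $C \to \CP^1$ are then classified by degree: in the closed case one obtains the degree $d$ cover $z \mapsto z^d$ branched at $p_\pm$ (assigned a sphere edge $e$ with $\delta(e) = (d,d)$); in the boundary case, an $S^1$-equivariant cover of a disk onto $(\CP^1,\RP^1)$ must take $\partial C$ to the great circle $\RP^1$ and factor as a standard hemisphere cover (assigned a disk edge $h$ with $\delta(h) = (d,0)$ or $(0,d)$ according to the hemisphere). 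Contracted irreducible components, together with isolated fixed points in $\Sigma$, necessarily map into $\{p_+,p_-\}$, and yield the vertices in $V$ with $\mu(v) = \pm 1$. Since nodes and boundary nodes of $\Sigma$ are fixed by $S^1$, they must also lie over $\{p_+,p_-\}$ (or over $\RP^1$ and hence in the pole), which is exactly the combinatorial content of the incidence relations defining $F$ and the equator/boundary distinction. Assembling this data one recovers the fixed-point graph $\Gamma$ together with the labellings $\gamma,\mu,\lambda,\delta$, and one checks that the compatibility conditions (a)--(e) in Definition~\ref{def:fpg} are automatic from this geometric picture.

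Next I would build an inverse construction: given $\Gamma$ together with a point of $\overline{\mm}_\Gamma = \prod_{v \in V_+(\Gamma)} \overline{\mm}_{\gamma(v), \val_+(v)}$, glue the required standard sphere/hemisphere covers to the chosen contracted curves at the marked points indexed by $\val(v) \subset \val_+(v)$. The resulting $S^1$-invariant stable open map is determined uniquely up to the choice of identifications between marked points and nodal/boundary points, and this is exactly what produces the automorphism group. The isotropy of a single standard degree-$d$ cover of $\CP^1$ branched at $p_\pm$ is $\zz/d$ (the deck group), and similarly the isotropy of a standard degree-$d$ hemisphere cover is $\zz/d$; together over all sphere and disk edges these give the subgroup $A_\Gamma^0 = \prod_{e \in E} \zz/|\delta(e)| \times \prod_{h \in H} \zz/|\delta(h)|$. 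On top of this there are automorphisms that permute hemisphere edges, sphere edges and vertices while respecting the decorations; these are exactly $\Aut(\Gamma)$, and they act on $A_\Gamma^0$ by permuting the factors, producing the semidirect product $A_\Gamma$ defined above.

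Finally, the parameter space of all $S^1$-invariant open maps of type $\Gamma$ is precisely $\overline{\mm}_\Gamma$, with the remaining discrete data (the covers, their orientations, the edge gluings) fully determined by $\Gamma$ up to the action of $A_\Gamma$ just described. Hence the stratum of the moduli space corresponding to $\Gamma$ is identified with the stack quotient $[\overline{\mm}_\Gamma / A_\Gamma]$, and the isotropy of any point is $A_\Gamma$, as claimed.

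The main obstacle, to be treated with care, is the open/boundary part: one must verify that an $S^1$-invariant map of a nodal bordered Riemann surface indeed splits into the sphere, disk, and contracted pieces listed above, and that a disk component necessarily covers a hemisphere branched only at a pole (ruling out more exotic $S^1$-equivariant disk maps). This is an open analog of the closed-case argument, and it uses both the equivariance and the Lagrangian boundary condition $f(\partial \Sigma) \subset \RP^1$; once this component-wise classification is in place, the group-theoretic identification of $A_\Gamma$ with the isotropy group is a direct bookkeeping of deck transformations and graph automorphisms.
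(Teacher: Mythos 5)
The paper states this Observation without proof, citing the analogous closed-case classification (\cite{pandharipande-virtual-loc}) implicitly and the foundational moduli-theoretic references \cite{mod2hom,equiv-OGW-invts} for the open setting; so there is no argument in the paper against which to compare. Your proposal is the natural and correct fleshing-out of the intended argument: decompose $\Sigma$ into irreducible components, classify the $S^1$-equivariant restrictions of $f$ on each (contracted components land on $\{p_\pm\}$; non-contracted spheres give the standard $z\mapsto z^d$ cover branched at $p_\pm$; non-contracted disks necessarily cover a hemisphere $z\mapsto z^d$ branched at the pole, by equivariance together with the Lagrangian condition $f(\partial D)\subset\RP^1$); identify the deck groups $\zz/|\delta(e)|$, $\zz/|\delta(h)|$ contributing $A_\Gamma^0$; and identify $\Aut(\Gamma)$ as the residual graph symmetries permuting these factors, giving the semidirect product $A_\Gamma$ and the stack quotient $[\overline{\mm}_\Gamma/A_\Gamma]$.

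One small point of precision: the sentence ``the isotropy of any point is $A_\Gamma$'' should be read as the statement that the fixed locus of type $\Gamma$ is the stack quotient $[\overline{\mm}_\Gamma/A_\Gamma]$, not that every individual stable map $f$ has $\Aut(f)\cong A_\Gamma$; for a specific $f$ the automorphism group is the stabilizer in $A_\Gamma$ of the corresponding point of $\overline{\mm}_\Gamma$ together with whatever residual automorphisms the marked contracted curves carry. This is the same level of informality as the Observation itself, so it is not a gap; but if you wanted to be scrupulous you would phrase the isotropy statement at the level of the resulting groupoid rather than of an individual $f$.
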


\subsubsection{Contribution of a fixed-point graph}\label{subsec:fp_graph_cont}
Let $\Gamma$ be a fixed-point graph. We construct  vector bundles
on $\overline{\mm}_{\Gamma}$ by pulling back the Hodge and cotangent
bundles along the projections $\overline{\mm}_{\Gamma}\to\overline{\mm}_{\gamma\left(v\right),\val_{+}\left(v\right)}$.
First, for $v\in V_{+}\left(\Gamma\right)$, we let $\mathbb{E}_{v}^{\vee}$
denote the pullback of the dual of the Hodge bundle on $\overline{\mm}_{\gamma\left(v\right),\val_{+}\left(v\right)}$.
The corresponding Chern polynomial is denoted
\[
c\left(\mathbb{E}_{v}^{\vee}\right)\in H^{\bullet}\left(\overline{\mm}_{\Gamma}\right)\left[t\right].
\]
For $f\in F\left(\Gamma\right)$ with $v\left(f\right)=v\in V_{+}\left(\Gamma\right)$,
we can identify $f$ with a marking on $\overline{\mm}_{\gamma\left(v\right),\val_{+}\left(v\right)}$,
and we then let $\mathbb{L}_{f}$ denote the pullback of the corresponding
cotangent line bundle. We denote the first Chern class of
$\mathbb{L}_{f}$
by
\[
\psi_{f}=c_{1}\left(\mathbb{L}_{f}\right)\in H^{\bullet}\left(\overline{\mm}_{\Gamma}\right).
\]

Our invariants are (a priori) defined over the localized equivariant
cohomology of a point $\qq\left[u,u^{-1}\right]$, where $u$ is a
generator of degree $+2$. Let
\[
\omega_{f}=\mu\left(f\right)\frac{2u}{\left|\delta\left(f\right)\right|}.
\]

We define a class $e_{\Gamma}^{-1}\in H^{\bullet}\left(\overline{\mm}_{\Gamma};\qq\left[u,u^{-1}\right]\right)$
by

\begin{align}
\label{eq:euler^-1}e_{\Gamma}^{-1}=&\prod_{\left\{ f\in F\left(\Gamma\right)|v\left(f\right)\in V_{+}\left(\Gamma\right)\right\} }\frac{1}{\omega_{f}-\psi_{f}}\mu\left(f\right)\left(2u\right)
\prod_{v\in V_{+}\left(\Gamma\right)}%\prod_{\nu\neq\mu\left(v\right)}
c\left(\mathbb{E}_{v}^{\vee}\right)\left(t=\frac{\mu\left(v\right)}{2u}\right)\left(2u\right)^{\gamma\left(v\right)-1}\\
&\times
\prod_{\left\{ v\in V_{-}\left(\Gamma\right)|\val_{+}\left(v\right)=\left\{ f_{1},f_{2}\right\} \right\} }\frac{1}{\omega_{f_{1}}+\omega_{f_{2}}}\prod_{\left\{ v\in V_{-}\left(\Gamma\right)|\val_{+}\left(v\right)=\left\{f\right\}\right\} }\omega_{f}\notag\\
&\times
\prod_{e\in E}\frac{\left(-1\right)^{\left|\delta\left(e\right)\right|}\left|\delta\left(e\right)\right|{}^{2\left|\delta\left(e\right)\right|}}{\left(\left|\delta\left(e\right)\right|!\right)^{2}\left(2u\right)^{2\left|\delta\left(e\right)\right|}}
\prod_{h\in H}\mu(h)^{|\delta(h)|+1}\frac{\left(\left|\delta\left(h\right)\right|\right){}^{\left|\delta\left(h\right)\right|}}{\left|\delta\left(h\right)\right|!\left(2u\right)^{\left|\delta\left(h\right)\right|}}.\notag
\end{align}

We define a class $\alpha_{\Gamma}^{\vec{a},\vec{\epsilon}}\in H^{\bullet}\left(\overline{\mm}_{\Gamma};\qq\left[u,u^{-1}\right]\right)$ to be $0$ if $\epsilon_i\neq\mu(v),$ for some $v\in V_+\cup V_-$ and $i\in\lambda(v),$
and otherwise by

\begin{equation}\label{eq:alpha_term}
\alpha_{\Gamma}^{\vec{a},\vec{\epsilon}}=\left(\prod_{v\in V_{+}\left(\Gamma\right)}\prod_{i\in\lambda\left(v\right)}\psi_{i}^{a_{i}}\mu\left(v\right)\,2u\right)\left(
\prod_{\substack{v\in V_{-}\left(\Gamma\right),\\\lambda\left(v\right)=\left\{ i\right\},\val(v)=\left\{f\right\} }}
\left(-\omega_f\right)^{a_{i}}\mu\left(v\right)\,2u\right).
%\left(-\frac{\mu\left(v\right)\,2u}{|\delta(f)|}\right)^{a_{i}}\mu\left(v\right)\,2u\right).
\end{equation}

\begin{rem}
  In all genera,  $e_{\Gamma}^{-1}$ is conjectured to
  be the inverse to the equivariant Euler class
of the virtual normal bundle to the fixed-point component $\ff_{\Gamma}$
parameterized by $\Gamma$.
In genus $0$, which is what we consider in all sections except Section \ref{sec:high_genus}, $e_{\Gamma}^{-1}$ is as claimed. Moreover,
in genus 0,
the second term in \eqref{eq:euler^-1} is simplified to $\prod_{v\in V_{+}\left(\Gamma\right)}\left(2u\right)^{-1}=(2u)^{-|V_+(\Gamma)|}.$
The class
$\alpha_{\Gamma}^{\vec{a},\vec{\epsilon}}$ should be the pullback of the integrand, a product of stationary descendents as in \eqref{integrand1st}, to $\ff_{\Gamma}$.
\end{rem}
\begin{defn}\label{def:inhomterms}
  Let $S$ be a moduli specification, $\vec{a},\vec{\epsilon}$ vectors as in Notation \ref{nn:a_eps}.  The \emph{fixed-point contribution}
$I\left(S,\vec{a},\vec{\epsilon}\right)$
  {\em associated
to $S,\vec{a},\vec{\epsilon},$ } is defined by the equation
\begin{equation}\label{eq:I_S}
\frac{I\left(S,\vec{a},\vec{\epsilon}\right)}{|\Aut(S)|}=\sum_{\Gamma}\frac{1}{|\Aut(\Gamma)|}I(\Gamma,\vec{a},\vec{\epsilon}),
\end{equation}
where the sum ranges over all isomorphism types of fixed-point graphs $\Gamma$ corresponding
to $S$, and
\begin{equation}\label{eq:I_Gamma}
I(\Gamma,\vec{a},\vec{\epsilon})=\frac{1}{|A^0_\Gamma|}\int_{\overline{\mm}_{\Gamma}}e_{\Gamma}^{-1}\cdot\alpha_{\Gamma}^{\vec{a},\vec{\epsilon}}
\end{equation}
is the \emph{fixed-point contribution of $\Gamma$}. In case $S$ is a disk moduli specification $(\lf,\vd)$ or sphere moduli specification $(\lf,d),$ we write $I(\lf,\vd,\vec{a},\vec{\epsilon}),\, I(\lf,d,\vec{a},\vec{\epsilon})$ respectively for the associated fixed-point contributions.
\end{defn}
For later convenience we set $I(S,\vec{a},\vec{\epsilon})$ and $I(\Gamma,\vec{a},\vec{\epsilon})$ to be $0$ if some $a_i<0.$

\subsubsection{Genus $0$ equivariant localization formula}
We can now state our result in genus $0.$

\begin{defn}\label{def:non_lab_non_ord_trees_and_ampli}
For a disk moduli specification $(\lf,\vd)$ denote by $\TTT(\lf,\vd)$ the set of trees $T$ such that each vertex $v$ is decorated by a disk moduli specification $(\lf_v,\vd_v)$ such that
\[
\bigsqcup_{v\in V(T)}\lf_v=\lf,\qquad\sum_{v\in V(T)}\vd_v=\vd.
\]
An \emph{automorphism} of an element of $\TTT(\lf,\vd)$ is an automorphism of the underlying tree which preserves the moduli specifications of the vertices.
Let $\Aut(T)$ denote the automorphism group of~$T.$

The \emph{amplitude} of $T$ is defined by
\begin{equation}\label{eq:ampli_intro_g=0}
\AAA(T,\vec{a},\vec{\epsilon})=\left(\frac{(-2u)^{-|E(T)|}}{|\Aut(T)|}\prod_{v\in V(T)}({\vd^+(v)-\vd^-(v)})^{\val(v)}\right)I(S(T),\vec{a},\vec{\epsilon}),
\end{equation}
where $V(T),~E(T)$ are the vertex and edge sets of $T,~\val(v)$ is the number of neighbors of $v$ in $T,$ and $S(T)=\{(\lf_v,\vd_v)\}_{v\in V(T)}$ is the moduli specification associated to $T$.
\footnote{If we denote by $\vec{a}|_{\lf_v},\vec{\epsilon}_{\lf_v}$ the $\lf_v-$components of $\vec{a},\vec{\epsilon}$ respectively, then $I(S(T),\vec{a},\vec{\epsilon})=\prod_{v\in V(T)}I(\lf_v,\vd_v,\vec{a}|_{\lf_v},\vec{\epsilon}|_{\lf_v})$.}

Define\footnote{The dependence on $\lf$ is only notational. \eqref{eq:nice_tree_sum} depends only on the degree $\vd,$ vector of descendents $\vec{a}$ and the points constraints $\vec{\epsilon},$ and is by definition permutation invariant.}
\begin{equation}\label{eq:nice_tree_sum}
\OGW(\lf,\vd,\vec{a},\vec{\epsilon})=
\sum_{T\in\TTT(\lf,\vd)}\AAA(T,\vec{a},\vec{\epsilon})+\delta_{d^+-d^-}\frac{|\vd|}{2u}I(\lf,|\vd|,\vec{a},\vec{\epsilon}).\end{equation}
\end{defn}
The main result of the paper is the following theorem, see Theorem \ref{thm:int_nums_equal_tree_sum} for the precise statement of the boundary conditions.
\begin{thm*}
The open equivariant stationary descendent $\<\prod\tau_{a_i}^{\epsilon_i}\>_{0,\vd}$ defined via coherent boundary conditions equals $\OGW(\lf,\vd,\vec{a},\vec{\epsilon}).$
In particular, \eqref{eq:nice_tree_sum} vanishes if $1+\sum a_i<d^++d^-$.
\end{thm*}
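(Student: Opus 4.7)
The plan is to apply equivariant localization to the defining integral \eqref{integrand1st}, but with careful attention paid to the corner strata, which force additional contributions beyond the usual Atiyah--Bott contributions of the $S^1$-fixed loci. Since $\overline{\mm}_{0,\emptyset,\lf}(\vd)$ is an orbifold with corners and the fixed loci of the $S^{1}$-action never meet the boundary (the $S^{1}$-action on $\RP^1$ is free away from finitely many orbits, none of which contain fixed points of the $\CP^1$-action), a naive localization cannot hold: Stokes' theorem applied to the standard equivariant primitive $\pi^{-1}\wedge\cdots$ will produce a ``bulk'' term summing over fixed-point graphs together with boundary correction terms. My first step is thus to write, for a coherent equivariant integrand $\alpha$ representing $\prod_i \psi_i^{a_i}\ev_i^*\rho_{\epsilon_i}$, an identity of the form
\[
\int_{\overline{\mm}_{0,\emptyset,\lf}(\vd)}\alpha \;=\; \sum_{\Gamma \text{ of type } (\lf,\vd)}\frac{I(\Gamma,\vec a,\vec\epsilon)}{|\Aut(\Gamma)|} \;+\; \text{(boundary contributions)},
\]
where the first sum is the ordinary equivariant-localization contribution and hence equals $I(\lf,\vd,\vec a,\vec\epsilon)$, i.e.\ the single-vertex tree in \eqref{eq:nice_tree_sum}.

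For the boundary contributions I would enumerate the real codimension-$1$ strata described just before the statement: the node-splitting strata $\mathcal B=\oCM^\DIAMOND_1\times_L\oCM^\DIAMOND_2$ and the exceptional boundary $\ee=\ev_\star^{-1}(L)$, which appears only when $d^+=d^-$. Using the coherence hypothesis, the restriction of the equivariant primitive to $\mathcal B$ pulls back along $f_\mathcal B:\mathcal B\to\oCM^\DIAMOND_1\times\oCM^\DIAMOND_2$, so integration along the fibers of $f_\mathcal B$ (an $S^1$-bundle over the diagonal $L\subset \CP^1\times\CP^1$, giving the factor $(-2u)^{-1}$ in \eqref{eq:ampli_intro_g=0} via $\int_{\RP^1} d\theta$) converts the boundary integral into a product of two disk-moduli integrals $\int_{\oCM_i^\DIAMOND}$ of coherent equivariant forms of strictly lower dimension, multiplied by a combinatorial factor accounting for the two possible degrees of the shared boundary. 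The combinatorial weight is precisely the $(d^+(v)-d^-(v))^{\val(v)}$ product, while the exceptional boundary $\ee$ yields, by the same mechanism applied to the forgetful map $f:\ee\to\oCM_{0,\lf}(d^+)$, the term $\delta_{d^+-d^-}\tfrac{|\vd|}{2u}I(\lf,|\vd|,\vec a,\vec\epsilon)$.

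The main work is then an induction on $|\lf|+|\vd|$, using the identity of the previous paragraph: each node-splitting boundary contribution is a product of two lower-order $\OGW$'s, which by induction equal their tree sums, and reassembling two trees by gluing them along a new edge (with the $(-2u)^{-1}$ and degree weights absorbed into \eqref{eq:ampli_intro_g=0}) reproduces exactly the sum over $\TTT(\lf,\vd)$. The automorphism-group bookkeeping between $\Aut(T)$ and the various $\Aut$'s of the subtrees is what forces the $1/|\Aut(T)|$ in \eqref{eq:ampli_intro_g=0}: every unordered tree with $k$ edges arises from $k!/|\Aut(T)|$ ordered sequences of edge splittings, and the $k!$ cancels against the ordered iteration of the recursion.

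I expect the main obstacle to be the orientation/sign analysis of the boundary contributions: the orientation on $\overline{\mm}_{0,\emptyset,\lf}(\vd)$ is defined delicately in Section \ref{sec:or}, and to show that node-splitting really produces the sign $(-2u)^{-|E(T)|}\prod(d^+(v)-d^-(v))^{\val(v)}$ with the correct relative sign between the sphere (bulk) and disk (boundary) contributions one must track the equivariant Euler class of the normal bundle of the diagonal in $\CP^1\times\CP^1$ together with the induced boundary orientation. Once these signs are correctly pinned down, the vanishing statement for $1+\sum a_i<d^++d^-$ is automatic: every $I(\Gamma,\vec a,\vec\epsilon)$ is homogeneous in $u$ of a degree forced by $\vdim$, so under the stated inequality each summand on the right-hand side of \eqref{eq:nice_tree_sum} contains a strictly positive power of $u$, while the left-hand side, being a non-equivariant degree-$0$ integral of a top form of total cohomological degree $\sum a_i+|\lf|$ against a moduli of real dimension $d^++d^-+|\lf|-1$, is forced to vanish.
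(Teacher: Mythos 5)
Your overall strategy — localize, collect the fixed‑point (bulk) term, and then account for boundary corrections using coherence — is indeed the paper's strategy, and you correctly identify the key role of the degree/winding factors and the $(-2u)^{-1}$ coming from $\int_{\RP^1}d\theta$. However, there is a genuine structural gap in the plan: your proposed identity $\int_{\overline\mm}\alpha=\sum_\Gamma I(\Gamma)/|\Aut\Gamma|+\text{(codimension-$1$ boundary contribution)}$ is not correct. On an orbifold with corners and simple fixed points, Stokes applied to a primitive $Q$ on the blowup of the fixed locus does not produce a single boundary term; one cannot choose a single closed primitive whose restriction to $\partial\overline\mm$ factors through the node-forgetting maps. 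What the paper proves (Proposition~\ref{prop:simple fp formula}) is an alternating sum over \emph{all} corner codimensions,
$$\int_{\xx}\alpha=\sum_{F}\int_{F}\frac{\alpha|_{F}}{e(N_{F})}+\sum_{c\geq1}(-1)^{c-1}\int_{\partial^{c}\xx}Q_{c}\cdots Q_{1}\alpha,$$
for a \emph{compatible family} of $\Sym(c)$-invariant primitives $Q_{c}$ on each $\partial^{c}\xx$. In the tree language this says $P_{0}=G+P_{1}-P_{2}+\cdots$, and the quantities $P_{c}$ for $c\geq 2$ definitely do not vanish (a $3$-vertex tree contributes to $P_{2}$). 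The paper handles this by building the explicit primitives $Q_{c}=\tfrac{-1}{2u}\tfrac{1}{c}\sum_i d\theta_i$, proving a nested recursion for the corner contributions (Proposition~\ref{prop:recursion}), and then exhibiting the tree sum as the \emph{unique} solution of the coupled system for all $\{P_c\}$ (Lemma~\ref{lem:anzats}); the binomial weights $\binom{n}{c}$ in the ansatz for $P_{c}$ are essential and have no analogue in a $c=1$-only recursion. A direct consequence of the gap is that your proposed induction, even if carried out carefully, would prove the wrong formula: the recursion $\Phi=G+\tfrac{1}{2}\sum\tfrac{(d^{+}_{1}-d^{-}_{1})(d^{+}_{2}-d^{-}_{2})}{-2u}\Phi_{1}\Phi_{2}$ does not have the tree sum $\sum_{T}\AAA(T)$ as its solution (one can check directly that the coefficients of $3$-vertex trees disagree).

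Two smaller points. First, the step you describe as ``integration along the fibers of $f_{\bb}$'' is, in the paper, the most technically demanding lemma in the argument (Lemma~\ref{lem:projection formula}): because the corners $\bb$ are fiber products over $L=\RP^1$ rather than genuine products, pushing forward requires constructing compatible Thom forms for the diagonal and carefully killing the boundary terms $\EEE$ using the orientation-reversing involutions $\inv_j$ on the ``ghost disk'' loci; the signs are exactly the hardest part, as you anticipated, but they are not just an orientation issue at the diagonal. Second, your closing argument for the vanishing when $1+\sum a_i<d^++d^-$ asserts that each summand $\AAA(T,\vec a,\vec\epsilon)$ on the right side of \eqref{eq:nice_tree_sum} carries a strictly positive power of $u$; this is not true — the individual amplitudes live in $\qq[u,u^{-1}]$ and can carry nonpositive powers of $u$. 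The correct statement (and the one used in the paper's proof) is that the \emph{left-hand side} vanishes for dimension reasons, and therefore so does the sum on the right; as the paper remarks, this is a nontrivial relation among the individual contributions, not a term-by-term vanishing.
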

\begin{figure}[t]
\centering
\includegraphics[scale=.55]{./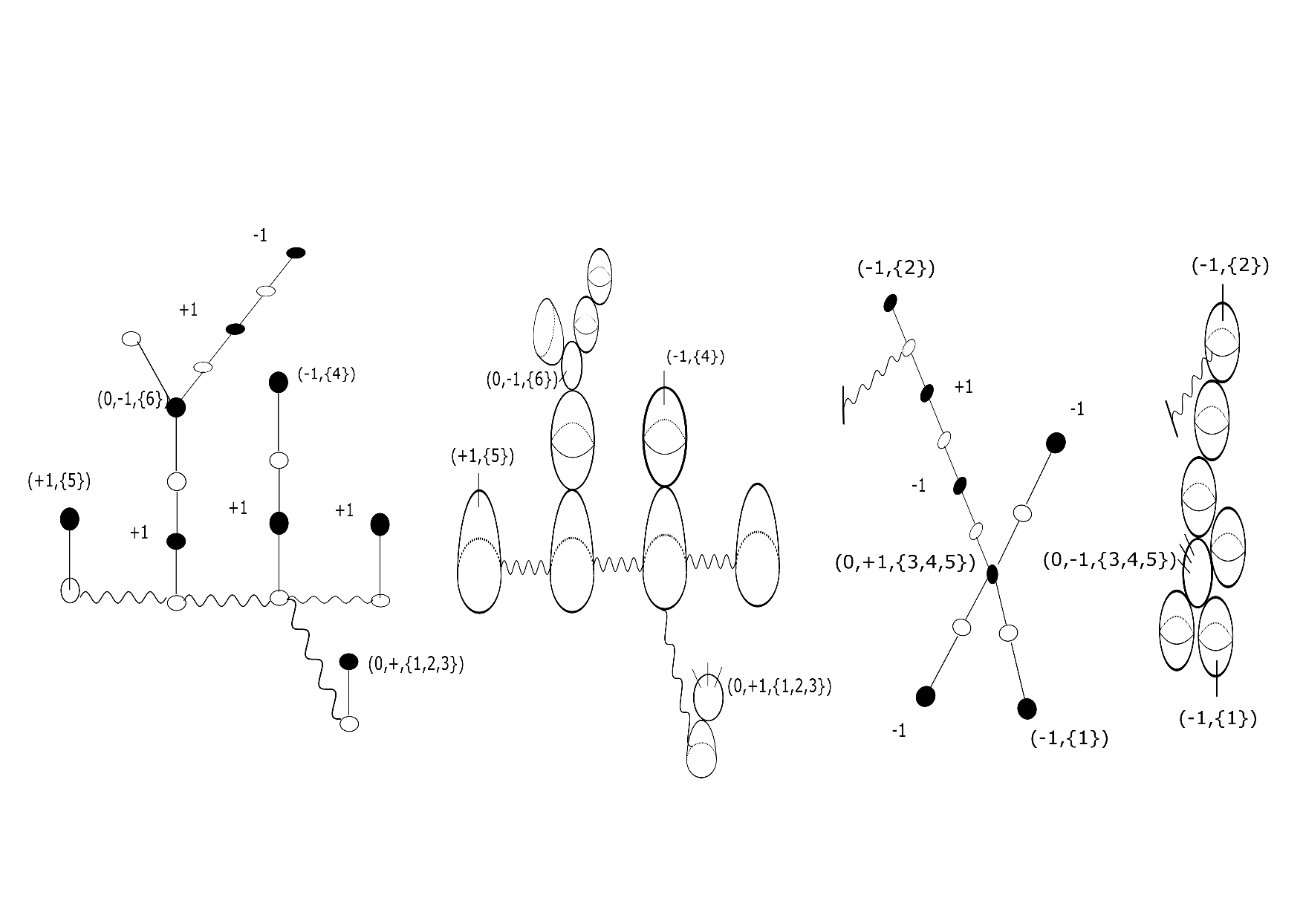}
\caption{Localization graphs which appear in the formula for genus $0$ invariants. We draw a graph and on its right the corresponding geometric picture. Again for illustration reasons we do not contract degree $0$ components, and again next to a contracted component vertex we write the genus, $\mu$ and the markings, while next to other vertices we write the labels and $\mu.$}
\label{fig:g_0_cont}
\end{figure}

\subsection{Explicit formulas and recursion}
In the case of disk covers, we have the following explicit formula for the open stationary descendents.
\begin{thm}\label{thm:disk_disk_covers_formula}
Suppose $a_i\geq 0$ and $1+\sum_{i=1}^n a_i=d$, then
\begin{equation}\label{eq:disk_disk_covers_formula}
\<\prod_{i=1}^n\tau^{+}_{a_i}\>_{0,(d,0)}%^{\circ}
=\frac{(1+\sum a_i)^{n-2}}{\prod a_i!}.
\end{equation}
%Similarly,
%\begin{equation}\label{eq:disk_disk_covers_formula_minus}
%\<\prod_{i=1}^n\tau^{-}_{a_i}\>_{0,(0,d)}%^{\circ}
%=-\frac{(1+\sum a_i)^{n-2}}{\prod a_i!}.
%\end{equation}
\end{thm}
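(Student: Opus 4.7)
The starting point is the main localization theorem specialized to the case $\vd = (d,0)$ with all $\epsilon_i = +$. Since $d^+ = d \neq 0 = d^-$, the Kronecker-delta term in \eqref{eq:nice_tree_sum} vanishes, giving
$$\<\prod_{i=1}^n \tau^+_{a_i}\>_{0, (d, 0)} = \sum_{T \in \TTT(\lf, (d, 0))} \AAA(T, \vec{a}, \vec{\epsilon}_+).$$
The constraint $\sum_v \vd_v = (d, 0)$ with $\vd_v \in \zz_{\geq 0}^2$ forces every vertex of $T$ to carry a disk-cover sub-specification $(\lf_v, (d_v, 0))$, so the sum factors through disk-cover sub-problems.

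Next, I would pin down the fixed-point graphs $\Gamma$ contributing to each factor $I(\lf_v, (d_v, 0), \vec{a}|_{\lf_v}, \vec{\epsilon}_+)$. Because $d_v^- = 0$, a contributing $\Gamma$ cannot contain an equator, which would force a $(d', d')$ contribution to the degree, so $\Gamma$ has no sphere edges. Since each boundary of $V_\circ$ is attached to a single $V$-vertex, connectivity then forces $\Gamma$ to consist of a single black vertex $v^*$ with $\mu(v^*) = +1$, $\gamma(v^*) = 0$, and $\lambda(v^*) = \lf_v$, to which disk edges $h_1, \ldots, h_k$ of positive degrees $d_1, \ldots, d_k$ with $\sum_j d_j = d_v$ are attached. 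All contributions come from these \emph{star} graphs, indexed by compositions of $d_v$ into positive parts together with a partition of the markings.

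Then I would compute $I(\Gamma)$ explicitly. Substituting the genus-zero simplifications into \eqref{eq:euler^-1} and \eqref{eq:alpha_term}, expanding the geometric series
$$\frac{1}{\omega_h - \psi_h} = \frac{d_h}{2u} \sum_{m \geq 0} \left(\frac{d_h\, \psi_h}{2u}\right)^m,$$
and applying the standard formula $\int_{\oCM_{0, m}} \prod \psi_j^{b_j} = (m-3)!/\prod b_j!$ on $\oCM_\Gamma = \oCM_{0, k + |\lf_v|}$, yields an explicit expression involving $\prod_j d_j^{d_j}/d_j!$, the multinomial collapse $(\sum_j d_j)^N/N! = d_v^N/N!$ for the sum over $\psi_h$-exponents, and a definite power of $2u$ depending on $k$. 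The edge cases $k + |\lf_v| \leq 2$, where $v^* \in V_-$ and one must use the $(-\omega_f)^{a_i}$ prescription of \eqref{eq:alpha_term}, require separate but parallel treatment. Combining with the tree weights $(-2u)^{-|E(T)|} \prod_v d_v^{\val(v)}/|\Aut(T)|$ and summing, the powers of $u$ must cancel in the end — consistent with the final answer being a rational number.

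The principal obstacle is the combinatorial identity that collapses the double sum, over $T \in \TTT(\lf, (d,0))$ and over the choice of disk-edge degrees at each vertex, into the clean closed form $d^{n-2}/\prod a_i!$. My plan is to reinterpret each pair (tree $T$, local star data $\{(d_1, \ldots, d_{k_v})\}_v$) as a single enriched tree whose vertex set encodes both the $\lf$-markings and the disk edges, then to apply a weighted Cayley-type identity such as
$$\sum_{T \text{ labelled tree on } [N]} \prod_i x_i^{\deg_T(i)} = \Big(\sum_i x_i\Big)^{N-2}\prod_i x_i,$$
or equivalently Lagrange inversion for the exponential generating function of rooted trees. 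Should this direct combinatorial route prove unwieldy, an alternative is to derive a leaf-removal recursion on $n$, verify that $d^{n-2}/\prod a_i!$ satisfies it, and check the base cases $n=1,2$ from the single contributing star graph in each of those dimensions.
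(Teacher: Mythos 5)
Your primary route --- evaluating the tree sum from Theorem~\ref{thm:int_nums_equal_tree_sum} directly, classifying the contributing fixed-point graphs, and then collapsing the resulting double sum with a weighted Cayley identity --- differs genuinely from what the paper does. The paper deduces the formula by induction on $(n,\sum a_i)$ from three already-established facts: the base case $\langle\,\rangle_{(1,0)}=1$, the divisor equation (Theorem~\ref{prop:divisor}), and the disk-cover TRR (Theorem~\ref{thm:open_stat_trr_disk_cover}); the inductive step reduces to a short polynomial identity in $a_1,\dots,a_n$ proved by a degree count and specializations. Your graph analysis is correct --- for degree $(d,0)$ no equator or negatively-oriented disk edge can occur, so each tree vertex contributes only star-shaped fixed-point graphs --- and you rightly flag the $V_-$ edge cases. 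The hard step, however, is left as a plan: the denominator $1/\prod a_i!$ does not come out of a Cayley count alone but from the $\psi$-integrals on the $\oCM_{0,m}$ factors, and making it survive the tree collapse together with the $d_h^{d_h}/d_h!$ edge weights and the $u$-powers amounts to redoing a substantial piece of the genus-$0$ GW/Hurwitz bookkeeping that the paper deliberately defers to the sequel. Your fallback --- a leaf-removal recursion on $n$ checked against the closed form --- is in substance the paper's proof, since the TRR \emph{is} that recursion; routing through the already-proven TRR is exactly what lets the paper sidestep the explicit double-tree combinatorics and land on a one-line polynomial identity instead.
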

An additional nice formula is the vanishing of intersection numbers for $\vd=(d,d),$ see Lemma~\ref{lem:(d,d)_vanishing} below.
These formulas will play a significant role in the sequel.

By analogy to the closed GW theory of $\CP^1,$
we have divisor equations and topological recursions (compare to the TRRs of \cite{PST14,BCT1,BCT2}) also in the open setting.
\begin{thm}\label{prop:divisor}
The equivariant divisor equation is
\[\<\tau_0^{\epsilon_1}\prod_{i=2}^l\tau_{a_i}^{\epsilon_i}\>_{0,\vd}
=d^{\epsilon_1}\<\prod_{i=2}^l\tau_{a_i}^{\epsilon_i}\>_{0,\vd}+
(2u)\sum_{\substack{j>1,\\\epsilon_j=\epsilon_1,~a_j>0}}\<\tau_{a_j-1}^{\epsilon_j}\prod_{i=2,i\neq j}^l\tau_{a_i}^{\epsilon_i}\>_{0,\vd}
.\]
In particular, if $1+\sum_{i=2}^l a_i=d_{+}+d_{-}>0$, then
\[\<\tau_0^{\epsilon_1}\prod_{i=2}^l\tau_{a_i}^{\epsilon_i}\>_{0,\vd}%^{\circ}
=d^{\epsilon_1}\<\prod_{i=2}^l\tau_{a_i}^{\epsilon_i}\>_{0,\vd}%^{\circ}
.\]
\end{thm}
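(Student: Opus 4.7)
The plan is to apply the main theorem to rewrite both sides as sums over trees via \eqref{eq:nice_tree_sum}, reduce to a per-component divisor equation on each fixed-point invariant $I(\lf,\vd,\vec{a},\vec{\epsilon})$, and then prove the latter graph by graph using the equivariant divisor equation on closed moduli spaces.

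A tree $T' \in \TTT(\lf \cup \{1\}, \vd)$ is obtained from a unique $T \in \TTT(\lf, \vd)$ by placing marking $1$ on one vertex $v_0 \in V(T)$, leaving the underlying tree structure unchanged. Using orbit-stabilizer to match automorphisms, and the fact that the combinatorial prefactor in \eqref{eq:ampli_intro_g=0} is independent of marking $1$, the comparison reduces to the per-vertex factors $I(\lf_{v_0} \cup \{1\}, \vd_{v_0}, \ldots)$ and $I(\lf_{v_0}, \vd_{v_0}, \ldots)$. It therefore suffices to establish the divisor equation at the level of a single disk moduli specification,
\[I(\lf \cup \{1\}, \vd, (0, \vec{a}), (\epsilon_1, \vec{\epsilon})) = d^{\epsilon_1} I(\lf, \vd, \vec{a}, \vec{\epsilon}) + 2u \sum_{\substack{j \in \lf,\\ \epsilon_j = \epsilon_1,~ a_j > 0}} I(\lf, \vd, \vec{a} - e_j, \vec{\epsilon}),\]
together with the analogous statement for sphere specifications (with $d^{\epsilon_1} = d$). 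Summing $d_{v_0}^{\epsilon_1}$ over $v_0 \in V(T)$ recovers the total $d^{\epsilon_1} = \sum_v d_v^{\epsilon_1}$, and the corrections collect across the disjoint union $\lf = \bigsqcup_v \lf_v$ into the global sum over $j \in \lf$. The sphere term from \eqref{eq:nice_tree_sum} is handled analogously.

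The per-component identity is proved graph by graph via \eqref{eq:I_S}. Adding marking $1$ at a vertex $v \in V(\Gamma)$ with $\mu(v) = \epsilon_1$ multiplies $\alpha_\Gamma$ by $\mu(v) \cdot 2u$; for $v \in V_+(\Gamma)$ this enlarges the $\overline{\mm}_{\gamma(v), \val_+(v)}$ factor by one marking. Applying the forgetful-map pullback $\psi_i^{\text{new}} = \pi^* \psi_i + D_{i, 1}$ and the classical equivariant divisor equation on the resulting closed integral, one extracts the $d^{\epsilon_1}$-multiplier from the equivariant weight of $\rho_{\epsilon_1}$ at $p_{\epsilon_1}$, while the $\tau_{a_j - 1}$ corrections come from the divisor-class insertions $D_{i, 1}$ after pushforward. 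Vertices in $V_-(\Gamma)$ require separate treatment, since adding a marking can force a migration $V_- \to V_+$ (for instance, a $(0, 2)$ vertex becomes $(0, 3)$, introducing a new $\overline{\mm}$-factor); the distinct branches of \eqref{eq:euler^-1} must then be matched directly. This migration analysis is the main technical obstacle.

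Finally, the ``In particular'' assertion is immediate: when $1 + \sum_{i \geq 2} a_i = d^+ + d^-$, each invariant appearing in the $2u$-correction satisfies $1 + (\sum_{i \geq 2} a_i - 1) = d^+ + d^- - 1 < d^+ + d^-$, so by the vanishing statement of the main theorem it equals zero.
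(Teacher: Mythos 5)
Your plan matches the paper's own strategy: reduce the tree-level sum via the forgetful map on $\TTT(\lf',\vd)\to\TTT(\lf,\vd)$, use orbit-stabilizer bookkeeping on automorphisms, and then prove a per-specification divisor identity \eqref{eq:divisor_fp} by analyzing the effect of adding marking $1$ to each vertex of a fixed-point graph. The ``In particular'' deduction from the dimension-vanishing statement is also the same. So far so good.

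However, the central computation is left as an acknowledged gap. You write that vertices in $V_-(\Gamma)$ ``require separate treatment, since adding a marking can force a migration $V_- \to V_+$ ... This migration analysis is the main technical obstacle'' --- but you do not carry it out, and it is precisely this migration bookkeeping that constitutes the bulk of the paper's proof (its cases (a)--(d)). In particular, the case where $v\in V_-$ carries a single label $i$ with $a_i>0$ and a single flag $f$ is delicate: after adding marking $1$ the resulting $\overline{\mm}_{0,3}$ factor is zero-dimensional, so $I(\Gamma',\vec{a'},\vec{\epsilon'})$ vanishes outright, and one must instead recover the relation $I(\Gamma')=|\delta(f)|\,I(\Gamma,\vec{a},\vec{\epsilon}) + 2u\,I(\Gamma,\vec{a}_{(i)},\vec{\epsilon})$ by comparing the second product in \eqref{eq:alpha_term} for $\vec{a}$ versus $\vec{a}_{(i)}$. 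This is where the $\tau_{a_j-1}$ correction terms come from at degree-zero tips, and it cannot be reached by the pullback-plus-string-equation route you outline for $V_+$ vertices.

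A second, smaller inaccuracy: you say one ``extracts the $d^{\epsilon_1}$-multiplier from the equivariant weight of $\rho_{\epsilon_1}$ at $p_{\epsilon_1}$.'' But after localization the classes $\alpha_\Gamma^{\vec{a},\vec{\epsilon}}$ of \eqref{eq:alpha_term} contain no $\rho$-factor; the point constraints have already been traded for the equivariant weights $\mu(v)\,2u$ and the constraint $\epsilon_i = \mu(v)$. In the fixed-point picture the factor $d^{\epsilon_1}$ arises combinatorially, as the sum $\sum |\delta(f)|$ of edge degrees over all admissible positions of marking $1$ (with the string equation on $V_+$ vertices supplying the contributions $\sum_j|\delta(f_j)|$, your intended $D_{i,1}$-type terms). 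So the reduction is correctly identified, but both the mechanism generating the $d^{\epsilon_1}$ factor and, more importantly, the full $V_-$ analysis need to be worked out before this is a proof.
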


\begin{thm}\label{thm:open_stat_trr_disk_cover}
Suppose $a_i\geq 0$, then
\begin{align}\label{eq:trr_disk_disk}
\<\tau^+_{a_i+1}\prod_{i=2}^l\tau^{+}_{a_i}\>_{0,(d,0)}=&\sum_{\substack{R\sqcup S = \{3,\ldots,l\}}}(2u)^{|R|}\<\tau_{a_1}\left(\prod_{i \in R} \tau_{a_i}\right)\tau_0\>_0^c\<\tau^+_0\tau^+_{a_2}\prod_{i \in S} \tau^+_{a_i}\>_{0,(d,0)}+\\
&+\sum_{\substack{R\sqcup S = \{3,\ldots,l\} \\ d_1 + d_2 = d}}d_2 \<\tau^+_{a_1} \prod_{i \in R} \tau^+_{a_i}\>_{0,(d_1,0)}\<\tau^+_{a_2}\prod_{i \in S} \tau^+_{a_i}\>_{0,(d_2,0)},\notag
\end{align}
where $\<\cdots\>^c_0$ stands for the closed descendent integrals over the Deligne-Mumford moduli space $\overline{\mm}_{0,n}.$
In particular, in the non-equivariant limit, $2+\sum_{i=1}^l a_i=d,$ we have
\begin{gather}\label{eq:no_internal_trr}
\<\tau^+_{a_i+1}\prod_{i=2}^l\tau^{+}_{a_i}\>_{0,(d,0)}%^{\circ}
=\sum_{\substack{R\sqcup S = \{3,\ldots,l\} \\ d_1 + d_2 = d}}d_2 \<\tau^+_{a_1} \prod_{i \in R} \tau^+_{a_i}\>_{0,(d_1,0)}%^{\circ}
\<\tau^+_{a_2}\prod_{i \in S} \tau^+_{a_i}\>_{0,(d_2,0)}%^{\circ}
,
\end{gather}
where in the sum only non-equivariant intersection numbers appear.
\end{thm}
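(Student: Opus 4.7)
The plan is to apply the classical genus-$0$ TRR inside each fixed-point integral produced by the localization formula and then reorganize the resulting sum into the two sums on the RHS. Since $d^+-d^- = d \neq 0$, the $\delta$-term in \eqref{eq:nice_tree_sum} vanishes, so by the main theorem
$$\<\tau^+_{a_1+1}\prod_{i=2}^l\tau^+_{a_i}\>_{0,(d,0)} = \sum_{T\in\TTT(\lf,(d,0))} \AAA(T,\vec{a}\,',+),$$
where $\vec{a}\,'$ has $a_1$ replaced by $a_1+1$. Each amplitude expands via Definition \ref{def:inhomterms} into fixed-point integrals $I(\Gamma,\vec{a}\,',+) = |A^0_\Gamma|^{-1}\int_{\overline{\mm}_\Gamma} e_\Gamma^{-1}\alpha_\Gamma^{\vec{a}\,',+}$. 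In each such integral I would locate the vertex $v^*\in V_+(\Gamma)$ carrying marking $1$ and apply the classical genus-$0$ TRR inside the closed factor $\overline{\mm}_{0,\val_+(v^*)}$, replacing $\psi_1^{a_1+1}$ by a sum of boundary divisors weighted by $\psi_1^{a_1}$, with marking $2$ as the anchor.

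Each resulting boundary contribution corresponds, under the fixed-point correspondence, to one of two geometric degenerations of the stable disk map. In the first case the degeneration splits the component at $v^*$ into two pieces sharing a node at the same fixed point, with one piece carrying marking $1$ together with a subset $R\subseteq\{3,\ldots,l\}$; this piece contributes purely through its closed genus-$0$ descendent integral $\<\tau_{a_1}\prod_{i\in R}\tau_{a_i}\tau_0\>_0^c$, while the remainder contributes the open invariant $\<\tau^+_0\tau^+_{a_2}\prod_{i\in S}\tau^+_{a_i}\>_{0,(d,0)}$ with the node becoming the new $\tau^+_0$ marking; the factor $(2u)^{|R|}$ arises from the $\mu(v)\cdot 2u$ equivariant weights in $\alpha_\Gamma$ that become free when the $|R|$ markings move into the (non-equivariant) closed bubble. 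In the second case the degeneration produces a new disk edge in the fixed-point graph, which corresponds to inserting a new edge in the tree $T$ that splits the disk component at $v^*$ into two disk components of degrees $d_1+d_2=d$ carrying labels $\{1\}\sqcup R$ and $\{2\}\sqcup S$; collecting the amplitude's $(d^+_v-d^-_v)^{\val(v)}$ factors at the endpoints of the new edge together with the $(-2u)^{-|E(T)|}$ prefactor in \eqref{eq:ampli_intro_g=0} yields the $d_2$-weighted product of open invariants.

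The main obstacle is the combinatorial bookkeeping: tracking the automorphism factors $|\Aut(T)|$, $|\Aut(\Gamma)|$, and $|A^0_\Gamma|$ as the graphs and trees grow under TRR, and verifying that every equivariant weight, sign, and degree factor lines up precisely between the two descriptions. The non-equivariant specialization \eqref{eq:no_internal_trr} then follows from a dimension count: non-equivariantly, $\sum_{i=1}^l a_i = d-2$, while the dimensions of the two factors in the closed-bubble sum force $a_1+\sum_R a_i = |R|-1$ and $a_2+\sum_S a_i = d-1$, whose sum gives $|R|=0$; but then $\<\tau_{a_1}\tau_0\>_0^c$ lives on the unstable $\overline{\mm}_{0,2}$ and vanishes, so only the splitting sum remains.
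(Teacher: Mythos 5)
Your plan diverges from the paper's proof in an important way, and the divergence exposes a genuine gap.

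You propose to expand the fixed-point integrals of Theorem \ref{thm:int_nums_equal_tree_sum}, locate the graph vertex $v^*\in V_+(\Gamma)$ carrying the marking $1$, and apply the classical genus-$0$ TRR inside the closed factor $\overline{\mm}_{0,\val_+(v^*)}$. But for $(d,0)$ disk covers in genus $0$, the vertex carrying the marking $1$ is typically in $V_-(\Gamma)$, not $V_+(\Gamma)$. Concretely, in the paper's tree sum, whenever a tree vertex $v$ has $\lf(v)=\{1\}$ and $\vd(v)=(d_v,0)$, its unique fixed-point graph consists of a single graph vertex with $\lambda=\{1\}$ and one incident hemisphere edge, so $(\gamma,|\val_+|)=(0,2)$, which puts that vertex in $V_-$: it is a single point of the domain, not a contracted component, there is no moduli factor $\overline{\mm}_{0,n}$, and $\psi_1^{a_1+1}$ enters only as the scalar $(-\omega_f)^{a_1+1}$ in $\alpha_\Gamma$. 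There is no place to apply TRR. Moreover, the paper's first lemma in the proof (case $l_0=0$, i.e.\ all $a_i>0$ for $i>2$) is precisely that \emph{every} nonvanishing tree has $|\lf(v)|\le 1$ at every vertex, so marking $1$ always sits on a $V_-$ vertex: in the dominant case your prescription never fires at all. The paper handles this case by a purely combinatorial argument — cutting each contributing tree at the first edge on the unique $1$-to-$2$ path, matching the result bijectively with the open-open sum and observing that the shift $a_1\mapsto a_1+1$ is absorbed by the scalar factor $\frac{d^+(v)}{-2u}(-\omega_f)^{a_1+1}=(-\omega_f)^{a_1}$ — no TRR on any closed moduli space is invoked there.

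The second gap is that you never address the case $l_0>0$, which is the only case in which the closed-open term on the right can actually be nonzero. The paper reduces this case to $l_0=0$ by a double induction on $(l_0,l)$, using the divisor equation (Theorem \ref{prop:divisor}) and the string equation to peel off a marking with $a_i=0$ and then carefully regroup the seven resulting terms. Your proposal does not engage with this mechanism; even if the $V_+$-TRR step were sound it would not by itself produce the closed-open term. Finally, you rightly identify that "tracking the automorphism factors" and "verifying that every equivariant weight, sign, and degree factor lines up" is the main technical burden, but you do not carry it out; that bookkeeping is essentially the entire content of the proof.

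The dimension count you give for the non-equivariant specialization \eqref{eq:no_internal_trr} is essentially correct and aligns with the paper's observation that negative $u$-powers vanish by the vanishing clause of Theorem \ref{thm:int_nums_equal_tree_sum}; that part is fine.
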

\begin{rmk}There is also a recursion for sphere covers
  which involves non-stationary insertions and
  will be handled in a future paper.\end{rmk}

\subsection{The main conjecture}
In Section \ref{sec:high_genus},
we propose a formula, motivated by equivariant localization, for all
higher genus open stationary descendents
which generalizes in a non-trivial way the proven formula for disks.
Our main conjecture, Conjecture \ref{conj:high_genus_def_and_loc} of Section
\ref{tmconj},
is that our formula is geometric:
{\em the open invariants in all higher genus can be defined geometrically via
coherent boundary condition, and
the resulting numbers agree with our high genus proposal}.

\subsection{Structure of the paper}
In Section \ref{sec:GW-defs}, we define the relevant moduli spaces, tautological line bundles, and the key objects for our calculations and definitions -- coherent integrands. We also explain how to obtain coherent integrands from the tautological line bundles. In Section \ref{sec:genus0-calc}, we prove the fixed-point localization formula and prove an algorithm for integrating coherent integrands.
In Section~\ref{sec:proofs}, we prove the main results:
Theorems \ref{thm:int_nums_equal_tree_sum},~\ref{thm:open_stat_trr_disk_cover},
and \ref{thm:disk_disk_covers_formula}.
In Section \ref{sec:high_genus}, we define the higher genus intersection numbers via localization. In Section \ref{sec:or}, we construct and analyze
the canonical orientation for the moduli spaces.

\subsection{The sequel}
The sequel to this paper  will be devoted to the higher genus theory
and to the open Gromov-Witten/Hurwitz correspondence:

\begin{enumerate}[\textbullet]
  \item
We will sketch a proof, modulo missing foundations, of the higher
genus formula and provide more evidence for its correctness,
including a genus $1$ recursion.
In addition, we will prove a \emph{map decomposition} theorem which
will allow us, in particular, to write all closed Gromov-Witten
invariants of $\CP^1$ in terms of the (conjectural higher genus)
\emph{disk cover} invariants defined in Section \ref{sec:high_genus}.

\item
We will define  \emph{open Hurwitz theory}, which is a natural generalization of classical Hurwitz theory to surfaces with boundary, and
prove that the genus $0$ Gromov-Witten
theory of disk covers satisfies an open GW/H correspondence, using the same completed cycles as in~\cite{OP06}. In fact, assuming a weak form of Conjecture \ref{conj:high_genus_def_and_loc},
we will prove the open GW/H  result in all genera.
\end{enumerate}

\subsection{Acknowledgments}
We thank Penka Georgieva for discussions about orientations, Hurwitz numbers, and other topics. We thank
Yoel Groman and Pavel Giterman for several enlightening discussions regarding maps to $\CP^1$ and how to visualize them.

The work of A.~B. is supported by the Mathematical Center in Akademgorodok under agreement No. 075-15-2019-1675 with the Ministry of Science and Higher Education of the Russian Federation.

A.~N.~Z. was partially supported by NSF grant DMS-1638352 and ERC-2012-AdG-320368-MCSK.

R.~P. was partially supported by SNF-200020-182181, SwissMAP, and the
Einstein Stiftung. The project has received funding from the European Research Council (ERC)
under the European Union Horizon 2020 research and innovation program (grant
agreement No.  786580).

R.~T. (incumbent of the Lillian and George Lyttle Career Development Chair) was supported by the ISF (grant No. 335/19), by a research grant from the Center for New Scientists of Weizmann Institute, by Dr. Max R\"ossler, the Walter Haefner Foundation, and the ETH Z\"urich Foundation, and partially by ERC-2012-AdG-320368-MCSK.

\section{Moduli of stable disk-maps and stationary descendent integrals}\label{sec:GW-defs}
The goal of this section is to define the stationary descendent integrals over the moduli space of stable disk-maps. We will first review the basics of equivariant localization, and continue to define stable maps to $(\CP^1,\RP^1),$ the moduli of stable genus $0$ maps to $(\CP^1,\RP^1),$ the tautological line bundles and the natural $S^1$ action. We will then define the notion of \emph{coherent integrand} and show how to obtain a coherent integrand out of the tautological line bundles. We shall end by defining stationary descendent integrals over the moduli space of stable disk-maps, and stating the main theorem, Theorem \ref{thm:int_nums_equal_tree_sum}.

\subsection{$S^1-$orbifolds and equivariant forms}

All our orbifolds will have corners unless explicitly mentioned otherwise.
We refer the reader to \cite{mod2hom} Section $3$ for a precise definition of
this and related differential-geometric notions (differential forms,
vector bundles, various types of maps, etc.) which we will use without
further discussion.

An $S^{1}$-orbifold $\yy$ is an orbifold with corners which is equipped
with an $S^{1}$ action. Let $\xi$ denote the vector field generating
the action, and consider the differential graded algebra of \emph{equivariant
forms}
\[
\mathcal{A}_{S^{1}}\left(\yy\right)=\left(\Omega\left(\yy;\rr\left[u\right]\right)^{S^{1}},D=d-u\iota_{\xi},\cdot\right)
\]
consisting of $S^{1}$-invariant differential forms on $\yy$ with
values in the graded polynomial algebra $\rr\left[u\right],\deg1 u=2$.
The differential is a deformation of the exterior derivative by contraction
with $\xi$, and the product is the usual exterior product of forms
(taking values in an evenly-graded ring). The grading is the total
grading, coming from the De-Rham degree and the grading of $\rr\left[u\right]$.
The homology of this differential graded algebra is isomorphic to
the cohomology of the homotopy quotient of $\yy$ by the $S^{1}$
action, also known as the\emph{ equivariant cohomology} of $\yy$,
see \cite{AB}.

The algebra of \emph{Laurent equivariant forms} $\mathcal{A}_{S^{1}}\left(\yy\right)\left[u^{-1}\right]$
is the localization of $\mathcal{A}_{S^{1}}\left(\yy\right)$ by the
multiplicative subset $\left\{ 1,u,u^{2},...\right\} $.

If $f:\xx\to\yy$ is an equivariant map, the usual pullback of forms
restricts to a map $f^{*}:\aso\left(\yy\right)\to\aso\left(\xx\right)$.
If $f$ is an equivariant proper and oriented submersion then we can
consider the usual pushforward of forms (defined by integration over
the fiber) as a map
\[
f_{*}:\aso\left(\xx\right)\to\aso\left(\yy\right).
\]
Our convention for the pushforward is such that
\begin{equation}
f_{*}\left(f^{*}\alpha\cdot\beta\right)=\alpha\cdot f_{*}\beta\label{eq:pf convention}
\end{equation}
and
\[
g_{*}\circ f_{*}=\left(-1\right)^{\deg1 g\cdot\deg1 f}\left(g\circ f\right)_{*}.
\]
The induced orientation on $\partial\xx$ is defined so that the orientation
of $\xx$ is recovered by appending an outward normal vector to the
beginning of an oriented base for $\partial\xx$, so that
\[
\left(f_{*}D-Df_{*}\right)\alpha=\left(f_{*}d-df_{*}\right)\alpha=\left(-1\right)^{\deg1\alpha+\dim\yy-\dim\xx}\left(f|_{\partial\xx}\right)_{*}\alpha.
\]

In particular, if $\xx$ is a compact and oriented orbifold with corners, then the pushforward to a point gives an integration map
\[
\int:\aso\left(\xx\right)\to\rr\left[u\right],
\]
satisfying Stokes' theorem
\[
\int_{\xx}D\alpha=\int_{\partial\xx}\alpha.
\]

Since all of these operations are $\rr\left[u\right]$-linear they
extend to Laurent equivariant forms as well.
\begin{defn}
A Laurent equivariant form $Q\in\mathcal{A}_{S^{1}}\left(\yy\right)\left[u^{-1}\right]$
is called \emph{an equivariant primitive }(or just \emph{a primitive})
if $DQ=1$, the unit of $\mathcal{A}_{S^{1}}\left(\yy\right)\left[u^{-1}\right]$.
\end{defn}
If $\yy$ has no fixed points, $\yy^{S^{1}}=\emptyset$, then $Q=\frac{\eta}{D\eta}$
is a primitive where $\eta=g\left(\xi,-\right)$ is the dual to $\xi$
with respect to any $S^{1}$-invariant Riemannian metric $g$ on $\yy$.
\begin{defn}
An $S^{1}$-orbifold $\xx$ is said to have \emph{simple fixed points}if $\partial\xx\cap\xx^{S^{1}}=\emptyset$.
\end{defn}
If $\xx$ has simple fixed points, then $\xx^{S^{1}}$ is represented
by a disjoint union of orbifolds without boundary (not necessarily
of the same dimension) together with a closed embedding $\xx^{S^{1}}\to\xx$
(see \cite[ Definition 36]{fp-loc-OGW}) and for every $c\geq1$ the
orbifold with corners $\yy=\partial^{c}\xx$ is fixed point free.
All the orbifolds we will consider here have simple fixed points.

If $F\subset\xx^{S^{1}}$ is a connected component, we denote by $N_{F}$
the normal bundle to $F\to\xx$ and let
\[
e_{S^{1}}\left(N_{F}\right)\in\aso\left(F\right)^{\times}\subset\aso\left(F\right)
\]
denote an associated equivariant Euler form (see the proof of Lemma
\ref{lem:fp contrib by euler} below).

\subsection{Stable maps to $(\CP^1,\RP^1)$}
In this section we briefly recall the definition of stable maps to $(\CP^1,\RP^1),$ for a definition in the more general case of a target pair $(X,L), $ see \cite{Liu02}.

\begin{definition}
A \emph{Riemann surface with boundary} $\Sigma$ is the result of removing
finitely many disjoint extendably embedded open disks from a compact Riemann surface.
Given disjoint finite sets $\kf,\lf\subset \Univ,$
a $(\kf,\lf)-$\emph{smooth marked Riemann surface (with boundary)} is a triple
$(\Sigma, \{x_i\}_{i\in \kf}, \{z_i\}_{i\in \lf} )$
where
\begin{enumerate}[(a)]
\item $\Sigma$ is a Riemann surface (with boundary).
\item $\{x_i\}_{i\in \kf}$ are distinct boundary points.
\item $\{z_i\}_{i\in \lf}$ are distinct internal points.
\end{enumerate}
The points $x_i$ are called boundary markings, the points $z_i$ are the internal markings.
We will sometimes omit the marked points and $(\kf,\lf)$ from our notations; we will sometimes identify a marking with its label from $\kf\cup\lf.$

The \emph{genus} $g(\Sigma)$ of a connected marked smooth Riemann surface with boundary $\Sigma$ is defined as the genus of the doubled surface $\Sigma_\C,$ if $\partial\Sigma\neq\emptyset,$ and as the usual genus otherwise.
$\Sigma$ is \emph{stable} if its automorphism group is finite.
\end{definition}
When $\Sigma$ is connected and $\partial\Sigma=\emptyset$ stability is equivalent to $2g(\Sigma)+|\lf|\geq 3,$ while when $\partial\Sigma\neq\emptyset$ stability is equivalent to $2g(\Sigma)+2|\lf|+|\kf|\geq 3.$ When $\Sigma$ is not connected, then stability is equivalent to the stability of all the connected components.
\begin{definition}
A $(\kf,\lf)-$\emph{pre-stable marked surface with boundary} is a tuple
$\Sigma=((\Sigma_\alpha)_{\alpha\in C\sqcup O},\sim_B,\sim_I,\mathcal{CB})$, where
\begin{enumerate}[(a)]
\item $C,O$ are finite sets. For $\alpha\in C$ $\Sigma_\alpha$ is a smooth marked Riemann surface without boundary, for $\alpha\in O$ $\Sigma_\alpha$ is a smooth marked Riemann surface with boundary.
\item An equivalence relation $\sim_{B}$ on the set of all boundary marked
points, with equivalence classes of size at most $2.$ An equivalence relation $\sim_I$ on the set of all internal marked points, with
equivalence classes of size at most $2.$
\item A subset $\mathcal{CB}$ of the equivalence classes of size $1$ of $\sim_I.$
\end{enumerate}
We require that $\kf$ is the set of labels of points belonging to $\sim_B-$equivalence
classes of size $1.$ $I$ is the set of labels of points belonging to $\sim_I$
equivalence classes of size $1$ which do not lie in $\mathcal{CB}.$

The two equivalence relations $\sim_B,\sim_I$ taken together are denoted
by $\sim$. Equivalence classes of $\sim$ ($\sim_B,\sim_I$) of size $2$ are called \emph{nodes} (\emph{boundary nodes, internal nodes}). Elements of $\mathcal{CB}$ are called \emph{contracted boundaries}.

We identify $\Sigma$ with the topological space $\left(\bigsqcup_{\alpha}\Sigma_\alpha\right)/\sim.$

The \emph{doubling} $\Sigma_\C$ of $\Sigma$ is the pre-stable marked surface without boundary obtained by gluing $\Sigma$ and an isomorphic copy of it $\overline{\Sigma}$ with an opposite complex structure along the common boundary (using Schwartz reflection), and then taking the topological quotient which identifies a point $z\in\mathcal{CB}\subset\Sigma$ with its partner in $\overline\Sigma.$ The \emph{genus} of a connected pre-stable marked surface with non-empty boundary or with contracted boundaries is defined as the genus of the doubled (pre-stable) surface. If there are no boundaries or contracted boundaries, the genus is taken to be the usual genus of a pre-stable surface.

The \emph{normalization of $\Sigma$} is the pair $(\widehat{\Sigma},q)$, where $\widehat{\Sigma}=\bigsqcup_\alpha\Sigma_\alpha,$ and $q:\widehat{\Sigma}\to\Sigma$ is the quotient map.
$\Sigma$ is stable if each $\Sigma_\alpha$ is stable.
\end{definition}
Note that $\Sigma$ is smooth precisely if all the $\sim-$equivalence classes are of size $1$ and $\mathcal{CB}=\emptyset.$ We will usually omit $\sim_B,\sim_I$ and $\mathcal{CB}$ from the notations.
\begin{definition}
A \emph{stable map} $u:\Sigma\to(\CP^1,\RP^1),$ where $\Sigma=(\{\Sigma_\alpha\},\sim_B,\sim_I,\mathcal{CB})$ is a pre-stable marked surface with boundary, is a map $u$ which satisfies
\begin{enumerate}[(a)]
\item The restrictions $u|_{\Sigma_\alpha}$ are smooth, holomorphic on the interior of $\Sigma_\alpha,$ and map $\partial\Sigma_\alpha$ to $\RP^1.$
\item If $x\sim y$ then $u(x)=u(y);$ equivalently, $u$ descends to a (continuous) map from $\Sigma$ to $\CP^1.$
\item $u(\mathcal{CB})\subset \RP^1.$
\item If some $\Sigma_\alpha$ is mapped to a point, then $\Sigma_\alpha$ is stable.
\end{enumerate}
$u$ is said to be \emph{smooth} if $\Sigma$ is smooth.
It is said to be a \emph{disk map} if $g(\Sigma)=0$ and $\Sigma$ is connected with either $\partial\Sigma\neq\emptyset$ or $\mathcal{CB}\neq \emptyset.$

The \emph{degree} of $u$ is defined as $u_*[\Sigma,\partial\Sigma]\in H_2(\CP^1,\RP^1)\simeq\Z\oplus\Z.$ $u$ is a \emph{disk cover} if its degree is either of the form $(d,0)$ or of the form $(0,d).$
\end{definition}
The notions of isomorphisms and automorphisms between stable maps are the expected ones.

Throughout most of this paper we will consider only disk maps. In addition, we will consider mainly internal markings; boundary markings will appear through normalization of boundary nodes.

\subsection{Moduli space, its corners and the tautological line bundles}

In this section we discuss the moduli spaces of stable disk maps,
the structure of the corners, cotangent line bundles and maps between the
moduli spaces. Many of the constructions appeared in \cite{mod2hom}
and \cite{equiv-OGW-invts}. For those, we will allow ourselves to
be quite brief here, and the reader should consult these references
for more detail.

Consider finite disjoint subsets $\kf,\lf\subseteq\Univ$ and $\vd=\left(d^{+},d^{-}\right)\in\zz_{\geq0}^{2}\subset H_{2}\left(\cc\pp^{1},\rr\pp^{1}\right)$.
Write $\sum\vd=d^{+}+d^{-}$ and $\lf_{\cc}=\kf\sqcup\left(\lf\times\left\{ 1,2\right\} \right)$.
Suppose the stability condition
\begin{equation}
\left|\kf\right|+2\left|\lf\right|+3\sum\vd>2\label{eq:stability}
\end{equation}
holds.
\begin{prop}
\label{prop:The-moduli-space}The moduli space $\overline{\mm}_{0,\kf,\lf}\left(\vd\right)$
is an $S^{1}$-orbifold with corners admitting an $S^{1}$-equivariant
map
\begin{equation}
\overline{\mm}_{0,\kf,\lf}\left(\vd\right)\to\overline{\mm}_{0,\lf_{\cc}}\left(\sum\vd\right).\label{eq:open-closed relation}
\end{equation}
Moreover, $\overline{\mm}_{0,\kf,\lf}\left(\vd\right)$ has simple fixed points.
\end{prop}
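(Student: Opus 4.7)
The plan is to realize $\overline{\mm}_{0,\kf,\lf}(\vd)$ as the fixed locus of the anti-holomorphic involution on the complex moduli space $\overline{\mm}_{0,\lf_\cc}(\sum\vd)$ via the doubling construction, inherit the orbifold-with-corners structure from standard open Gromov--Witten foundations, and then verify the simple fixed point condition by a direct argument.

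First, I would construct the map \eqref{eq:open-closed relation} by sending a stable disk map $u:(\Sigma,\partial\Sigma)\to(\CP^1,\RP^1)$ to its doubled map $u_\C:\Sigma_\C\to\CP^1$, forgetting the anti-holomorphic involution $b$ of $\Sigma_\C$. Boundary markings double to real internal markings, internal markings double into pairs of conjugate internal markings (yielding the index set $\lf_\cc=\kf\sqcup(\lf\times\{1,2\})$), and contracted boundaries double to pairs of conjugate internal points identified on $\RP^1$. The degrees match as $\sum\vd=d^++d^-$, computed by integrating the pullback of the K\"ahler form over the doubled sphere. This realizes $\overline{\mm}_{0,\kf,\lf}(\vd)$ set-theoretically as the sublocus of $\overline{\mm}_{0,\lf_\cc}(\sum\vd)$ carrying a compatible anti-holomorphic involution.

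Second, I would equip the moduli with its orbifold-with-corners structure following the foundational works in open Gromov--Witten theory (such as those of Liu, Solomon, and Fukaya--Oh--Ohta--Ono). Locally one takes the fixed locus of the anti-holomorphic involution acting on Kuranishi-type charts of the complex moduli. Codimension-one real boundary strata correspond to the two types of real degeneration already discussed: two real components meeting at a boundary node, and a sphere bubble attached at a contracted boundary point of $\RP^1$. The gluing parameters at these loci are real---they form the fixed locus of conjugation acting on the complex gluing parameters---and they provide smooth corner coordinates; iterated gluings stratify the corners. The $S^1$-action on $\CP^1$ is given by real matrices, so it commutes with complex conjugation and preserves $\RP^1$; acting on maps by post-composition yields an $S^1$-action on $\overline{\mm}_{0,\kf,\lf}(\vd)$ compatible with the inclusion into the complex moduli, making \eqref{eq:open-closed relation} equivariant.

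For simple fixed points, suppose $u$ is an $S^1$-fixed stable disk map and let $\Sigma_{\mathrm{sing}}$ denote the finite set of boundary nodes and contracted boundaries of its domain. For every $\theta\in S^1$ there exists an automorphism $\phi_\theta$ of the marked domain with $u\circ\phi_\theta=\theta\cdot u$, and $\phi_\theta$ preserves the canonically-defined set $\Sigma_{\mathrm{sing}}$. Hence $u(\Sigma_{\mathrm{sing}})\subset\RP^1$ is a finite $S^1$-invariant subset: indeed $\theta\cdot u(\Sigma_{\mathrm{sing}})=u(\phi_\theta(\Sigma_{\mathrm{sing}}))=u(\Sigma_{\mathrm{sing}})$. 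Since $S^1$ acts freely on $\RP^1$, this subset must be empty, so $\Sigma_{\mathrm{sing}}=\emptyset$ and $u$ lies in the interior, giving $\partial\overline{\mm}_{0,\kf,\lf}(\vd)\cap\overline{\mm}_{0,\kf,\lf}(\vd)^{S^1}=\emptyset$. I expect the main obstacle to be the clean treatment of the orbifold-with-corners structure at deep corner strata (iterated boundary-node and contracted-boundary gluings) and the compatibility of charts across strata; this is essentially standard in the open Gromov--Witten literature and would be cited rather than reproven.
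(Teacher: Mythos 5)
Your proof follows essentially the same route as the paper's: construct the map \eqref{eq:open-closed relation} by doubling, invoke standard open Gromov--Witten foundations for the orbifold-with-corners structure (the paper specifically cites \cite[Theorem~1]{mod2hom}), and deduce simple fixed points from the lack of $S^1$-fixed points on $\RP^1$, with your version merely spelling out the automorphism argument that the paper leaves implicit. One small imprecision: the $S^1$-action on $\RP^1$ is \emph{not} free, since $e^{\sqrt{-1}\pi}=-1$ acts trivially on all of $\CP^1$; what you actually need, and what the paper correctly states, is that the action has no fixed points on $\RP^1$, and this already forces any finite $S^1$-invariant subset of $\RP^1$ to be empty because every orbit in $\RP^1$ is a connected set with more than one point, hence infinite.
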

\begin{proof}
The moduli map is induced from the doubling map $(\Sigma,u)\to(\Sigma_\C,u_\C)$, and the first statement of the proposition is a special case of \cite[Theorem 1]{mod2hom}
(see Example 3 \emph{ibid}.). Since the induced action on $\rr\pp^{1}\subset\cc\pp^{1}$
has no fixed points, the fixed points of $\overline{\mm}_{0,\kf,\lf}\left(\vd\right)$
have no special points on $\rr\pp^{1}$. In particular, $\partial\overline{\mm}_{0,\kf,\lf}\left(\vd\right)$ has no fixed points.
\end{proof}
Recall the underlying groupoid of $\overline{\mm}_{0,\kf,\lf}\left(\vd\right)$
is equivalent to the groupoid whose objects are \emph{fundamental
configurations }$\sigma=\left(\left(\Sigma,\nu,\lambda,w\right),b,\Sigma^{1/2}\right)$.
Here $\left(\Sigma,\nu,\lambda,w\right)$ represents a point in $\overline{\mm}_{0,\lf_{\cc}}\left(\sum\vd\right)$, so
\begin{enumerate}[\textbullet]
\item $\Sigma$ is a disjoint union of $\cc\pp^{1}$'s,
\item $\nu:\Sigma\to\Sigma$ is an involution whose size 2 orbits are \emph{the}
\emph{nodes},
\item $\lambda:\lf_{\cc}\to\Sigma^{\nu}$ is an injective map which specifies
the position of the marked points, and
\item $w:\Sigma\to X$ is a holomorphic map with $w_{*}\left(\left[\Sigma,\partial\Sigma\right]\right)=\sum\vd$.
\end{enumerate}
$b:\Sigma\to\Sigma$ is an anti-holomorphic involution, so that $\left(\left(\Sigma,\nu,\lambda,w\right),b\right)$
represents a point in the fixed-point stack $\overline{\mm}_{0,\lf_{\cc}}\left(\sum\vd\right)^{\zz/2}$
for a suitable $\zz/2$ action. Finally, $\Sigma^{1/2}\subset\Sigma$
is a fundamental domain for the action of $b$, subject to some conditions,
so $\Sigma^{1/2}/\nu$ is the domain of the stable disk-map.

Let us discuss the structure of the corners. Consider some fundamental
configuration $\sigma=\left(\left(\Sigma,\nu,\lambda,w\right),b,\Sigma^{1/2}\right)$
representing a point $\left[\sigma\right]\in\overline{\mm}_{0,\kf,\lf}$.
A \emph{real node }of $\sigma$ is an orbit $o=\left\{ o_{1},o_{2}\right\} $
of the involution $\nu$ such that $b\left(o\right)=o$. We call a
node \emph{standard }if $b\left(o_{1}\right)=o_{1}$ and \emph{exceptional
}otherwise (these are called E-type and H-type nodes in \cite{mod2hom}, following \cite{Liu02}).
Let $\mathcal{R}\left(\sigma\right)$ denote the set of real nodes
of $\sigma$. It is not hard to see that $\mathcal{R}\left(\sigma\right)$
either consists of a single exceptional node, or some number of standard
nodes.

The set $\mathcal{R}\left(\sigma\right)$ is naturally identified
with the set of local boundary components at $\left[\sigma\right]\in\overline{\mm}_{0,\kf,\lf}\left(\vd\right)$,
so a point $q\in\partial^{c}\overline{\mm}_{0,\kf,\lf}\left(\vd\right)$
is represented by $\left(\sigma,\rho\right)$ where $\sigma$ is a
fundamental configuration, and $\rho:\left\{ 1,...,c\right\} \hookrightarrow\mathcal{R}\left(\sigma\right)$
is an injective map. Our numbering convention is such that $\left(\sigma,\rho|_{\left\{ 2,...,c\right\} }\right)$
represents the image of $q$ in $\partial^{c-1}\overline{\mm}_{0,\kf,\lf}\left(\vd\right)$.

Consider $\rho(c)$. If $\rho(c)$ is standard, we can write
\[
\left(\left(\Sigma,\nu,\lambda,w\right),b,\Sigma^{1/2}\right)=\left(\left(\Sigma_{1}\sqcup\Sigma_{2},\nu_{1}\sqcup\nu_{2},\lambda_{1}\sqcup\lambda_{2},w_{1}\sqcup w_{2}\right),b_{1}\sqcup b_{2},\Sigma_{1}^{1/2}\cup\Sigma_{2}^{1/2}\right)
\]
so that
\[
\sigma_{i}=\left(\left(\Sigma_{i},\nu_{i},\lambda_{i},w_{i}\right),b_{i},\Sigma_{i}^{1/2}\right)
\]
specifies a point $p_{i}\in\overline{\mm}_{0,\kf_{i},\lf_{i}}\left(\vd_{i}\right)$
for $i=1,2$. This decomposition is unique up to a $\zz/2$ action
which swaps the labels $i=1$ and $i=2$. We have $\mathcal{R}\left(\sigma\right)=\mathcal{R}\left(\sigma_{1}\right)\sqcup\mathcal{R}\left(\sigma_{2}\right)\sqcup\left\{ \rho(c)\right\} $.
Set $\kappa_{i}=\rho^{-1}\left(\mathcal{R}\left(\sigma_{i}\right)\right)$,
$c_{i}=\left|\kappa_{i}\right|$, and let $\rho_{i}=\rho|_{\kappa_{i}}\circ\gamma_{i}$
where $\gamma_{i}:\left\{ 1,...,c_{i}\right\} \to\kappa_{i}$ is the
unique order-preserving bijection. We can interpret $\left(\sigma_{i},\rho_{i}\right)$
as specifying a point of $\partial^{c_{i}}\overline{\mm}_{0,\kf_{i},\lf_{i}}\left(\vd_{i}\right)$.
Note that in order to reconstruct $\sigma$, we must keep track of
the partition $\left\{ 1,...,c-1\right\} =\kappa_{1}\sqcup\kappa_{2}$;
henceforth if $\kappa\subset\N$ is any finite subset, we write $\partial^{\kappa}\xx$
for a copy of $\partial^{c}\xx$, $c=\left|\kappa\right|$, where
the local boundary components are labeled by $\kappa$ instead of
$\left[c\right]$. This allows us to write
\[
\partial^{c}\left(\xx\times\yy\right)=\bigsqcup\partial^{\kappa_{1}}\xx\times\partial^{\kappa_{2}}\yy,
\]
where the disjoint union ranges over partitions $\kappa_{1}\sqcup\kappa_{2}=\left[c\right]$,
which specify how to merge the local boundary components of $\xx$
and of $\yy$ to form a corner of the product. A similar Leibnitz
rule holds for transverse fibered product of orbifolds over the manifold
without boundary $L=\rr\pp^{1}$ (cf. \cite[Proposition 6.7]{joyce-fibered}).

If $\rho(c)$ is an exceptional node, then $\Sigma^{b}=\emptyset$
and $\Sigma=\Sigma^{1/2}\sqcup b\left(\Sigma^{1/2}\right)$, $\kf=\emptyset$
and $d_{+}=d_{-}$.

\begin{prop}
\label{prop:corners of moduli}We have $\partial^{c}\overline{\mm}_{0,\kf,\lf}\left(\vd\right)=\cl\sqcup\ee$. \\
The \emph{standard corners}
\[
\cl=\cl^{c}\left(\kf,\lf,\vd\right)=\left(\bigsqcup\partial^{\kappa_{1}}\overline{\mm}_{0,\kf_{1}\sqcup\sstar_{1},\lf_{1}}\left(\vd_{1}\right)\times_{L}\partial^{\kappa_{2}}\overline{\mm}_{0,\kf_{2}\sqcup\sstar_{2},\lf_{2}}\left(\vd_{2}\right)\right)_{\zz/2}
\]
are the $\zz/2$ quotient of the disjoint union over $\kf_{1}\sqcup\kf_{2}=\kf$,
$\lf_{1}\sqcup\lf_{2}=\lf$, $\vd_{1}+\vd_{2}=\vd$ and
$\kappa_{1}\sqcup\kappa_{2}=\left\{ 1,...,c-1\right\} $ such that
$\left(k_{i}+1\right)+2l_{i}+3\sum\vd_{i}>2$ for $i=1,2$, where
$k_{i}=\left|\kf_{i}\right|$ and $l_{i}=\left|\lf_{i}\right|$. The
fibered product is over the evaluation maps $\ev_{\sstar_{1}},\ev_{\sstar_{2}}$
to $L=\rr\pp^{1}$. $\zz/2$ acts by swapping the two fibered factors.\\
The \emph{exceptional boundary} is
\[
\ee=\begin{cases}
\ev_{\star}^{-1}\left(L\right)\subset\overline{\mm}_{0,\lf\sqcup\star}\left(d_{+}\right), & \text{if }d_{+}=d_{-},\kf=\emptyset,\text{ and }c=1,\\
\emptyset, & \text{otherwise},
\end{cases}
\]
where $\overline{\mm}_{0,\lf\sqcup\star}\left(d_{+}\right)$ is the moduli space of closed stable maps.
\end{prop}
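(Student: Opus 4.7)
The plan is to combine the groupoid description of points in $\partial^c\overline{\mm}_{0,\kf,\lf}(\vd)$ as pairs $(\sigma,\rho)$, together with the dichotomy for the set $\mathcal{R}(\sigma)$ of real nodes already recorded in the discussion preceding the proposition (it is either a single exceptional node or consists only of standard nodes). I would treat the two possibilities for $\rho(c)$ separately, proving in each case a local isomorphism of orbifolds with corners onto the right-hand side stratum, and then assemble them into a disjoint-union description.

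\textbf{Standard case.} Suppose $\rho(c)$ is a standard node. Following the splitting $\sigma = \sigma_1 \sqcup \sigma_2$ along $\rho(c)$ already recorded in the text, I would check three things. First, the decomposition yields data $(\sigma_i,\rho_i)$ representing a point of $\partial^{\kappa_i}\overline{\mm}_{0,\kf_i\sqcup\sstar_i,\lf_i}(\vd_i)$, with the two evaluations $\ev_{\sstar_1}$, $\ev_{\sstar_2}$ sending the new marked points to the same point of $L = \rr\pp^1$; this is precisely the fibered product condition. Second, the decomposition is canonical up to the involution swapping the two factors, so taking the $\zz/2$ quotient is forced. Third, the stability inequalities $(k_i+1)+2l_i+3\sum\vd_i>2$ are exactly the conditions that each factor $\overline{\mm}_{0,\kf_i\sqcup\sstar_i,\lf_i}(\vd_i)$ exists. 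That these identifications are compatible with the smooth/orbifold structures on both sides is essentially the Leibnitz rule for corners of fibered products over the boundary-free manifold $L$ (cf.\ the cited Joyce reference), together with the corresponding statement for the closed moduli obtained via the forgetful map \eqref{eq:open-closed relation}.

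\textbf{Exceptional case.} Suppose $\rho(c)$ is exceptional. Since $\mathcal{R}(\sigma)$ then consists of exactly one node, we are forced to have $c = 1$, and by definition $\kf = \emptyset$ as well as $d_+ = d_-$. Normalizing along the single exceptional node produces a connected nodal genus-zero closed curve with an extra marked point $\star$, whose image under $\ev_\star$ lies in $L$; conversely, every such configuration yields a fundamental configuration with an exceptional real node by the Schwarz doubling construction. This gives a canonical identification with $\ev_\star^{-1}(L)\subset\overline{\mm}_{0,\lf\sqcup\star}(d_+)$.

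\textbf{Assembly.} Finally I would observe that the two cases are mutually exclusive (since a standard $\rho(c)$ forces $\Sigma^b\neq\emptyset$, while the exceptional one forces $\Sigma^b=\emptyset$), so $\partial^c\overline{\mm}_{0,\kf,\lf}(\vd)$ decomposes as a disjoint union of the two loci. The main technical obstacle is verifying that the orbifold structure on the corner matches the fibered product on the nose, in particular checking that the automorphism stack of the glued configuration equals the product of automorphism stacks of $\sigma_1,\sigma_2$ quotiented by the label-swap $\zz/2$ when the two factors happen to be isomorphic; this is exactly where the semi-direct quotient notation $(\,\cdot\,)_{\zz/2}$ on the right-hand side does the bookkeeping. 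The remaining verifications (transversality of $\ev_{\sstar_1}$, $\ev_{\sstar_2}$ to $L$, smoothness of $\ev_\star^{-1}(L)$) follow from the $S^1$-equivariance and the absence of $S^1$-fixed points on $L$ established in Proposition \ref{prop:The-moduli-space}.
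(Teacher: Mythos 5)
Your proposal follows the same route as the paper's (very terse) proof: the set-theoretic/groupoid bijection is already contained in the discussion preceding the proposition, and your elaboration of the standard/exceptional dichotomy, the $\zz/2$ ambiguity in the splitting, and the fact that $\mathcal{R}(\sigma)$ consisting of a single exceptional node forces $c=1$, $\kf=\emptyset$, $d_+=d_-$ all faithfully recapitulates that discussion. The one place where you diverge slightly from what the paper actually invokes is in the final smooth/orbifold-structure step: the paper's proof says the upgrade from a bijection of groupoids to an equivalence of orbifolds with corners is obtained ``using the real gluing maps'' (i.e.\ the local models these maps provide near boundary strata), whereas you attribute it to the Leibnitz rule for corners of fibered products over $L$. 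The Leibnitz rule is indeed used in the preceding discussion, but its role there is to organize the combinatorics of the labels $\kappa_1\sqcup\kappa_2$, not to establish the smooth structure; the orbifold comparison really comes from the gluing maps. This is a minor misattribution rather than a gap, and the rest of your argument (mutual exclusivity via $\Sigma^b\neq\emptyset$ vs.\ $\Sigma^b=\emptyset$, the $\zz/2$-quotient bookkeeping, and the transversality of the evaluations via $S^1$-equivariance) is correct and well aligned with the paper.
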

\begin{proof}
The discussion preceding the proposition defines a bijection of groupoids
of sets. Using the real gluing maps this is seen to be an equivalence
of orbifolds.
\end{proof}
The standard corners $\cl^{c}\left(\kf,\lf,\vd\right)$ correspond
to the familiar disk bubbling in Floer theory. The exceptional boundary
$\ee$ represents points where the boundary of a disk with no special
points shrinks to a point on $\rr\pp^{1}$. See also the illustrations in \ref{fig:moduli}.

The product of symmetric groups $\Sym\left(\kf\right)\times\Sym\left(\lf\right)$
acts on $\overline{\mm}_{0,\kf,\lf}\left(\vd\right)$ by permuting
the labels, and there is an $S^{1}\times\Sym\left(\kf\right)\times\Sym\left(\lf\right)$
equivariant evaluation map $\overline{\mm}_{0,\kf,\lf}\left(\vd\right)\to L^{\kf}\times X^{\lf}$.

If $\overline{\mm}_{\kf_{i}\sqcup\sstar_{i},\lf_{i}}\left(\vd_{i}\right)$
are moduli spaces of disks, $i=1,2$, we have a gluing map
\[
\overline{\mm}_{\kf_{1}\sqcup\sstar_{1},\lf_{1}}\left(\vd_{1}\right)\fibp{\ev_{\sstar_{1}}}{\ev_{\sstar_{2}}}\overline{\mm}_{\kf_{2}\sqcup\sstar_{2},\lf_{2}}\left(\vd_{2}\right)\to\overline{\mm}_{\kf_{1}\sqcup\kf_{2},\lf_{1}\sqcup\lf_{2}}\left(\vd_{1}+\vd_{2}\right)
\]
obtained by taking the $\zz/2$-invariants of the corresponding closed gluing map. Gluing maps at a complex node can be defined similarly.

Suppose \eqref{eq:stability} holds. We have a forgetful map
\[
\overline{\mm}_{0,\kf\sqcup\left\{ y\right\} ,\lf}\left(\vd\right)\xrightarrow{f_{y}}\overline{\mm}_{0,\kf,\lf}\left(\vd\right)
\]
corresponding to forgetting a boundary marked point.
\begin{rem}
In case $\kf=\emptyset$ and $\vd=\left(d,d\right)$ this map is only
\emph{weakly smooth} over points of $\overline{\mm}_{0,\kf,\lf}\left(\vd\right)$
with an exceptional node, see the proof of Lemma 8 in \cite{mod2hom}.
It is straightforward to extend the category of orbifolds with corners
as defined there to allow such maps. Some fibered products (cf. Lemma
26 \emph{ibid.}) may only satisfy the universal property with respect
to the smaller subcategory of smooth maps, but this is sufficient
for our needs. In particular, if $\sstar'\in\kf'$ and $\sstar''\in\kf''$,
we can use the universal property to define maps such as
\[
\overline{\mm}_{0,\kf'\sqcup\left\{ y\right\} ,\lf'}\left(\vd'\right)\fibp{\ev_{\sstar'}}{\ev_{\sstar''}}\overline{\mm}_{0,\kf'',\lf''}\left(\vd''\right)\xrightarrow{\left(f_{y},\id\right)}\overline{\mm}_{0,\kf',\lf'}\left(\vd'\right)\fibp{\ev_{\sstar'}}{\ev_{\sstar''}}\overline{\mm}_{0,\kf'',\lf''}\left(\vd''\right),
\]
since $\kf'\neq\emptyset$ in this case.
\end{rem}
The usual relations (e.g., associativity of gluing, and the various
compatibilities of the evaluation maps, forgetful maps, gluing maps,
and the group actions) are shown to hold using the corresponding results
for the closed curves.

For $i\in\lf$ we have a cotangent line bundle $\mathbb{L}_{i}=\mathbb{L}_{i}^{\kf,\lf,\vd}$
on $\overline{\mm}_{0,\kf,\lf}\left(\vd\right)$ which is the pullback
of the cotangent line bundle $\mathbb{L}_{i\times\left\{ 1\right\} }^{\lf_{\cc},\sum\vd}$
on $\overline{\mm}_{0,\lf_{\cc}}\left(\sum\vd\right)$ along \eqref{eq:open-closed relation}.

\begin{lem}
\label{lem:d-forget cartesian}Suppose the stability condition \eqref{eq:stability} holds. Then the map
\[
df_{j}:\left(\mathbb{L}_{i}^{\kf\sqcup\left\{ j\right\} ,\lf,\vd}\right)^{\vee}\to\left(\mathbb{L}_{i}^{\kf,\lf,\vd}\right)^{\vee}
\]
is \emph{cartesian}, that is, it induces an isomorphism $\left(\mathbb{L}_{i}^{\kf\sqcup\left\{ j\right\} ,\lf,\vd}\right)^{\vee}\simeq f_j^* \left(\mathbb{L}_{i}^{\kf,\lf,\vd}\right)^{\vee}$.
\end{lem}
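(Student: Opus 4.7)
The plan is to interpret the dual cotangent bundle $\mathbb{L}_i^\vee$ as the relative tangent line of the universal curve at the $i$-th internal section, and to show that the forgetful map $f_j$ lifts to an isomorphism of universal curves in a neighborhood of that section. The cartesian statement for $df_j$ then follows immediately, as the differential of the lifted map restricts to a fiberwise isomorphism of relative tangent spaces.

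First I would set up the universal-curve lift. Let $\pi: \mathcal{C} \to \overline{\mm}_{0,\kf,\lf}(\vd)$ denote the universal domain and $s_i$ its $i$-th internal section; by construction (via doubling and the universal property on the closed side, where the analogous description of $\mathbb{L}_{i\times\{1\}}^\vee$ on $\overline{\mm}_{0,\lf_\cc}(\sum\vd)$ is standard) one has a canonical identification $\mathbb{L}_i^\vee \simeq s_i^* T_\pi$, and likewise for the bigger moduli with universal curve $\mathcal{C}'$ and section $s_i'$. The forgetful map $f_j$ lifts to a map $\tilde f_j: \mathcal{C}' \to \mathcal{C}$ obtained by removing the $j$-th boundary marking and contracting the components that become unstable. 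It therefore suffices to show that $\tilde f_j$ is an isomorphism on a neighborhood of $s_i'$.

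Next I would verify that no component of the domain carrying $z_i$ is contracted by the stabilization. A disk component $D$ with $l$ internal and $k$ boundary special points (markings plus nodes) is marked-stable iff $2l + k \geq 3$, and is destabilized by removing $x_j$ iff $2l + k = 3$ with $x_j \in D$. If $z_i \in D$ then $l \geq 1$, forcing $k \leq 1$. If $D$ is a bubble it carries at least one boundary node in addition to $x_j$, giving $k \geq 2$, a contradiction. If $D$ is the entire (connected) domain, then $(l,k) = (1,1)$ with no nodes, so the stability hypothesis \eqref{eq:stability} applied to the forgotten moduli reads $0 + 2 + 3|\vd| > 2$, forcing $|\vd| > 0$; then the map is nonconstant on $D$, so $D$ is stable-as-map and not contracted. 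Sphere components carry no boundary markings and are unaffected by forgetting $x_j$.

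The main obstacle to watch for is the degenerate configuration of a single disk carrying only $z_i$ and $x_j$ with trivial map, which is precisely the case ruled out by the hypothesis \eqref{eq:stability}. A secondary bookkeeping point is the compatibility of the identification $\mathbb{L}_i^\vee \simeq s_i^* T_\pi$ with the doubling map used to define $\mathbb{L}_i$; this reduces to the analogous closed-moduli identification together with the fact that doubling intertwines $f_j$ with the closed forgetful map that forgets the pair of conjugate complex markings $j \times \{1\}, j \times \{2\}$. Once these checks are in place, $\tilde f_j$ is an isomorphism on a neighborhood of $s_i'$, whence $s_i'^* \tilde f_j^* T_\pi = s_i'^* T_{\pi'}$ and the cartesian square for $df_j$ follows.
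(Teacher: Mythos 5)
Your argument is essentially the paper's, carried out directly on the open side: both reduce to checking that forgetting $x_j$ never destabilizes the irreducible component through $z_i$, and your ``weight two'' count for an internal marking is the open-side shadow of the paper's one-line observation (phrased in terms of doubled fundamental configurations) that a $b$-invariant component containing $i\times\{1\}$ must also contain $i\times\{2\}$.

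Two small points to correct. First, the claim that a bubble disk carrying $z_i$ and $x_j$ must carry a boundary node is not quite right: a disk component can be joined to the rest of the domain by an internal node alone (a sphere bubble at an interior point of that disk), in which case it has no boundary node. The desired contradiction survives -- such an internal node bumps $l$ to at least $2$, so $2l+k>3$ -- but the stated dichotomy ``bubble with a boundary node'' versus ``entire connected domain'' does not cover this configuration. Second, in the compatibility remark at the end: since $j\in\kf$ is a \emph{boundary} marking, under doubling it corresponds to a \emph{single} real marking in $\lf_{\cc}=\kf\sqcup(\lf\times\{1,2\})$, not to a conjugate pair $j\times\{1\},j\times\{2\}$; only internal markings are doubled. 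This asymmetry between boundary and internal markings is exactly what makes the paper's short proof go through, and since $\mathbb{L}_i$ is defined by pullback along the doubling map in the first place, the stability check is cleanest when run directly in the doubled picture, as the paper does.
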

\begin{proof}
Consider some fundamental configuration $\sigma=\left(\left(\Sigma,\nu,\lambda,w\right),b,\Sigma^{1/2}\right)$ representing a point of $\overline{\mm}_{0,\kf\sqcup\left\{ j\right\} ,\lf}\left(\vd\right)$.
Forgetting the point $j$ never makes an irreducible component of
$\Sigma$ which contains $i\times\left\{ 1\right\} $ unstable (if
an irreducible component contains both $j$ and $i\times\left\{ 1\right\} $,
it must also contain $i\times\left\{ 2\right\} $, and either have
positive degree or contain at least one other special point). The
claim follows.
\end{proof}
We shall sometimes omit the superscript $\kf,\lf,\vd$ from the notation of $\mathbb{L}_{i}.$

\subsection{Coherent integrands}\label{sub:coherent integrands}
Let $\alpha\in\aso\left(\xx\right)$
satisfy $D\alpha=0$.
Because $\partial\overline{\mm}_{0,\kf,\lf}\left(\vd\right)\neq\emptyset$,
in order to define
\[
\int_{\overline{\mm}_{0,\emptyset,\lf}\left(\vd\right)}\alpha
\]
we must introduce boundary conditions on $\alpha$. In fact, we will
use the recursive structure of the boundary of the moduli spaces to
impose a more detailed condition which will place additional constraints
on $\alpha$ at the corners. This is done for two reasons. First,
it allows us to construct $\alpha$ recursively. Second, we need this
detailed condition to compute the contributions of the corners $\partial^{c}\overline{\mm}_{0,\emptyset,\lf}\left(\vd\right)$
in Proposition \ref{prop:simple fp formula} below. In this section we
formulate these conditions precisely, and in the next two sections we will use
this to define the open descendent integrals.

Consider disk specifications $\left(\lf_{1},\vd_{1}\right)$, $\left(\lf_{2},\vd_{2}\right)$
and $\left(\lf,\vd\right)$ in $\mathcal{D}$. The equation
\begin{equation}
\left(\lf_{1},\vd_{1}\right)\#\left(\lf_{2},\vd_{2}\right)=\left(\lf,\vd\right)\label{eq:gluing specifications}
\end{equation}
will be taken to mean that $\lf_{1}\sqcup\lf_{2}=\lf$ and $\vd_{1}+\vd_{2}=\vd$.
In this case we have a boundary component $\bb\subset\partial\overline{\mm}_{0,\emptyset,\lf}\left(\vd\right)$
given by
\begin{equation}
\bb=\bb\left(\lf_{1},\vd_{1},\lf_{2},\vd_{2}\right)=\overline{\mm}_{0,\sstar_{1},\lf_{1}}\left(\vd_{1}\right)\times_{L}\overline{\mm}_{0,\sstar_{2},\lf_{2}}\left(\vd_{2}\right).\label{eq:bdry comp}
\end{equation}
For $\left(\lf,\left(d,d\right)\right)\in\mathcal{D}$ we also have
the exceptional boundary component
\[
\ee=\ee\left(\lf,\left(d,d\right)\right)\subset\partial\overline{\mm}_{0,\emptyset,\lf}\left(\left(d,d\right)\right).
\]
We write $\bb\xrightarrow{i_{\bb}}\overline{\mm}_{0,\emptyset,\lf}\left(\vd\right)$
and $\ee\xrightarrow{i_{\ee}}\overline{\mm}_{0,\emptyset,\lf}\left(\left(d,d\right)\right)$
for the maps that forget the local boundary component. We may write
$E|_{\bb},E|_{\ee}$ instead of $i_{\bb}^{*}E,i_{\ee}^{*}E$.
\begin{defn}
\label{def:coherent integrand}A \emph{coherent integrand }is a collection
of equivariant forms
\[
\left\{ \alpha_{\lf,\vd}\in\aso\left(\overline{\mm}_{0,\emptyset,\lf}\left(\vd\right)\right)\right\} _{\left(\lf,\vd\right)\in\mathcal{D}}\sqcup\left\{ \beta_{\lf,d}\in\aso\left(\overline{\mm}_{0,\lf}\left(d\right)\right)\right\} _{\left(\lf,d\right)\in\mathcal{S}}.
\]
We assume the degrees of all forms is even, and that the following
conditions hold.
\begin{enumerate}[(a)]
\item\label{it:1} $D\alpha_{\lf,\vd}=0,D\beta_{\lf,d}=0$.
\item\label{it:2} $\alpha_{\lf,\vec{0}}=0,\beta_{\lf,0}=0$.
\item\label{it:3} For any $\left(\lf_{1},\vd_{1}\right),\left(\lf_{2},\vd_{2}\right)$
and $\left(\lf,\vd\right)$ satisfying \eqref{eq:gluing specifications}
we have
\begin{equation}\label{eq:bcond}
i_{\bb}^{*}\alpha_{\lf,\vd}=\left
(\Prr_{1}'\right)^{*}\alpha_{\lf_{1},\vd_{1}}\wedge\left(\Prr_{2}'\right)^{*}\alpha_{\lf_{2},\vd_{2}},
\end{equation}
for $\bb=\bb\left(\lf_{1},\vd_{1},\lf_{2},\vd_{2}\right)$ and where
$\Prr_{i}'$ denotes the composition
\[
\bb\xrightarrow{\Prr_{i}}\overline{\mm}_{0,\sstar_{i},\lf_{i}}\left(\vd_{i}\right)\to\overline{\mm}_{0,\emptyset,\lf_{i}}\left(\vd_{i}\right)
\]
of the projection with the forgetful map, if it is defined. In case
the $\overline{\mm}_{0,\emptyset,\lf_{i}}\left(\vd_{i}\right)$ is
unstable, we set the right-hand side of \eqref{eq:bcond} to be zero.
\item\label{it:4} For any $\left(\lf,\left(d,d\right)\right)\in\mathcal{D}$ and $\ee\subset\partial\overline{\mm}_{0,\emptyset,\lf}\left(\left(d,d\right)\right)$
the exceptional boundary component, we have a natural identification
\[
\ee=\ev_{\star}^{-1}\left(L\right)\subset\overline{\mm}_{0,\lf\sqcup\star}\left(d\right)
\]
and we require
\begin{equation}\label{eq:alpha-beta coherence}
\alpha_{\lf,\vd}|_{\ee}=f^{*}\beta_{\lf,d},
\end{equation}
where $f$ is the composition
\[
\ev_{\star}^{-1}\left(L\right)\hookrightarrow\overline{\mm}_{0,\lf\sqcup\star}\left(d\right)\xrightarrow{}\overline{\mm}_{0,\lf}\left(d\right)
\]
of the inclusion followed by the forgetful map.
\end{enumerate}
%A collection $\{\alpha_{\lf',\vd}\}_{(\lf',d)\in\mathcal{D}(\subseteq\lf)},\{\beta_{\lf',d}\}_{(\lf',d)\in\mathcal{S}(\subseteq\lf)}$  ($\mathcal{D}(\subseteq\lf),\mathcal{S}(\subseteq\lf)$ were defined in the end of Subsection \ref{subsec:mod_spec}) which satisfies the above requirements, will be called a \emph{coherent integrand bounded by} $\lf.$
\end{defn}

The map
\[
\bigsqcup\overline{\mm}_{0,\sstar_{1},\lf_{1}}\left(\vd_{1}\right)\times_{L}\overline{\mm}_{0,\sstar_{2},\lf_{2}}\left(\vd_{2}\right)\to\cl^{1}\left(\emptyset,\lf,\vd\right)
\]
is $2:1$; the right hand side of \eqref{eq:bcond} defines a form
in the image of the pullback map since it is invariant under the $\zz/2$
action that swaps the two moduli factors. Here we use the assumption
that $\deg1\alpha_{\lf,d}$ is even).

\begin{rem}
There is an obvious non-equivariant version of these conditions; the
equivariant forms are replaced by ordinary differential forms; conditions
\eqref{it:1}-\eqref{it:4} are formally the same (with $d$ used in place of $D$ in
\eqref{it:1}).
\end{rem}

\subsection{Coherent integrands coming from the tautological line bundles}\label{subsec:coherent_from_psi}
The main step towards the definition of the stationary
descendent integrals \eqref{integrand1st} is constructing
a suitable coherent integrand. First, we recall the construction of
the equivariant Chern form, following Atiyah-Bott \cite[\S 8]{AB},
focusing on the necessary changes needed to accommodate orbifolds.

Throughout this section we fix vectors $\vec{a},\vec{\epsilon}$ as in Notation \ref{nn:a_eps}, so we may omit them from the notation.

Let $\mm$ be an $S^{1}$-orbifold, and let $E\to\mm$ be an equivariant
complex line bundle. Let $\Gamma\left(E\right)$ denote the \emph{sheaf}
of sections of $E$. The $S^{1}$ action defines a map of sheaves
\[
X_{E}:\Gamma\left(E\right)\to\Gamma\left(E\right).
\]
A connection is specified by a map of sheaves
\[
\nabla:\Gamma\left(E\right)\to\Gamma\left(E\otimes T^{*}\mm\right),
\]
and we say the connection is \emph{equivariant} if $\nabla\circ X_{E}=X_{E\otimes T^{*}\mm}\circ\nabla$.
If $\nabla$ is equivariant, the \emph{equivariant Chern form }$c_{1}\left(E,\nabla\right)\in\aso\left(\mm\right)$
can be defined locally using a generating section (but is independent
of the choice of such a section), cf. \cite[equation~(8.8)]{AB}. We have
$Dc_{1}\left(E,\nabla\right)=0$.

We let $\rho_{\pm}\in\aso\left(\cc\pp^{1}\right)$ be equivariant
forms representing the Poincar\'e dual to $p_{\pm}$. We assume that
the support of $\rho_{\pm}$ is contained in the connected component
of $\cc\pp^{1}\backslash\rr\pp^{1}$ containing $p_{\pm}$.

For every $\left(\lf,d\right)\in\mathcal{S}$ and every $i\in\lf$,
we fix once and for all an $S^{1}$-equivariant connection $\nabla_{\text{cl},i}^{\lf,d}$
for the cotangent line bundle $\mathbb{L}_{\text{cl},i}^{\lf,d}$ on $\overline{\mm}_{0,\lf}\left(d\right)$.
Set
\begin{align}
&\psi_{\text{cl},i}^{\lf,d}=c_{1}\left(\mathbb{L}_{\text{cl},i}^{\lf,d},\nabla_{\text{cl},i}^{\lf,d}\right),\notag\\
&\beta_{\lf,d}=\begin{cases}\beta_{\lf,d}^{\vec{a},\vec{\epsilon}}=\prod_{i\in\lf}\left(\psi_{\text{cl},i}^{\lf,d}\right)^{a_i}\ev_{i}^{*}\rho_{\epsilon_i},&\text{if $d\neq 0$},\\
\beta_{\lf,0}=0,& \text{otherwise}.
\end{cases}
\label{eq:beta integrand def}
\end{align}
\begin{prop}
\label{prop:integrand constn}There exist $S^{1}$-equivariant connections
${\left\{ \nabla_{i}^{\lf,\vd}\right\} _{\left(\lf,\vd\right)\in\mathcal{D},i\in\lf}}$
for the complex line bundles $\mathbb{L}_{i}^{\lf,\vd}$ on $\overline{\mm}_{0,\emptyset,\lf}\left(\vd\right)$
such that, if we define
\begin{align}
&\psi_{i}^{\lf,\vd}=c_{1}\left(\mathbb{L}_{i}^{\lf,\vd},\nabla_{i}^{\lf,\vd}\right),\notag\\
&\alpha_{\lf,\vd}=\alpha_{\lf,\vd}^{\vec{a},\vec{\epsilon}}=\prod_{i\in\lf}\left(\psi_{i}^{\lf,\vd}\right)^{a_i}\ev_{i}^{*}\rho_{\epsilon_i},\label{eq:integrand defn}
\end{align}
then the collection $\left\{ \alpha_{\lf,\vd}\right\} ,\left\{ \beta_{\lf,d}\right\} $
is a coherent integrand.
\end{prop}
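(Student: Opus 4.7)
The plan is to construct the connections $\nabla_{i}^{\lf,\vd}$ by induction on the complexity of the disk specification $(\lf,\vd)$, using as induction measure the pair $(\sum\vd, |\lf|)$ in lexicographic order. At each step, I use the already-chosen connections on smaller disk moduli together with the fixed closed connections $\nabla_{\mathrm{cl},i}^{\lf,d}$ to pin down $\nabla_{i}^{\lf,\vd}$ in a neighborhood of $\partial\overline{\mm}_{0,\emptyset,\lf}(\vd)$, and then extend into the interior by an $S^1$-equivariant partition of unity. Since the space of $S^1$-equivariant connections on a given equivariant line bundle is affine (convex combinations of equivariant connections are equivariant connections), the extension step is formal once the boundary data is consistent.

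The boundary data is built as follows. On a standard codimension-one corner
\[
\bb=\overline{\mm}_{0,\sstar_1,\lf_1}(\vd_1)\times_L \overline{\mm}_{0,\sstar_2,\lf_2}(\vd_2)\xrightarrow{i_\bb}\overline{\mm}_{0,\emptyset,\lf}(\vd),
\]
Lemma \ref{lem:d-forget cartesian} (applied to the forgetful map that drops $\sstar_k$) identifies $i_\bb^*\mathbb{L}_i^{\lf,\vd}$, for $i\in\lf_k$, with $(\Prr_k')^*\mathbb{L}_i^{\lf_k,\vd_k}$. I define the connection on $i_\bb^*\mathbb{L}_i^{\lf,\vd}$ to be $(\Prr_k')^*\nabla_i^{\lf_k,\vd_k}$, which makes sense by induction (since $\sum\vd_k<\sum\vd$), interpreting the right-hand side as $0$ for the unstable degenerate cases as in \eqref{eq:bcond}. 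On the exceptional corner $\ee=\ev_\star^{-1}(L)\subset\overline{\mm}_{0,\lf\sqcup\star}(d)$ I use the analogous forgetful-cartesian identification $\mathbb{L}_i^{\lf,\vd}|_\ee\cong f^*\mathbb{L}_{\mathrm{cl},i}^{\lf,d}$ and pull back $\nabla_{\mathrm{cl},i}^{\lf,d}$.

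The main technical step, and where I expect the most care is needed, is checking that these boundary prescriptions are compatible on the overlaps of boundary strata, so that they piece together to a single $S^1$-equivariant connection on a neighborhood of $\partial\overline{\mm}_{0,\emptyset,\lf}(\vd)$. The intersections of two standard strata correspond to codimension-two corners which, by the Leibniz-type corner formula preceding Proposition \ref{prop:corners of moduli}, are themselves fibered products of standard strata of smaller moduli. The induction hypothesis, iterated on both factors, provides the identification in two different ways, and these agree because they are both the pullback of $\nabla_i^{\lf',\vd'}$ from the unique smaller disk specification that contains the marking $i$. Similar reasoning handles the intersection of a standard stratum with the exceptional one, reducing the compatibility to the coherence of the closed connection $\nabla_{\mathrm{cl},i}^{\lf,d}$ with itself under forgetting a marked point (which is automatic because no choice is being made twice).

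With a coherent $S^1$-equivariant connection in hand, the four properties of Definition \ref{def:coherent integrand} follow directly. Condition (a), $D\alpha_{\lf,\vd}=0$, holds because equivariant Chern forms of equivariant connections are $D$-closed and $\ev_i^*\rho_{\epsilon_i}$ is $D$-closed on $\CP^1$; condition (b) is automatic, since on the moduli of constant maps the evaluations land in $\RP^1$ where $\rho_{\pm}$ vanishes by assumption; conditions (c) and (d) are exactly the pullback identities used to construct $\nabla_i^{\lf,\vd}$ on $\bb$ and $\ee$, applied now to the Chern forms built from these connections, with $\ev_i^*\rho_{\epsilon_i}$ behaving correctly under the same pullbacks. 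The main obstacle is therefore really the bookkeeping for the corner compatibilities; everything else is a soft argument about partitions of unity and naturality of equivariant Chern forms.
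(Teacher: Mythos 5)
Your overall plan --- inductive construction of connections on the boundary via the cartesian identifications of Lemma \ref{lem:d-forget cartesian}, compatibility checking at codimension-two corners using the recursive corner structure, extension into the interior by an equivariant partition of unity, and then direct verification of the four conditions --- is essentially the paper's argument (the extension step is isolated there as Lemma \ref{lem:extending connections}, and the corner compatibility is checked via the commuting triangle/square of forgetful maps over $\mm_1\times_L\mm_2\times_L\mm_3$).

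There is, however, a genuine gap in your treatment of the exceptional boundary $\ee$. The identification $\mathbb{L}_i^{\lf,\vd}|_\ee\cong f^*\mathbb{L}_{\text{cl},i}^{\lf,d}$ that you invoke does \emph{not} hold on all of $\ee$: on the locus where both $i$ and $\star$ lie on a common degree-zero sphere component, the forgetful map $f$ contracts that component and relocates $i$, so $df$ fails to be an isomorphism on cotangent lines there. You therefore cannot pull back $\nabla_{\text{cl},i}^{\lf,d}$ globally over $\ee$, and the boundary data for your inductive step is undefined at those points. The repair (carried out in the paper) is to shrink to $\ee'=\ee\cap\ev_i^{-1}(U_{\epsilon_i})$ for a precompact open set $\supp\rho_{\epsilon_i}\subset U_{\epsilon_i}\Subset\CP^1\setminus\RP^1$: configurations in $\ee'$ cannot have $i$ and $\star$ on a degree-zero component, so the cartesian identification does hold there. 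One defines the connection by pullback on $\ee'$, extends it to all of $\ee$ by a further partition of unity, and then observes that the coherence equation \eqref{eq:alpha-beta coherence} is still satisfied on $\ee\setminus\ee'$ for the trivial reason that both sides vanish there, since $\ev_i^*\rho_{\epsilon_i}$ is supported in $\ev_i^{-1}(U_{\epsilon_i})$. Without this restriction, your argument breaks precisely at the points where the integrand is insensitive to the failure; the role of the neighbourhoods $U_\pm$ introduced before the proposition is exactly to exploit this.
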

The proof of this proposition occupies the remainder of this section.

Let $\xx$ be an orbifold with corners. We denote by $i:\partial\xx\to\xx$
and by $i_{j}:\partial^{2}\xx\to\partial\xx$, for $j=1,2$, the structure
maps forgetting a local boundary component.
\begin{lem}
\label{lem:extending connections}Let $E$ be an $S^{1}$-equivariant
complex line over an $S^{1}$-orbifold with corners $\xx$. Let $K\subset\partial\xx$
be a clopen (closed and open) component of the boundary, and let $K_{2}=i_{1}^{-1}K\cap i_{2}^{-1}K$.
Write $i^{K}:K\to\xx$ and $i_{j}^{K}:K_{2}\to K$ for the restrictions
of $i,i_{j}$. Let $\nabla_{K}$ be an $S^{1}$-equivariant connection on $\left(i^{K}\right)^{*}E$,
such that $\left(i_{1}^{K}\right)^{*}\nabla_{K}=\left(i_{2}^{K}\right)^{*}\nabla_{K}$.
Then there exists an $S^{1}$-equivariant connection $\nabla$ with
$\left(i^{K}\right)^{*}\nabla=\nabla_{K}$.
\end{lem}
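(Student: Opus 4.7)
The plan is a standard partition-of-unity construction, with $S^1$-equivariance enforced throughout by averaging over the compact group $S^1$, and with the collar structure near $K$ used to transport $\nabla_K$ into the interior of $\xx$.

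First I would choose an $S^1$-invariant collar retraction $\pi\colon U\to K$ of a neighborhood $U$ of $K$ in $\xx$. For an orbifold with corners a collar exists by the standard collar theorem, and an $S^1$-invariant version is obtained by averaging over $S^1$. Near a stratum where two sheets of $K$ meet, the collar is built from a collar of $K_2$ in each of the two sheets. The compatibility hypothesis $(i_1^K)^{*}\nabla_K=(i_2^K)^{*}\nabla_K$ on $K_2$ is precisely what makes the pullback $\pi^{*}\nabla_K$ unambiguous across such corner strata, giving a well-defined $S^1$-equivariant connection on $E|_U$ whose restriction to $K$ is $\nabla_K$.

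Next, pick any $S^1$-equivariant connection $\nabla_0$ on $E\to\xx$ (it exists by patching local equivariant trivial connections via an $S^1$-invariant partition of unity). On $U$, the difference $A=\pi^{*}\nabla_K-\nabla_0|_U$ is an $S^1$-invariant $\text{End}(E)$-valued $1$-form, since connections on a fixed bundle form an affine space over such forms. Choose an $S^1$-invariant cutoff $\chi\in C^{\infty}(\xx)$ supported in $U$ and equal to $1$ on a smaller neighborhood of $K$, and set
$$\nabla=\nabla_0+\chi A,$$
extended by $\nabla_0$ outside $U$. This is an $S^1$-equivariant connection on $E$, and by construction $(i^K)^{*}\nabla=\nabla_K$.

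The main obstacle is the first step: producing a collar of $K$ in $\xx$ that is honest along $K_2$, so that $\pi^{*}\nabla_K$ is genuinely well-defined (and $S^1$-equivariant) in a neighborhood of the corners. The hypothesis $(i_1^K)^{*}\nabla_K=(i_2^K)^{*}\nabla_K$ is tailored to exactly this issue; once the collar is in place, the remainder is a routine equivariant partition-of-unity patching, which is linear because the space of connections is affine.
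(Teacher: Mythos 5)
Your affine reduction is the same as the paper's: subtract a reference $S^{1}$-equivariant connection $\nabla_0$, note the difference is an $S^1$-invariant $1$-form, extend the form, and add back $\nabla_0$. The paper then cites \cite[Lemma 82]{fp-loc-OGW} to extend the invariant $1$-form; that lemma is a local-extension-plus-partition-of-unity argument. So the overall scaffolding matches.

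However, your construction of the extension on a neighbourhood $U$ of $K$ has a real gap. A collar retraction $\pi\colon U\to K$ with $\pi\circ i^{K}=\id_{K}$ cannot exist near a depth-$2$ corner of $\xx$ at which both local boundary components lie in $K$: there $i^{K}\colon K\to\xx$ is $2$-to-$1$ (two sheets of the clopen set $K$ map to the two hyperfaces meeting at the corner), so a single-valued retraction back to $K$ restricting to the identity on both sheets is impossible. There is a collar \emph{of each hyperface separately}, giving two local retractions $\pi_1,\pi_2$, but the resulting pullbacks $\pi_1^{*}\nabla_K$ and $\pi_2^{*}\nabla_K$ are \emph{different} connection $1$-forms on a neighbourhood of the corner. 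The hypothesis $(i_1^{K})^{*}\nabla_K=(i_2^{K})^{*}\nabla_K$ is an equality of $1$-forms on $K_2$ (that is, on tangent vectors to $K_2$ only); it does not imply the two pullbacks coincide off $K_2$. For instance, in the local model $[0,\epsilon)^2\times\rr^{n-2}$ with hyperfaces $\{x=0\}$ and $\{y=0\}$, the $dy$-component of $\pi_1^{*}\nabla_K$ and the $dx$-component of $\pi_2^{*}\nabla_K$ are unconstrained by the hypothesis. So ``the pullback $\pi^{*}\nabla_K$'' is not a well-defined object, and the argument stalls before the cutoff step.

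What the compatibility hypothesis actually buys is solvability of the \emph{local} extension problem at a corner: writing the restrictions of the sought form to the two hyperfaces in local coordinates, the only overdetermined component is the one along the corner directions, and the condition $(i_1^{K})^{*}\nabla_K=(i_2^{K})^{*}\nabla_K$ is exactly the consistency needed there. One then extends locally (e.g.\ by an inclusion--exclusion formula $\pi_1^{*}\omega_1+\pi_2^{*}\omega_2-\pi_{12}^{*}(\omega_K|_{K_2})$, or any other local extension) and patches with an $S^1$-invariant partition of unity; since each local extension restricts correctly on $K$ and the partition sums to $1$, the patched form has the right restriction. That is the content of the cited lemma, and is what needs to replace your collar-pullback step.
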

\begin{proof}
Fix a reference $S^{1}$-equivariant connection $\nabla_{0}$ for
$E$, so that ${\nabla_{K}-\left(i^{K}\right)^{*}\nabla_{0}}$ corresponds
to a 1-form on $K$, and the problem becomes: given an $S^{1}$-invariant
1-form $\omega_{K}\in\Omega^{1}\left(K;\cc\right)^{S^{1}}$ such that
$\left(i_{1}^{K}\right)^{*}\omega_{K}=\left(i_{2}^{K}\right)^{*}\omega_{K}$,
find an $S^{1}$-invariant 1-form $\omega\in\Omega^{1}\left(\xx;\cc\right)^{S^{1}}$
with $\left(i^{K}\right)^{*}\omega=\omega_{K}$. This is an immediate
generalization of \cite[Lemma 82]{fp-loc-OGW} (there, the claim is
proved for the case $K=\partial\xx$, but the proof extends \emph{mutatis
mutandis} to $K\subset\partial\xx$ clopen).
\end{proof}
Fix some $i\in\mathbb{N}$. We denote
\[
\mathcal{D}\left(i\right)=\left\{ \left(\lf,\vd\right)\in\mathcal{D}|i\in\lf,\vd\neq\left(0,0\right)\right\} .
\]
The equation

\begin{equation}
\left(\lf,\vd\right)=\left(\vd_{1},\lf_{1}\right)\#_{i}\left(\vd_{2},\lf_{2}\right)\label{eq:i-guling}
\end{equation}
will be taken to mean $\left(\lf,\vd\right)=\left(\vd_{1},\lf_{1}\right)\#\left(\vd_{2},\lf_{2}\right)$,
$\vd_{1},\vd_{2}\neq\left(0,0\right)$ and $i\in\lf_{1}$. Take a triple $\left(\lf,\vd\right),\left(\vd_{1},\lf_{1}\right),\left(\vd_{2},\lf_{2}\right)$ which satisfies \eqref{eq:i-guling}, and consider
\begin{equation}
\bb=\overline{\mm}_{0,\sstar_{1},\lf_{1}}\left(\vd_{1}\right)\times_{L}\overline{\mm}_{0,\sstar_{2},\lf_{2}}\left(\vd_{2}\right)\xrightarrow{\Prr_{1}'}\overline{\mm}_{0,\emptyset,\lf_{1}}\left(\vd_{1}\right).\label{eq:known boundary}
\end{equation}
It follows from Lemma \ref{lem:d-forget cartesian} that $d\Prr_{1}'$ induces an
isomorphism
\begin{equation}
\mathbb{L}_{i}^{\lf,\vd}|_{\bb}\simeq\left(\Prr_{1}'\right)^{*}\mathbb{L}_{i}^{\lf_{1},\vd_{1}}.\label{eq:coherence for L_i}
\end{equation}

Next we consider the exceptional boundary $\ee\subset\partial\overline{\mm}_{0,\lf}\left(\left(d,d\right)\right)$.
We fix some precompact open subset
\[
\operatorname{supp}\rho_{\pm}\subset U_{\pm}\Subset\CP^1\backslash\RP^{1}
\]
containing the support of $\rho_{\pm}$. Write $\ee'=\ee\cap\ev_{i}^{-1}\left(U_{\epsilon_i}\right)$,
and let $\ee'\xrightarrow{f'}\overline{\mm}_{0,\lf}\left(d\right)$
denote the pullback of $\ev_{\star}^{-1}\left(L\right)\xrightarrow{f}\overline{\mm}_{0,\lf}\left(d\right)$
(see Definition \ref{def:coherent integrand}).
\begin{obs}
$df'$ induces an isomorphism %\ref{eq:exceptional coherence}
\begin{equation}
\mathbb{L}_{i}^{\lf,\vd}|_{\ee'}\simeq\left(f'\right)^{*}\mathbb{L}_{\text{cl},i}^{\lf,d}.\label{eq:L-exceptional coherence}
\end{equation}
\end{obs}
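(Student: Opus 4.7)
The plan is to reduce the claim to the standard closed-theory comparison of cotangent bundles under the forgetful map, and then to verify that the restriction to $\ee'$ avoids the destabilization locus.

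First, I would identify $\mathbb{L}_i^{\lf,\vd}|_{\ee}$ with the closed cotangent line bundle $\mathbb{L}_{\text{cl},i}^{\lf\sqcup\star,d}$ restricted to $\ee = \ev_\star^{-1}(L) \subset \oCM_{0,\lf\sqcup\star}(d)$. By definition $\mathbb{L}_i^{\lf,\vd}$ is pulled back from the doubled closed moduli along \eqref{eq:open-closed relation}, but at a point of $\ee$ the domain of the disk-map is effectively a closed marked stable curve equipped with a distinguished point (the contracted boundary $\star$) mapped to $L$, and the holomorphic cotangent space at $z_i$ agrees whether computed on the disk-map domain, on its double, or on the underlying closed marked map in $\oCM_{0,\lf\sqcup\star}(d)$. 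This step is essentially a bookkeeping exercise using the description of $\ee$ from Proposition \ref{prop:corners of moduli}.

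Second, I would invoke the standard closed comparison: the differential of the forgetful map $\mathrm{for}_\star: \oCM_{0,\lf\sqcup\star}(d) \to \oCM_{0,\lf}(d)$ induces an isomorphism $\mathbb{L}_{\text{cl},i}^{\lf\sqcup\star,d} \simeq \mathrm{for}_\star^* \mathbb{L}_{\text{cl},i}^{\lf,d}$ away from the divisor $D_{i,\star}$ parameterizing maps whose domain has $z_i$ and $\star$ on a common degree-$0$ rational component (so that forgetting $\star$ would destabilize this component). This is the closed analog of Lemma \ref{lem:d-forget cartesian} and is classical.

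Finally, I would verify $\ee' \cap D_{i,\star} = \emptyset$. On $D_{i,\star}$ the shared degree-$0$ bubble forces the map to be constant on that component, so $\ev_i = \ev_\star \in L = \RP^1$. On $\ee'$, however, $\ev_i \in U_{\epsilon_i} \Subset \CP^1 \setminus \RP^1$, which is disjoint from $L$, a contradiction. Hence $\ee'$ avoids $D_{i,\star}$, and the pullback $df'$ of the closed isomorphism restricts to the desired isomorphism on $\ee'$. The main obstacle is the first step: threading the open, closed, and doubled cotangent identifications together carefully; once that is in place, the remainder is short.
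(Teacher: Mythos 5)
Your proof is correct and follows the same approach as the paper: the key point in both is that on $\ee'$ the constraint $\ev_i \in U_{\epsilon_i} \Subset \CP^1\setminus\RP^1$ is incompatible with $i$ and $\star$ lying on a common degree-zero component (which would force $\ev_i=\ev_\star\in\RP^1$), so the forgetful map never destabilizes the component carrying $i$ and its differential is cartesian on $\mathbb{L}_i$. The paper states this in a single sentence; you spell out the same destabilization-locus argument in more detail, but the reasoning is the same.
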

\begin{proof}
The image of $\ee'$ in $\overline{\mm}_{0,\lf}\left(d\right)$ does
not contain any configurations where $i,\star$ are on a degree zero
component.
\end{proof}
\begin{defn}
A collection of connections $\nabla_{i}^{\lf,\vd}$ for $\mathbb{L}_{i}^{\lf,\vd}$,
defined for all $\left(\lf,\vd\right)$ in some subset $\mathcal{D}'\subset\mathcal{D}\left(i\right)$,
will be called \emph{coherent }if:
\begin{enumerate}[(a)]
\item For all $\left(\lf,\vd\right),\left(\lf_{1},\vd_{1}\right)\in\mathcal{D}'$
and $\left(\lf_{2},\vd_{2}\right)\in\mathcal{D}$ which satisfy \eqref{eq:i-guling},
we have
\begin{equation}
\left(i^{*}\nabla_{i}^{\lf,\vd}\right)|_{\bb}=\left(\Prr_{1}'\right)^{*}\nabla_{i}^{\lf_{1},\vd_{1}}.\label{eq:coherence for connection}
\end{equation}
\item For all $\left(\lf,\left(d,d\right)\right)\in\mathcal{D}'$, we have
\begin{equation}
\nabla_{i}^{\lf,\left(d,d\right)}|_{\ee'}=\left(f'\right)^{*}\nabla_{\text{cl},i}^{\lf,d}.\label{eq:exceptional coherence}
\end{equation}
\end{enumerate}
\end{defn}

\begin{lem}\label{lem:inductive_construction_of_connection}
There exists a coherent collection of connections $\left\{ \nabla_{i}^{\lf,\vd}\right\} $
defined for all $\left(\lf,\vd\right)\in\mathcal{D}\left(i\right)$.
\end{lem}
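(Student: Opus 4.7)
The plan is induction on the lexicographic order of $(\sum\vd,|\lf|)$ over $(\lf,\vd)\in\mathcal{D}(i)$. For a minimal such pair---necessarily with $\sum\vd=1$ and $\lf=\{i\}$---no coherence condition applies: \eqref{eq:coherence for connection} would require a decomposition $\vd_1+\vd_2=\vd$ with both summands nonzero, which is impossible when $\sum\vd=1$, and \eqref{eq:exceptional coherence} requires $\vd=(d,d)$, which fails when $\sum\vd$ is odd. Thus we may take $\nabla_i^{\lf,\vd}$ to be any $S^1$-equivariant connection on $\mathbb{L}_i^{\lf,\vd}$ and start the induction.

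For the inductive step, I would use Proposition~\ref{prop:corners of moduli} to write $\partial\overline{\mm}_{0,\emptyset,\lf}(\vd)$ as the disjoint union of clopen standard components $\bb(\lf_1,\vd_1,\lf_2,\vd_2)$ together with, when $\vd=(d,d)$, the exceptional component $\ee$. On each $\bb$ with $i\in\lf_1$ and both $\vd_1,\vd_2\neq(0,0)$, condition \eqref{eq:coherence for connection} forces $\nabla_i^{\lf,\vd}|_{\bb}=(\Prr_1')^{*}\nabla_i^{\lf_1,\vd_1}$, which is well-defined via the inductive hypothesis and the canonical identification \eqref{eq:coherence for L_i}; the $\zz/2$ ambiguity of factor ordering is resolved by the requirement $i\in\lf_1$. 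On the open subset $\ee'\subset\ee$, condition \eqref{eq:exceptional coherence} forces $\nabla_i^{\lf,(d,d)}|_{\ee'}=(f')^{*}\nabla_{\text{cl},i}^{\lf,d}$ via \eqref{eq:L-exceptional coherence}. On the pieces where no condition is imposed (standard $\bb$ with some $\vd_j=(0,0)$, and $\ee\setminus\ee'$) I would use an $S^1$-invariant partition-of-unity argument to extend the pulled-back connection from $\ee'$ to all of $\ee$, so that the resulting prescription lives on a clopen subset $K\subset\partial\overline{\mm}_{0,\emptyset,\lf}(\vd)$ to which Lemma~\ref{lem:extending connections} can be applied.

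The main obstacle is verifying the compatibility hypothesis $(i_1^K)^{*}\nabla_K=(i_2^K)^{*}\nabla_K$ of Lemma~\ref{lem:extending connections} along the codimension-$2$ strata. When two standard components $\bb,\bb'$ meet, the shared stratum parametrizes configurations with two simultaneous real nodes and is a fibered product of three strictly smaller moduli; the two candidate pullbacks agree because, after applying associativity of the gluing maps, the restriction of $\nabla_i^{\lf_1,\vd_1}$ to its own standard boundary coincides---by the inductive coherence---with the pullback of the corresponding smaller $\nabla_i$. The analogous matching between a standard component and $\ee'$ (when $\vd=(d,d)$) reduces to the coherence of the fixed closed connections $\nabla_{\text{cl},i}^{\lf,d}$ with respect to the closed forgetful maps appearing in the gluing--exceptional diagram, which is automatic since those connections are pulled back along \eqref{eq:open-closed relation} from the moduli of closed stable maps. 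Once this bookkeeping is verified, Lemma~\ref{lem:extending connections} delivers an $S^1$-equivariant extension $\nabla_i^{\lf,\vd}$ agreeing with the prescription on $K$, closing the induction.
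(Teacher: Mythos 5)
Your proposal mirrors the paper's proof: induct on a linear order compatible with the gluing partial order (your lexicographic order on $(\sum\vd,|\lf|)$ is a valid concrete choice), prescribe $\nabla_K$ on the clopen $K\subset\partial\overline{\mm}_{0,\emptyset,\lf}(\vd)$ as forced by \eqref{eq:coherence for connection} and (after a partition-of-unity extension from $\ee'$) by \eqref{eq:exceptional coherence}, verify $(i_1^K)^*\nabla_K=(i_2^K)^*\nabla_K$ on corner strata via the inductive hypothesis and the compatibility of the forgetful maps, and then invoke Lemma~\ref{lem:extending connections}. One correction: your discussion of ``matching between a standard component and $\ee'$'' is off-target, since $\ee=\ev_\star^{-1}(L)\subset\overline{\mm}_{0,\lf\sqcup\star}(d)$ is an orbifold without boundary, so there is no codimension-$2$ stratum where $\ee$ meets a standard component and nothing to check there; in particular, the claim that the $\nabla_{\text{cl},i}^{\lf,d}$ are ``pulled back along \eqref{eq:open-closed relation}'' and satisfy a forgetful-map coherence is both incorrect (they are fixed independently, with no compatibility required) and unnecessary.
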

\begin{proof}
Fix some linear order on the set $\mathcal{D}\left(i\right)$ with the property
that \eqref{eq:i-guling} implies $\left(\lf_{1},\vd_{1}\right)<\left(\lf,\vd\right)$.
For $\left(\lf,\vd\right)\in\mathcal{D}\left(i\right)$ we denote the corresponding
prefix of $\mathcal{D}\left(i\right)$ by
\[
\mathcal{D}_{\leq\left(\lf,\vd\right)}=\left\{ \left(\lf_{1},\vd_{1}\right)\in\mathcal{D}\left(i\right)|\left(\lf_{1},\vd_{1}\right)\leq\left(\lf,\vd\right)\right\} .
\]
We define $\mathcal{D}_{<\left(\lf,\vd\right)}$ similarly. We construct,
by induction on $(\lf,\vd)\in\mathcal{D}\left(i\right),$
a collection of coherent connections on $\mathcal{D}_{\leq\left(\lf,\vd\right)}.$
Suppose we have a coherent collection on $\mathcal{D}_{<\left(\lf,\vd\right)}$.
Let
\[
K\subset\partial\overline{\mm}_{0,\emptyset,\lf}\left(\vd\right)
\]
be the clopen component of the boundary corresponding to $\bb$ of
the form \eqref{eq:known boundary} for $\left(\lf_{1},\vd_{1}\right),\left(\lf_{2},\vd_{2}\right)$
satisfying \eqref{eq:i-guling}, as well as the exceptional boundary
component (which is non-empty only if $\vd=\left(d,d\right)$). Define
$\nabla_{K}$ on $\left(i^{K}\right)^{*}\mathbb{L}_{i}^{\lf,\vd}$
so that \eqref{eq:coherence for connection} and \eqref{eq:exceptional coherence}
hold (for the latter, this involves using a partition of unity to
extend the pulled-back connection from $\ee'$ to $\ee$).

We check that $\left(i_{1}^{K}\right)^{*}\nabla_{K}=\left(i_{2}^{K}\right)^{*}\nabla_{K}$.
Indeed, $\partial\ee=\emptyset$ so there is nothing to check at the
exceptional boundary. The corner components in $i_{1}^{-1}K\cap i_{2}^{-1}K$
can be written as fibered products
\[
\mm_{1}\times_{L}\mm_{2}\times_{L}\mm_{3}.
\]
By symmetry we may assume the $i^{th}$ marking is on the disk parameterized
by $\mm_{j}$ for $j=1$ or $j=2$. Consider first the case $j=1$.
Let $\mm_{1}^{\DIAMOND}=\overline{\mm}_{0,\emptyset,\lf_{1}}\left(\vd_{1}\right)$
denote the moduli obtained from $\mm_{1}$ by forgetting the boundary
node, and let $\mm_{2}^{\DIAMOND}$ denote the moduli obtained from $\mm_{2}$
by forgetting the boundary node connecting it to $\mm_{3}$. The triangle
\[
\xymatrix{ & \mm_{1}\times_{L}\mm_{2}\times_{L}\mm_{3}\ar[dl]\ar[dr]\\
\mm_{1}\times_{L}\mm_{2}^{\DIAMOND}\ar[rr] &  & \mm_{1}^{\DIAMOND}
}
\]
commutes, and so does its linearization, so the cartesian lifts of
the maps to the cotangent line bundles $\mathbb{L}_{i}$ also form a commutative
triangle. Thus we can use the inductive hypothesis to conclude that
$i_{1}^{*}\nabla_{K}=i_{2}^{*}\nabla_{K}$, since both are equal to
the pullback of $\nabla_{i}^{\lf_{1},\vd_{1}}$ from $\mm_{1}^{\DIAMOND}$.
The case $j=2$ is similar (though the triangle needs to be replaced
by a square). Now apply Lemma \ref{lem:extending connections} to
construct $\nabla_{i}^{\lf,\vd}$ compatible with $\nabla_{K}$, completing
the inductive step and the proof of the lemma.
\end{proof}

\begin{proof}
[Proof of Proposition \ref{prop:integrand constn}.]We define $\alpha_{\lf,\vd}$
by \eqref{eq:integrand defn} applied to the connections constructed in Lemma \ref{lem:inductive_construction_of_connection}. Let us check the conditions of Definition
\ref{def:coherent integrand}. Condition \eqref{it:1} holds since the Chern
form and $\rho_{\pm}$ are $D$-closed. Condition \eqref{it:2} holds for $\beta_{\lf,0}$
by definition, and for $\alpha_{\lf,\left(0,0\right)}$ since $\lf\neq\emptyset$
and $\rho_{\pm}|_{\rr\pp^{1}}=0$. Condition \eqref{it:3} holds by \eqref{eq:coherence for connection} and the fact the evaluation maps commute with $\Prr'_{i}$. Let us check that
condition \eqref{it:4} holds. If $\lf=\emptyset$, then $\alpha_{\lf,\vd}=1$ and $\beta_{\lf,d}=1,$ since \eqref{eq:beta integrand def},\eqref{eq:integrand defn} involve empty products. Therefore \eqref{eq:alpha-beta coherence} holds trivially. Assume $\lf\neq\emptyset.$ By construction,
the evaluation maps commute with \eqref{eq:open-closed relation}.
At
\[
\ee''=\ee\cap\bigcap_{i\in\lf}\left(\ev_{i}^{\lf,\left(d,d\right)}\right)^{-1}\left(U_{\epsilon_i}\right)
\]
\eqref{eq:alpha-beta coherence} holds by \eqref{eq:exceptional coherence},
while at $\ee\backslash\ee''$ both sides of \eqref{eq:alpha-beta coherence}
vanish.
\end{proof}

\subsection{Stationary descendent integrals}
\begin{defn}\label{def:int_bracket}
Fix $\vec{a},\vec{\epsilon}.$ A \emph{coherent integrand} for the tautological line bundles is a coherent integrand $\left\{ \alpha_{\lf,\vd}\right\} ,\left\{ \beta_{\lf,d}\right\} $ defined via \eqref{eq:beta integrand def},\eqref{eq:integrand defn}.

For given $\lf,\vd,\vec{a},\vec{\epsilon},$ we define the \emph{open stationary descendent integral} or \emph{open stationary intersection number} by
\[
\<\prod_{i\in\lf}\tau^{\epsilon_i}_{a_i}\>_{0,\vd}=\int_{\overline{\mm}_{0,\emptyset,\lf}\left(\vd\right)}\alpha^{\vec{a},\vec{\epsilon}}_{\lf,\vd},
\]
where $\alpha^{\vec{a},\vec{\epsilon}}_{\lf,\vd}$ comes from a family of coherent integrands for the tautological line bundles, and $\int_{\overline{\mm}_{0,\emptyset,\lf}\left(\vd\right)}$ is oriented via the canonical orientation defined in Section \ref{sec:or}.
\end{defn}
Definition \ref{def:int_bracket} gives a rigourous definition for the integral on the right-hand side of~\eqref{integrand1st}. The next theorem says that the open stationary intersection number is independent of the specific coherent integrand used in its definition, and gives a formula for this number.

\begin{thm}\label{thm:int_nums_equal_tree_sum}
For given $\lf,\vd,\vec{a},\vec{\epsilon},~\<\prod_{i\in\lf}\tau^{\epsilon_i}_{a_i}\>_{0,\vd}$ is independent of the coherent integrand used to define it.
Moreover, it holds that
\[
\<\prod_{i\in\lf}\tau^{\epsilon_i}_{a_i}\>_{0,\vd}=
\OGW(\lf,\vd,\vec{a},\vec{\epsilon}),
\]
where \[\OGW(\lf,\vd,\vec{a},\vec{\epsilon})=\sum_{T\in\TTT(\lf,\vd)}\AAA(T,\vec{a},\vec{\epsilon})+\delta_{d^+-d^-}\frac{d^+}{2u}I(\lf,d,\vec{a},\vec{\epsilon})\]
was defined in \eqref{eq:nice_tree_sum}, and $\AAA(-,-,-),\TTT(-,-)$ and $I(-,-,-,-)$ are defined in Definitions \ref{def:non_lab_non_ord_trees_and_ampli} and \ref{def:inhomterms}.
In particular, \eqref{eq:nice_tree_sum} vanishes if $1+\sum a_i<d^++d^-$.
\end{thm}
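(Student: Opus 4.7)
\emph{Plan.} The strategy is to prove the tree-sum formula directly, from which independence of the choice of coherent integrand follows immediately since the right-hand side $\OGW(\lf,\vd,\vec{a},\vec{\epsilon})$ depends only on the discrete data. The key tool is equivariant localization adapted to moduli with corners, exploiting the simple fixed-point property (Proposition \ref{prop:The-moduli-space}): because $\partial \oCM_{0,\emptyset,\lf}(\vd) \cap \oCM_{0,\emptyset,\lf}(\vd)^{S^1} = \emptyset$, the Cartan primitive $Q = \eta/(D\eta)$ for an $S^1$-invariant metric dual $\eta$ is well-defined on a neighborhood of the boundary, and a cutoff plus Stokes argument yields
\begin{equation*}
\int_{\oCM_{0,\emptyset,\lf}(\vd)} \alpha_{\lf,\vd}^{\vec{a},\vec{\epsilon}} = \int_{F} \frac{\alpha|_F}{e_{S^1}(N_F)} + \int_{\partial \oCM_{0,\emptyset,\lf}(\vd)} \alpha \wedge Q,
\end{equation*}
where $F = \oCM_{0,\emptyset,\lf}(\vd)^{S^1}$.

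For the first term, classical Atiyah--Bott localization applied on the connected components of $F$, combined with the fixed-point graph description of Section \ref{subsubsec:fpg} and the identification $F_\Gamma \simeq \oCM_\Gamma / A_\Gamma$, expands it as $\sum_\Gamma I(\Gamma,\vec{a},\vec{\epsilon})/|\Aut(\Gamma)|$, which by Definition \ref{def:inhomterms} is exactly the contribution $I(\lf,\vd,\vec{a},\vec{\epsilon})/|\Aut((\lf,\vd))|$ of the single-vertex tree in $\OGW$.

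The boundary term splits, via Proposition \ref{prop:corners of moduli}, into contributions from the standard corners $\bb(\lf_1,\vd_1,\lf_2,\vd_2)$ and, when $d^+=d^-$, from the exceptional boundary $\ee$. Over each $\bb$, coherence \eqref{eq:bcond} makes $\alpha|_\bb$ the pullback of $\alpha_{\lf_1,\vd_1}^{\vec{a}|_{\lf_1},\vec{\epsilon}|_{\lf_1}} \otimes \alpha_{\lf_2,\vd_2}^{\vec{a}|_{\lf_2},\vec{\epsilon}|_{\lf_2}}$ from $\oCM_{0,\emptyset,\lf_1}(\vd_1) \times \oCM_{0,\emptyset,\lf_2}(\vd_2)$. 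Pushing $Q$ down along the $L$-fiber (using the equivariant analogue of $\int_{\RP^1} d\theta = 1$, together with the weights of the $S^1$-action on $\RP^1$) produces a factor of $(d^+_1 - d^-_1)/(-2u) = -(d^+_2 - d^-_2)/(-2u)$ — the two values agreeing by \eqref{eq:connecting} — and the integral over the product factorizes. Recursively applying the same formula to each factor, each iterated splitting adds an edge: summing over all such splittings reproduces the tree sum $\sum_{T\in\TTT(\lf,\vd)} \AAA(T,\vec{a},\vec{\epsilon})$, with $(-2u)^{-|E(T)|}$ collecting one factor per edge, $\prod_v (d^+(v)-d^-(v))^{\val(v)}$ collecting the $L$-pushforward weights at each vertex, and $1/|\Aut(T)|$ correcting for the $\zz/2$ swap of corner factors in Proposition \ref{prop:corners of moduli} and for symmetries between identical subtrees. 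Over $\ee$, the coherence condition \eqref{eq:alpha-beta coherence} makes the integrand a pullback from the closed moduli $\oCM_{0,\lf}(d)$; the $L$-fiber push yields a factor of $|\vd|/(2u)$, and standard closed localization identifies the remaining closed integral as $I(\lf,|\vd|,\vec{a},\vec{\epsilon})$, producing the $\delta_{d^+-d^-}\tfrac{|\vd|}{2u}I(\lf,|\vd|,\vec{a},\vec{\epsilon})$ term.

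The main obstacle will be the combinatorial and orientation bookkeeping: the automorphism factors $|\Aut(T)|$, $|\Aut(\Gamma)|$, and $|A^0_\Gamma|$ must combine correctly across the different stratifications, and the signs from the Leibniz rule for pushforwards through corner boundaries (per Section \ref{sec:or}) must align with the canonical orientation underlying $\OGW$. Once this bookkeeping is in place, the final vanishing assertion is immediate: each $I(\Gamma,\vec{a},\vec{\epsilon})$ vanishes on dimensional grounds whenever $1+\sum a_i < d^++d^-$, and the same bound propagates through the tree expansion.
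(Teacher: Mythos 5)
Your high-level strategy matches the paper's: localize to fixed points, use coherence of the integrand at the boundary, and recursively unpack the boundary contributions into a tree sum. But two of the steps as written contain genuine gaps.

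First, the edge factor is wrong. You claim the $L$-pushforward at a boundary component $\bb = \oCM_{0,\sstar_1,\lf_1}(\vd_1)\times_L \oCM_{0,\sstar_2,\lf_2}(\vd_2)$ yields a single factor $(d_1^+-d_1^-)/(-2u)$, justified by asserting $(d_1^+-d_1^-)=-(d_2^+-d_2^-)$ from the connecting map \eqref{eq:connecting}. That identity is false unless $d^+-d^-=0$: in general $(d_1^+-d_1^-)+(d_2^+-d_2^-)=d^+-d^-$. The correct factor, as in the paper's Lemma \ref{lem:projection formula}, is $(d_1^+-d_1^-)(d_2^+-d_2^-)/(-2u)$: the $\ev_{\sstar_i}$ maps to $L$ each have degree $(d_i^+-d_i^-)$, and both contribute to the pushforward across the fiber product. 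Your version cannot reproduce the exponent $\val(v)$ in $\prod_v(d^+(v)-d^-(v))^{\val(v)}$: a valence-$k$ vertex must receive one degree factor per incident edge.

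Second, the treatment of corners is not an afterthought but the heart of the argument, and "recursively applying the same formula" with a single global Cartan primitive $Q=\eta/D\eta$ does not close up. After one application of Stokes you get $\int_{\partial\oCM}Q\alpha$, but $Q|_\bb$ has no particular compatibility with the coherence structure, so pushing it to $\oCM_1^\diamond\times\oCM_2^\diamond$ does not produce the clean factored integral you need to iterate. The paper's resolution (Lemma \ref{lem:Q rewriting} and Proposition \ref{prop:simple fp formula}) is to show that $Q$ can be \emph{replaced}, at the cost of corner contributions, by specific $\Sym(c)$-invariant primitives $Q_c=\frac{-1}{2u}\frac{1}{c}\sum_i d\theta_i$ on each $\partial^c\oCM$; the resulting formula $\int\alpha=\sum_F\int_F\alpha/e(N_F)+\sum_{c\geq1}(-1)^{c-1}\int_{\partial^c\oCM}Q_c\cdots Q_1\alpha$ is what actually makes the recursion (Proposition \ref{prop:recursion}) well-posed, and the tree sum appears as the unique solution via the ansatz of Lemma \ref{lem:anzats}, with the binomial weights $\binom{n}{c}$ accounting for how the $r$ tree edges distribute over the corner depth $c$. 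Your sketch skips all of this, which is why the combinatorial factors $\frac{1}{2^r r!}$ in the amplitude do not emerge from your argument; the $1/|\Aut(T)|$ you invoke is obtained only after a nontrivial orbit-stabilizer passage from the labeled-oriented-edge trees $\hat{\tc}(r,\lf,\vd)$ used in the recursion to the unlabeled $\tc(\lf,\vd)$ of the statement.
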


\begin{rmk}\label{rmk:why_not_naive}
Several comments are in place.
\begin{enumerate}[(a)]
\item
First, in the introduction another definition for open intersection numbers was sketched, as the weighted cardinality of the intersection of the zero loci of multisections of $\mathbb{L}_i,$ given some boundary conditions. That approach, which is similar to the approach taken in \cite{PST14,BCT1,BCT2}, can be made fully rigourous by a recursive construction analogous to those given in these papers. The equivalence of that definition with the definition in terms of coherent integrands (in the non-equivariant limit) can be proven along the lines of \cite[Corollary 15]{fp-loc-OGW}.

\item\label{it:graphical}
Second, the first sum in \eqref{eq:nice_tree_sum} can be represented in a less compact, but perhaps more suggestive way, by expanding the expressions \eqref{eq:I_S} in the definition of the amplitude in terms of fixed-point graph contributions \eqref{eq:I_Gamma}. Doing that, we get a sum over tuples $(T,(\Gamma_v)_{v\in V(T)}),$ in which $T\in\TTT$ are trees which have no vertex $v$ with $d^+(v)=d^{-}(v)$ (otherwise the amplitude clearly vanishes), and $\Gamma_v$ are fixed point graphs for the specifications $(\lf_v,\vd_v)$ of the vertices of $T.$ Each tuple can be graphically presented as follows:
\begin{enumerate}[(i)]
\item Replace each vertex $v$ by $\Gamma_v.$ Since $d^+(v)\neq d^-(v),~$the graph $\Gamma_v$ has a single disk edge $H_v$ with
    \[\partial^H(\delta(H_v))=(d^+(v)-d^-(v)),\]
    where $\partial^H$ is the connecting map of \eqref{eq:connecting}.
\item Draw the edge $e=\{u,v\}$ as a wavy edge which connects the boundary vertices of $H_v,~H_u$ and associate this wavy edge $e$ the weight
\[\frac{\partial^H(\delta(H_v))\partial^H(\delta(H_u))}{-2u}=\frac{(d^+(v)-d^-(v))(d^+(u)-d^-(u))}{-2u}.\]
\item Associate the vertex $v$ of $T$ the weight \[\frac{1}{|A_{\Gamma_v}|}\int_{\overline{\mm}_{\Gamma_v}}e_{\Gamma_v}^{-1}\cdot\alpha_{\Gamma_v}^{\vec{a},\vec{\epsilon}}.\]
\item Take the product of weights of vertices and wavy edges, and divide by $|\Aut(T)|$.
\end{enumerate}

Similarly, the second term in \eqref{eq:nice_tree_sum} can be graphically presented as a sum over all isomorphism types of pairs $(\Gamma,e)$, where $\Gamma$ is a fixed point graph for $(\lf,d),$ the sphere moduli specifications obtained from shrinking the boundary of a disk map of type $(\lf,\vd),$ and $e$ is a sphere edge of $\Gamma.$ An isomorphism between $(\Gamma,e)$ to $(\Gamma',e')$ is an isomorphism between $\Gamma$ and $\Gamma'$ which takes $e$ to $e'.$  For such a pair one draws a wavy half-edge emanating from the equator of $e.$ This wavy half-edge is associated the weight $\frac{-|\delta(e)|}{-2u}$. The contribution of $(\Gamma,e)$ to \eqref{eq:nice_tree_sum} is the product
\[\frac{-|\delta(e)|}{-2u}\frac{1}{|A'_{(\Gamma,e)}|}\int_{\overline{\mm}_{\Gamma}}e_{\Gamma}^{-1}\cdot\alpha_{\Gamma}^{\vec{a},\vec{\epsilon}}\]
of the graph contribution and the wavy half-edge weight
\[|A'_{(\Gamma,e)}|=|A^0_\Gamma||\Aut_{(\Gamma,e)}|,\]
where $\Aut_{(\Gamma,e)}$ is the automorphism group of $\Gamma$ preserving $e.$
See Figure \ref{fig:g_0_cont} for the graphical description described above (where we draw sphere edges of fixed point graphs as spheres and disk edges as disks).

The proof that the two representations are the same is straightforward (see also Lemma \ref{lem:OGW_loc_graphs} for a similar statement in higher genus).

\item
One could have hoped that the simpler formula \eqref{eq:I_S} will give rise to the stationary open descendent integrals. This expression does not correspond to the intersection theory we define, and moreover, there is no geometric intersection theory that would give rise to this expression. Indeed, a formula which calculates an intersection number must vanish in underdetermined cases (when the dimension of the moduli is bigger than the dimension of the constraints). This is not the case with $I(S,\vec{a},\vec{\epsilon}).$
\end{enumerate}
\end{rmk}

\section{Fixed-point localization for coherent integrands}\label{sec:genus0-calc}
In this section we first prove a general localization formula for $S^1-$orbifolds with corners and simple fixed points, Proposition \ref{prop:simple fp formula}. We then prove a localization expression for the integration of a coherent integrand, Theorem \ref{thm:loc for CP1,RP1}.

\subsection{Fixed-point formula for orbifolds with corners}
The following fixed-point formula is the main computational tool that
we will use.
\begin{prop}
\label{prop:simple fp formula}Let $\xx$ be a compact oriented $S^{1}$-orbifold
with simple fixed points, and for $c\geq1$ let $Q_{c}\in\mathcal{A}_{S^{1}}\left(\partial^{c}\xx\right)\left[u^{-1}\right]$
be an equivariant primitive which is invariant under the $\Sym\left(c\right)$
action on $\partial^{c}\xx$, permuting local boundary components.
Then for any $\alpha\in\mathcal{A}_{S^{1}}\left(\xx\right)$ such
that $D\alpha=0$ we have
\[
\int_{\xx}\alpha=\sum_{F\subset\xx^{S^{1}}}\int_{F}\frac{\alpha|_{F}}{e\left(N_{F}\right)}+\sum_{c\geq1}\left(-1\right)^{c-1}\int_{\partial^{c}\xx}Q_{c}\cdots Q_{2}Q_{1}\,\alpha|_{\partial^{c}\xx},
\]
where the first sum ranges over connected components $F\subset\xx^{S^{1}}$.
\end{prop}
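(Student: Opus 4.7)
My plan is to prove the identity by induction on the corner depth $n := \max\{c : \partial^c\xx \neq \emptyset\}$. The base case $n=0$ is the classical Atiyah--Bott--Berline--Vergne equivariant localization for a closed $S^1$-orbifold, where the corner sum is empty. For the inductive step I first extract a single boundary contribution, then recursively apply an analogous argument on $\partial\xx$ (a fixed-point-free $S^1$-orbifold with corners of depth $n-1$) to generate the remaining corner terms.

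For the first reduction, fix an $S^1$-invariant Riemannian metric $g$ on $\xx$ and let $\eta = g(\xi,-)$ be the metric dual of the generator $\xi$. Then $Q_\eta := \eta/D\eta \in \aso(\xx\setminus\xx^{S^1})[u^{-1}]$ is a primitive of $1$ away from the fixed locus, since $D\eta = d\eta - u\,g(\xi,\xi)$ is invertible there. Choose an $S^1$-invariant cutoff $\chi$ that vanishes on a neighborhood of $\xx^{S^1}$, equals $1$ on a neighborhood of $\partial\xx$, and is supported away from $\xx^{S^1}$. Using $D\alpha = 0$ and Leibniz,
\[\alpha = D(\chi Q_\eta\alpha) + (1-\chi)\alpha - d\chi\wedge Q_\eta\,\alpha.\]
Integrating, applying Stokes to the first summand, and letting $\chi$ approach the indicator function of $\xx\setminus\xx^{S^1}$, the $(1-\chi)\alpha$ term vanishes and the $d\chi$ term concentrates along $\xx^{S^1}$, producing $\sum_F\int_F \alpha|_F/e(N_F)$ via the usual residue computation along the normal bundle. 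The boundary term becomes $\int_{\partial\xx}Q_\eta|_{\partial\xx}\,\alpha$, and by choosing the metric so that $Q_\eta|_{\partial\xx}=Q_1$ (or by absorbing the $D$-closed difference into the next inductive step) this equals $\int_{\partial\xx}Q_1\alpha$. Now iterate on $\partial\xx$: extend $Q_2$ to a primitive $\tilde Q_2$ on a collar of $\partial^2\xx$ in $\partial\xx$, introduce a cutoff $\chi'$, and compute $D(\chi'\tilde Q_2 Q_1\alpha)$ using $D(Q_1\alpha) = \alpha|_{\partial\xx}$. The analogous manipulation produces a new corner contribution $-\int_{\partial^2\xx}Q_2 Q_1\alpha$ (the minus sign reflecting the induced orientation and the odd total degree of $Q_2$ in $\aso[u^{-1}]$) together with residual integrals that feed the next iteration. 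After $n$ iterations the telescoping yields precisely the alternating series in the statement.

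The principal obstacle is proving that the formula is independent of the auxiliary choices made along the way --- the metrics, cutoff functions, and the particular collar extensions of the $Q_c$'s. Equivalently, given two $\Sym(c)$-invariant systems of primitives $\{Q_c\}$ and $\{Q_c'\}$, I must show
\[\sum_{c\geq 1}(-1)^{c-1}\int_{\partial^c\xx}\bigl(Q_c\cdots Q_1 - Q_c'\cdots Q_1'\bigr)\alpha = 0.\]
Rewriting this as a telescoping sum via the fact that each $Q_c - Q_c'$ is $D$-closed on $\partial^c\xx$, and using the anticommutativity of odd-degree primitives in $\aso[u^{-1}]$, each discrepancy on $\partial^c\xx$ should become a boundary term on $\partial^{c+1}\xx$ that cancels against the corresponding change in the next summand. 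The $\Sym(c)$-invariance is essential here: it ensures that the pullbacks between corner strata are consistent independently of a preferred ordering of local boundary components, which is needed for the cancellations to go through globally. Carrying out this telescoping cancellation and carefully tracking all signs produced by the induced orientations on successive corners is the main technical work of the proof.
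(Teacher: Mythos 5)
Your outline follows the same overall strategy as the paper: remove (or cut off) a tubular neighborhood of the fixed locus to land on a fixed-point-free space with corners, extract a fixed-point residue $\sum_F\int_F\alpha/e(N_F)$, and then iteratively push Stokes remainders into deeper corner strata. The residue computation you sketch is the content of the paper's Lemma \ref{lem:fp contrib by euler}, and the iteration is the content of Lemma \ref{lem:Q rewriting}.

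There is, however, a concrete gap at the crux of the iteration, and your plan for filling it does not match what actually needs to happen. After the first Stokes step you arrive at $\int_{\partial\xx}Q_\eta\alpha$ with your auxiliary primitive $Q_\eta$, and you propose either to arrange $Q_\eta|_{\partial\xx}=Q_1$ (not possible in general) or to ``absorb the $D$-closed difference'' and later verify independence of all auxiliary choices by a telescoping argument. This telescoping is not straightforward: the differences $Q_c-Q'_c$ are not exact (they are $D$-closed of degree $-1$), and to push them deeper you would again need to multiply by yet another primitive, generating a tower of auxiliary objects on each $\partial^{c+1}\xx$. The paper avoids this entirely. In Lemma \ref{lem:Q rewriting} one fixes a single global primitive $Q$ on the fixed-point-free blow-up $\widetilde{\xx}$ and proves by induction on $m$ the identity
\[
\int_{\widetilde{\xx}}\alpha=\sum_{1\le c\le m}(-1)^{c-1}\int_{\partial^c\widetilde{\xx}}Q_{\le c}\,\alpha+(-1)^{m-1}\int_{\partial^m\widetilde{\xx}}(Q-Q_m)Q_{\le(m-1)}\,\alpha,
\]
so that the given $Q_c$'s appear exactly and the only auxiliary datum is the single $Q$, which is eliminated when $\partial^m\widetilde{\xx}=\emptyset$. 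The decisive step in the induction is the equality $(\star)$: writing $(Q-Q_m)Q_{\le(m-1)}=-D(QQ_m)Q_{\le(m-1)}$ and wanting to replace this by $-D\bigl(QQ_{\le m}\alpha\bigr)$, one must show that the cross-terms $\int_{\partial^m\widetilde{\xx}}QQ_m\cdots\widehat{Q_j}\cdots Q_1\alpha$ vanish for $1\le j\le m-1$. This is proved via the orientation-reversing involution swapping local boundary labels $j$ and $j+1$: invariance of $\alpha$, $Q$, and $Q_i$ for $i<j$ is automatic, while invariance of $Q_i$ for $i\ge j+1$ is precisely where the $\Sym(c)$-invariance hypothesis is used. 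Your stated role for $\Sym(c)$-invariance (``consistency of pullbacks between corner strata'') is not the mechanism: the hypothesis is needed so that an explicit orientation-reversing involution fixes the integrand, forcing the cross-terms to vanish. Without $(\star)$ or an equivalent, the residual terms in your iteration do not close up into the stated alternating sum.
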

\begin{rem}
Note that as a special case of Proposition \ref{prop:simple fp formula} we obtain the well-known Atiyah-Bott fixed-point formula \cite{AB},
\begin{equation}
\int_{\xx}\alpha=\sum\int_{F}\frac{\alpha|_{F}}{e_{S^{1}}\left(N_{F}\right)},\quad\text{if}\quad\partial\xx=\emptyset.\label{eq:classical fp formula}
\end{equation}
There are easy counterexamples where \eqref{eq:classical fp formula} does not hold in case $\partial\xx\neq\emptyset$.
\end{rem}
\begin{rem}
It is possible to obtain a fixed-point formula even when the fixed
points of $\xx$ intersect the boundary, see \cite{fp-loc-OGW}.
\end{rem}
The proof of Proposition \ref{prop:simple fp formula} will occupy
the remainder of this section. Let $\widetilde{\xx}$ be an oriented
orbifold with corners on which $S^{1}$ acts with \emph{no} fixed
points, and suppose $Q_{c}\in\aso\left(\partial^{c}\widetilde{\xx}\right)\left[u^{-1}\right]$
are $\Sym(c)$-invariant primitives as above. We write
$Q_{\leq c}=Q_{c}\cdots Q_{1}$.
\begin{lem}
\label{lem:Q rewriting}For any equivariant form $\alpha$ with $D\alpha=0$ we have
$\int_{\widetilde{\xx}}\alpha=\sum_{c\geq1}\left(-1\right)^{c-1}\int_{\partial^{c}\widetilde{\xx}}Q_{\leq c}\alpha$.
\end{lem}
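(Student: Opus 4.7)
The strategy is to apply Stokes' theorem iteratively, leveraging the no-fixed-points hypothesis at every stratum to produce global equivariant primitives that can be compared with the given $Q_c$.

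Step 1: Since $\widetilde{\xx}$ has no $S^1$-fixed points, the construction $Q = \eta/D\eta$ recalled just before the lemma yields a global equivariant primitive $P^{(0)} \in \aso(\widetilde{\xx})[u^{-1}]$ with $DP^{(0)}=1$. Because $D\alpha=0$, the Leibniz rule gives $D(P^{(0)}\alpha)=\alpha$, and Stokes' theorem therefore produces
\[
\int_{\widetilde{\xx}} \alpha \;=\; \pm\int_{\partial\widetilde{\xx}} P^{(0)}|_{\partial\widetilde{\xx}}\cdot \alpha.
\]
The same construction supplies global primitives $P^{(c)}$ on each iterated boundary $\partial^c\widetilde{\xx}$, which has no $S^1$-fixed points either.

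Step 2: On $\partial\widetilde{\xx}$, both $P^{(0)}|_{\partial\widetilde{\xx}}$ and $Q_1$ are $D$-primitives, so their difference is $D$-closed. Decomposing
\[
P^{(0)}|_{\partial\widetilde{\xx}}\,\alpha \;=\; Q_1\,\alpha \;+\; (P^{(0)}-Q_1)\,\alpha,
\]
the first summand supplies the $c=1$ term, while the second is a $D$-closed form to which I would recursively apply the same procedure: multiply by $P^{(1)}$, use $D(P^{(1)}\cdot(\text{closed}))=(\text{closed})$, push to $\partial^2\widetilde{\xx}$ via Stokes, then rewrite $P^{(1)}|_{\partial^2\widetilde{\xx}}=Q_2+(P^{(1)}-Q_2)$ on $\partial^2\widetilde{\xx}$, extracting $-\int_{\partial^2}Q_2Q_1\alpha$ and passing the rest to $\partial^3$.

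Step 3: I would formalize this as induction on $c$, with inductive claim stating that after $c$ reductions
\[
\int_{\widetilde{\xx}}\alpha \;=\; \sum_{k=1}^{c}(-1)^{k-1}\int_{\partial^k\widetilde{\xx}}Q_{\le k}\,\alpha \;+\; (-1)^c\int_{\partial^c\widetilde{\xx}} R_c,
\]
where $R_c$ is a $D$-closed form on $\partial^c\widetilde{\xx}$ built from $\alpha$, the iterated pullbacks of the $P^{(j)}$, and factors $(P^{(j)}-Q_{j+1})$. The inductive step splits $R_c = Q_{\le c}\alpha + R'_c$ and uses Stokes with $P^{(c)}$ on the $D$-closed form $R'_c$. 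The induction terminates because $\partial^c\widetilde{\xx} = \emptyset$ once $c$ exceeds $\dim \widetilde{\xx}$, so the sum is finite.

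The principal obstacle will be the sign and pullback bookkeeping. Explicitly: (a) each $Q_c$ has odd total equivariant degree (since $DQ_c=1$ forces $|Q_c|=-1$ mod $2$), giving Koszul signs under Leibniz; (b) the sign conventions of Stokes at corners, namely $\int_{\xx}D\beta = (-1)^{\deg\beta-\dim\xx}\int_{\partial\xx}\beta$, must be threaded through consistently; and (c) the interpretation of the product $Q_{\le c}=Q_c\cdots Q_1$ on $\partial^c\widetilde{\xx}$ involves several forgetful maps $\partial^c\widetilde{\xx}\to \partial^j\widetilde{\xx}$, and one must invoke the $\Sym(c)$-invariance of $Q_c$ to guarantee that the iterated extractions in the induction produce exactly the canonical $Q_{\le c}$ and not some unsymmetrized variant. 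Once these signs and symmetrizations are reconciled, the telescoping is straightforward.
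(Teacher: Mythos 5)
Your overall strategy follows the paper's, but the claimed ``straightforward telescoping'' hides the actual content of the lemma, and your inductive bookkeeping is not internally consistent. After the first Stokes application you are looking at $P^{(1)}(P^{(0)}-Q_1)\alpha$ on $\partial^2\widetilde{\xx}$. Splitting $P^{(1)}=Q_2+(P^{(1)}-Q_2)$ does \emph{not} cleanly extract $\pm Q_2Q_1\alpha$: the first summand $Q_2(P^{(0)}-Q_1)\alpha$ equals $Q_2P^{(0)}\alpha - Q_2Q_1\alpha$, leaving behind a cross term $Q_2P^{(0)}\alpha$. This cross term is not $D$-closed, since $D(Q_2P^{(0)}\alpha)=P^{(0)}\alpha-Q_2\alpha$, so it cannot be absorbed into $R'_2$ and pushed to $\partial^3$ by another Stokes application; its integral has to be shown to \emph{vanish} outright. (Your inductive claim that $R'_c=R_c-Q_{\le c}\alpha$ stays $D$-closed has the same defect: $Q_{\le c}\alpha$ is not $D$-closed for $c\ge 1$, so subtracting it from a $D$-closed $R_c$ cannot give a $D$-closed remainder.) The vanishing of these cross terms is exactly what the paper's step $(\star)$ supplies: $\int_{\partial^m\widetilde{\xx}}QQ_m\cdots\widehat{Q_j}\cdots Q_1\,\alpha=0$ because the orientation-reversing involution swapping the $j$-th and $(j+1)$-st local boundary components fixes the integrand. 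This is where the $\Sym(c)$-invariance hypothesis is actually used. You mention that hypothesis, but you attribute to it the wrong role (a normalization issue about producing ``the canonical $Q_{\le c}$''), rather than the key cancellation that makes the induction close.

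A second, compounding problem is your allowance of different primitives $P^{(c)}$ at each stratum. The involution argument needs the restricted primitive on $\partial^m\widetilde{\xx}$ to be invariant under transpositions of the local boundary components. That holds automatically for the restriction of one global primitive $Q$ defined on $\widetilde{\xx}$ -- which is what the paper uses -- since the restriction factors through the canonical corner map $\partial^m\widetilde{\xx}\to\widetilde{\xx}$, which commutes with all relabelings. It does not hold for a generic $P^{(c)}$ chosen only on $\partial^c\widetilde{\xx}$: its restriction to $\partial^m\widetilde{\xx}$, $m>c$, is pulled back along a particular composite forgetful map and is not manifestly invariant under swaps involving the retained components, so the deeper cross terms involving several distinct $P^{(j)}$ cannot be killed. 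The repair is forced: take $P^{(c)}=Q|_{\partial^c\widetilde{\xx}}$ for a single global $Q$, state the inductive remainder as $(Q-Q_m)Q_{\le(m-1)}\alpha$ rather than as a heap of $(P^{(j)}-Q_{j+1})$ factors, and supply the involution vanishing -- at which point your argument coincides with the paper's.
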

\begin{proof}
Since $\widetilde{\xx}$ has no fixed points, we may take $Q$ to be an equivariant primitive on $\widetilde{\xx}$. We will show by induction
on $m$ that
\[
\int_{\widetilde{\xx}}\alpha=\sum_{1\leq c\leq m}\left(-1\right)^{c-1}\int_{\partial^{c}\widetilde{\xx}}Q_{\leq c}\alpha+\left(-1\right)^{m-1}\int_{\partial^{m}\widetilde{\xx}}\left(Q-Q_{m}\right)Q_{\leq\left(m-1\right)}\alpha,
\]
from which the result follows by taking $m$ so large that $\partial^{m}\widetilde{\xx}=\emptyset$.
Let us prove the base case $m=1$. By Stokes' theorem, we have
\[
\int_{\widetilde{\xx}}\alpha=\int_{\widetilde{\xx}}D\left(Q\alpha\right)=\int_{\partial\widetilde{\xx}}Q\alpha=\int_{\partial\widetilde{\xx}}Q_{1}\alpha+\int_{\partial\widetilde{\xx}}\left(Q-Q_{1}\right)\alpha.
\]
Suppose the result holds for $m$, let us prove it for $m+1$:
\begin{align*}
&\left(-1\right)^{m-1}\int_{\partial^{m}\widetilde{\xx}}\left(Q-Q_{m}\right)Q_{\leq\left(m-1\right)}\alpha=\left(-1\right)^{m}\int_{\partial^{m}\widetilde{\xx}}D\left(QQ_{m}\right)Q_{\leq\left(m-1\right)}\alpha\overset{\left(\star\right)}{=}\left(-1\right)^{m}\int_{\partial^{m}\widetilde{\xx}}D\left(QQ_{\leq m}\alpha\right)=\\
=&\left(-1\right)^{m}\int_{\partial\partial^{m}\widetilde{\xx}}QQ_{\leq m}\alpha=\left(-1\right)^{m}\int_{\partial^{m+1}\widetilde{\xx}}Q_{m+1}Q_{\leq m}\alpha+\left(-1\right)^{m}\int_{\partial^{m+1}\widetilde{\xx}}\left(Q-Q_{m+1}\right)Q_{\leq m}\alpha.
\end{align*}
To establish the equality marked with $\left(\star\right)$ it suffices
to show that
\[
\int_{\partial^{m}\widetilde{\xx}}QQ_{m}\cdots\widehat{Q_{j}}\cdots Q_{1}\alpha=0
\]
for any $1\leq j\leq m-1$. Consider the orientation-reversing involution
$\sigma:\partial^{m}\widetilde{\xx}\to\partial^{m}\widetilde{\xx}$
which swaps the $j$ and $j+1$ boundary components. We have $\sigma^{*}Q=Q$,
$\sigma^{*}\alpha=\alpha$ and $\sigma^{*}Q_{i}=Q_{i}$ for all $i<j$
since $\sigma$ commutes with the structure map $\partial^{j}\widetilde{\xx}\to\widetilde{\xx}$.
We have $\sigma^{*}Q_{i}=Q_{i}$ for $i\geq j+1$ by the invariance
assumption. In short, we have
\[
\int_{\partial^{m}\widetilde{\xx}}Q_{m}\cdots\widehat{Q_{j}}\cdots Q_{1}Q\alpha=-\int_{\partial^{m}\widetilde{\xx}}\sigma^{*}\left(Q_{m}\cdots\widehat{Q_{j}}\cdots Q_{1}Q\alpha\right)=-\int_{\partial^{m}\widetilde{\xx}}Q_{m}\cdots\widehat{Q_{j}}\cdots Q_{1}Q\alpha,
\]
from which it follows that $\int_{\partial^{m}\widetilde{\xx}}Q_{m}\cdots\widehat{Q_{j}}\cdots Q_{1}Q\alpha=0$. This completes the proof.
\end{proof}
Now suppose that $\xx$ has simple fixed points, and fix some $S^{1}$-invariant
Riemannian metric on $\xx$. For each connected component $F\subset\xx^{S^{1}}$,
let $S\left(N_{F}\right)\xrightarrow{\pi}F$ denote the sphere bundle
associated with the normal bundle $N_{F}$. We take $\epsilon>0$
so small so that the geodesic flow
\[
\bigsqcup_{F\subset\xx^{S^{1}}}\left(S\left(N_{F}\right)\times\left(0,\epsilon\right)\right)\to\xx
\]
is well-defined and injective, and use it to construct the blow up
\[
\widetilde{\xx}=\left(\xx\backslash\xx^{S^{1}}\right)\cup\left(\bigsqcup_{F\subset\xx^{S^{1}}}\left(S\left(N_{F}\right)\times[0,\epsilon)\right)\right)
\]
of the fixed points of $\xx$. It is not hard to see $\widetilde{\xx}$
is a compact oriented orbifold with corners with no fixed points and
that
\[
\partial^{c}\widetilde{\xx}=\begin{cases}
\partial\xx\sqcup\bigsqcup_{F\subset\xx}S\left(N_{F}\right), & \text{if }c=1,\\
\partial^{c}\xx, & \text{if }c\neq 1.
\end{cases}
\]

Note that the induced outward normal orientation on $S\left(N_{F}\right)$
is opposite to the standard orientation induced from that of the total
space $N_{F}$. Thus, Proposition \ref{prop:simple fp formula} follows
from Lemma \ref{lem:Q rewriting} and the following.
\begin{lem}
\label{lem:fp contrib by euler}If $Q$ is any equivariant primitive
on $S\left(N_{F}\right)$, then we have
\begin{equation}
-\int_{S\left(N_{F}\right)}Q\pi^{*}\left(\alpha|_{F}\right)=\int_{F}\frac{\alpha|_{F}}{e_{S^{1}}\left(N_{F}\right)}.\label{eq:fp contrib}
\end{equation}
\end{lem}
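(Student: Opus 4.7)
The plan is to prove this identity in two steps: first, show that the left-hand side of \eqref{eq:fp contrib} is independent of the choice of equivariant primitive $Q$, and then evaluate it for a conveniently chosen primitive constructed from the equivariant Thom form of $N_F$.

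For the independence of $Q$, suppose $Q_1, Q_2$ are two primitives, so $D(Q_1 - Q_2) = 0$. Since $F \cap \partial \xx = \emptyset$, the sphere bundle $S(N_F)$ is free of $S^1$-fixed points, and the equivariant localization theorem yields $H^*_{S^1}(S(N_F); \qq[u, u^{-1}]) = 0$. Hence $Q_1 - Q_2 = D\tau$ for some Laurent form $\tau$, and because $D(\pi^*(\alpha|_F)) = 0$ we have $(Q_1 - Q_2)\pi^*(\alpha|_F) = D(\tau\, \pi^*(\alpha|_F))$. Stokes' theorem on the closed manifold $S(N_F)$ then gives that the two integrals agree.

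For the evaluation, let $\bar p : \bar D(N_F) \to F$ be the closed unit disk bundle in $N_F$, with $\partial \bar D(N_F) = S(N_F)$ carrying the standard Stokes (outward-radial) orientation; per the excerpt's remark, this is the orientation used in the statement of the lemma. Let $s : F \to \bar D(N_F)$ be the zero section and $T \in \aso(\bar D(N_F))$ an equivariant Thom form, compactly supported in the interior of $\bar D(N_F)$, satisfying $\bar p_* T = 1$ and $s^* T = e_{S^1}(N_F)$. The $S^1$-action on $N_F$ has nonzero weights at every point of $F$ (otherwise $F$ would fail to be a connected component of $\xx^{S^1}$), so $e_{S^1}(N_F)$ is a unit in $\aso(F)[u^{-1}]$. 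Therefore $\phi := \bar p^*\bigl(e_{S^1}(N_F)^{-1}\bigr)\cdot T \in \aso(\bar D(N_F))[u^{-1}]$ is $D$-closed and $s^*\phi = 1$. Since $\bar D(N_F)$ deformation retracts $S^1$-equivariantly onto $F$, the restriction $s^*$ is a cohomology isomorphism, so $s^*(1 - \phi) = 0$ forces $1 - \phi = D\eta$ for some $\eta$. Because $T$ vanishes on $\partial \bar D(N_F)$, so does $\phi$ there, and $Q := \eta|_{S(N_F)}$ is an equivariant primitive of $1$. Stokes' theorem on $\bar D(N_F)$ now yields
\begin{align*}
\int_{S(N_F)} Q\,\pi^*(\alpha|_F)
 &= \int_{\bar D(N_F)} D\bigl(\eta\, \bar p^*(\alpha|_F)\bigr)
  = \int_{\bar D(N_F)} (1 - \phi)\, \bar p^*(\alpha|_F) \\
 &= -\int_{\bar D(N_F)} T \cdot \bar p^*\!\left(\frac{\alpha|_F}{e_{S^1}(N_F)}\right)
  = -\int_F \frac{\alpha|_F}{e_{S^1}(N_F)},
\end{align*}
using that $D(\bar p^*(\alpha|_F)) = 0$, that $\int_{\bar D(N_F)} \bar p^*(\alpha|_F) = 0$ for dimensional reasons (the fiber is a positive-dimensional disk), and the projection formula together with $\bar p_* T = 1$. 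Negating both sides produces \eqref{eq:fp contrib}.

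The principal technical obstacle is producing a genuine $S^1$-equivariant Thom form $T$ on the orbifold normal bundle $N_F$ with all required properties simultaneously (compact support in $\bar D(N_F)$, $\bar p_* T = 1$, and $s^* T = e_{S^1}(N_F)$). This is classical in the smooth setting, and extends to closed orbifolds by a local Mathai--Quillen construction patched together via an equivariant partition of unity; the simple-fixed-points hypothesis ensures $F$ lies in the interior of $\xx$ and is itself a closed orbifold, so no further complications from the corner strata of $\xx$ intrude.
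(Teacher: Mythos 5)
Your proof is correct and reaches the conclusion by a related but structurally different route. The paper stays on the sphere bundle $S(N_F)$: it rewrites the left-hand side as $\int_F(-\pi_*Q)\,\alpha|_F$, shows the class $[-\pi_*Q]$ is independent of $Q$ by computing $(\pi_*D-d\pi_*)(QQ')$, and then identifies it with $[e_{S^1}(N_F)^{-1}]$ by pairing $Q$ against an equivariant angular form $\phi$ on $S(N_F)$ satisfying $\pi_*\phi=1$ and $D\phi=-\pi^*e_{S^1}(N_F)$, whose existence it imports from the second author's earlier work. You instead fill in to the closed disk bundle $\bar D(N_F)$: independence of $Q$ follows from the localization-theoretic vanishing of the Laurent equivariant cohomology of the fixed-point-free orbifold $S(N_F)$, a concrete primitive is manufactured from an equivariant Thom form $T$ on $\bar D(N_F)$, and the evaluation is a single Stokes computation there. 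The two technical inputs are faces of the same Bott--Tu/Mathai--Quillen construction (the relation $D\phi=-\pi^*e_{S^1}(N_F)$ is exactly the boundary shadow of $s^*T=e_{S^1}(N_F)$), so neither route is intrinsically lighter; but yours replaces the explicit fiber-integration bookkeeping by a one-line Stokes argument and confirms the invertibility of $e_{S^1}(N_F)$ up front from the nonzero weights of the $S^1$-action on $N_F$, whereas the paper extracts invertibility as a byproduct of the same calculation that proves the identity. The one caveat is that you must know the full equivariant Thom-form package exists on the orbifold disk bundle with all three properties simultaneously; you flag this and it is indeed standard, but the paper's angular-form input is the marginally weaker of the two.
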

\begin{proof}
We need to compute
\[
-\int_{S\left(N_{F}\right)}Q\pi^{*}\left(\alpha|_{F}\right)=\int_{F}\left(-\pi_{*}Q\right)\alpha|_{F}.
\]
Since $\pi$ is an $S^{1}$-equivariant map from an orbifold without
boundary to an orbifold with trivial $S^{1}$ action, we have
\[
\pi_{*}D=D\pi_{*}=d\pi_{*}.
\]
Now if $Q'$ is any other equivariant primitive, then
\[
0=\left(\pi_{*}D-d\pi_{*}\right)\left(QQ'\right)=\pi_{*}\left(Q'\right)-\pi_{*}\left(Q\right)+d\epsilon,
\]
so the cohomology class $\left[-\pi_{*}Q\right]$, and the left hand
side of \eqref{eq:fp contrib}, is independent of $Q$. To complete
the proof, we identify $\left[-\pi_{*}Q\right]$ with the inverse
of the equivariant Euler form. In \cite{twA8} the second author proves the existence
of an \emph{equivariant angular form}
\[
\phi\in\Omega\left(S\left(N\right);\Or\left(N\right)\otimes\rr\left[u\right]\right)^{S^{1}}
\]
for $S\left(N\right)\xrightarrow{\pi}F$. This is a form such that
(a) $D\phi\in\im\left(\pi^{*}\right)$ and (b) $\pi_{*}\phi=1$. By
definition\emph{ }the equivariant Euler form associated with $\phi$
is specified by the equation
\[
D\phi=-\pi^{*}e_{S^{1}}\left(N_{F}\right).
\]
On the other hand,
\[
0=\left(\pi_{*}D-d\pi_{*}\right)\left(Q\phi\right)=1+\left(\pi_{*}Q\right)\cdot e_{S^{1}}\left(N_{F}\right)+d\epsilon,
\]
which shows that $e_{S^{1}}\left(N_{F}\right)$ is invertible (since
$d\epsilon$ is nilpotent), and $\left[-\pi_{*}Q\right]\cdot\left[e_{S^{1}}\left(N_{F}\right)\right]=1$.
The proof of the lemma, hence also of Proposition \ref{prop:simple fp formula}, is now complete.
\end{proof}
\begin{rem}
In the proof of Lemma \ref{lem:fp contrib by euler}, we can take $Q=-\left(\pi^{*}e_{S^{1}}\left(N_{F}\right)\right)^{-1}\phi$
as an equivariant primitive, so $\pi_{*}Q=-e_{S^{1}}\left(N_{F}\right)^{-1}$
and $\epsilon=0$.
\end{rem}

\subsection{Statement of fixed-point formula for coherent integrands}\label{subsec:localization formula}
\begin{defn}\label{def:trees}
Fix a disk moduli specification $\left(\lf,\vd\right)\in\mathcal{D}$.
Write
\[
\mathcal{D}{}_{\neq0}=\left\{ \left(\lf,\vd\right)\in\mathcal{D}|\vd\neq\left(0,0\right)\right\} .
\]
For $r\geq0$ a non-negative integer, an $\left(r,\lf,\vd\right)$\emph{-labeled
tree} $T$ consists of
\begin{enumerate}[(a)]
\item A tree with $r$ oriented edges labeled $1,...,r$. We denote by $V\left(T\right)$
the set of vertices of $T$.
\item A function
\[
\delta:V\left(T\right)\to\mathcal{D}_{\neq0},\qquad\delta\left(v\right)=\left(\lf\left(v\right),\vd\left(v\right)\right)=\left(\lf\left(v\right),\left(d^{+}\left(v\right),d^{-}\left(v\right)\right)\right),
\]
such that $\bigsqcup\lf\left(v\right)=\lf$ and $\sum\vd\left(v\right)=\vd$.
\end{enumerate}
There is an obvious notion of isomorphism of such trees, and we fix
a set of representatives $\hts\left(r,\lf,\vd\right)$ for the isomorphism
classes of $\left(r,\lf,\vd\right)$-labeled trees.
\end{defn}
\begin{nn}\label{nn:amplitude,F,E}
Consider some coherent integrand $\left\{ \alpha_{\lf,\vd}\right\} _{\left(\lf,\vd\right)\in\mathcal{D}},\left\{ \beta_{\lf,d}\right\} _{\left(\lf,d\right)\in\mathcal{S}}$
as in Definition \ref{def:coherent integrand}. For $\left(\lf,\vd\right)\in\mathcal{D}_{\neq0}$
we set
\[
F\left(\lf,\vd\right)=F^{\left\{ \alpha_{\delta}\right\} _{\delta\in\mathcal{D}},\left\{ \beta_{\delta}\right\} _{\delta\in\mathcal{S}}}\left(\lf,\vd\right)
=\sum_{F\subset\overline{\mm}_{0,\emptyset,\lf}\left(\vd\right)}\int_{F}\frac{\alpha_{\lf,\vd}|_{F}}{e^{S^{1}}\left(N_{F}\right)},
\]
where $F$ ranges over connected components of the fixed-point stack
$\overline{\mm}_{0,\emptyset,\lf}\left(\vd\right)^{S^{1}}$, and
\begin{align*}
&E\left(\lf,\vd\right)=E^{\left\{ \alpha_{\delta}\right\} _{\delta\in\mathcal{D}},\left\{ \beta_{\delta}\right\} _{\delta\in\mathcal{S}}}\left(\lf,\vd\right)=\begin{cases}
\frac{-d}{\left(-2u\right)}\int_{\overline{\mm}_{0,\lf}\left(d\right)}\beta_{\lf,d}, & \text{if }\vd=\left(d,d\right),\\
0, & \text{otherwise},
\end{cases}\\
&G\left(\lf,\vd\right)=G^{\left\{ \alpha_{\delta}\right\} _{\delta\in\mathcal{D}},\left\{ \beta_{\delta}\right\} _{\delta\in\mathcal{S}}}\left(\lf,\vd\right)
=F\left(\lf,\vd\right)+E\left(\lf,\vd\right).
\end{align*}

We define the \emph{amplitude }$\AAH\left(T\right)=A^{\left\{ \alpha_{\delta}\right\} _{\delta\in\mathcal{D}},\left\{ \beta_{\delta}\right\} _{\delta\in\mathcal{S}}}\left(T\right)
\in\rr\left[u,u^{-1}\right]$ of a labeled tree $T\in\hts\left(r,\lf,\vd\right)$~by
\[
\AAH\left(T\right)=\frac{1}{2^{r}r!}\left(\frac{-1}{2u}\right)^{r}\prod_{v\in V}\left(d^{+}\left(v\right)-d^{-}\left(v\right)\right)^{\val\left(v\right)}\cdot G\left(\delta\left(v\right)\right).
\]
Observe that if $T$ has more than one vertex, and if $E\left(\delta(v)\right)\neq 0,$ then
$\left(d^{+}\left(v\right)-d^{-}\left(v\right)\right)^{\val\left(v\right)}=0,$ hence also $\AAH\left(T\right)=0.$
Thus, we can also write
\[
\AAH\left(T\right)=
\begin{cases}
\frac{1}{2^{r}r!}\left(\frac{-1}{2u}\right)^{r}\prod_{v\in V}\left(d^{+}\left(v\right)-d^{-}\left(v\right)\right)^{\val\left(v\right)}\cdot F\left(\delta\left(v\right)\right),& \text{if }r>1,\\
G\left(\delta\left(v\right)\right), & \text{if }V(T)=\{v\}.
\end{cases}
\]
%and moreover, if $r>1$ for each vertex $d_{+}(v)-d_{-}(v)\neq0.$
\end{nn}

\begin{thm}
\label{thm:loc for CP1,RP1}
For all $\left(\lf,\vd\right)\in\mathcal{D}$ we have \begin{equation}
\int_{\overline{\mm}_{0,\emptyset,\lf}\left(\vd\right)}\alpha_{\lf,\vd}=\sum_{r\geq0}\sum_{T\in\hts\left(r,\lf,\vd\right)}\AAH\left(T\right).
\label{eq:fp formula}
\end{equation}

\end{thm}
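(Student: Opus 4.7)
The strategy is to apply the general corner localization formula (Proposition \ref{prop:simple fp formula}) to $\xx = \overline{\mm}_{0,\emptyset,\lf}(\vd)$, which has simple fixed points by Proposition \ref{prop:The-moduli-space}, with a carefully chosen family of $\Sym(c)$-invariant equivariant primitives $Q_c$ on the corners $\partial^c \xx$. Since each corner carries evaluation maps $\ev_{\sstar}$ to $\RP^1$ on which $S^1$ acts freely, I would construct $Q_c$ as a symmetrization of pullbacks of a canonical equivariant angular primitive on $\RP^1$ (associated with the form $d\theta$, $\int_{\RP^1} d\theta = 1$, of the general notations) along these evaluations at the newly-created boundary markings. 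This choice is calibrated so that fiber integration over each $\times_L$ factor in a gluing produces a factor of $\frac{1}{-2u}$ times the boundary degree of the adjacent component, which by \eqref{eq:connecting} equals $d^+ - d^-$.

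\textbf{Identifying the $r=0$ contribution.} The Atiyah--Bott piece $\sum_F \int_F \alpha_{\lf,\vd}|_F/e^{S^1}(N_F)$ is by definition $F(\lf,\vd)$. For the exceptional boundary $\ee$ (present only when $\vd = (d,d)$), coherence condition \eqref{it:4} identifies $\alpha_{\lf,\vd}|_\ee$ with $f^*\beta_{\lf,d}$. Pushing the integral $-\int_\ee Q_1\, f^*\beta_{\lf,d}$ forward along $f$ and using that $\ev_\star$ has degree $d$ onto $\CP^1$ produces the factor $-d/(-2u)$, giving $E(\lf,\vd)$. Since $\partial \ee = \emptyset$, the exceptional stratum contributes only at depth $c = 1$. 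Together with the fixed-point sum this yields $G(\lf,\vd) = F(\lf,\vd) + E(\lf,\vd)$, which is exactly the amplitude of the unique single-vertex tree in $\hts(0,\lf,\vd)$.

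\textbf{Identifying the $r \geq 1$ contributions.} For each $c \geq 1$, the standard part of $\partial^c \xx$ (Proposition \ref{prop:corners of moduli}) is, after quotienting by the $\Sym(c) \ltimes (\mathbb{Z}/2)^c$ symmetry that permutes the $c$ breakings and swaps the two sides of each, an iterated $L$-fibered product whose combinatorial skeleton is precisely a $(c,\lf,\vd)$-labeled tree in the sense of Definition \ref{def:trees}. Iterated application of coherence \eqref{it:3} factorizes $\alpha_{\lf,\vd}|_{\partial^c \xx}$ as a tensor product $\prod_{v \in V(T)} (\Prr_v')^* \alpha_{\delta(v)}$, and fiber integration of the primitive at each edge contributes one power of $(d^+(v) - d^-(v))/(-2u)$ to the two incident vertices. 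A vertex of valence $\val(v)$ thereby accumulates the factor $(d^+(v) - d^-(v))^{\val(v)} F(\delta(v))$---note that only $F$, not $G$, appears here, since the $E$-piece requires $d^+(v) = d^-(v)$ which annihilates the degree factor. The Atiyah--Bott sign $(-1)^{c-1}$ combines with the $c$ primitive-signs to yield $(-1/(2u))^c$, and the $1/(2^c c!)$ arises from passing from ordered breakings to tree isomorphism classes. This matches $\AAH(T)$ exactly.

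\textbf{Main obstacle.} The principal technical challenge is the last step: constructing the $Q_c$ so they are genuinely $\Sym(c)$-invariant equivariant primitives while still producing, via iterated fiber integration, the precise factor $\prod_v (d^+(v) - d^-(v))^{\val(v)}$ with the correct combinatorial normalization $1/(2^c c!)$. One must verify that after symmetrization each fibered product still evaluates to the boundary degree of the correct component, and carefully track how each of the $c$ edge-fibrations distributes one degree factor to each of its two endpoints. A secondary subtlety is handling degenerate corners where some $\overline{\mm}_{0,\emptyset,\lf_i}(\vd_i)$ is unstable: these are zeroed out by coherence condition \eqref{it:2} together with the convention in \eqref{eq:bcond}. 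Finally, one must confirm that no exceptional boundary reappears at depth $c \geq 2$---this follows because $\partial \ee = \emptyset$ and because further breaking of a two-disk configuration only yields standard nodes.
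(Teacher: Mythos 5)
Your setup matches the paper's: you apply Proposition \ref{prop:simple fp formula} to $\xx = \overline{\mm}_{0,\emptyset,\lf}(\vd)$, take $Q_c$ to be (anti)symmetrized pullbacks of $d\theta$ along the node evaluation maps, and you correctly identify the $r=0$ amplitude with $G(\lf,\vd) = F(\lf,\vd) + E(\lf,\vd)$, noting that $E(\delta(v))$ is annihilated by the degree factors at any vertex of a multi-vertex tree. The factors $(d^+ - d^-)$ from the fiber integrals are also the right mechanism.

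However, the identification of the $r\geq 1$ contributions as you describe it does not work, and the missing piece is structural. You assert that pushing $\alpha|_{\partial^c\xx}$ forward along iterated fiber integration yields at each vertex $v$ the factor $(d^+(v)-d^-(v))^{\val(v)}\, F(\delta(v))$. But the pushforward along the node-forgetting maps does not land on the moduli spaces $\overline{\mm}_{0,\emptyset,\lf_v}(\vd_v)$; it lands on their \emph{corner strata} $\partial^{c_v}\overline{\mm}_{0,\emptyset,\lf_v}(\vd_v)$. So what you obtain at each vertex is not the Atiyah--Bott fixed-point sum $F(\delta(v))$ of that vertex's moduli, but a depth-$c_v$ corner integral $P_{c_v}^{\delta(v)}$ of the same kind of quantity you started with. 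In other words, the corner-decomposition step is intrinsically \emph{recursive}, not a direct expansion into fixed-point data. This is exactly what the paper captures in Proposition \ref{prop:recursion}: defining $P_c^\delta = \int_{\partial^c\mm_\delta}Q_{\leq c}\alpha_\delta$ (with the exceptional piece split off at $c=1$), the projection formula yields the quadratic recursion
\begin{equation*}
P_c^{\delta} = \frac{1}{2c(-2u)}\sum_{\delta_1\#\delta_2=\delta}(d_1^+-d_1^-)(d_2^+-d_2^-)\sum_{c_1+c_2=c-1}P_{c_1}^{\delta_1}P_{c_2}^{\delta_2},
\end{equation*}
which alone is not enough; it must be paired with the identity $\sum_{c\geq 0}(-1)^c P_c^\delta = G(\delta)$ coming from the localization formula. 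One then has to prove the coupled system is upper-triangular in an appropriate partial order (so the solution is unique), and verify the ansatz $P_c^\delta = \sum_{n\geq 0}\binom{n}{c}\sum_{T\in\hts(n,\lf,\vd)}\AAH(T)$, with the theorem being the $c=0$ case. Note in particular that the depth-$c$ corner receives contributions from trees of \emph{all} sizes $n \geq c$, weighted by $\binom{n}{c}$; your implicit claim that the depth-$c$ corner contributes precisely the trees with $c$ edges is not what actually happens.

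A secondary caution: even the single-edge pushforward from $\bb=\overline{\mm}_{0,\sstar_1,\lf_1}(\vd_1)\times_L\overline{\mm}_{0,\sstar_2,\lf_2}(\vd_2)$ to $\check\mm_1\times\check\mm_2$ is not an immediate projection formula, since $\bb$ itself has corners and Stokes' theorem produces boundary terms; these are killed by the orientation-reversing involution $\inv_j$ on the corners with ghost disks, and the extraction of the degree factors $(d_j^+-d_j^-)$ requires a sign analysis that the paper carries out carefully (Lemma \ref{lem:projection formula} and Lemma \ref{lem:induced for bdry node}). Your ``main obstacle'' paragraph correctly flags that something needs care here, but the dominant missing idea is the recursion--uniqueness--ansatz structure, not the construction of $Q_c$.
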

The proof of this theorem appears at the end of $\S$\ref{subsec:Solution-to-Recursion}
below.

\subsection{Equivariant primitives}

To apply Proposition \ref{prop:simple fp formula} we need to construct
equivariant primitives $Q_{c}^{\lf,\vd}\in\aso\left(\partial^{c}\overline{\mm}_{0,\kf,\lf}\left(\vd\right)\right)\left[u^{-1}\right]$.

If $\vd=\left(d,d\right)$, $\kf=\emptyset$, and $\ee\subset\overline{\mm}_{0,\lf\sqcup\star}\left(d\right)$
denotes the exceptional boundary component, we take
\[
Q_{1}^{\lf,\vd}|_{\ee}=\left(-\frac{1}{2u}\right)\cdot\left(\ev_{\star}\right)^{*}d\theta,
\]
where $d\theta$ is the canonical angular form on $\RP^1,$ which satisfies $\iota_\xi d\theta=2.$

Recall the standard codimension $c$ corners, $C^{c}\left(\emptyset,\lf,\vd\right)$,
parameterize nodal configurations of $c+1$ disks connected by $c$
real nodes, which are numbered $i=1,...,c$. We define
\[
Q_{c}^{\lf,\vd}=\frac{-1}{2u}\,\frac{1}{c}\,\sum_{i=1}^{c}d\theta_{i},
\]
where $d\theta_{i}$ is the pullback of $d\theta$ along the evaluation
map corresponding to the $i^{th}$ node.

It follows that for $s\leq c$ we have
\[
Q_{s}|_{C^{c}\left(\emptyset,\lf,\vd\right)}=\frac{-1}{2u}\frac{1}{s}\left(d\theta_{s}+\cdots+d\theta_{c}\right)
\]
so
\[
Q_{\leq c}^{\lf,\vd}=Q_{c}^{\lf,\vd}\cdots Q_{1}^{\lf,\vd}=\frac{1}{c!}\,\frac{1}{\left(-2u\right)^{c}}\,d\theta_{1}\cdots d\theta_{c}.
\]
We say that a connected component of standard corners $\bb\subset\cl^{c}\left(\emptyset,\lf,\vd\right)$ is \emph{degenerate }if an interior point of $\bb$ has a degree zero
disk with two or more nodes attached, and \emph{non-degenerate} otherwise.
Note that if $\bb$ is degenerate, then $Q_{\leq c}^{\lf,\vd}|_{\bb}=0$,
since $d\theta_{i}=d\theta_{j}$ for some $i\neq j$.

\subsection{Recursion for corner contributions}

The following recursion is the basis for the proof of the fixed-point formula in Theorem \ref{thm:loc for CP1,RP1}.
\begin{prop}
\label{prop:recursion}We have
\begin{gather*}
\int_{\cl^{c}\left(\emptyset,\lf,\vd\right)}Q_{\leq c}^{\lf,\vd}\alpha_{\lf,\vd}=\frac{1}{2}\frac{\left(d_{1}^{+}-d_{1}^{-}\right)\left(d_{2}^{+}-d_{2}^{-}\right)}{\left(-2u\right)}\,\frac{1}{c}\,\sum\int_{\cl^{c_{1}}\left(\emptyset,\lf_{1},\vd_{1}\right)}Q_{\leq c_{1}}^{\lf_{1},\vd_{1}}\alpha_{\lf_{1},\vd_{1}}\times
\int_{\cl^{c_{2}}\left(\emptyset,\lf_{2},\vd_{2}\right)}Q_{\leq c_{2}}^{\lf_{2},\vd_{2}}\alpha_{\lf_{2},\vd_{2}},
\end{gather*}
where the sum ranges over all $\left(\lf_{1},\vd_{1}\right)\#\left(\lf_{2},\vd_{2}\right)=\left(\lf,\vd\right)$ and $c_{1}+c_{2}=c-1$.
\end{prop}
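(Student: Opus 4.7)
\textbf{Proof plan for Proposition \ref{prop:recursion}.}

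My plan is to unravel the corner structure of $\cl^c(\emptyset,\lf,\vd)$, apply the coherence condition to split the integrand, and then use a pushforward identity for the fibered product over $L=\RP^1$. First, by Proposition \ref{prop:corners of moduli} the stratum $\cl^c(\emptyset,\lf,\vd)$ is the $\zz/2$ quotient of a disjoint union, indexed by $(\lf_1,\vd_1)\#(\lf_2,\vd_2)=(\lf,\vd)$ and $\kappa_1\sqcup\kappa_2=\{1,\ldots,c-1\}$, of fibered products $\bb=\partial^{\kappa_1}\mm_1\times_L\partial^{\kappa_2}\mm_2$, with $\mm_j=\oM_{0,\sstar_j,\lf_j}(\vd_j)$. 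The $\zz/2$ quotient accounts for the prefactor $\tfrac{1}{2}$. On each piece $\bb$ the coherence condition \eqref{eq:bcond} gives $\alpha_{\lf,\vd}|_\bb=(\Prr_1')^*\alpha_{\lf_1,\vd_1}\wedge(\Prr_2')^*\alpha_{\lf_2,\vd_2}$, where $\Prr_j'$ factors through the forgetful map $f_j\colon\mm_j\to\mm_j^{\DIAMOND}:=\oM_{0,\emptyset,\lf_j}(\vd_j)$. Simultaneously, reordering the factors in $Q_{\leq c}^{\lf,\vd}|_\bb=\tfrac{1}{c!(-2u)^c}d\theta_1\wedge\cdots\wedge d\theta_c$ according to the partition $\kappa_1\sqcup\kappa_2\sqcup\{c\}$, and using that $d\theta_i$ for $i\in\kappa_j$ is pulled back from $\partial^{\kappa_j}\mm_j$ while $d\theta_c=\ev_{\sstar_1}^*d\theta=\ev_{\sstar_2}^*d\theta$, exhibits $Q_{\leq c}^{\lf,\vd}|_\bb$ as a suitable product of the pulled-back $Q_{\leq c_j}^{\lf_j,\vd_j}$ with the extra angular form at the splitting node.

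Second, the main geometric input is the fiber-integration identity
\[
\int_{\partial^{\kappa_1}\mm_1\times_L\partial^{\kappa_2}\mm_2}\ev_\sstar^*d\theta\wedge\pi_1^*\omega_1\wedge\pi_2^*\omega_2
=\pm\int_{\partial^{\kappa_1}\mm_1}\omega_1\wedge\ev_{\sstar_1}^*d\theta\;\cdot\;\int_{\partial^{\kappa_2}\mm_2}\omega_2\wedge\ev_{\sstar_2}^*d\theta,
\]
which I would derive by realizing the fibered product as the preimage of the diagonal $\Delta_L\subset L\times L$ under $\ev_{\sstar_1}\times\ev_{\sstar_2}$ and applying the push-pull formula together with the cohomology class $[\Delta_L]=\pi_1^*d\theta-\pi_2^*d\theta\in H^1(L\times L)$; the term $d\theta\wedge d\theta=0$ collapses the bracket to a single product.

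Third, I would combine this with the degree identity
\[
(f_j)_*\ev_{\sstar_j}^*d\theta=d_j^+-d_j^-,
\]
which holds because the fiber of $f_j$ over $[\Sigma,u]$ is $\partial\Sigma$, and $\int_{\partial\Sigma}u^*d\theta=\partial^H(\vd_j)=d_j^+-d_j^-$ by \eqref{eq:connecting}. Applied to $\omega_j=Q_{\leq c_j}^{\lf_j,\vd_j}\alpha_{\lf_j,\vd_j}$ (which is $f_j$-pulled-back from $\partial^{c_j}\mm_j^{\DIAMOND}$ thanks to coherence), this produces the factor $(d_1^+-d_1^-)(d_2^+-d_2^-)$ and reduces each integral to one over $\partial^{c_j}\mm_j^{\DIAMOND}$.

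Finally, I would tally the combinatorics: summing over partitions $\kappa_1\sqcup\kappa_2=\{1,\ldots,c-1\}$ with $|\kappa_j|=c_j$ gives the multinomial coefficient $\binom{c-1}{c_1}$, and the identity $\tfrac{1}{c!}\binom{c-1}{c_1}=\tfrac{1}{c\cdot c_1!\cdot c_2!}$ exactly recovers the $\tfrac{1}{c}$ factor in the claim while rebuilding the $Q_{\leq c_j}$-normalization; similarly $(-2u)^{c_1+c_2+1}=(-2u)^c$ accounts for the equivariant denominator. I expect the main obstacle to be the sign bookkeeping: the shuffle sign from reordering $d\theta_1\wedge\cdots\wedge d\theta_{c-1}$ into the $\kappa_1,\kappa_2$ blocks, the sign in $[\Delta_L]=\pi_1^*d\theta-\pi_2^*d\theta$, and the orientation convention on the fibered product (inherited from the canonical orientation of Section \ref{sec:or}) must all conspire to yield the clean $+1$ sign stated in the proposition, and verifying this likely requires invoking the orientation compatibilities established for the gluing and forgetful maps.
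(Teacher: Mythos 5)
Your plan follows the same route the paper takes: corner decomposition via Proposition \ref{prop:corners of moduli}, coherence to split $\alpha_{\lf,\vd}$, a fiber-integration step over the $\times_L$ fibered product, a degree computation for $(f_j)_*\ev_{\sstar_j}^*d\theta$, and the multinomial bookkeeping at the end; these are precisely steps (I)--(IV) of Lemma \ref{lem:projection formula} and the surrounding combinatorics. The gap is in how you propose to establish the two middle identities. You invoke ``the push-pull formula together with the cohomology class $[\Delta_L]$'' as if these were clean cohomological facts, but the corner strata $\partial^{\kappa_j}\overline{\mm}_{0,\sstar_j,\lf_j}(\vd_j)$ are orbifolds with corners, so every application of Stokes' theorem in this chain throws off boundary error terms, and showing that these vanish is the real technical content of Lemma \ref{lem:projection formula}. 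Concretely, the boundary of $\mm_j$ relative to $f_j$ splits as $\partial^-\mm_j\sqcup\partial^+\mm_j$; the contribution of the horizontal part $\partial^+\mm_j$ (ghost-disk configurations, where forgetting $\sstar_j$ destabilizes a degree-zero boundary disk with three special points) is killed by the orientation-reversing involution $\inv_j$ that swaps the two node labels while preserving $f_j$ and $\ev_j$, cf.\ \eqref{eq:involution invariance}, and the contribution of the vertical part $\partial^-\mm_j$ is killed because $\gamma$ is a top-degree form on $\check\mm_1\times\check\mm_2$, so $f^*\gamma$ restricts to zero there. Your plan never identifies these boundary contributions as an obstruction; this is where the argument actually stalls, and it is distinct from the sign issues you do flag.

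There is also a degree mismatch in the cohomological setup: $\pi_1^*d\theta-\pi_2^*d\theta$ is a $1$-form, Poincar\'e dual to the diagonal $\Delta_L\subset L\times L$, whereas the identity used in step~(II) of Lemma \ref{lem:projection formula} is $g_*\ev^*d\theta=\ev_{12}^*\delta_*d\theta$ together with the cohomology identity $\delta_*d\theta\sim d\theta_1\wedge d\theta_2$, a $2$-form dual to a \emph{point} in $L\times L$, which is what allows Fubini to split the integral into a product. In fact $\ev_\sstar^*d\theta\wedge(\pi_1^*d\theta-\pi_2^*d\theta)$ pulled back to the fibered product $C$ vanishes identically, since $\pi_1^*d\theta|_C=\pi_2^*d\theta|_C=\ev_\sstar^*d\theta$, so the ``collapse to a single product'' step does not function as written. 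Finally, the degree identity $(f_j)_*\ev_{\sstar_j}^*d\theta=d_j^+-d_j^-$ carries a sign $(-1)^{c_j}$ in the corner setting, cf.\ \eqref{eq:forgetful fiber or}, which must be tracked alongside the shuffle sign for the final $+1$ to emerge.
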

The proof of this proposition occupies the remainder of this section.
Since $Q_{\leq c}$ vanishes on degenerate corners, and $\alpha_{i}^{\lf_{i},\vd_{i}}$
vanishes if $\vd_{i}=\left(0,0\right)$, we have
\[
\int_{\cl^{c}\left(\emptyset,\lf,\vd\right)}Q_{\leq c}^{\lf,\vd}\alpha_{\lf,\vd}=\frac{1}{2}\int_{\widetilde{\cl}^{c}\left(\lf,\vd\right)}Q_{\leq c}^{\lf,\vd}\alpha_{\lf,\vd},
\]
where
\[
\widetilde{\cl}^{c}\left(\lf,\vd\right)=\bigsqcup\partial_{nd}^{\sigma_{1}}\overline{\mm}_{0,\sstar_{1},\lf_{1}}\left(\vd_{1}\right)\times_{L}\partial_{nd}^{\sigma_{2}}\overline{\mm}_{0,\sstar_{2},\lf_{2}}\left(\vd_{2}\right),
\]
is the disjoint union ranging over all $\sigma_{j},\lf_{j},\vd_{j}$, $j=1,2$, as in Proposition \ref{prop:corners of moduli}, where $\vd_{1},\vd_{2}$ are both not equal to $\left(0,0\right)$, and
\[
\partial_{nd}^{\sigma_{j}}\overline{\mm}_{0,\sstar_{j},\lf_{j}}\left(\vd_{j}\right)\subset\partial^{\sigma_{j}}\overline{\mm}_{0,\sstar_{j},\lf_{j}}\left(\vd_{j}\right)
\]
are the non-degenerate corners.

Now fix some $\sigma_{1},\sigma_{2}$, $\sigma_{1}\sqcup\sigma_{2}=\left\{ 1,...,c-1\right\} $
and write $C=\partial_{nd}^{\sigma_{1}}\overline{\mm}_{0,\sstar_{1},\lf_{1}}\left(\vd_{1}\right)\times_{L}\partial_{nd}^{\sigma_{2}}\overline{\mm}_{0,\sstar_{2},\lf_{2}}\left(\vd_{2}\right)$.
Non-degenerate corners are vertical in the sense that the forgetful
map ${\overline{\mm}_{0,\sstar_{j},\lf_{j}}\left(\vd_{j}\right)\to\overline{\mm}_{0,\emptyset,\lf_{j}}\left(\vd_{j}\right)}$
induces a map
\begin{equation}\label{eq:f_i}
\mm_{j}=\partial_{nd}^{\sigma_{j}}\overline{\mm}_{0,\sstar_{j},\lf_{j}}\left(\vd_{j}\right)\xrightarrow{f_{j}}\check{\mm}_{j}=\partial_{nd}^{\sigma_{j}}\overline{\mm}_{0,\emptyset,\lf_{j}}.
\end{equation}
Indeed, when we forget $\sstar_{j}$ the \emph{depth} of a point (i.e.,
the number of local boundary components incident to that point) decreases
only if a disk component becomes unstable; for non-degenerate boundaries
this is never the case and so the depth is preserved. See \cite[\S 2.7]{mod2hom}
for more details. We will also need to consider the vertical/horizontal
boundary decomposition of $f_{j}$ itself:
\[
\partial\mm_{j}=\partial^{-}\mm_{j}\sqcup\partial^{+}\mm_{j}.
\]
The \emph{vertical} clopen component $\partial^{-}\mm_{j}$ admits
a map $\partial^{-}\mm_{j}\to\partial\check{\mm}_{j}$. The \emph{horizontal}
clopen component $\partial^{+}\mm_{j}\subset\partial\mm_{j}$ parameterizes
configurations with a \emph{ghost disk}: an irreducible component
which is a degree $\left(0,0\right)$ disk with no special points
on the interior and three special points on the boundary: two boundary
nodes labeled by $\left\{ a_{1},a_{2}\right\} \subset\sigma_{j}$,
and the boundary marking $\sstar{}_{j}$ (this is the only type of
disk that becomes unstable when we forget $\sstar_{j}$). We can
define an involution
\[
\inv_{j}:\partial^{+}\mm_{j}\to\partial^{+}\mm_{j}
\]
that switches the labels $\left\{ a_{1},a_{2}\right\} $ of the nodes. %\footnote{Amitai: A sanity check for me (Ran) - this involution changes the cyclic order of the components which meet at the ghost, or does it only changes the order of the local boundary components, but is "geometrically" meaningless?-----Yes}
This is an orientation-\emph{reversing} involution and we have
\begin{equation}
f_{j}\circ i\circ\inv_{j}=f_{j}\circ i\text{ and }\ev_{j}\circ i\circ\inv_{j}=\ev_{j}\circ i,\label{eq:involution invariance}
\end{equation}
where $i:\partial^{+}\mm_{j}\to\mm_{j}$.

Set $f=f_{1}\times f_{2}$ and write $C\overset{g}{\hookrightarrow}\mm_{1}\times\mm_{2}$
for the structure map. Set $\Gamma\in\aso\left(\check{\mm}_{1}\times\check{\mm}_{2}\right)$
to be
\[
\Gamma=\Prr_{1}^{*}Q_{\leq c_{1}}\alpha_{\lf_{1},\vd_{1}}\wedge\Prr_{2}^{*}Q_{\leq c_{2}}\alpha_{\lf_{2},\vd_{2}}=:\Prr_{1}^{*}\Gamma_{1}\wedge\Prr_{2}^{*}\Gamma_{2}.
\]
We have the following version of the projection formula.
\begin{lem}
\label{lem:projection formula}For $\vd^{j}=\left(d_{+}^{j},d_{-}^{j}\right)\neq\left(0,0\right)$,
if we orient $C$ as a corner of $\overline{\mm}_{0,\emptyset,\lf_{1}\sqcup\lf_{2}}\left(\vd^{1}+\vd^{2}\right)$
and $\check{\mm}_{1}\times\check{\mm}_{2}$ as a product of corners,
then
\begin{gather}
\int_{C}d\theta_c\,g^{*}f^{*}\Gamma=\left(-1\right)^{c-1}\left(-1\right)^{\sigma}\left(d_{+}^{1}-d_{-}^{1}\right)\left(d_{+}^{2}-d_{-}^{2}\right)\int_{\check{\mm}_{1}}\Gamma_{1}\times\int_{\check{\mm}_{2}}\Gamma_{2},\label{eq:proj formula}
\end{gather}
where, writing the elements of $\sigma_{j}$ in order $\sigma_{j}^{1}<\sigma_{j}^{2}<\cdots<\sigma_{j}^{c_{j}}$, we define
\[
\left(-1\right)^{\sigma}=\operatorname{sign}\begin{pmatrix}1 & \cdots &  &  & \cdots & c-1\\
\sigma_{1}^{1} & \cdots & \sigma_{1}^{c_{1}} & \sigma_{2}^{1} & \cdots & \sigma_{2}^{c_{2}}
\end{pmatrix}
\]
to be the shuffle sign.
\end{lem}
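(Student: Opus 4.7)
The goal is to reduce the integral on $C$ to a product integral on $\check{\mm}_1 \times \check{\mm}_2$ by pushing $d\theta_c$ down along $p := f \circ g : C \to \check{\mm}_1 \times \check{\mm}_2$, using that the rest of the integrand is a pullback via $p$ of a product form. The substantive calculation is a fiber-integration / degree count, and the rest is orientation bookkeeping.

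\emph{Step 1: Projection formula.} Since $g^* f^* \Gamma = p^* \Gamma$, the convention \eqref{eq:pf convention} gives
\begin{equation*}
\int_C d\theta_c \cdot g^* f^* \Gamma \;=\; \int_{\check{\mm}_1 \times \check{\mm}_2} p_*(d\theta_c) \cdot \Gamma.
\end{equation*}
Because $\Gamma = \Prr_1^* \Gamma_1 \wedge \Prr_2^* \Gamma_2$ is a product form, if $p_*(d\theta_c)$ turns out to be a constant function $\kappa$ on $\check{\mm}_1 \times \check{\mm}_2$, Fubini collapses the right-hand side to $\kappa \cdot \int_{\check{\mm}_1}\Gamma_1 \cdot \int_{\check{\mm}_2}\Gamma_2$.

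\emph{Step 2: Computing $p_*(d\theta_c)$ as a degree.} The map $p$ has one-dimensional fibers: over $(\check p_1, \check p_2)$, the fiber is the preimage of the diagonal $\Delta \subset L \times L$ under $\ev_{\sstar_1} \times \ev_{\sstar_2} : f_1^{-1}(\check p_1) \times f_2^{-1}(\check p_2) \to L \times L$. Each $f_j^{-1}(\check p_j)$ is a circle parameterising the position of $\sstar_j$ on the relevant boundary of the disk, and $\ev_{\sstar_j}$ restricts to a cover $S^1 \to \RP^1$ of signed degree $m_j := d_+^j - d_-^j$; here the non-degeneracy assumption on the corners ensures that the forgetful map $f_j$ in \eqref{eq:f_i} really is a circle-bundle on the interior and that the involutions $\inv_j$ on $\partial^+\mm_j$, being orientation-reversing by \eqref{eq:involution invariance}, kill the horizontal boundary contributions so that no further correction terms appear. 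Consequently, the fiber $F$ of $p$ covers $\RP^1$ via $\ev_c|_F = \ev_{\sstar_1}|_F = \ev_{\sstar_2}|_F$ with signed degree $m_1 m_2$, and since $d\theta_c = \ev_c^* d\theta$ with $\int_{\RP^1} d\theta = 1$,
\begin{equation*}
p_*(d\theta_c)(\check p_1, \check p_2) \;=\; \pm\, m_1 m_2 \;=\; \pm (d_+^1 - d_-^1)(d_+^2 - d_-^2).
\end{equation*}

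\emph{Step 3: Signs.} The overall sign splits as $(-1)^{c-1}(-1)^\sigma$. The shuffle sign $(-1)^\sigma$ arises when we identify $\partial^{\sigma_1}\overline{\mm}_{0,\sstar_1,\lf_1}(\vd_1) \times \partial^{\sigma_2}\overline{\mm}_{0,\sstar_2,\lf_2}(\vd_2)$ (product orientation) with the corresponding face of $\partial^{c-1}\overline{\mm}_{0,\emptyset,\lf}(\vd)$: the ambient local-boundary labels $1, \dots, c-1$ are in their natural order, but the product orientation peels them off in the order $\sigma_1^1 < \cdots < \sigma_1^{c_1} < \sigma_2^1 < \cdots < \sigma_2^{c_2}$, and the discrepancy is exactly the shuffle sign in the statement. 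The factor $(-1)^{c-1}$ records that the gluing node (the $c$-th local boundary) is peeled off last in the outward-normal convention for $\partial^c = \partial(\partial^{c-1})$: commuting its outward normal past the $c-1$ tangent directions coming from the previously-peeled boundaries contributes $(-1)^{c-1}$. Combining these two signs with the signed degree $m_1 m_2$ from Step 2 yields \eqref{eq:proj formula}.

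\emph{Main obstacle.} The analytic content (the degree $m_1 m_2$) is essentially transparent once the fiber of $p$ is identified as the preimage of the diagonal in $L \times L$. The real technical burden is the sign analysis: three natural orientations on $C$ are in play --- as a face of $\partial^c$ of the glued moduli, as the fibered product $\mm_1 \times_L \mm_2$, and as a circle-fibration over $\check{\mm}_1 \times \check{\mm}_2$ --- and $(-1)^{c-1}(-1)^\sigma$ is precisely the cocycle reconciling them. Verifying this requires tracing the outward-normal and shuffle conventions through the fibered-product Leibnitz rule used in Proposition \ref{prop:corners of moduli}.
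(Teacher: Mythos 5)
Your plan --- reduce to an integration-over-fiber along the forgetful map, identify the degree of the fiber over $\rr\pp^1$, and track signs --- is the same plan the paper uses. But two of your steps need justification that is, in fact, the bulk of the paper's proof; as written the argument has genuine gaps.

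The first gap is in Step 1. Equation \eqref{eq:pf convention} applies to equivariant \emph{proper oriented submersions}, and $p=f\circ g$ is not one: the forgetful maps $f_{j}$ are not submersions over the horizontal boundary faces $\partial^{+}\mm_{j}$ (where $\sstar_{j}$ collides with a node and a ghost disk forms), so the fibers of $p$ are not the clean circles you describe everywhere. The paper handles this by first factorizing through the ambient product $\mm_{1}\times\mm_{2}$, constructing $g_{*}$ with an explicit Bott--Tu Thom form for the diagonal in $L\times L$, and then proving that the boundary error term $\EEE$ of \eqref{eq:bdry term} vanishes --- on the vertical faces $\tilde\partial_{j}^{-}$ because $f^{*}\gamma$ pulls back from $\partial(\check\mm_{1}\times\check\mm_{2})$ and $\gamma$ is a top form, and on the horizontal faces $\tilde\partial_{j}^{+}$ by the orientation-reversing involution $\widetilde{\inv}_{j}$. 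Your remark that the involutions ``kill the horizontal boundary contributions so that no further correction terms appear'' names the right mechanism, but it is exactly the claim that needs proving (and you omit the vertical-face contribution entirely); you cannot invoke \eqref{eq:pf convention} for $p$ before that is done.

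The second gap is the sign analysis in Step 3. The attribution of $(-1)^{c-1}$ to ``commuting the outward normal of the gluing node past the $c-1$ tangent directions'' is not how it arises in the paper. The paper's $(-1)^{c-1}$ decomposes as $(-1)^{c_1}(-1)^{c_2}$ and comes from the signed degree counts $\#\,w|_{\partial}^{-1}(y_{p})=(-1)^{c_{j}}(d_{j}^{+}-d_{j}^{-})$ in \eqref{eq:w_signed_count}, which in turn trace back to the fiber orientation $F_{f_{j}}=(-1)^{s_{j}+c_{j}}\ev_{j}^{*}TL$ of \eqref{eq:forgetful fiber or}. Along the way there are $(-1)^{s_{j}}$ factors (the sign of the region incident to $\sstar_{j}$, appearing both in Lemma~\ref{lem:induced for bdry node} for $F_{f\circ g}$ and in \eqref{eq:forgetful fiber or}) which cancel in the end but must be tracked to get \eqref{eq:C orientation} and the normal bundle comparison \eqref{eq:normal gap}. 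Without tracking these, the claim that the signed degree of $\ev_{c}|_{F}$ is $(d_{+}^{1}-d_{-}^{1})(d_{+}^{2}-d_{-}^{2})$ (rather than its negative) with respect to the $C$-as-a-corner orientation is an assertion, not a computation.
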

\begin{proof}
Let $\ev$ be the evaluation at the $c^{th}$ node, so that $d\theta_c=\ev^*d\theta.$
Consider the diagram
\[
\xymatrix{C\aru{r}{g}\ar[d]_{\ev} & \mm_{1}\times\mm_{2}\ar[r]^{f}\ar[d]^{\ev_{12}} & \check{\mm}_{1}\times\check{\mm}_{2}\\
L\ar[r]_{\delta} & L^{2}
}
\]
Here $L=\rr\pp^{1}$ and $\delta$ denotes the diagonal map. Using
the $S^1$-action one checks that the map $\ev_{12}=\ev_{1}\times\ev_{2}$
is a b-submersion (this means the differential is onto, as well as
all of its restrictions to the corners of the domain, see \cite{equiv-OGW-invts})
and so the square is a b-transverse cartesian square.

We let $\gamma\in\Omega^{\dim\left(\check{\mm}_{1}\times\check{\mm}_{2}\right)}\left(\check{\mm}_{1}\times\check{\mm}_{2}\right)$ be the \emph{top de Rham degree} part of $\Gamma$:
\[
\Gamma=\gamma\,u^{m}\mod u^{m+1},
\]
where $m=\frac{\deg1\Gamma-\dim\left(\check{\mm}_{1}\times\check{\mm}_{2}\right)}{2}$.
We define $\gamma_{1},\gamma_{2}$ similarly, so $\gamma=\gamma_{1}\wedge\gamma_{2}$.

The lemma follows immediately from the following computation:
\begin{align}
\left(-1\right)^{\sigma}\int_{C}\ev^{*}d\theta\,g^{*}f^{*}\gamma\overset{\left(\text{I}\right)}{=}&\left(-1\right)^{c_{1}}\int_{\mm_{1}\times\mm_{2}}\left(g_{*}\ev^{*}d\theta\right)\,f^{*}\gamma=\label{eq:projection formula computation}\\
\overset{\left(\text{II}\right)}{=}&\left(-1\right)^{c_{1}}\int_{\mm_{1}\times\mm_{2}}\left(\ev_{1}\times\ev_{2}\right)^{*}\left(d\theta_{1}\wedge d\theta_{2}\right)\,f^{*}\gamma=\notag\\
\overset{\left(\text{III}\right)}{=}&\left(\int_{\mm_{1}}\ev_{1}^{*}d\theta_{1}\,f_{1}^{*}\gamma_{1}\right)\times\left(\int_{\mm_{2}}\ev_{2}^{*}d\theta_{2}\,f_{2}^{*}\gamma_{2}\right)=\notag\\
\overset{\left(\text{IV}\right)}{=}&\left(-1\right)^{c-1}\left(\left(d_{1}^{+}-d_{1}^{-}\right)\int_{\check{\mm}_{1}}\gamma_{1}\right)\times\left(\left(d_{2}^{+}-d_{2}^{-}\right)\int_{\check{\mm}_{2}}\gamma_{2}\right).\notag
\end{align}
Let us justify this computation step by step. The discussion of orientations
and signs appears at the end of the proof.

First, we define $g_{*}$. Let $\partial_{1}-\partial_{2}$ denote
the vector field on $L\times L$ which is normal to the diagonal $L\xrightarrow{\delta}L\times L$.
By \cite[Lemma 55]{fp-loc-OGW} there exists a vector field $\eta$
on an open neighbourhood $U\subset\mm_{1}\times\mm_{2}$ of $C$,
such that (i) $\eta$ is \emph{b-tangent}, that is, for any $p\in U$
it is tangent to all the boundary faces incident to $p,$ and (ii)
$\left(\ev_{1}\times\ev_{2}\right)_{*}\eta=\partial_{1}-\partial_{2}$.
Flowing along $\eta$ and $\partial_{1}-\partial_{2}$ we construct
a pair of compatible tubular neighbourhoods $C\subset V\subset\mm_{1}\times\mm_{2}$
and $L\subset V_{0}\subset L\times L$. We denote by $V\xrightarrow{\pi}C$
and $V_{0}\xrightarrow{\pi_{0}}L$ the associated projections, which
satisfy $\ev\circ\pi=\pi_{0}\circ\ev_{12}$.

Let $r_{0}:V_{0}\to[0,\epsilon)$ be the distance function from the
diagonal associated with the flow. Let $\widetilde{V}_{0}\xrightarrow{B_{0}}V_{0}$
denote the hyperplane blowup of the diagonal (see \cite[\S 3.3]{mod2hom}); it is isomorphic to
$(-1,0]\sqcup[0,1)\times L\to\left(-1,1\right)\times L$, and extends
to a blow up $\widetilde{L\times L}\to L\times L$. We construct a
Thom form $\tau_{0}$ for $\delta$ supported in $V_{0}$, following
Bott and Tu \cite{bott+tu}. We define $\tau_{0}$ by
\[
B_{0}^{*}\tau_{0}=d\left(\sigma\left(r_{0}\right)\phi_{0}\right),
\]
where $\sigma:[0,\epsilon)\to\left[-1,0\right]$ is a smooth compactly
supported monotone function with $\left.\frac{\partial^{a}\sigma}{\partial r^{a}}\right|_{r=0}=0$
for all $a\geq1$, and $\phi_{0}=\pm\frac{1}{2}$ is a locally constant
function obtaining opposite values on the two connected components
of $\widetilde{V}_{0}$ ($\phi_{0}$ is the pullback of the angular
form for $S^{0}=\pt\sqcup\pt$ to $S\left(N_{\delta}\right)=S^{0}\times L$;
see below for the orientation convention which fixes the trivialization).
Let $\tau=\ev_{12}^{*}\tau_{0}$ be the corresponding Thom form for
$C\subset\mm_{1}\times\mm_{2}$. We define
\[
\delta_{*}\omega=\tau_{0}\wedge\pi_{0}^{*}\omega,\qquad g_{*}\omega=\tau\wedge\pi^{*}\omega,
\]
the expressions on the right-hand side should be interpreted as the extension by zero of the indicated forms, which have compact support in tubular neighbourhoods of the diagonals, to $L\times L$ and to $\mm_{1}\times\mm_{2}$, respectively.
%the expressions on the right-hand side extend by zero to $L\times L$
%and to $\mm_{1}\times\mm_{2}$, respectively.

Let us justify equation (I). Let $\widetilde{V}\xrightarrow{B_{V}}V$
denote the hyperplane blowup of $V$ along $C$ (more directly, it
can be defined as the pullback of $B_{0}$ along $\ev_{1}\times\ev_{2}$).
We have $\partial\widetilde{V}=\left(S^{0}\times C\right)\sqcup\left(\partial V\right){}^{\sim}$,
where $\left(\partial V\right)^{\sim}$ is an open substack of $\partial\left(\mm_{1}\times\mm_{2}\right)^{\sim}$, the hyperplane blowup of $\partial\left(\mm_{1}\times\mm_{2}\right)$
along $\partial C$. We denote the map associated with this blowup
by
\[
\partial\left(\mm_{1}\times\mm_{2}\right)^{\sim}\xrightarrow{B_{\partial}}\partial\left(\mm_{1}\times\mm_{2}\right).
\]
There is a lift $\partial\left(\mm_{1}\times\mm_{2}\right)^{\sim}\xrightarrow{\tilde{\ev}_{12}}\widetilde{L\times L}$
of $\ev_{12}$,
\[
B_{\partial}\circ\tilde{\ev}_{12}=\ev_{12}\circ B_{\partial}.
\]
We have
\begin{align}
\int_{\mm_{1}\times\mm_{2}}\left(g_{*}\lambda\right)\,\rho=&\int_{\widetilde{V}}d\left(\sigma\left(r_{0}\right)\phi_{0}\left(\pi^{*}\lambda\right)\,\rho\right)=\int_{\partial\widetilde{V}}\sigma\left(r_{0}\right)\phi_{0}\left(\pi^{*}\lambda\right)\,\rho=\label{eq:adjunction}\\
=&\left(-1\right)^{\sigma}\left(-1\right)^{c_{1}}\int_{S^{0}\times C}\phi_{0}\left(g\circ\Prr_{C}\right)^{*}\left(\left(\pi^{*}\lambda\right)\,\rho\right)+\EEE=\left(-1\right)^{\sigma}\left(-1\right)^{c_{1}}\int_{C}\lambda\,g^{*}\rho+\EEE,\notag
\end{align}
where
\begin{equation}
\EEE=\int_{\partial\left(\mm_{1}\times\mm_{2}\right)^{\sim}}\widetilde{\ev}_{12}^{*}\left(\sigma\left(r_{0}\right)\phi_{0}\right)\,B^{*}\left(\left(\pi^{*}\lambda\right)\,\rho\right).\label{eq:bdry term}
\end{equation}
In general, $\EEE$ doesn't have to vanish, but for $\lambda=\ev^{*}d\theta$
and $\rho=f^{*}\gamma$ we claim that it does, completing the justification
of step (I). Define
\[
\partial_{1}^{+}=\left(\partial^{+}\mm_{1}\right)\times\mm_{2}\subset\partial\left(\mm_{1}\times\mm_{2}\right),\qquad \tilde{\partial}_{1}^{+}=\left(\partial^{+}\mm_{1}\times\mm_{2}\right)^{\sim}\subset\left(\partial\left(\mm_{1}\times\mm_{2}\right)\right)^{\sim},
\]
and define $\partial_{j}^{\pm},\tilde{\partial}_{j}^{\pm}$ similarly,
so
\[
\partial\left(\mm_{1}\times\mm_{2}\right)^{\sim}=\tilde{\partial}_{1}^{+}\sqcup\tilde{\partial}_{2}^{+}\sqcup\tilde{\partial}_{1}^{-}\sqcup\tilde{\partial}_{2}^{-}.
\]
The involution $\inv_{j}$ lifts to an orientation-reversing involution $\widetilde{\inv}_{j}:\tilde{\partial}_{j}^{+}\to\tilde{\partial}_{j}^{+}$
with $B_{\partial}\circ\widetilde{\inv}_{j}=\inv_{j}\circ B_{\partial}$.
We have $\pi^{*}\ev^{*}d\theta=\ev_{12}^{*}\pi_{0}^{*}d\theta$. Using
this and \eqref{eq:involution invariance} it follows that
\[
\left[\tilde{\ev}_{12}^{*}\left(\sigma\left(r_{0}\right)\phi_{0}\right)\wedge B^{*}\left(\pi^{*}\ev^{*}d\theta\wedge f^{*}\gamma\right)\right]|_{\tilde{\partial}_{j}^{+}}
\]
is invariant under $\widetilde{\inv}_{j}^{*}$. Since $\widetilde{\text{inv}}_{j}$
is orientation-reversing, the contribution of $\int_{\tilde{\partial}_{j}^{+}}\cdots$
to $\EEE$ vanishes. On $\tilde{\partial}_{1}^{-}\sqcup\tilde{\partial}_{2}^{-}$,
$f^{*}\gamma$ vanishes identically. Indeed, there exists a map $f^{-}$
making the following square commute
\[
\xymatrix{\partial_{1}^{-}\sqcup\partial_{2}^{-}\ar[r]\ar[d]_{f^{-}} & \mm_{1}\times\mm_{2}\ar[d]^{f}\\
\partial\left(\check{\mm}_{1}\times\check{\mm}_{2}\right)\ar[r] & \check{\mm}_{1}\times\check{\mm}_{2}
}
\]
so $\left(f^{*}\gamma\right)|_{\partial_{1}^{-}\sqcup\partial_{2}^{-}}=\left(f^{-}\right)^{*}\left(\gamma|_{\partial\left(\check{\mm}_{1}\times\check{\mm}_{2}\right)}\right)=0$,
since $\gamma$ is a top form.

Let us justify equation (II). We have
\[
g_{*}\ev^{*}d\theta=\tau\wedge\pi^{*}\ev^{*}d\theta=\ev_{12}^{*}\left(\tau_{0}\wedge\pi_{0}^{*}d\theta\right)=\ev_{12}^{*}\delta_{*}d\theta.
\]
Since $d\theta$ is Poincar\'e dual to a point in $L$, $\delta_{*}d\theta$
is Poincar\'e dual to a point in $L\times L$, so there exists some
$\epsilon$ with
\[
\ev_{12}^{*}d\epsilon=g_{*}\ev^{*}d\theta-\ev_{12}^{*}\left(d\theta_{1}\wedge d\theta_{2}\right).
\]
It follows that
\[
\int_{\mm_{1}\times\mm_{2}}\left(g_{*}\ev^{*}d\theta\right)\,f^{*}\gamma=\int_{\mm_{1}\times\mm_{2}}\ev_{12}^{*}\left(d\theta_{1}\wedge d\theta_{2}\right)\,f^{*}\gamma+\int_{\partial\left(\mm_{1}\times\mm_{2}\right)}\ev_{12}^{*}\epsilon\,f^{*}\gamma.
\]
Again, the boundary term on the right-hand side vanishes, the argument is essentially the same as for \eqref{eq:bdry term}.

Step (III) follows from $\deg1\gamma_{1}=c_{1}$ and Fubini's theorem.
To prove step (IV) we need to show
\[
\int_{\mm_{1}}\ev_{1}^{*}d\theta_{1}\wedge f_{1}^{*}\gamma_{1}=\left(-1\right)^{c_{1}}\left(d_{1}^{+}-d_{1}^{-}\right)\int_{\check{\mm}_{1}}\gamma_{1}.
\]
Consider some point $p\in\check{\mm}_{1}$, represented by a fundamental
configuration $\sigma=\left(\left(\Sigma,\nu,\lambda,w\right),b,\Sigma^{1/2}\right)$.
Let $y_{p}\in L$ be a regular value for $w|_{\partial\Sigma^{1/2}}$. It
is not hard to see that for a small open interval $y_{p}\in V_{p}\subset L$
and a small open neighbourhood $p\in U_{p}\subset\check{\mm}_{1}$\textbf{
$f_{1}^{-1}\left(U_{p}\right)\cap\ev_{1}^{-1}\left(V_{p}\right)\xrightarrow{f_{1}\times\ev_{1}}U_{p}\times V_{p}$}
is a (possibly empty) finite covering map, whose sheets are in bijection
with $w|_{\partial}^{-1}\left(y_{p}\right)$. Choose a finite open
subcover $\left\{ U_{\alpha}=U_{p_{\alpha}}\right\} $ with a subordinate
partition of unity $\rho_{\alpha}$. Let $\lambda_{\alpha}\in\Omega^{1}\left(L\right)$
be Poincar\'e dual to a point with compact support in $V_{\alpha}=V_{p_{\alpha}}$,
and let $\epsilon_{\alpha}\in\Omega^{0}\left(L\right)$ satisfy $d\epsilon_{\alpha}=d\theta-\lambda_{\alpha}$.
We compute
\begin{multline*}
\int_{\mm_{1}}\ev_{1}^{*}d\theta_{1}\wedge f_{1}^{*}\gamma_{1}=\sum_{\alpha}\int_{\mm_{1}}\ev_{1}^{*}d\theta_{1}\wedge f_{1}^{*}\left(\rho_{\alpha}\gamma_{1}\right)=\\
=\sum_{\alpha}\int_{\mm_{1}}\ev_{1}^{*}\lambda_{\alpha}\wedge f_{1}^{*}\left(\rho_{\alpha}\gamma_{1}\right)+\sum_{\alpha}\int_{\partial\mm_{1}}\ev_{1}^{*}\epsilon_{\alpha}\wedge f_{1}^{*}\left(\rho_{\alpha}\gamma_{1}\right).
\end{multline*}
We have used the fact $d\left(\rho_{\alpha}\gamma_{1}\right)=0$, since
$\rho_{\alpha}\gamma_{1}$ is a top form. The boundary terms vanish
as before (by using $\inv_{1}$ to show that the contribution of $\partial^{+}\mm_{1}$
vanishes, and arguing that $f^{*}\rho_{\alpha}\gamma_{1}|_{\partial^{-}\mm_{1}}\equiv0$).
To evaluate $\int_{\mm_{1}}\ev_{1}^{*}\lambda_{\alpha}\wedge f_{1}^{*}\left(\rho_{\alpha}\gamma_{1}\right)$,
use $f_{1}\times\ev_{1}$ as local coordinates. The contribution of
each sheet is $\pm\int_{U_{\alpha}}\rho_{\alpha}\gamma_{1}$, where
the sign is $\pm1$ according to \eqref{eq:forgetful fiber or}. This signed count satisfies
\begin{equation}\label{eq:w_signed_count}
\#\,w|_{\partial}^{-1}\left(y_{p}\right)=\left(-1\right)^{c_{1}}\left(d_{1}^{+}-d_{1}^{-}\right),
\end{equation}
where only the sign needs some explanation, see below. So we get
\[
\sum_{\alpha}\int_{\mm_{1}}\ev_{1}^{*}\lambda_{\alpha}\wedge f_{1}^{*}\left(\rho_{\alpha}\gamma_{1}\right)=\left(-1\right)^{c_{1}}\left(d_{1}^{+}-d_{1}^{-}\right)
\int_{\check{\mm}_{1}}\left(\sum\rho_{\alpha}\right)\gamma=\left(-1\right)^{c_{1}}\left(d_{1}^{+}-d_{1}^{-}\right)\int_{\check{\mm}_{1}}\gamma,
\]
completing the justification of step (IV).

It remains to explain our orientation conventions and discuss signs.
A generic point $p\in C$ satisfies $g\left(p\right)\not\in i\left(\partial_{1}^{+}\sqcup\partial_{2}^{+}\right)$,
that $d\ev|_{g\left(p\right)},df|_{g\left(p\right)}$ are submersions,
and that the isotropy at $p$ is trivial. Fix such a point. For the remainder of the
proof we will pull back all of the vector bundles to $p$, and work
exclusively with the vector spaces we obtain in this way and with
their oriented bases. Henceforth, if the pullback
to $p$ is unambiguously defined we omit it from the notation. Let $[X]$ denote an oriented base for $T_qX,$ for an orbifold $X$ and a point $q$ that will be clear from the context.
We write $\ev^{*}TL,\ev_{1}^{*}TL,\ev_{2}^{*}TL$ for the positive vectors
(dual to $d\theta$) on the corresponding vector spaces.

Recall
\[
\check{\mm}_{j}=\partial^{\sigma_{j}}\overline{\mm}_{0,\emptyset,\lf_{j}}\left(\vd_{j}\right),
\]
and $\sigma_{1}\sqcup\sigma_{2}=\left\{ 1,...,c-1\right\} $. Let
\[
O_{j}=o\left(\sigma_{j}^{c_{j}}\right)o\left(\sigma_{j}^{c_{j-1}}\right)\cdots o\left(\sigma_{j}^{1}\right)
\]
denote the sequence of outward normal vectors in $\overline{\mm}_{0,\emptyset,\lf_{j}}\left(\vd_{j}\right),$
with $o\left(i\right)$ corresponding to the $i^{th}$ local boundary
component. Note the elements of $\sigma_{j}$ appear in reverse order;
this is because, iterating the definition of the boundary orientation,
we find that there is an equality of orientations
\[
O_{j}\,[\check{\mm}_{j}]=[\overline{\mm}_{0,\emptyset,\lf_{j}}\left(\vd_{j}\right)].
\]
Let $[\overline{\mm}]$ denote an oriented basis for $\overline{\mm}_{0,\emptyset,\lf_{1}\sqcup\lf_{2}}\left(\vd_{1}+\vd_{2}\right)$,
and let $O=o(c)$ denote an outward normal vector corresponding
to smoothing the $\left(\sstar_{1},\sstar_{2}\right)$-node. Since
$C$ is oriented as a corner of $\overline{\mm}$, we find that
\[
\left(-1\right)^{\sigma+c_{1}c_{2}}O\,O_{1}\,O_{2}\,[C]=[\overline{\mm}],
\]
where
\[
\left(-1\right)^{\sigma+c_{1}c_{2}}=\operatorname{sign}\begin{pmatrix}c-1 &  &  &  & \cdots &  &  & 2 & 1\\
\sigma_{1}^{c_{1}} & \sigma_{1}^{c_{1}-1} & \cdots & \sigma_{1}^{2} & \sigma_{1}^{1} & \sigma_{2}^{c_{2}} & \sigma_{2}^{c_{2}-1} & \cdots & \sigma_{2}^{1}
\end{pmatrix}.
\]

Let $F_{f\circ g}$ denote a vector spanning $\ker d\left(f\circ g\right)$,
oriented so that the following equality holds:
\begin{equation}
\left(-1\right)^{\sigma+c_{1}c_{2}}O_{1}\,O_{2}\,[C]=F_{f\circ g}\,O_{1}[\check{\mm}_{1}]\,O_{2}[\check{\mm}_{2}].\label{eq:fg fib or}
\end{equation}
To understand this equation, note that the standard convention for
orienting the fiber of $f\circ g$ would lead to an equation of the
form
\[
C\overset{!}{=}F_{f\circ g}^{!}\,\check{\mm}_{1}\,\check{\mm}_{2}.
\]
Instead, \eqref{eq:fg fib or} comes from identifying $F_{f\circ g}$
with the tangent space to the fiber of the map
\[
\overline{\mm}_{0,\sstar_{1},\lf_{1}}\left(\vd_{1}\right)\times_{L}\overline{\mm}_{0,\sstar_{2},\lf_{2}}\left(\vd_{2}\right)\to\overline{\mm}_{0,\emptyset,\lf_{1}}\left(\vd_{1}\right)\times\overline{\mm}_{0,\emptyset,\lf_{2}}\left(\vd_{2}\right),
\]
where the domain is oriented as a boundary of $\overline{\mm}$.
We use this orientation for $F_{f\circ g}$ because in Lemma \ref{lem:induced for bdry node} it is computed to be
\[
F_{f\circ g}=\left(-1\right)^{s_{1}+s_{2}}\ev^{*}TL,
\]
where $s_{j}$ is the sign of the region incident to $\sstar_{j}$.

Plugging this in \eqref{eq:fg fib or} and using that $\dim\check{\mm}_{1}=c_{1}\mod 2$
we get
\begin{equation}
[C]=\left(-1\right)^{\sigma}\left(-1\right)^{s_{1}+s_{2}}\left(-1\right)^{c-1}\left(\ev^{*}TL\right)\,[\check{\mm}_{1}]\,[\check{\mm}_{2}].\label{eq:C orientation}
\end{equation}

We turn to the construction of the Thom forms. We orient the normal
lines $N_{g},N_{\delta}$ associated with $g,\delta$ by
\begin{align}
&[\overline{\mm}_{0,\sstar_1,\lf_{1}}\left(\vd_{1}\right)][\overline{\mm}_{0,\sstar_2,\lf_{2}}\left(\vd_{2}\right)]=N_{g}\,[C]=\left(-1\right)^{\sigma}\left(-1\right)^{s_{1}+s_{2}}\left(-1\right)^{c-1}N_{g}\left(\ev^{*}TL\right)\,[\check{\mm}_{1}]\,[\check{\mm}_{2}],\label{eq:Ng or}\\
&(\ev_{1}^{*}TL)\,(\ev_{2}^{*}TL)=N_{\delta}\ev^{*}TL.\label{eq:Ndelta or}
\end{align}

Since $g$ is the pullback of $\delta$, $N_{g},N_{\delta}$ are naturally
identified as vector spaces. We claim that we have an equality of
oriented vector spaces
\begin{equation}
N_{g}=\left(-1\right)^{\sigma}\left(-1\right)^{c_{1}}N_{\delta}.\label{eq:normal gap}
\end{equation}
To see this, let $F_{f_{j}}$ be a vector spanning $\ker df_{j}$
oriented by the fiber convention
\begin{equation}
[\overline{\mm}_{0,\sstar_j,\lf_{j}}\left(\vd_{j}\right)]=F_{f_{j}}\,[\check{\mm}_{j}].\label{eq:def of forgetful fiber or}
\end{equation}
We have
\[
O_{j}\,F_{f_{j}}\,[\check{\mm}_{j}]=[\overline{\mm}_{0,\sstar_{j},\lf_{j}}\left(\vd_{j}\right)]=
\left(-1\right)^{s_{j}}(\ev_{j}^{*}TL)\,[\overline{\mm}_{0,\emptyset,\lf_{j}}\left(\vd_{j}\right)]=
\left(-1\right)^{s_{j}}(\ev_{j}^{*}TL)\,O_{j}[\check{\mm}_{j}],
\]
where the second equality follows from the orientation formula \eqref{eq:or}, so
\begin{equation}
F_{f_{j}}=\left(-1\right)^{s_{j}+c_{j}}\ev_{j}^{*}TL\label{eq:forgetful fiber or}
\end{equation}
Plugging \eqref{eq:forgetful fiber or} and \eqref{eq:def of forgetful fiber or}
in \eqref{eq:Ng or} we find
\[
(\ev_{1}^{*}TL)\,(\ev_{2}^{*}TL)=\left(-1\right)^{\sigma}\left(-1\right)^{c_{1}}N_{g}\,\ev^{*}TL.
\]
Comparing this to \eqref{eq:Ndelta or} gives \eqref{eq:normal gap}.

We use this to explain the sign appearing between the first and second
line in \eqref{eq:adjunction}. Indeed it is not hard to see that,
had we used a Thom form $\tau^{!}$ constructed using the orientation
of $N_{g}$, there would be no sign. Instead, we defined $\tau=\ev_{12}^{*}\tau_{0}$,
so there is a sign from \eqref{eq:normal gap}.

Step (II) is an equality of forms. The sign of step (III) requires no additional explanation. The sign of step (IV) comes from the calculation \eqref{eq:w_signed_count}, which follows from \eqref{eq:forgetful fiber or}. This concludes the proof of the
lemma.
\end{proof}

\begin{proof}
[Proof of Proposition \ref{prop:recursion}]By definition,
\begin{gather*}
\frac{1}{2}\int_{\widetilde{\cl}^{c}\left(\emptyset,\lf,\vd\right)}Q_{\leq c}^{\lf,\vd}\alpha_{\lf,\vd}=\frac{1}{2\cdot c!\cdot\left(-2u\right)^{c}}\sum_{\sigma_{1},\sigma_{2},\lf_{1},\lf_{2},\vd_{1},\vd_{2}}\int_{C}d\theta_{1}\cdots d\theta_{c}\,g^{*}f^{*}\,\left(\alpha_{\lf_{1},\vd_{1}}\,\alpha_{\lf_{2},\vd_{2}}\right),
\end{gather*}
where
\[
C=C\left(\sigma_{1},\sigma_{2},\lf_{1},\lf_{2},\vd_{1},\vd_{2}\right)=\partial_{nd}^{\sigma_{1}}\overline{\mm}_{0,\sstar_{1},\lf_{1}}\left(\vd_{1}\right)\times_{L}\partial_{nd}^{\sigma_{2}}\overline{\mm}_{0,\sstar_{2},\lf_{2}}\left(\vd_{2}\right).
\]

We have
\begin{align*}
\frac{1}{2\cdot c!\cdot\left(-2u\right)^{c}}&\int_{C}d\theta_{1}\cdots d\theta_{c}g^{*}f^{*}\,\left(\alpha_{\lf_{1},\vd_{1}}\,\alpha_{\lf_{2},\vd_{2}}\right)=\\
=&\frac{\left(-1\right)^{c-1}\left(-1\right)^{\sigma}}{2\cdot c!\cdot\left(-2u\right)^{c}}\int_{C}d\theta_{c}\,g^{*}f^{*}\left(d\theta_{\sigma_{1}^{1}}\cdots d\theta_{\sigma_{1}^{c_{1}}}\alpha_{\lf_{1},\vd_{1}}\,d\theta_{\sigma_{2}^{1}}\cdots d\theta_{\sigma_{2}^{c_{2}}}\alpha_{\lf_{2},\vd_{2}}\right)=\\
=&\frac{\left(-1\right)^{c-1}\left(-1\right)^{\sigma}}{2\cdot c\cdot\left(-2u\right)}\cdot\frac{c_{1}!\,c_{2}!}{\left(c-1\right)!}\int_{C}\ev^{*}d\theta\,g^{*}f^{*}\Gamma=\\
=&\frac{\left(d_{+}^{1}-d_{-}^{1}\right)\left(d_{+}^{2}-d_{-}^{2}\right)}{2\cdot\left(-2u\right)}\cdot\frac{1}{c}\cdot\frac{c_{1}!\,c_{2}!}{\left(c-1\right)!}\int_{\partial_{nd}^{\sigma_{1}}\overline{\mm}_{0,\emptyset,\lf_{1}}\left(\vd^{1}\right)}\Gamma_{1}\cdot\int_{\partial_{nd}^{\sigma_{2}}\overline{\mm}_{0,\emptyset,\lf_{2}}\left(\vd^{2}\right)}\Gamma_{2},
\end{align*}
where we used Lemma \ref{lem:projection formula} in the last step.

Summing over all $\sigma_{1}\sqcup\sigma_{2}=\left\{ 1,...,c-1\right\} $
with $\left|\sigma_{j}\right|=c_{j}$, then over all $c_{1}+c_{2}=c-1$,
$\lf_{1}\sqcup\lf_{2}=\lf$ and $\vd_{1}+\vd_{2}=\vd$, we find that
\begin{gather*}
\int_{\cl^{c}\left(\emptyset,\lf,\vd\right)}Q_{\leq c}^{\lf,\vd}\alpha_{\lf,\vd}=\frac{1}{2}\cdot\frac{\left(d_{1}^{+}-d_{1}^{-}\right)\left(d_{2}^{+}-d_{2}^{-}\right)}{\left(-2u\right)}\,\frac{1}{c}\,\sum\int_{\partial_{nd}^{c_{1}}\overline{\mm}_{0,\emptyset,\lf_{1}}\left(\vd_{1}\right)}Q_{\leq c_{1}}^{\lf_{1},\vd_{1}}\alpha_{\lf_{1},\vd_{1}}\times\int_{\partial_{nd}^{c_{2}}\overline{\mm}_{0,\emptyset,\lf_{2}}\left(\vd_{2}\right)}Q_{\leq c_{2}}^{\lf_{2},\vd_{2}}\alpha_{\lf_{2},\vd_{2}}.
\end{gather*}
The statement of the proposition follows, since we can enlarge the
domains of integration to include the degenerate corners
\[
\cl^{c_{j}}\left(\emptyset,\lf_{j},\vd_{j}\right)\supset\partial_{nd}^{c_{j}}\overline{\mm}_{0,\emptyset,\lf_{j}}\left(\vd_{j}\right)
\]
without changing the value of the integral.
\end{proof}

\subsection{Contribution of the exceptional boundary}
\begin{lemma}\label{lem:exceptional_cont}
The contribution of the exceptional boundary is
\[\int_{\ee}\left.Q_{1}^{\lf,\vd}\right|_{\ee}\alpha_{\lf,(d,d)}=
\int_{\ee}\left(-\frac{1}{2u}\right)\cdot\left(\ev_{\star}\right)^{*}d\theta f^*\beta_{\lf,d}
=\frac{-d}{\left(-2u\right)}\int_{\overline{\mm}_{0,\lf}\left(d\right)}\beta_{\lf,d},\]
where $f$ is the map which forgets $\star.$
\end{lemma}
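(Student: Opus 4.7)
The first equality is immediate: the prescription for $Q_1^{\lf,\vd}|_{\ee}$ given just before the lemma is $Q_1^{\lf,\vd}|_{\ee}=-\tfrac{1}{2u}(\ev_\star)^*d\theta$, and the coherence condition \eqref{eq:alpha-beta coherence} gives $\alpha_{\lf,(d,d)}|_{\ee}=f^*\beta_{\lf,d}$; substituting both yields the middle expression.

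For the second equality, my plan is to apply the projection formula along the forgetful map $f:\ee\to\overline{\mm}_{0,\lf}(d)$. Since $\beta_{\lf,d}$ has even degree (Definition \ref{def:coherent integrand}), the convention \eqref{eq:pf convention} gives
\[
\int_{\ee}(\ev_\star)^*d\theta\cdot f^*\beta_{\lf,d}=\int_{\overline{\mm}_{0,\lf}(d)}f_*\bigl((\ev_\star)^*d\theta\bigr)\cdot\beta_{\lf,d},
\]
thereby reducing the problem to evaluating the fiber integral $f_*((\ev_\star)^*d\theta)$, which is a locally constant function on $\overline{\mm}_{0,\lf}(d)$.

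To compute this fiber integral I would work at a generic point $[\Sigma,u,z_1,\dots,z_n]\in\overline{\mm}_{0,\lf}(d)$, with $\Sigma\simeq\CP^1$ and $u$ transverse to $\RP^1$. The fiber of $f$ over such a point is the locus of choices of an extra marked point $z_\star\in\Sigma$ with $u(z_\star)\in L$, which is exactly $u^{-1}(L)\subset\Sigma$; along this fiber $(\ev_\star)^*d\theta$ restricts to $u^*d\theta$. Since $\RP^1$ is oriented as $\partial H^+$ and $u$ restricts to an orientation-preserving degree $d$ cover of $H^+$, the boundary map $u|_{u^{-1}(L)}\colon u^{-1}(L)\to L$ has degree $d$, so that $\int_{u^{-1}(L)}u^*d\theta=d\int_L d\theta=d$ up to an overall sign.

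The one subtle point, and the only real obstacle, is this overall sign. One must compare the orientation of $\ee$ inherited as a clopen boundary component of $\overline{\mm}_{0,\emptyset,\lf}((d,d))$ (whose outward normal corresponds to opening up the collapsed disk boundary) against the orientation of $\ev_\star^{-1}(L)\subset\overline{\mm}_{0,\lf\sqcup\star}(d)$ as the preimage of an oriented submanifold under the evaluation map. Tracing through the conventions of Section~\ref{sec:or}, the combined sign together with the factor $-\tfrac{1}{2u}$ converts the signed degree into the stated value $\frac{-d}{-2u}\int_{\overline{\mm}_{0,\lf}(d)}\beta_{\lf,d}$.
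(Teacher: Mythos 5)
Your overall route is the same as the paper's: restrict the coherent integrand to $\ee$ using \eqref{eq:alpha-beta coherence}, then integrate $(\ev_\star)^*d\theta$ along the fibers of the forgetful map $f$. But as written there are two gaps, one technical and one substantive.

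The technical gap: you invoke the pushforward convention \eqref{eq:pf convention}, but that convention is stated for proper oriented \emph{submersions}, and $f\colon\ee\to\overline{\mm}_{0,\lf}(d)$ fails to be a submersion over the nodal locus of $\overline{\mm}_{0,\lf}(d)$ (the fiber $u^{-1}(\RP^1)\subset\Sigma$ degenerates there). The paper handles this by exhausting the smooth interior $\mm_{0,\lf}(d)$ with compact $S^1$-invariant suborbifolds $\mm_\varepsilon$, observing that $f|_{\ee_\varepsilon}\colon\ee_\varepsilon\to\mm_\varepsilon$ \emph{is} a submersion, doing the fiber integral there, and taking $\varepsilon\to 0$; the limits exist because the strata being removed have codimension at least one. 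Your proposal needs this (or some equivalent) to make the projection formula legal. The substantive gap: you correctly identify the sign as the only real issue, but then defer it entirely to ``tracing through the conventions of Section \ref{sec:or}.'' That trace is the actual content of the second equality; the paper carries it out in Lemma \ref{lem:contracted boundary}, where a chamber-by-chamber analysis of the orientation $\mathfrak{o}_{0,\lf,(d,d)}$ near $\ee$ (an arc between one positive and one negative real ramification point contracting) produces the value $-d$, not $+d$. Without that computation your argument establishes the equality only up to sign, which is not enough here since the $-d$ is what the recursion relies on.
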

\begin{proof}
By part \eqref{it:4} of Definition \ref{def:coherent integrand},
\[
\int_{\ee}\left.Q_{1}^{\lf,\vd}\right|_{\ee}\alpha_{\lf,(d,d)}=
\int_{\ee}\left(-\frac{1}{2u}\right)\cdot\left(\ev_{\star}\right)^{*}d\theta f^*\beta_{\lf,d}.
\]
Let $(\mm_\varepsilon)_{\varepsilon\in(0,1]}$ be a decreasing family of compact, $S^1-$invariant suborbifolds with boundary of $\overline{\mm}_{0,\lf}\left(d\right)$ whose union is ${\mm}_{0,\lf}(d).$ Let $\ee_\varepsilon = f^{-1}(\mm_\varepsilon).$ We have
\[
\int_{\ee}\left(-\frac{1}{2u}\right)\cdot\left(\ev_{\star}\right)^{*}d\theta f^*\beta_{\lf,d}=\lim_{\varepsilon\to 0}
\int_{\ee_\varepsilon}\left(-\frac{1}{2u}\right)\cdot\left(\ev_{\star}\right)^{*}d\theta f^*\beta_{\lf,d}.
\]
Indeed, $\overline{\mm}_{0,\lf}(d)\backslash {\mm}_{0,\lf}(d),$ and hence also $\overline{\ee}_{0,\lf}(d)\backslash {\ee}_{0,\lf}(d),$ are unions of strata with codimension at least $1$ to which the forms extend. Similarly,
\[\int_{\overline{\mm}_{0,\lf}\left(d\right)}\beta_{\lf,d} = \lim_{\varepsilon\to 0}\int_{{\mm_\varepsilon}}\beta_{\lf,d}.
\]
It is therefore enough to show for all $\varepsilon>0$
\[\int_{\ee_\varepsilon}\left(-\frac{1}{2u}\right)\cdot\left(\ev_{\star}\right)^{*}d\theta f^*\beta_{\lf,d}=
(d/2u)\int_{{\mm_\varepsilon}}\beta_{\lf,d}.\]

Now, $f|_{\ee_\varepsilon}:\ee_\varepsilon\to\mm_\varepsilon$ is a submersion. We can therefore calculate the left-hand side of the last equation by integration of $(\ev_\star)^*d\theta$ along the fibers of $f$. The integral along any fiber equals $-d,$ by Lemma \ref{lem:contracted boundary}, which completes the proof of the claim.
\end{proof}

\subsection{Solution to recursion}\label{subsec:Solution-to-Recursion}
For $\delta=\left(\lf,\vd\right)\in\mathcal{S}\left(\tilde{\lf},\tilde{\vd}\right)$
we write $\mm_{\delta}=\overline{\mm}_{0,\emptyset,\lf}\left(\vd\right)$,
and define $P_{c}^{\delta}\in\rr\left[u\right]$ for $c\geq0$
by
\[
P_{c}^{\delta}=\begin{cases}
\int_{\partial^{c}\mm_{\delta}}Q_{\leq c}^{\delta}\alpha_{\delta},&\text{if $c\neq 1$},\\
\int_{\partial\mm_{\delta}\backslash\ee}Q_{1}^{\delta}\alpha_{\delta},&\text{if $c=1$}.
\end{cases}
\]
Recall that $\ee$ denotes the exceptional boundary and that we use the definitions of $G(\delta),\AAH(T)$ from Notation \ref{nn:amplitude,F,E}.

We can combine Propositions \ref{prop:simple fp formula}, \ref{prop:recursion}, Lemma \ref{lem:exceptional_cont}
and the description of the boundary and corners, Proposition \ref{prop:corners of moduli}, to obtain the following pair of equations:
\begin{align}
&\sum_{c\geq0}\left(-1\right)^{c}P_{c}^{\delta}=G\left(\delta\right)\quad\text{for}\quad\delta\in\mathcal{S}\left(\tilde{\lf},\tilde{\vd}\right),\label{eq:alternating sum P}\\
&P_{c}^{\delta}=\sum_{\delta_{1}\#\delta_{2}=\delta}\frac{1}{2}\frac{\left(d_{1}^{+}-d_{1}^{-}\right)\left(d_{2}^{+}-d_{2}^{-}\right)}{-2u}\,\frac{1}{c}\,\sum_{c_{1}+c_{2}=c-1}P_{c_{1}}^{\delta_{1}}P_{c_{2}}^{\delta_{2}}\quad\text{for}\quad c\geq 1\quad\text{and}\quad\delta\in\mathcal{S}\left(\tilde{\lf},\tilde{\vd}\right).\label{eq:quadratic P}
\end{align}

\begin{lem}
Treating $\left\{ P_{c}^{\delta}\right\} $ as unknowns, there exists a unique solution to \eqref{eq:alternating sum P} and \eqref{eq:quadratic P}.
\end{lem}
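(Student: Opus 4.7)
The plan is to establish uniqueness by induction on a well-ordering of the disk moduli specifications $\mathcal{D}$, where I order $\delta = (\lf, \vd)$ primarily by the total degree $|\vd| = d^+ + d^-$, with $|\lf|$ breaking ties. The key structural observation is that in the right-hand side of \eqref{eq:quadratic P}, only decompositions $\delta_1 \# \delta_2 = \delta$ with both $\vd_1, \vd_2 \neq (0,0)$ contribute nontrivially, since a degree-zero factor kills the $(d_i^+ - d_i^-)$ weight. For such decompositions $|\vd_1|, |\vd_2| < |\vd|$ strictly, so \eqref{eq:quadratic P} expresses each $P_c^\delta$ with $c \geq 1$ as a polynomial in $\{P_{c'}^{\delta'}\}$ with $\delta' < \delta$.

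Before running the induction I would record the elementary finiteness statement that for each fixed $\delta$ the alternating sum in \eqref{eq:alternating sum P} is finite, since $\partial^c \mm_\delta = \emptyset$ for $c$ exceeding the depth of the moduli space, and therefore $P_c^\delta = 0$ for all but finitely many $c$. For the base case I take $\delta$ minimal in the chosen order (for instance $|\vd| = 1$), so that no admissible splitting exists; then \eqref{eq:quadratic P} forces $P_c^\delta = 0$ for every $c \geq 1$, and \eqref{eq:alternating sum P} fixes $P_0^\delta = G(\delta)$ uniquely. For the inductive step, assuming uniqueness for all strictly smaller specifications, \eqref{eq:quadratic P} uniquely determines $P_c^\delta$ for every $c \geq 1$, and then rearranging \eqref{eq:alternating sum P} gives
\[
P_0^\delta \;=\; G(\delta) \;-\; \sum_{c \geq 1}(-1)^c P_c^\delta,
\]
a finite expression in already-determined quantities, which pins down $P_0^\delta$ uniquely.

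There is no real obstacle here once the triangular structure of the system is recognized: \eqref{eq:quadratic P} is triangular for $c \geq 1$ with respect to the degree order, while \eqref{eq:alternating sum P} is the single linear equation needed to eliminate the remaining degree of freedom $P_0^\delta$. The only point I would be careful about is verifying that the right-hand side of \eqref{eq:quadratic P} for $c \geq 1$ indeed involves only strictly smaller specifications, which relies precisely on the exclusion of $\vd_i = (0,0)$ factors enforced by the vanishing of the coherent integrand in condition (b) of Definition \ref{def:coherent integrand} together with the vanishing weight $(d_i^+ - d_i^-) = 0$ in that degenerate case.
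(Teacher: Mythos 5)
Your proof is correct and takes essentially the same route as the paper: recognizing the triangular structure of the system and solving by induction. The paper phrases the induction via a partial order on triples $(c,\lf,\vd)$, whereas you explicitly separate the two steps (first determining $P_c^\delta$ for $c\geq 1$ from \eqref{eq:quadratic P} using strictly smaller specifications, since the $(d_i^+-d_i^-)$ weight annihilates any degree-zero factor, and then pinning down $P_0^\delta$ from \eqref{eq:alternating sum P}); this is a slightly more explicit bookkeeping of the same argument.
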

\begin{proof}
Define a partial order
\[
\left(c_{1},\lf_{1},\vd_{1}\right)\leq\left(c_{2},\lf_{2},\vd_{2}\right)
\]
iff $c_{1}\leq c_{2}$ and $\lf_{1}\subseteq\lf_{2}$ and $d_{1}^{+}\leq d_{2}^{+}$
and $d_{1}^{-}\leq d_{2}^{-}$. We see that \eqref{eq:alternating sum P} and \eqref{eq:quadratic P} are upper-triangular; more precisely, if $c=0$ we can use \eqref{eq:alternating sum P}
to express $P_{c}^{\delta}$ in terms of $\left\{ P_{c'}^{\delta'}|\left(c',\delta'\right)<\left(c,\delta\right)\right\} $
and if $c\geq1$ we can use \eqref{eq:quadratic P} to do so. This
proves existence and uniqueness of a solution $\left\{ P_{c}^{\delta}\right\} $.
\end{proof}
We now describe an anzats for this.
\begin{lem}
\label{lem:anzats}The unique solution to \eqref{eq:alternating sum P}-\eqref{eq:quadratic P}
is given by
\[
P_{c}^{\lf,\vd}=\sum_{n\geq0}\binom{n}{c}\sum_{T\in\hts\left(n,\lf,\vd\right)}\AAH\left(T\right).
\]
\end{lem}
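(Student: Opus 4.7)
The plan is to verify that the proposed ansatz satisfies both recursions \eqref{eq:alternating sum P} and \eqref{eq:quadratic P}; by the uniqueness established in the preceding lemma this will suffice. For the first recursion, substituting the ansatz gives
\[
\sum_{c \geq 0}(-1)^c P_c^\delta = \sum_n\Big(\sum_c(-1)^c\binom{n}{c}\Big)\sum_{T\in\hts(n,\lf,\vd)}\AAH(T) = \sum_{T \in \hts(0,\lf,\vd)}\AAH(T),
\]
since $\sum_c(-1)^c\binom{n}{c}$ vanishes unless $n = 0$. The unique tree in $\hts(0,\lf,\vd)$ is a single vertex with amplitude $G(\delta)$.

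Let $Q_n^\delta := \sum_{T \in \hts(n,\lf,\vd)}\AAH(T)$. I reduce \eqref{eq:quadratic P} to the identity
\begin{equation}\label{eq:planstar}
n\,Q_n^\delta = \frac{1}{-4u}\sum_{\delta_1\#\delta_2 = \delta}(d_1^+-d_1^-)(d_2^+-d_2^-)\sum_{n_1+n_2=n-1}Q_{n_1}^{\delta_1}Q_{n_2}^{\delta_2},\qquad n\geq 1.
\end{equation}
Indeed, inserting the ansatz into the right-hand side of \eqref{eq:quadratic P}, exchanging summations, and applying Vandermonde $\sum_{c_1+c_2=c-1}\binom{n_1}{c_1}\binom{n_2}{c_2}=\binom{n_1+n_2}{c-1}$, one obtains after reindexing $n=n_1+n_2+1$ the expression $\sum_{n\geq 1}\frac{1}{c}\binom{n-1}{c-1}\cdot n\,Q_n^\delta$ via \eqref{eq:planstar}. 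Since $\frac{n}{c}\binom{n-1}{c-1}=\binom{n}{c}$ and $\binom{0}{c}=0$ for $c\geq 1$, this equals $P_c^\delta$.

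The essential content is \eqref{eq:planstar}. Observe that $\AAH(T)$ depends only on the underlying unlabeled, unoriented, decorated tree $T_{un}$: the factor $\frac{1}{2^n n!}$ exactly absorbs the $2^n\cdot n!$ choices of edge labels and orientations, while $\Aut(T)$ is trivial for labeled-oriented $T$ because distinct edge labels rigidify the tree. Orbit-stabilizer yields
\[
Q_n^\delta = \sum_{[T_{un}]}\frac{1}{|\Aut(T_{un})|}\Big(\frac{-1}{2u}\Big)^n\prod_{v\in V(T_{un})}G(\delta(v))(d^+(v)-d^-(v))^{\val(v)}.
\]
Then $2n\,Q_n^\delta$ is a sum over iso classes $[(T_{un},h)]$ of decorated trees with a distinguished half-edge (since $T_{un}$ has $2n$ half-edges). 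Cutting the distinguished edge defines a bijection to iso classes of ordered pairs of rooted decorated trees $((T_1,v_1),(T_2,v_2))$ with $\delta_1\#\delta_2 = \delta$ and $n_1+n_2 = n-1$. The valency of the root $v_i$ drops by $1$ upon cutting, producing the extra factor $(d^+(v_1)-d^-(v_1))(d^+(v_2)-d^-(v_2))$ together with a single extra $\frac{-1}{2u}$. To pass from rooted to unrooted $T_i$, I invoke $\sum_{[(T,v)]}f(v)/|\Aut(T,v)| = \sum_{[T]}|\Aut(T)|^{-1}\sum_{v\in V(T)}f(v)$ combined with $\sum_{v\in V(T_i)}(d^+(v)-d^-(v)) = d_i^+-d_i^-$, the elementary fact that total degree is additive across vertices of a connected component.

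The main obstacle is symmetry-factor bookkeeping: aligning the $\frac{1}{2^n n!}$ prefactor in $\AAH$, the $2n$ half-edges on an $n$-edge tree, the rooted-vertex automorphism groups, and the distinction between ordered and unordered decompositions $\delta_1\#\delta_2 = \delta$. Once these factors are organized correctly, the identity collapses to the elementary degree-summation observation above.
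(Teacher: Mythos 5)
Your proof is correct. Both arguments are tree-surgery verifications of the ansatz, but the bookkeeping is organized differently. The paper stays with the labeled-oriented trees of $\hts$ throughout: it exhibits an explicit gluing bijection between tuples $\left(\sigma,T_1,v_1,T_2,v_2\right)$ and trees in $\hts\left(n,\lf,\vd\right)$, where the partition $\sigma\sqcup\sigma_2=[n-1]$ distributes the edge labels, and then closes the argument with the single identity $\sum_{c_1+c_2=c-1}\tfrac{1}{c}\binom{n_1}{c_1}\binom{n_2}{c_2}\tfrac{n!}{n_1!n_2!}=\binom{n}{c}\binom{n-1}{n_1,n_2}$. You instead first apply Vandermonde to peel off all binomial-coefficient dependence, isolating the clean quadratic recursion $n\,Q_n^\delta=\tfrac{1}{-4u}\sum\left(d_1^+-d_1^-\right)\left(d_2^+-d_2^-\right)\sum Q_{n_1}^{\delta_1}Q_{n_2}^{\delta_2}$ for $Q_n^\delta=\sum_{T\in\hts(n,\lf,\vd)}\AAH(T)$, and then prove that recursion by an edge-cutting argument on unlabeled decorated trees, tracking symmetry via orbit-stabilizer. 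Your factorization exposes that recursion as the conceptual core; the paper's route avoids automorphism-group bookkeeping entirely by keeping all trees rigidified through edge labels, so every bijection is elementary. The content is the same, and both routes are sound.
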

\begin{proof}
Let us check that this satisfies equation \eqref{eq:alternating sum P}.
Fix a tree $T\in\hts\left(n,\lf,\vd\right)$. If $n>0$, the contribution
of $\AAH\left(T\right)$ to $\sum_{c\geq0}\left(-1\right)^{c}P_{c}^{\lf,\vd}$
is given by
\[
\left(\sum_{c\geq0}\left(-1\right)^{c}\binom{n}{c}\right)\AAH\left(T\right)=0.
\]
If $n=0$, $T$ has a single vertex labeled $\left(\lf,\vd\right)$
and the contribution of $\AAH\left(T\right)$ to $\sum_{c\geq0}\left(-1\right)^{c}P_{c}^{\lf,\vd}$
is $G\left(\lf,\vd\right)$.

Let us check that this satisfies equation \eqref{eq:quadratic P}. Rewrite
the right-hand side of \eqref{eq:quadratic P} as a sum over tuples
$\left(\sigma,T_{1},v_{1},T_{2},v_{2}\right)$, where $\sigma=\left(\sigma_{1},\sigma_{2}\right)$
is a partition $\sigma_{1}\sqcup\sigma_{2}=\left[n-1\right]$ with
$\left|\sigma_{i}\right|=n_{i}$ and for $i=1,2$ we have $T_{i}\in\hts\left(n_{i},\lf_{i},\vd_{i}\right)$
and $v_{i}\in V\left(T_{i}\right)$, and where the contribution of
each such tuple is
\begin{align}
&\binom{n-1}{n_{1},n_{2}}^{-1}\frac{1}{2^{n}n!}\left(\frac{-1}{2u}\right)^{n}\sum_{c_{1}+c_{2}=c-1}\frac{1}{c}\frac{2^{n-1}n!}{2^{n_{1}}n_{1}!2^{n_{2}}n_{2}!}\times\label{eq:5-tuple contribution}\\
&\times\left(d^{+}\left(v_{1}\right)-d^{-}\left(v_{1}\right)\right)\binom{n_{1}}{c_{1}}\prod_{v\in V\left(T_{1}\right)}\left(d^{+}\left(v\right)-d^{-}\left(v\right)\right)^{\val\left(v\right)}\cdot G\left(\delta\left(v\right)\right)\times\notag\\
&\times\left(d^{+}\left(v_{2}\right)-d^{-}\left(v_{2}\right)\right)\binom{n_{2}}{c_{2}}\prod_{v\in V\left(T_{2}\right)}\left(d^{+}\left(v\right)-d^{-}\left(v\right)\right)^{\val\left(v\right)}\cdot G\left(\delta\left(v\right)\right).\notag
\end{align}
Now glue $v_{1}$ and $v_{2}$ by an oriented edge to obtain a tree
$T$, so that
\[
\prod_{j=1,2}\left(d^{+}\left(v_{j}\right)-d^{-}\left(v_{j}\right)\right)\prod_{v\in V\left(T_{j}\right)}\left(d^{+}\left(v\right)-d^{-}\left(v\right)\right)^{\val\left(v\right)}=\prod_{v\in V\left(T\right)}\left(d^{+}\left(v\right)-d^{-}\left(v\right)\right)^{\val\left(v\right)}.
\]
We use the partition $\sigma$ and the orders on the edges of $T_{1}$
and of $T_{2}$ to number the edges of $T$, labeling the new edge
$\left(v_{1},v_{2}\right)$ by $n=n_{1}+n_{2}+1$. This defines a
bijection
\[
\left\{ \left(\sigma,v_{1},T_{1},v_{2},T_{2}\right)\right\} \simeq\left\{ T\in\hts\left(n,\lf,\vd\right)\right\}.
\]
Plugging in the identity
\[
\sum_{c_{1}+c_{2}=c-1}\frac{1}{c}\binom{n_{1}}{c_{1}}\binom{n_{2}}{c_{2}}\frac{n!}{n_{1}!n_{2}!}=\binom{n}{c}\,\binom{n-1}{n_{1},n_{2}}
\]
completes the proof of \eqref{eq:quadratic P}.
\end{proof}
\begin{proof}
[Proof of Theorem \ref{thm:loc for CP1,RP1}]Theorem \ref{thm:loc for CP1,RP1} is the case $c=0$
of Lemma \ref{lem:anzats}.
\end{proof}

\section{Proofs of the main theorems}\label{sec:proofs}
\subsection{Fixed-point contributions for stationary descendent integrals}
Recall Notation \ref{nn:amplitude,F,E}.
Let $\left\{ \alpha_{\lf,\vd}\right\} ,\left\{ \beta_{\lf,d}\right\} $ be a coherent integrand for the tautological line bundles, we set
\begin{equation}\label{eq:F,E}
F\left(\lf,\vd,\vec{a},\vec{\epsilon}\right)=F^{\left\{ \alpha_{\delta}\right\} _{\delta\in\mathcal{D}},\left\{ \beta_{\delta}\right\} _{\delta\in\mathcal{S}}}\left(\lf,\vd\right),\qquad
E\left(\lf,\vd,\vec{a},\vec{\epsilon}\right) =E^{\left\{ \alpha_{\delta}\right\} _{\delta\in\mathcal{D}},\left\{ \beta_{\delta}\right\} _{\delta\in\mathcal{S}}}\left(\lf,\vd\right).
\end{equation}

The following lemma shows that for given $\vec{a},\vec{\epsilon},~E,F$ of \eqref{eq:F,E} are independent of the choices made in the definition of the coherent integrand, and calculates them.
\begin{lem}\label{lem:lem}
(a) For a sphere moduli specification $\left(\lf,d\right)$, we have
\[
\frac{2u}{d}E(\lf,d,\vec{a},\vec{\epsilon})=\int_{\overline{\mm}_{0,\lf}\left(d\right)}\beta_{\lf,d}=I\left(\lf,d,\vec{a},\vec{\epsilon}\right),
\]
where $I(S,\vec{a},\vec{\epsilon})$ is given in Definition \ref{def:inhomterms}. In particular, for fixed $\lf,d,\vec{a},\vec{\epsilon},$ $E(\lf,d,\vec{a},\vec{\epsilon})$ is independent of choices.

(b) For a disk moduli specification $\left(\lf,\vd\right)$, we have
\[
F\left(\lf,\vd,\vec{a},\vec{\epsilon}\right)=I\left(\lf,\vd,\vec{a},\vec{\epsilon}\right).
\]
In particular, for fixed $\lf,\vd,\vec{a},\vec{\epsilon}$ it is independent of choices.

(c) Let $S$ be a moduli specification with several connected components. Then we have
\[
I\left(S,\vec{a},\vec{\epsilon}\right)=\prod I\left(S',\vec{a}|_{S'},\vec{\epsilon}|_{S'}\right),
\]
where the product is taken over the connected components and $\vec{a}|_{S'},\vec{\epsilon}|_{S'}$ are the restrictions of $\vec{a},\vec{\epsilon}$ to the labels of $S'.$
\end{lem}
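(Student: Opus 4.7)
The plan is to recognize Lemma \ref{lem:lem} as the virtual localization formula applied to the three relevant moduli spaces. All three assertions reduce to the same three ingredients: (i) identifying the $S^{1}$-fixed locus with a disjoint union of orbifold quotients $[\overline{\mm}_{\Gamma}/A_{\Gamma}]$ indexed by isomorphism classes of fixed-point graphs (as in the Observation following Definition \ref{def:fpg}), (ii) computing the equivariant Euler class of the normal bundle in terms of node-smoothing, Hodge-bundle, and cover-automorphism contributions, and (iii) restricting the explicit integrands $\prod \psi_{i}^{a_i}\ev_{i}^{*}\rho_{\epsilon_i}$ to each fixed component.

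Part (c) is essentially formal: the moduli $\overline{\mm}_{\Gamma}$, the cover-automorphism group $A^{0}_{\Gamma}$, and both classes $e_{\Gamma}^{-1}$ and $\alpha_{\Gamma}^{\vec a,\vec\epsilon}$ factor as external products over connected components of $\Gamma$, and the sum over fixed-point graphs of type $S$ groups into independent sums over graphs of type $S_{j}$. The claim then follows from Fubini once the automorphism bookkeeping $|\Aut(S)| = \prod|\Aut(S_{j})|$ (up to the obvious additional permutations when some $S_{j}$ are isomorphic, which are absorbed in the sum over isomorphism classes) is matched.

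For part (a), the first equality is immediate from the definition of $E$ in Notation \ref{nn:amplitude,F,E}. For the second equality, apply Atiyah-Bott localization \eqref{eq:classical fp formula} to the closed stack $\overline{\mm}_{0,\lf}(d)$. On each component $\ff_{\Gamma} = [\overline{\mm}_{\Gamma}/A_{\Gamma}]$ the integrand $\beta_{\lf,d}$ restricts to $\alpha_{\Gamma}^{\vec a,\vec\epsilon}$: the form $\ev_{i}^{*}\rho_{\epsilon_i}$ restricts to $\mu(v)\cdot 2u$ when the marking $i$ sits on a vertex $v$ with $\mu(v) = \epsilon_i$ and vanishes otherwise, while $\psi_{\text{cl},i}$ pulls back to the cotangent class on the contracted component carrying $i$ (for $v\in V_{+}$) or becomes $-\omega_{f}$ (for $v \in V_{-}$), matching \eqref{eq:alpha_term}. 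The standard virtual localization analysis of the equivariant Euler class of the normal bundle to $\ff_{\Gamma}$ produces the factors of \eqref{eq:euler^-1} coming from smoothing sphere-edge nodes, the Hodge contribution at contracted vertices, and the deformations of ramified covers. Dividing by $|A_{\Gamma}| = |A^{0}_{\Gamma}|\cdot|\Aut(\Gamma)|$ and summing over isomorphism classes reproduces \eqref{eq:I_S}.

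Part (b) is parallel: since $\overline{\mm}_{0,\emptyset,\lf}(\vd)$ has simple fixed points (Proposition \ref{prop:The-moduli-space}), $F(\lf,\vd)$ is exactly the interior fixed-point sum in Proposition \ref{prop:simple fp formula}, and the identification of components and restriction of the integrand proceed as in (a). The only genuinely new ingredient is the Euler-class contribution of a disk edge $h \in H$, which parameterizes a stable disk map given by a standard degree $|\delta(h)|$ cover of a hemisphere branched at the pole. The main obstacle here is computing the deformation complex of this cover and balancing the real tangent-smoothing at the boundary node against the $S^{1}$-action by $\mu\cdot 2u/|\delta(h)|$ on the tangent direction, while keeping track of signs using the canonical orientation of Section \ref{sec:or}. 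The outcome is the disk-edge factor $\mu(h)^{|\delta(h)|+1}|\delta(h)|^{|\delta(h)|}/(|\delta(h)|!\,(2u)^{|\delta(h)|})$ in \eqref{eq:euler^-1}; everything else is a direct translation of the closed virtual localization formula.
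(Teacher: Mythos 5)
Your overall plan correctly identifies the architecture: decompose the fixed locus into $[\overline{\mm}_{\Gamma}/A_{\Gamma}]$, restrict the integrand, compare to $e_{\Gamma}^{-1}$, and reduce part (b) to a single new computation -- the equivariant Euler class at the unique fixed point $q_{\mu}\in\mm_{\mu}=\overline{\mm}_{0,\emptyset,\emptyset}(\delta(h))$ parameterizing the ramified disk cover. Parts (a) and (c) are dispatched as in the paper. You also correctly note that reducing part (b) to the disk-edge computation requires matching orientations of the node-smoothing lines, which the paper achieves via Lemma \ref{lem:induced for internal nodes} and \eqref{eq:fibered product-internal node}; you gesture at this but don't spell it out, which is acceptable at this level of detail.

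However, for the disk-edge contribution \eqref{eq:half edge euler} itself -- which you rightly call ``the only genuinely new ingredient'' -- you merely assert the answer and say the ``main obstacle'' is computing the deformation complex of the cover and balancing tangent-smoothing against the $S^{1}$-weight, without carrying out or even outlining that computation. This is a genuine gap, not a stylistic abbreviation: the sign in \eqref{eq:half edge euler} is the actual content of the lemma and is sensitive to the conventions of Section \ref{sec:or}. The paper's route is rather different from the direct deformation-theoretic calculation you propose: it identifies $T_{q_{\mu}}\mm_{\pm}$ with the $\zz/2$-fixed part of $T_{q}\overline{\mm}_{0,\emptyset}(d)$ for the doubled closed map, so that $e_{\mu}^{2}$ is the \emph{closed} Euler class (known from \cite{pandharipande-virtual-loc}), and then pins down the sign of the square root by comparing the complex orientation against the orientation \eqref{eq:or} through the branch-divisor map $\mm_{\pm}\to(D^{2})^{d-1}/\Sym(d-1)$. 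If you want to carry out your direct route instead, you would need to explicitly present the tangent/obstruction complex of the ramified disk cover as an $S^{1}$-representation together with its orientation induced by Section \ref{sec:or}, and show the resulting Euler class matches \eqref{eq:half edge euler}; as written the argument stops exactly where the work begins.
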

\begin{proof}
Part (a) is standard \cite{original-loc,kontsevich-torus-loc,pandharipande-virtual-loc},
and part (c) is immediate, so we prove the formula of part~(b). The independence of choices is clear from the formula.

The map \eqref{eq:open-closed relation} induces a map of fixed-point stacks
\[
\overline{\mm}_{0,\emptyset,\lf}\left(\vd\right)^{S^{1}}\to\left(\overline{\mm}_{0,\lf\times\left\{ 1,2\right\} }\left(\sum\vd\right)^{S^{1}}\right).
\]
We find that this map factors through
\[
\overline{\mm}_{0,\emptyset,\lf}\left(\vd\right)^{S^{1}}\to\left(\overline{\mm}_{0,\lf\times\left\{ 1,2\right\} }\left(\sum\vd\right)^{S^{1}}\right)^{\zz/2},
\]
which, assuming $\lf\neq\emptyset$, is the inclusion of a clopen
component (see \cite[\S 3.2]{fp-loc-OGW} for a similar argument).

Now use the description of the fixed points of the moduli of stable
\emph{closed }maps to deduce that
\[
\overline{\mm}_{0,\emptyset,\lf}\left(\vd\right)\simeq\bigsqcup_{\Gamma}\left(\overline{\mm}_{\Gamma}\right)_{A_{\Gamma}},
\]
where $\Gamma$ ranges over isomorphism classes of fixed-point graphs
$\Gamma$ for $\left(\lf,\vd\right)_{\mathcal{D}}$.

To compute the normal bundle of $\overline{\mm}_{\Gamma}\to\overline{\mm}_{0,\emptyset,\lf}\left(\vd\right)$,
we proceed as in \cite[\S3.2]{fp-loc-OGW}, and use the orientation results of Section \ref{sec:or}. Using the short exact sequence
\eqref{eq:fibered product-internal node}, the short exact sequence in Lemma \ref{lem:induced for internal nodes}, and the well-known formula for the normal bundle to the fixed points in the closed case \cite{pandharipande-virtual-loc}, we see that we only need to compute the normal bundle to an $S^1$-invariant smooth disk-map, justifying the contribution of the disk edges in the last factor of \eqref{eq:euler^-1}.

More precisely, write $\mm_{+}=\overline{\mm}_{0,\emptyset,\emptyset}\left(\left(d,0\right)\right)$
and $\mm_{-}=\overline{\mm}_{0,\emptyset,\emptyset}\left(\left(0,d\right)\right)$.
We denote $\mu=\mu\left(h\right)\in\left\{ \pm\right\} $. There is
a unique $S^{1}$~fixed point $q_{\mu}\in\mm_{\mu}$, represented
by a disk mapping to a hemisphere with one branch point at $p_{\mu}$
with ramification profile a single cycle of length $d$. We need to show that
\begin{equation}
e_{\mu}=e^{S^{1}}\left(T_{q_{\pm}}\mm_{\pm}\right)=\frac{\mu}{d!}\left(\mu\frac{2u}{d}\right)^{d-1}.\label{eq:half edge euler}
\end{equation}

Let $q\in\overline{\mm}_{0,\emptyset}\left(d\right)$ denote the double
of $q_{\mu}$, which is the unique fixed point of the moduli of stable
disk-maps with smooth domain. The involution that conjugates the map
acts on $T_{q}\overline{\mm}_{0,\emptyset}\left(d\right)$ as an anti-holomorphic
involution and we have
\[
T_{q_{\mu}}\mm_{\pm}=\left(T_{q}\overline{\mm}_{0,\emptyset}\left(d\right)\right)^{\zz/2}
\]
as $S^{1}$-representations, so
\[
e_{\mu}^{2}=e^{S^{1}}\left(T_{q}\overline{\mm}_{0,\emptyset}\left(d\right)\right)=\left(-1\right)^{d}\frac{\left(d!\right)^{2}\left(2u\right)^{2d-2}}{d^{2d-2}},
\]
which gives the value of $e$ up to a sign:
\[
e=s\cdot\frac{d^{d-1}}{d!\left(2u\right)^{d-1}},\quad s=s\left(\mu\right)\in\left\{ \pm1\right\} .
\]
Let us compute $s$. We have a diagram
\[
\mm_{\pm}\to\left(D^{2}\right)^{d-1}/\Sym\left(d-1\right)\leftarrow\left(D^{2}\right)^{d-1},
\]
where both maps are equivariant, holomorphic, non-constant maps between spaces of the same dimension: the left map is induced from the branch
divisor, the right map is the quotient. This implies that if we orient $T_{q_{\pm}}\mm_{\pm}$ using the complex orientation, then
\begin{equation*}
e^{S^{1}}\left(T_{q_{\pm}}\mm_{\pm}\right)=c\cdot e^{S^{1}}\left(T_{\left(0,...,0\right)}\left(D^{2}\right)^{d-1}\right),
\end{equation*}
where $c\in\qq$ is \emph{positive}.
The sign of $e^{S^{1}}\left(T_{\left(0,...,0\right)}\left(D^{2}\right)^{d-1}\right)$
is $\left(\mu\right)^{d-1}$; since, by \eqref{eq:or}, the orientation of $T_{q_{\pm}}\mm_{\pm}$ is twisted by $\operatorname{sgn}\left(d_{-}\leq d_{+}\right)$ relative to the complex orientation, we conclude that $s=\mu^{d}$. Equation \eqref{eq:half edge euler} is proved.
\end{proof}

\begin{proof}[Proof of Theorem \ref{thm:int_nums_equal_tree_sum}]
The 'Moreover' part follows immediately from applying Lemma \ref{lem:lem} to the calculation of the amplitudes $\AAH\left(T\right)$ in \eqref{eq:fp formula}. The passage from the trees with labeled oriented edges of $\bigsqcup_r\hts(r,\lf,\vd)$ to trees of $\TTT(\lf,\vd)$ without this additional data is responsible to the difference in combinatorial factors, by standard Orbit-Stabilizer argument. The independence of choices is a consequence of the 'Moreover' part.

The vanishing statement follows immediately from the fact the equivariant integral vanishes on forms whose de Rham degree is less than the dimension of the domain, and this is the case for $\<\prod_{i\in\lf}\tau^{\epsilon_i}_{a_i}\>_{0,\vd}$ whenever $1+\sum_{i\in\lf}  a_i   < d^++d^-.$

Note that, because fixed points components can be of any dimension, the individual contributions to $\OGW(\lf,\vd,\vec{a},\vec{\epsilon})$ don't have to vanish, and so the vanishing provides a non-trivial relation.
\end{proof}

We end this section with a simplification of the formula \eqref{eq:nice_tree_sum} for $\OGW.$
Recall the famous theorem of Cayley \cite{Cayley}, in its weighted version.
\begin{thm}
Associate a number $x_v$ to every vertex $v \in [n]$, and associate the monomial $\prod_{v \in [n]}x_v^{\val_T(v)}$ to every vertex-labeled tree $T$ on the set of vertices $[n].$ Then we have
\[
\sum_T \prod_{i \in [n]}x_i^{\val_T(i)} = \prod x_i\left(\sum_{i \in [n]}{x_i}\right)^{n-2},
\]
where the summation is taken over all trees with vertices labeled by $[n].$
\end{thm}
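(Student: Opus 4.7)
The plan is to prove this via the Prüfer sequence bijection, which is the most direct way to extract the valency statistics from the tree-counting problem. Recall that the Prüfer sequence of a tree $T$ on vertex set $[n]$ is the sequence $(a_1,\dots,a_{n-2}) \in [n]^{n-2}$ obtained by iteratively removing the leaf with the smallest label and recording the label of its unique neighbor. The classical theorem asserts that this map $T \mapsto (a_1,\dots,a_{n-2})$ is a bijection between labeled trees on $[n]$ and sequences in $[n]^{n-2}$.

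The key combinatorial fact I would invoke is that for every vertex $v \in [n]$, its number of appearances in the Prüfer sequence equals $\val_T(v) - 1$. This is immediate by induction on $n$: each time vertex $v$ has a neighbor removed as the current smallest leaf, one appearance of $v$ is recorded, and the valency of $v$ in the remaining tree drops by one; the process terminates when $v$ becomes a leaf itself, accounting for the ``$-1$''.

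Given this fact, I would simply rewrite the monomial associated to $T$ by factoring off one copy of each variable:
\[
\prod_{i \in [n]} x_i^{\val_T(i)} \;=\; \Bigl(\prod_{i \in [n]} x_i\Bigr) \cdot \prod_{i \in [n]} x_i^{\val_T(i)-1} \;=\; \Bigl(\prod_{i \in [n]} x_i\Bigr) \cdot \prod_{k=1}^{n-2} x_{a_k},
\]
where $(a_1,\dots,a_{n-2})$ is the Prüfer sequence of $T$. Summing over all trees and using that $T \leftrightarrow (a_1,\dots,a_{n-2})$ is a bijection with $[n]^{n-2}$, the right-hand product becomes
\[
\sum_{(a_1,\dots,a_{n-2}) \in [n]^{n-2}} \prod_{k=1}^{n-2} x_{a_k} \;=\; \Bigl(\sum_{i \in [n]} x_i\Bigr)^{n-2},
\]
yielding the desired identity.

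There is no real obstacle here; the only step with any content is the valency statistic for the Prüfer bijection, which is standard and can be cited (e.g.\ from any combinatorics textbook). If one prefers a self-contained argument, the Joyal proof via doubly-rooted trees, or the matrix-tree theorem applied to the weighted Laplacian of the complete graph $K_n$ with edge weights $x_i x_j$, would both deliver the formula as well, but the Prüfer approach is by far the shortest.
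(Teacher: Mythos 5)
Your proof is correct; the Prüfer-sequence argument you give is the standard proof of the weighted Cayley formula, and the key fact you invoke (vertex $v$ appears in the Prüfer code exactly $\val_T(v)-1$ times) is accurate. Note that the paper does not itself prove this statement---it is recalled as ``the famous theorem of Cayley'' with a citation to \cite{Cayley}---so there is no in-paper argument to compare against; your write-up would serve perfectly well as the proof if one were desired.
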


Let $\PPP(\lf,\vd)$ be the set of \emph{ordered partitions} of $(\lf,\vd)$ meaning sets of disk specifications \[((\lf_1,\vd_1),\ldots,(\lf_r,\vd_r))~~\text{with}~~
\bigsqcup \lf_i=\lf,~~\sum\vd_i=\vd.\]
Given $\vec{a}$ and $\vec{\epsilon}$ we define the \emph{amplitude} of $P=((\lf_1,\vd_1),\ldots,(\lf_r,\vd_r))$
to be
\[\AAA^\PPP(P,\vec{a},\vec{\epsilon}) =
\frac{1}{(-2u)^{r-1}r!}(d^{+}-d^{-})^{r-2}\prod_{i\in r}(d^{+}_i-d^{-}_i).
\prod_{i=1}^r{I(\lf_i,\vd_i,\vec{a}|_{\lf_i},\vec{\epsilon}|_{\lf_i})}.
\]
\begin{cor}\label{cor:cayley}
We have
\[
\OGW(\lf,\vd,\vec{a},\vec{\epsilon})=
\sum_{P\in\PPP(\lf,\vd)}\AAA^\PPP(P,\vec{a},\vec{\epsilon})+\delta_{d^+-d^-}\frac{d^+}{2u}I(\lf,|\vd|,\vec{a},\vec{\epsilon}).\]
\end{cor}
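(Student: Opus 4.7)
The proof plan is to match the two sides of the claimed identity term-by-term. The exceptional-boundary contribution (the $\delta_{d^+-d^-}$ piece) agrees automatically: when $d^+=d^-$ one has $|\vd|=d^+$, so both expressions read $\delta_{d^+-d^-}\frac{d^+}{2u}I(\lf,|\vd|,\vec{a},\vec{\epsilon})$. All the work is therefore in showing
\[
\sum_{T\in\TTT(\lf,\vd)}\AAA(T,\vec{a},\vec{\epsilon}) \;=\; \sum_{P\in\PPP(\lf,\vd)}\AAA^\PPP(P,\vec{a},\vec{\epsilon}).
\]

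My approach is to group trees in $\TTT(\lf,\vd)$ by their number of vertices $r$ and to re-parameterise each isomorphism class $[\tilde T]$ by its vertex-labelings by $[r]$. By orbit--stabilizer, a fixed $\tilde T$ with $r$ vertices admits exactly $r!/|\Aut(\tilde T)|$ distinct labeled decorated trees on $[r]$; such a labeled tree is precisely the data of an ordered partition $P=((\lf_1,\vd_1),\ldots,(\lf_r,\vd_r))\in\PPP_r(\lf,\vd)$ together with a plain tree $T$ on $[r]$, in which vertex $i$ carries the decoration $(\lf_i,\vd_i)$. Combining this with the multiplicativity $I(S(\tilde T),\vec{a},\vec{\epsilon})=\prod_i I(\lf_i,\vd_i,\vec{a}|_{\lf_i},\vec{\epsilon}|_{\lf_i})$ from Lemma~\ref{lem:lem}(c) and using $|E(T)|=r-1$, the $|\Aut(\tilde T)|$ in the denominator of $\AAA$ cancels against the orbit count, rewriting the sum as
\[
\sum_{r\geq 1}\frac{(-2u)^{-(r-1)}}{r!}\sum_{P\in\PPP_r(\lf,\vd)}\Bigl(\prod_{i=1}^r I(\lf_i,\vd_i,\vec{a}|_{\lf_i},\vec{\epsilon}|_{\lf_i})\Bigr)\sum_{T\text{ on }[r]}\prod_{i=1}^r(d_i^+-d_i^-)^{\val_T(i)}.
\]

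The innermost sum over labeled trees is precisely the weighted Cayley sum with weights $x_i=d_i^+-d_i^-$; since $\sum_i x_i=d^+-d^-$, the quoted Cayley theorem evaluates it to $\prod_i(d_i^+-d_i^-)\cdot(d^+-d^-)^{r-2}$. Plugging this back into the display reproduces $\sum_P \AAA^\PPP(P,\vec{a},\vec{\epsilon})$ term by term.

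There is no substantive obstacle: the proof is pure combinatorics, with the only subtlety being the degenerate $r=1$ case of Cayley's formula, where the identity formally reads $x_1\cdot x_1^{-1}=1$. In that case both sides of the equation above reduce by direct inspection to $I(\lf,\vd,\vec{a},\vec{\epsilon})$ (on the left, the trivial single-vertex tree contributes $\AAA=I(\lf,\vd,\vec{a},\vec{\epsilon})$ since $|E|=0$, $|\Aut|=1$, and the valency product is empty; on the right, the trivial ordered partition $((\lf,\vd))$ contributes the same), so this case can be handled separately without invoking the formula.
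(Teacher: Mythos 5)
Your proof is correct and follows essentially the same approach as the paper: apply the weighted Cayley theorem to \eqref{eq:nice_tree_sum} after rewriting the $|\Aut(T)|$-weighted sum over unlabeled decorated trees in $\TTT(\lf,\vd)$ as a $\frac{1}{r!}$-weighted sum over labeled decorated trees (equivalently over ordered partitions in $\PPP_r(\lf,\vd)$ paired with trees on $[r]$), so the automorphism factors cancel by orbit--stabilizer. The paper's proof is just a sentence and omits the orbit--stabilizer bookkeeping and the degenerate $r=1$ case where $(d^+-d^-)^{r-2}$ is a negative power, which you sensibly handle explicitly by direct inspection.
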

The proof is immediate from applying Cayley's theorem to formula \eqref{eq:nice_tree_sum}. The difference in automorphism factors follows easily from the fact that trees in $\TTT(\lf,\vd)$ are not vertex-labeled.

\subsection{Divisor and TRR for genus $0$ disk covers}\label{subsec:recursions}
In this section we prove the divisor relation, Theorem \ref{prop:divisor}, and the topological recursion for disk covers, Theorem \ref{thm:open_stat_trr_disk_cover}.
\begin{proof}[Proof of Theorem \ref{prop:divisor}]
Let $S$ be either a sphere moduli specification $(\lf,d)$ or a disk moduli specification $(\lf,\vd)$, $S'$ be the moduli specification obtained by extending $\lf$ to $\lf'=\lf\sqcup\{1\},$ without changing the degree. Let $\vec{a},\vec{\epsilon}$ be vectors of descendents and constraints, respectively, for $\lf,$ and $\vec{a'},\vec{\epsilon'}$ be their extensions to $\lf'$ obtained by defining $a'_i=a_i,\epsilon'_i=\epsilon_i$ for $i\in\lf$ and $a'_1=0,\epsilon'_1=\pm.$ We assume $\epsilon_1=+$ and denote in the disk case $d^+$ by $d.$ The case $\epsilon_1=-$ is treated in an analogous way. We will first show, and then use to derive the theorem, that
\begin{equation}\label{eq:divisor_fp}
\frac{I(S',\vec{a'},\vec{\epsilon'})}{|\Aut(S')|}=d\frac{I(S,\vec{a},\vec{\epsilon})}{|\Aut(S)|}+\frac{2u}{|\Aut(S)|}\sum_{i>1|\epsilon_i=+} I(S,\vec{a_{(i)}},\vec{\epsilon}),
\end{equation}
where the fixed point contribution $I$ is given by \eqref{eq:I_S} and $\vec{a_{(i)}}$ is the vector which is obtained from $\vec{a}$ by decreasing $a_i$ by $1.$
There is an obvious forgetful map $\for_1$ from fixed point graphs for $S'$ to those of $S.$ Equation \eqref{eq:divisor_fp} will follow if we could show that for any fixed point graph $\Gamma$ for $S$ we have
\begin{equation}\label{eq:divisor_graph}
\sum_{\Gamma'\in \for^{-1}(\Gamma)}\frac{I(\Gamma',\vec{a'},\vec{\epsilon'})}{|\Aut(\Gamma')|}=d\frac{I(\Gamma,\vec{a},\vec{\epsilon})}{|\Aut(\Gamma)|} +2u\sum_{i>1|\epsilon_i=+} \frac{I(\Gamma,\vec{a_{(i)}},\vec{\epsilon})}{|\Aut(\Gamma)|}.
\end{equation}
Observe that $\for_1^{-1}(\Gamma)$ is in bijection with the orbits of the action of $\Aut(\Gamma)$ on $V(\Gamma),$ where $\Gamma'$ corresponds to $[v]\in V(\Gamma)/\Aut(\Gamma)$ if it is obtained from $\Gamma$ by adding the tail marked $1$ to some $v\in[v]$ with $\mu(v)=+.$
Moreover, $\Aut(\Gamma')$ in this case is isomorphic to the subgroup of $\Aut(\Gamma)$ which fixes some $v\in[v],$ hence
\begin{equation}\label{eq:divisor_aut}\frac{|\Aut(\Gamma)|}{|\Aut(\Gamma')|}=|\Orb(v)|,\end{equation}
the size of the orbit of $v$ under $\Aut(\Gamma).$

Recall \eqref{eq:euler^-1} and \eqref{eq:alpha_term}. The analysis for $\Gamma'$ which corresponds to $[v],$ depends on $[v]$ as follows.
\begin{enumerate}[(a)]
\item If $\lambda(v)=\emptyset,~\val{v}=\{f\}$ then after adding the marking $1$ to $v,$ we see that
\[e_{\Gamma'}^{-1}=\frac{|\delta(f)|}{2u}e_{\Gamma}^{-1},~~
\alpha_{\Gamma'}^{\vec{a'},\vec{\epsilon'}}=2u\alpha_{\Gamma}^{\vec{a},\vec{\epsilon}}\Rightarrow I(\Gamma',\vec{a'},\vec{\epsilon'})=|\delta(f)|I(\Gamma,\vec{a},\vec{\epsilon}).\]
The first equality follows from the change in the fourth term of \eqref{eq:euler^-1} which corresponds to $v$ and the second equality follows from the change in the second term of \eqref{eq:alpha_term} which corresponds to $v.$
\item If $\lambda(v)=\emptyset,~\val{v}=\{f_1,f_2\}$ then after adding the marking $1$ to $v,~v$ becomes a vertex in $V_+$ which represents the moduli $\overline{\mm}_{0,3},$ with the three markings $1$ and the two nodes which correspond to $f_1,f_2$.
    We have
\[e_{\Gamma'}^{-1}=\frac{(2u)(\omega_{f_1}+\omega_{f_2})}{\omega_{f_1}\omega_{f_2}}e_{\Gamma}^{-1},~~
\alpha_{\Gamma'}^{\vec{a'},\vec{\epsilon'}}=(2u)\alpha_{\Gamma}^{\vec{a},\vec{\epsilon}}\Rightarrow I(\Gamma',\vec{a'},\vec{\epsilon'})=(|\delta(f_1)|+|\delta(f_2)|)I(\Gamma,\vec{a},\vec{\epsilon}),\]
where the change in the inverse Euler term comes from the first three terms in \eqref{eq:euler^-1}.
\item If $\lambda(v)=\{i\},$ with $a_i=0$ then after adding the marking $1$ to $v,~v$ becomes a vertex in $V_+$ which represents the moduli $\overline{\mm}_{0,3},$ with the three markings $1,i$ and the node.
The $\alpha$ term \eqref{eq:alpha_term} changes by $(2u)$ again, and if we write $\val{v}=\{f\}$ we see that
\[e_{\Gamma'}^{-1}=\frac{|\delta(f)|}{2u}e_{\Gamma}^{-1}\Rightarrow I(\Gamma',\vec{a'},\vec{\epsilon'})=|\delta(f)|I(\Gamma,\vec{a},\vec{\epsilon}),\]
where the change comes from the first term of \eqref{eq:euler^-1} which corresponds to $f,$ expanded to zeroth order in $\psi_f,$ and the $(2u)^{-1}$ is contributed by the second term of \eqref{eq:euler^-1}.
Note that in this case, before adding $1,$ $|\Orb(v)|=1$ since it carries the $i^{th}$ label.
\item Suppose now $\lambda(v)=\{i\},$ with $a_i>0.$ Write $\val{v}=\{f\}$. Again $|\Orb(v)|=1.$ After adding the marking $1$ to $v,~v$ becomes a vertex in $V_+$ which represents the moduli $\overline{\mm}_{0,3},$ with the three markings $1,i$ and the node. Since $a_i>0$ but $\overline{\mm}_{0,3}$ is zero dimensional, the $\alpha$ term, and hence also $I(\Gamma',\vec{a'},\vec{\epsilon'})$ vanish.
However, comparing the second expressions in $\alpha_\Gamma^{\vec{a},\vec{\epsilon}}$ and in $\alpha_\Gamma^{\vec{a_{(i)}},\vec{\epsilon}}$
\[|\delta(f)|I(\Gamma,\vec{a},\vec{\epsilon})=-(2u)I(\Gamma,\vec{a_{(i)}},\vec{\epsilon})\Rightarrow
I(\Gamma',\vec{a'},\vec{\epsilon'})=|\delta(f)|I(\Gamma,\vec{a},\vec{\epsilon})+(2u)I(\Gamma,\vec{a_{(i)}},\vec{\epsilon}).\]
\item The last case is that $v$ represents a contracted component with $\lambda(v)=\lf_1$ and $\val{v}=\{f_1,\ldots,f_s\}.$
From combining the first terms in \eqref{eq:euler^-1},\eqref{eq:alpha_term} we see that the multiplicative contribution of $v$ before adding $1$ is
\[\sum_{\substack{b_1,\ldots, b_s\geq 0,\\
\sum b_j=|\lf_1|+s-3-\sum_{i\in\lf_1}a_i  }}\prod_{j=1}^s\frac{|\delta(f_j)|^{b_j+1}}{(2u)^{b_j}} \<\prod_{i\in \lf_1}\tau_{a_i}\prod_{j=1}^s\tau_{b_j}\>^c,\]
where $\<\cdots\>^c$ stands for closed intersection numbers over the moduli of stable marked spheres.
Adding the marking $1$ changes the contribution of $v$ to
\begin{align}\label{eq:string_simplification_for_div_5_item}
&(2u)\sum_{\substack{b_1,\ldots, b_s\geq 0,\\
\sum b_j=|\lf_1|+s-2-\sum_{i\in\lf_1}a_i  }}\prod_{j=1}^s\frac{|\delta(f_j)|^{b_j+1}}{(2u)^{b_j}} \<\tau_0\prod_{i\in \lf_1}\tau_{a_i}\prod_{j=1}^s\tau_{b_j}\>^c=\\
\notag=&
(2u)\sum_{\substack{b_1,\ldots, b_s\geq 0,\\
\sum b_j=|\lf_1|+s-2-\sum_{i\in\lf_1}a_i  }}\prod_{j=1}^s\frac{|\delta(f_j)|^{b_j+1}}{(2u)^{b_j}}
\left(\sum_{k\in\lf_1}\<\tau_{a_k-1}\prod_{i\in \lf_1,i\neq k}\tau_{a_i}\prod_{j=1}^s\tau_{b_j}\>^c
+\sum_{k\in[s]}\<\tau_{b_k-1}\prod_{i\in \lf_1,i\neq k}\tau_{a_i}\prod_{j\in[s]\setminus\{k\}}\tau_{b_j}\>^c\right),
\end{align}
where we used the string equation
\begin{equation}\label{eq:string}\<\tau_0\prod_{i=1}^n\tau_{a_i}\>^c=\sum_{j=1}^n\<\tau_{a_j-1}\prod_{i\in[n]\backslash\{j\}}\>^c.
\end{equation}
Substituting \eqref{eq:string_simplification_for_div_5_item} in the expression for $I(\Gamma',\vec{a'},\vec{\epsilon'})$ we obtain
\begin{equation}
I(\Gamma',\vec{a'},\vec{\epsilon'})=(2u)\sum_{i\in\lf_1}I(\Gamma,\vec{a_{(i)}},\vec{\epsilon})+
\left(\sum_{i=1}^s|\delta(f_i)|\right)I(\Gamma,\vec{a},\vec{\epsilon}).
\end{equation}
\end{enumerate}
Summing the above items over all $[v]\in V(\Gamma)/\Aut(\Gamma),$ where the term for $[v]$ is taken with multiplicity $|\Orb(v)|,$ using \eqref{eq:divisor_aut} and the definition of the fixed-point contributions \eqref{eq:I_Gamma} for \\$(\Gamma',\vec{a'},\vec{\epsilon'}),(\Gamma,\vec{a},\vec{\epsilon}),(\Gamma,\vec{a_{(i)}},\vec{\epsilon})$ for the different $i,$ we obtain \eqref{eq:divisor_graph} and hence \eqref{eq:divisor_fp}.

We will now use \eqref{eq:divisor_fp} to derive the divisor relation. Write $\lf' = [l],\lf=\{2,\ldots,l\}.$ We want to prove that \[\OGW(\lf',\vd,\vec{a'},\vec{\epsilon'})=d^\pm \OGW(\lf,\vd,\vec{a},\vec{\epsilon})+(2u)\sum_{i\in\lf|\epsilon_i=\epsilon_1}\OGW(\lf,\vd,\vec{a_{(i)}},\vec{\epsilon}),\]
where again $\vec{a},\vec{\epsilon}$ are vectors of descendents and constraints respectively for $\lf,$ and $\vec{a'},\vec{\epsilon'},\vec{a_{(i)}}$ are defined as in the previous part of the proof.

We begin with considering the first term of \eqref{eq:nice_tree_sum} for $\OGW.$ We would like to show that
\begin{equation}\label{eq:divisor_trees}\sum_{T\in\TTT(\lf',\vd')}\AAA(T,\vec{a'},\vec{\epsilon'})=\sum_{T\in\TTT(\lf,\vd)}
\left(d^\pm\AAA(T,\vec{a},\vec{\epsilon})+
2u\sum_{i\in\lf|\epsilon_i=\epsilon_1}\AAA(T,\vec{a_{(i)}},\vec{\epsilon})\right).
\end{equation}
Denote by $\widetilde{\for}_1:\ts(\lf',\vd)\to\ts(\lf',\vd)$ the obvious forgetful map. Equation \eqref{eq:divisor_trees} will follow if we could show that for any $T\in\ts(\lf,\vd)$ we have
\[
\sum_{T'\in \widetilde{\for}_1^{-1}(T)}\AAA(T',\vec{a'},\vec{\epsilon'})=d^\pm \AAA(T,\vec{a},\vec{\epsilon})+2u\sum_{i\in\lf|\epsilon_i=\epsilon_1}\AAA(T,\vec{a_{(i)}},\vec{\epsilon}).
\]
However, elements $T'\in \widetilde{\for}_1^{-1}(T)$ are in bijection with the equivalence classes of $[v]\in V(T)/\Aut(T),$ which specify to which vertex the marking $1$ will be sent. For $T'$ which corresponds to $[v],$ by \eqref{eq:divisor_fp} and the definition of the amplitude, equation \eqref{eq:ampli_intro_g=0},
\[\AAA(T',\vec{a'},\vec{\epsilon'}) = \frac{|\Aut(T)|}{|\Aut(T')|}\left(d^\pm_v\AAA(T,\vec{a'},\vec{\epsilon'})+2u\sum_{i\in\lf_v|\epsilon_i=\epsilon_1}
\AAA(T,\vec{a_{(i)}},\vec{\epsilon})\right).\]
The group $\Aut(T')$ is canonically identified with the subgroup $\Aut(T,v) $ of $\Aut(T)$ preserving some $v\in[v].$ Therefore $\frac{|\Aut(T)|}{|\Aut(T')|}=|\Orb(v)|,$ the size of the orbit of $v$ under $\Aut(T),$ and in particular $\Aut(T')\simeq\Aut(T)$ when $\lf_v\neq\emptyset.$
Thus,
\[\sum_{[v]\in V(T)/\Aut(T)} |\Orb(v)|d_v^\pm=\sum_{v\in V(T)}d_v^\pm=d^\pm,\] and equation~\eqref{eq:divisor_trees} follows.

If $d^+=d^-$, then the expressions of \eqref{eq:nice_tree_sum} for $\OGW(\lf',\vd,\vec{a'},\vec{\epsilon'}), \OGW(\lf,\vd,\vec{a},\vec{\epsilon})$ include the second (closed term). Applying \eqref{eq:divisor_fp} to $S=(\lf',d),\vec{a},\vec{\epsilon}$ we see immediately that the second term satisfies the expected relation, and the divisor equation follows.

The 'In particular' part follows from taking the non-equivariant limit (order $u^0$ invariants) and using the fact that terms with negative powers of $u$ vanish, by the 'In particular' part of Theorem~\ref{thm:int_nums_equal_tree_sum}.\end{proof}

%%%An alternative proof for the stationary case
%The proof of Theorem \ref{thm:loc for CP1,RP1} shows that in order to calculate $\<\prod_{i=2}^l\tau_{a_i}^{\epsilon_i}\>_{0,\vd}%^{\circ}
%$ one needs only to have a coherent integrand bounded by $\lf$ (defined in the end of Definition \ref{def:coherent integrand}). One may extend a coherent integrand bounded by $\lf$ to a coherent integrand bounded by $\lf'$ by defining $\alpha'_{\lf'',\vd}=\alpha_{\lf'',\vd}$ if $1\notin\lf''$ and otherwise by $f_1^*\alpha_{\lf''\setminus\{1\},\vd}\ev_1^*\rho_{\epsilon_1},$ and act similarly for $\beta'_{*,*},$ where $f_1$ is the map which forgets the first marking. The fact that the resulting family of forms is a coherent integrand bounded by $\lf$ is straightforward from the definition of coherent integrands and Lemma \ref{lem:d-forget cartesian}.
%\[\int_{\overline{\mm}_{0,\emptyset,\lf'}\left(\vd\right)}\alpha'_{\lf',(d,0)}=
%\int_{\overline{\mm}_{0,\emptyset,\lf'}\left(\vd\right)}f_1^*\alpha_{\lf,\vd}\ev_1^*\rho_{\epsilon_1} = d^{\epsilon_1}\int_{\overline{\mm}_{0,\emptyset,\lf}\left(\vd\right)}\alpha_{\lf,\vd},\]
%where the last equality follows from integration along the fiber of $f_1.$

\begin{proof}[Proof of Theorem \ref{thm:open_stat_trr_disk_cover}]
We work with $\lf=[l].$ Write $l_0=|\{i>2|a_i=0\}|.$
We first consider the case $l_0=0.$ In this case the first term on the RHS of \eqref{eq:trr_disk_disk} doesn't contribute, for degree reasons. By Theorem \ref{thm:int_nums_equal_tree_sum}, we have
\[
\<\tau^+_{a_1+1}\prod_{i=2}^l\tau^{+}_{a_i}\>_{0,(d,0)}=
\sum_{n\geq 1}\sum_{{T}\in \TTT\left([l],(d,0)\right)}\AA\AAH\left({T},\vec{a'},\vec{\epsilon}\right),
\]
where $\vec{a'}=\vec{a}+(1,0,\ldots,0).$

For any tree $T$ which appears in the expression above and has a non-zero contribution, and any vertex $v$ of $T,$ it must hold that $\lf(v)$ is either empty or a singleton ($\lf(v)$ and the other tree notations are defined in Definition \ref{def:trees}). Write $\TTT'\left([l],(d,0)\right)$ for the subset of $\TTT\left([l],(d,0)\right)$ whose elements are trees satisfying this constraint. Consider an arbitrary ${T}\in \TTT'\left([l],(d,0)\right).$ Let $v$ be the vertex with $\lf(v)=\{1\}.$ There is a unique path in the tree between $v$ and the vertex $u$ with $\lf(u)=\{2\}.$ Let $e$ be the first edge of this path, starting from $v.$ Suppose it connects $v$ to some other vertex $w.$ Erasing $e$ divides ${T}$ into two trees, ${T}_1$ which contains $v$ and ${T}_2$ which contains $u.$ Denote by $R\cup\{1\}$ the label set of ${T}_1$ and by $S\cup\{2\}$ the label set of ${T}_2.$ From the definition of the amplitude, equation~\eqref{eq:ampli_intro_g=0}, it follows that
\begin{equation}\label{eq:tree_dec}\AAA\left({T},\vec{a'},\vec{\epsilon}\right)=
\frac{d^{+}(v)}{-2u}d^{+}(w)\AAA({T}_1,\vec{a'}|_{R\cup\{1\}},
\vec{\epsilon}|_{R\cup\{1\}})\AAA({T}_2,\vec{a}|_{S\cup\{2\}},\vec{\epsilon}|_{S\cup\{2\}}).
\end{equation}
Note that this simple procedure gives a bijection
\[
{T}\rightarrow J({T})=({T}_1,{T}_2,w)
\]
between $\TTT'\left([l],(d,0)\right)$ and
\[
\bigsqcup_{\substack{d_1,d_2>0,~d_1+d_2=d,\\R,S\subseteq\{3,\ldots,l\},~R\sqcup S=\{3,\ldots, l\}}}
\TTT'\left(R\cup\{1\},(d_1,0)\right)\times \bigsqcup_{{T}_2\in\TTT'\left({S\cup\{2\}},(d_2,0)\right)}
 V({T}_2).
\]

For any ${T}_2\in\TTT'\left(\lf_{S\cup\{2\}},(d_2,0)\right)$ we have
\begin{equation}\label{eq:sum_w}
\sum_{w\in V({T}_2)} d^+(w) = d_2.
\end{equation}
In addition, by Theorem \ref{thm:int_nums_equal_tree_sum}, we have
\begin{equation}\label{eq:sum_T2}
\sum_{{T}_2\in\TTT'\left(\lf|_{S\cup\{2\}},(d_2,0)\right)}\AAA({{T}_2},\vec{a}|_{S\cup\{2\}},\vec{\epsilon}|_{S\cup\{2\}})
=\left\langle \tau^+_{a_2}\prod_{i \in R} \tau^+_{a_i}\right\rangle_{0,(d_2,0)}.
\end{equation}
Putting together, using \eqref{eq:tree_dec}, we see that for any $d_1,R$ and ${T}_1,$ it holds that
\begin{align}
\sum_{{T}_2\in\TTT'\left(\lf_{S\cup\{2\}},(d_2,0)\right)}&\sum_{w\in V({T}_2)} \AAA(J^{-1}({T}_1,{T}_2,w),\vec{a'},\vec{\epsilon})=\label{eq:sum_wT2}\\
=&\sum_{{T}_2\in\TTT'\left(\lf|_{S\cup\{2\}},(d_2,0)\right)}\sum_{w\in  V({T}_2)}
\left(\frac{d^{+}(v)}{-2u}\AAA({T}_1,\vec{a'}|_{R\cup\{1\}},\vec{\epsilon}|_{R\cup\{1\}})\right)
\left(d^{+}(w)\AAA({T}_2,\vec{a}|_{S\cup\{2\}},\vec{\epsilon}|_{S\cup\{2\}})\right)=\notag\\
\notag=&d_2\left(\frac{d^{+}(v)}{-2u}\AAA({T}_1,\vec{a'}|_{R\cup\{1\}},\vec{\epsilon}|_{R\cup\{1\}},\vec{a'}|_{R\cup\{1\}},\vec{\epsilon}|_{R\cup\{1\}})\right)\left\langle \tau^+_{a_2}\prod_{i \in S} \tau^+_{a_i}\right\rangle_{0,(d_2,0)}.\notag
\end{align}

Observe that, by the definition of the amplitude, we have
\[
\frac{d^{+}(v)}{-2u}\AAA({T}_1,\vec{a'}|_{R\cup\{1\}},\vec{\epsilon}|_{R\cup\{1\}})=
\AAA({T}_1,\vec{a}|_{R\cup\{1\}},\vec{\epsilon}|_{R\cup\{1\}}).
\] %has the effect of reducing the $\psi$ power of the first marking by $1.$
Thus, using Theorem \ref{thm:int_nums_equal_tree_sum} again, we get, for fixed $d_1,R,$
\begin{equation}
\label{eq:2_for_trr_disk_disk}
\sum_{{T}_1\in \TTT'\left(\lf|_{R\cup\{1\}},(d_1,0)\right)}\frac{d^{+}(v)}{-2u}{\AAA({T}_1,\vec{a'}|_{R\cup\{1\}},\vec{\epsilon}|_{R\cup\{1\}})}=
\sum_{T_1\in\TTT'\left(\lf|_{R\cup\{1\}},(d_1,0)\right)}{\AAA({T}_1,\vec{a}|_{R\cup\{1\}},\vec{\epsilon}|_{R\cup\{1\}})}=
\left\langle \tau^+_{a_1}\prod_{i \in R} \tau^+_{a_i}\right\rangle_{0,(d_1,0)}.
\end{equation}

Combining equations \eqref{eq:sum_wT2},\eqref{eq:2_for_trr_disk_disk}, summing over all $d_1,R$ and using the bijection $J$ we obtain \eqref{eq:trr_disk_disk}.

For the general TRR, we order the pairs $(l_0,l)$ lexicographically, so that $(l'_0,l')<(l_0,l)$ if either $l'_0<l_0$ or $l'_0=l_0$ but $l'<l.$
We use induction on $(l_0,l).$ We have proved \eqref{eq:trr_disk_disk} for $l_0=0.$ Suppose we have proved it for all $(l'_0,l')<(l_0,l).$ We now prove for $(l_0,l).$
Since $l_0\geq 1$, there is some $i>2$ with $a_i=0.$ Without loss of generality take it to be $l.$
Theorem \ref{prop:divisor} and the string equation \eqref{eq:string} allow us to write the closed-open term of the RHS of \eqref{eq:trr_disk_disk},
\begin{equation}\label{eq:co_trr}
\sum_{\substack{R\sqcup S = \{3,\ldots,l\}}}(2u)^{|R|}\<\tau_{a_1}\left(\prod_{i \in R} \tau_{a_i}\right)\tau_0\>_0^c\<\tau^+_0\tau^+_{a_2}\prod_{i \in S} \tau^+_{a_i}\>_{0,(d,0)},
\end{equation}
as the sum of four terms $Q_1+\ldots+Q_4,$ where
\begin{align*}
Q_1=&(2u)\delta_{a_1=0}\<\tau^+_0\prod_{i \neq 1,l} \tau^+_{a_i}\>_{0,(d,0)},\\
Q_2=&\sum_{\substack{R\sqcup S = \{3,\ldots,l-1\}}}(2u)^{|R|+1}\sum_{j\in R\cup\{1\}}\<\left(\prod_{i \in R\cup\{1\}}\tau_{a_i-\delta_{j=i}}\right) \tau_0\>_0^c\<\tau^+_0\tau^+_{a_2}\prod_{i \in S} \tau^+_{a_i}\>_{0,(d,0)},\\
Q_3=&d\sum_{\substack{R\sqcup S = \{3,\ldots,l-1\}}}(2u)^{|R|}\<\tau_{a_1}\left(\prod_{i \in R} \tau_{a_i}\right)\tau_0\>_0^c\<\tau^+_0\tau^+_{a_2}\prod_{i \in S} \tau^+_{a_i}\>_{0,(d,0)},\\
Q_4=&(2u)\sum_{\substack{R\sqcup S = \{3,\ldots,l-1\}}}\sum_{j\in S\cup\{2\}}(2u)^{|R|}\<\tau_{a_1}\left(\prod_{i \in R} \tau_{a_i}\right)\tau_0\>_0^c\<\tau^+_0\prod_{i \in S\cup\{2\}} \tau^+_{a_i-\delta_{j=i}}\>_{0,(d,0)}.
\end{align*}
Indeed, $Q_1$ is obtained from the case $R=\{l\}$ in \eqref{eq:co_trr}, $Q_2$ from applying the string equation to sets $R\neq\{l\}$ which contain $l,$ $Q_3,Q_4$ are obtained from applying the divisor equation to the case $l\in S.$ Similarly, the open-open term of the RHS of \eqref{eq:trr_disk_disk},
\begin{equation}\label{eq:oo_trr}
\sum_{\substack{R\sqcup S = \{3,\ldots,l\} \\ d_1 + d_2 = d}}d_2 \<\tau^+_{a_1} \prod_{i \in R} \tau^+_{a_i}\>_{0,(d_1,0)}%^{\circ}
\<\tau^+_{a_2}\prod_{i \in S} \tau^+_{a_i}\>_{0,(d_2,0)},
\end{equation}
can be written as the sum $Q_5+Q_6$, where
\begin{align*}
Q_5=&(d_1+d_2)\sum_{\substack{R\sqcup S = \{3,\ldots,l-1\} \\ d_1 + d_2 = d}}d_2 \<\tau^+_{a_1} \prod_{i \in R} \tau^+_{a_i}\>_{0,(d_1,0)}\<\tau^+_{a_2}\prod_{i \in S} \tau^+_{a_i}\>_{0,(d_2,0)},\\
Q_6=&(2u)\sum_{\substack{R\sqcup S = \{3,\ldots,l-1\} \\ d_1 + d_2 = d}}\sum_{j\in[l-1]}d_2 \<\prod_{i \in R\cup\{1\}} \tau^+_{a_i-\delta_{j=i}}\>_{0,(d_1,0)}\<\prod_{i \in S\cup\{2\}} \tau^+_{a_i-\delta_{j=i}}\>_{0,(d_2,0)},
\end{align*}
and both are obtained by applying the divisor equation to \eqref{eq:oo_trr} in the two cases $l\in R$ and $l\in S.$

By the divisor equation, the LHS of \eqref{eq:trr_disk_disk} is equal to
\begin{equation}\label{eq:div_for_trr}
d\<\tau^+_{a_1+1}\prod_{i=2}^{l-1}\tau^{+}_{a_i}\>_{0,(d,0)}+
(2u)\left(\<\tau^+_{a_1}\prod_{i=2}^{l-1}\tau^{+}_{a_i}\>_{0,(d,0)}+
\sum_{j=2}^{l-1}\<\tau^+_{a_1+1}\prod_{i=2}^{l-1}\tau^{+}_{a_i-\delta_{j=i}}\>_{0,(d,0)}
\right).\end{equation}
%\begin{equation}\label{eq:div_for_trr}
%d\<\tau^+_{a_i+1}\prod_{i=2}^{l-1}\tau^{+}_{a_i}\>_{0,(d,0)}+
%(2u)\left(\<\tau^+_{a_1}\prod_{i=2}^{l-1}\tau^{+}_{a_i}\>_{0,(d,0)}+
%\sum_{j=2}^{l-1}\<\tau^+_{a_1+1}\prod_{i=2}^{l-1}\tau^{+}_{a_i-\delta_{j=i}\<\tau^+_{a_i+1}\prod_{i=2}^l\tau^{+}_{a_i}\>_{0,(d,0)}}\>_{0,(d,0)}
%\right).\end{equation}
All summands in \eqref{eq:div_for_trr} are smaller in lexicographic order than $(l_0,l),$ hence we can rely on the induction and apply TRR to \eqref{eq:div_for_trr}.
We will obtain the sum of seven terms $P_1+\ldots+P_7$, where
\begin{align*}
P_1=&d\sum_{\substack{R\sqcup S = \{3,\ldots,l-1\}}}(2u)^{|R|}\<\tau_{a_1}\left(\prod_{i \in R} \tau_{a_i}\right)\tau_0\>_0^c\<\tau^+_0\tau^+_{a_2}\prod_{i \in S} \tau^+_{a_i}\>_{0,(d,0)},\\
P_2=&d\sum_{\substack{R\sqcup S = \{3,\ldots,l-1\} \\ d_1 + d_2 = d}}d_2 \<\tau^+_{a_1} \prod_{i \in R} \tau^+_{a_i}\>_{0,(d_1,0)}%^{\circ}
\<\tau^+_{a_2}\prod_{i \in S} \tau^+_{a_i}\>_{0,(d_2,0)},\\
P_3=&\delta_{a_1=0}(2u)\<\tau^+_{0}\prod_{i=2}^{l-1}\tau^{+}_{a_i}\>_{0,(d,0)},\\
P_4=&(2u)\sum_{\substack{R\sqcup S = \{3,\ldots,l-1\}}}(2u)^{|R|}\<\tau_{a_1-1}\left(\prod_{i \in R} \tau_{a_i}\right)\tau_0\>_0^c\<\tau^+_0\tau^+_{a_2}\prod_{i \in S} \tau^+_{a_i}\>_{0,(d,0)},\\
P_5=&(2u)\sum_{\substack{R\sqcup S = \{3,\ldots,l-1\} \\ d_1 + d_2 = d}}d_2 \<\tau^+_{a_1-1} \prod_{i \in R} \tau^+_{a_i}\>_{0,(d_1,0)}%^{\circ}
\<\tau^+_{a_2}\prod_{i \in S} \tau^+_{a_i}\>_{0,(d_2,0)},\\
P_6=&(2u)\sum_{j=2}^{l-1}\sum_{\substack{R\sqcup S = \{3,\ldots,l-1\}}}(2u)^{|R|}\<\tau_{a_1}\left(\prod_{i \in R} \tau_{a_i-\delta_{i=j}}\right)\tau_0\>_0^c\<\tau^+_0\prod_{i \in S\cup\{2\}} \tau^+_{a_i-\delta_{i=j}}\>_{0,(d,0)},\\
P_7=&(2u)\sum_{j=2}^{l-1}\sum_{\substack{R\sqcup S = \{3,\ldots,l-1\} \\ d_1 + d_2 = d}}d_2 \<\tau^+_{a_1} \prod_{i \in R} \tau^+_{a_i-\delta_{i=j}}\>_{0,(d_1,0)}\<\prod_{i \in S\cup\{2\}} \tau^+_{a_i-\delta_{i=j}}\>_{0,(d_2,0)}.
\end{align*}
Here $P_1,P_2$ are obtained from applying TRR to the first term in \eqref{eq:div_for_trr}, $P_4,P_5$ are obtained from applying TRR to the second term, assuming $a_1\neq 0,$ $P_6,P_7$ are obtained from applying TRR to the last term. When $a_1=0$ we cannot apply TRR, and $P_3$ is the contribution in this case. The induction will follow if we could show that $Q_1+\ldots+Q_6=P_1+\ldots+P_7$. Indeed,
\[
Q_1=P_3,\quad Q_3=P_1,\quad Q_5=P_2,\quad Q_6=P_5+P_7,\quad Q_2+Q_4=P_4+P_6,
\]
and the theorem follows.

The 'In particular' part follows from taking the non-equivariant limit (order $u^0$ invariants) and using the fact that terms with negative powers of $u$ vanish, by the 'In particular' part of Theorem~\ref{thm:int_nums_equal_tree_sum}. In this case also the closed term $\<\>_c$ vanishes.
\end{proof}

\subsection{Genus $0$ disk cover formula and genus $0~~(d,d)-$vanishing}\label{subsec:formulas}
\begin{proof}[Proof of Theorem \ref{thm:disk_disk_covers_formula}]
%\eqref{eq:disk_disk_covers_formula_minus} follows from \eqref{eq:disk_disk_covers_formula} by Lemma \ref{lem:(d,d)_vanishing} below, so we will concentrate on proving \eqref{eq:disk_disk_covers_formula}.
For shortness, throughout the proof we will omit the superscripts $+$ in the symbols $\tau_{a_i}^+,$ as well as the genus and other superscripts and subscripts which are not the degree, since we consider disks which are mapped to upper hemisphere. Stationary invariants satisfy the following three properties:
\begin{align}
&\<\>_{(1,0)}=1,\label{eq:initial condition}\\
&\<\tau_0\prod_{i=1}^n\tau_{a_i}\>_{(1+a_{[n]},0)}=(1+a_{[n]})\<\prod_{i=1}^n\tau_{a_i}\>_{(1+a_{[n]},0)},&& n\ge 0,\label{eq:open divisor}\\
&\<\tau_{a_1+1}\prod_{i=2}^n\tau_{a_i}\>_{(2+a_{[n]},0)}=\sum_{\substack{I\sqcup J=[n]\\1\in I,\, n\in J}}(1+a_J)\<\prod_{i\in I}\tau_{a_i}\>_{(1+a_I,0)}\<\prod_{j\in J}\tau_{a_j}\>_{(1+a_J,0)},&& n\ge 2.\label{eq:open TRR0}
\end{align}
Here for an $n$-tuple $a_1,\ldots,a_n$ and a subset $I\subset [n]$ we use the notation $a_I=\sum_{i\in I}a_i$. Note that \eqref{eq:open TRR0} is a special case of Theorem \ref{thm:open_stat_trr_disk_cover}, and \eqref{eq:open divisor} is a special case of Theorem \ref{prop:divisor}.

We prove~\eqref{eq:disk_disk_covers_formula} by induction on $n$ and on $\sum a_i$. The case $n=0$ follows from~\eqref{eq:initial condition}. Suppose $n=1$. By~\eqref{eq:open divisor} and~\eqref{eq:initial condition}, $\<\tau_0\>_{(1,0)}=1$. Suppose that $a\ge 1$. Then we have
$$
\<\tau_a\>_{(a+1,0)}\stackrel{\text{by \eqref{eq:open divisor}}}{=}\frac{1}{a+1}\<\tau_a\tau_0\>_{(a+1,0)}\stackrel{\text{by~\eqref{eq:open TRR0}}}{=}\frac{1}{a+1}\<\tau_{a-1}\>_{(a,0)}\<\tau_0\>_{(1,0)}=\frac{1}{a+1}\<\tau_{a-1}\>_{(a,0)}.
$$
Therefore, $\<\tau_a\>_{(a+1,0)}=\frac{1}{(a+1)!}$.

Suppose $n\ge 2$. If all $a_i$'s are zero, then $\<\tau_0^n\>_{(1,0)}=1$. Suppose some of $a_i$'s are not zero. Without loss of generality we can assume that $a_1\ge 1$. Then, by~\eqref{eq:open TRR0}, we have
\begin{align*}
\<\prod_{i=1}^n\tau_{a_i}\>_{(1+a_{[n]},0)}=&\sum_{\substack{I\sqcup J=\{2,\ldots,n\}\\n\in J}}(1+a_J)\<\tau_{a_1-1}\prod_{i\in I}\tau_{a_i}\>_{(a_1+a_I,0)}\<\prod_{j\in J}\tau_{a_j}\>_{(1+a_J,0)}\stackrel{\substack{\text{by the induction}\\\text{assumption}}}{=}\\
=&\frac{1}{\prod a_i!}\sum_{\substack{I\sqcup J=[n]\\1\in I,\,n\in J}}a_1a_I^{|I|-2}(1+a_J)^{|J|-1}.
\end{align*}
It remains to prove that for any $n\ge 2$ the following identity is true:
\begin{gather}\label{eq:identity for disk to disk}
\sum_{\substack{I\sqcup J=[n]\\1\in I,\,n\in J}}a_1a_I^{|I|-2}(1+a_J)^{|J|-1}=(1+a_{[n]})^{n-2},
\end{gather}
where we consider $a_1,\ldots,a_n$ as formal variables.

We prove identity~\eqref{eq:identity for disk to disk} by induction on $n$. Both sides of it are polynomials in $a_1,\ldots,a_n$. Denote the left-hand side by $L_n(a_1,\ldots,a_n)$. For $n=2$ identity~\eqref{eq:identity for disk to disk} is trivial. Suppose $n\ge 3$. Since both sides of~\eqref{eq:identity for disk to disk} have degree $n-2$, it is sufficient to check the following two properties:
\begin{enumerate}[\textbullet]
\item $L_n|_{a_i=0}=(1+\sum_{j\ne i}a_j)^{n-2}$, for any $2\le i\le n-1$.
\item Coefficient of $a_2a_3\cdots a_{n-1}$ in $L_n$ is equal to $(n-2)!$.
\end{enumerate}
For this we compute
\begin{align*}
L_n|_{a_i=0}=&\sum_{\substack{I\sqcup J=[n]\backslash\{i\}\\1\in I,\,n\in J}}\left(a_1a_I^{|I|-2}(1+a_J)^{|J|-1}\right)(a_I+1+a_J)=\\
=&L_{n-1}(a_1,\ldots,\widehat{a_i},\ldots,a_n)\left(1+\sum_{j\ne i}a_j\right)\stackrel{\substack{\text{by the induction}\\\text{assumption}}}{=}\left(1+\sum_{j\ne i}a_j\right)^{n-2}.
\end{align*}
Clearly, the coefficient of $a_2\cdots a_{n-1}$ in $a_1a_I^{|I|-2}(1+a_J)^{|J|-1}$ is equal to zero, unless $|I|=1$. If $|I|=1$, then this coefficient is equal to $(n-2)!$. This completes the proof of identity~\eqref{eq:identity for disk to disk} and, hence, the theorem is proved.
\end{proof}

\begin{lemma}\label{lem:(d,d)_vanishing}
%\[\<\prod_{i=1}^l\tau_{a_i}^{\epsilon_i}\>_{0,(d_1,d_2)}%^{\circ}
%=-\<\prod_{i=1}^l\tau_{a_i}^{-\epsilon_i}\>_{0,(d_2,d_1)}%^{\circ}
%.\]
%In particular, a
The genus $0$ stationary intersection numbers for degree $(d,d)$ vanish.
\end{lemma}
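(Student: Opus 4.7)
My plan proceeds in three steps.

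\textbf{Step 1 (No fixed points).} The key observation is that $\overline{\mm}_{0,\emptyset,\lf}(d,d)$ has no $S^{1}$-fixed points: an $S^{1}$-invariant disk of degree $(d,d)$ would restrict on its boundary to an $S^{1}$-equivariant degree-$(d-d)=0$ map $S^{1}\to\RP^{1}$, which must be constant with image at an $S^{1}$-fixed point of $\RP^{1}$; but the $S^{1}$-action on $\RP^{1}$ is free. As a direct consequence, no fixed-point graph of type $(\lf,(d,d))$ exists: such a graph requires exactly one disk edge of form $(k,0)$ or $(0,k)$ with $k>0$, combined with sphere edges of form $(k',k')$, and these cannot sum to total degree $(d,d)$. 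Hence $I(\lf,(d,d),\vec{a},\vec{\epsilon})=0$.

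\textbf{Step 2 (Reduction).} By Theorem \ref{thm:int_nums_equal_tree_sum} combined with Corollary \ref{cor:cayley}, the invariant equals
\[
\sum_{P\in\PPP(\lf,(d,d))}\AAA^{\PPP}(P,\vec{a},\vec{\epsilon})+\frac{d}{2u}I(\lf,d,\vec{a},\vec{\epsilon}).
\]
Length-$r$ contributions with $r\geq 3$ vanish because of the factor $(d^{+}-d^{-})^{r-2}=0^{r-2}$, and the length-one contribution is $I(\lf,(d,d),\vec{a},\vec{\epsilon})=0$ by Step 1. The task reduces to showing the length-two partition sum cancels the exceptional term $\tfrac{d}{2u}I(\lf,d,\vec{a},\vec{\epsilon})$.

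\textbf{Step 3 (Open--closed bijection).} I would set up a bijection between ordered length-two partitions of $(\lf,(d,d))$ and triples $(\Gamma^{cl},e,\text{orientation})$, where $\Gamma^{cl}$ is a closed fixed-point graph of the sphere moduli specification $(\lf,d)$ and $e\in E(\Gamma^{cl})$ is one of its sphere edges. The bijection fuses the two open disk edges --- necessarily of matching degree $|k|=|d_1^{+}-d_1^{-}|$ --- into the single sphere edge $e$. Using \eqref{eq:euler^-1}, one checks the two open disk-edge factors $\mu(h)^{|\delta|+1}\tfrac{|\delta|^{|\delta|}}{|\delta|!(2u)^{|\delta|}}$ with $\mu=\pm$ and $|\delta|=k$ multiply to $(-1)^{k+1}\tfrac{k^{2k}}{(k!)^{2}(2u)^{2k}}$, which equals $-1$ times the closed sphere-edge factor $\tfrac{(-1)^{k}k^{2k}}{(k!)^{2}(2u)^{2k}}$. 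All other contributions ($V_{\pm}$ vertex factors, $V_{+}$ moduli, markings) are preserved by the fusion, while the covering-automorphism groups satisfy $|A_{\Gamma_{1}}^{0}||A_{\Gamma_{2}}^{0}|/|A_{\Gamma^{cl}}^{0}|=k$. Combining yields the pointwise identity $I(\lf_{1},\vd_{1},\cdot)\,I(\lf_{2},\vd_{2},\cdot)=-\tfrac{1}{k}\,I(\Gamma^{cl},\cdot)$. The length-two partition sum then telescopes:
\[
\sum_{P}\frac{(d_{1}^{+}-d_{1}^{-})^{2}}{4u}I_{1}I_{2}=-\frac{1}{2u}\sum_{\Gamma^{cl}}I(\Gamma^{cl})\sum_{e\in E(\Gamma^{cl})}|\delta(e)|=-\frac{d}{2u}I(\lf,d,\vec{a},\vec{\epsilon}),
\]
using $\sum_{e}|\delta(e)|=d$ and absorbing the orientation-factor-of-two via the symmetry $I_{1}I_{2}=I_{2}I_{1}$. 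This exactly cancels the exceptional contribution, proving the vanishing.

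The main technical step I expect to be delicate is the Euler-factor sign comparison (the $-1$ arises from the interplay between $(-1)^{|\delta|}$ in the sphere factor and the $\mu(h)^{|\delta|+1}$ contributions of the two opposing disk edges) together with the precise bookkeeping of automorphism groups across the bijection, especially when $\Gamma^{cl}$ has automorphisms fixing the distinguished edge.
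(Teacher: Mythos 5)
Your proposal is correct and follows essentially the same route as the paper: the tree-sum formula truncates at $r\leq 2$ because of the $(d^+-d^-)^{r-2}$ factor, and the surviving $r=2$ partition sum cancels the exceptional closed term via the same bijection (fuse two opposing disk edges into one sphere edge) and the same Euler-factor identity (the paper's Observation~\ref{obs:sphere_vs_2_disks_cont}, $\operatorname{ed}(d) = (-d)\operatorname{he}(+1,d)\operatorname{he}(-1,d)$, which is exactly your ratio computation). The one point you make explicit that the paper leaves implicit is that $I(\lf,(d,d),\cdot)=0$ because no fixed-point graph of type $(\lf,(d,d))$ exists (the unique disk edge must have $\partial^H(\delta)\neq 0$, inconsistent with $d^+-d^-=0$); this is the correct reason the $r=1$ term drops out, and worth stating.

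Two small imprecisions in Step 3, neither fatal. First, your ``pointwise identity'' is mislabeled: the relation $I_+ I_- = -\tfrac{1}{k}I$ only holds at the level of individual graph contributions $I(\Gamma_+,\cdot)\,I(\Gamma_-,\cdot) = -\tfrac{1}{k}I(\Gamma,\cdot)$, not for the full $I(\lf_i,\vd_i,\cdot)$ sums; the paper passes through the graph-level statement and then assembles via the bijection $q:B(\lf,\vd)\to A(\lf,\vd)$. Second, the automorphism bookkeeping you flag as delicate is precisely where the paper invests effort: the right normalizations come from $|\Aut(\Gamma_+)||\Aut(\Gamma_-)| = |\Aut(\Gamma,e)|$ together with $|A^0_{\Gamma_+}||A^0_{\Gamma_-}| = k\,|A^0_{\Gamma}|$, and the orbit-stabilizer identity $\sum_{[e]}|[e]|\,|\delta(e)|/|\Aut(\Gamma)| = d/|\Aut(\Gamma)|$ is what produces the factor of $d$ at the end. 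You should prove these explicitly rather than appeal to the ``telescope''; once done, your argument is complete.
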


\begin{proof}
%The lemma is an immediate consequence of the simple observation that inversion on the target is orientation reversing for the moduli, by Observation~\ref{obs:behavior of orientation under inversion}, but orientation preserving for the tautological lines and the point constraints.\footnote{A sanity check: can this be seen from the formula as well? make sure!.}
%
%For $\vd=(d,d)$ we provide an alternative proof, using Theorem \ref{thm:int_nums_equal_tree_sum} and Corollary \ref{cor:cayley}.
Consider any intersection number $\langle\prod_{i\in\lf}\tau^{\epsilon_i}_{a_i}\rangle_{0,(d,d)}.$
By Corollary \ref{cor:cayley}, partitions $P$ with $r>2$ pieces which appear in the expression for $\OGW(\lf,(d,d),\vec{a},\vec{\epsilon})$ vanish, since their amplitude contains the term $(d-d)^{r-2}.$

When $r=2$ this argument does not guarantee vanishing. We would like to show that the $r=2$ terms of \eqref{eq:nice_tree_sum} precisely cancel the second summand of that equation. In other words, we would like to show that
\begin{equation}\label{eq:2_vs_1_terms_cancellation_in_dd}\sum_{\substack{((\lf_+,\vd_+),(\lf_-,\vd_-))}}\frac{-(\partial^H(\vd_+))^2}{-2u}I(\lf_+,\vd_+)I(\lf_-,\vd_-)=\frac{d}{-2u}I(\lf,d), \end{equation}
where the sum is over pairs of moduli specifications $((\lf_+,\vd_+),(\lf_-,\vd_-)=(\lf\setminus\lf_+,\vd-\vd_+)),$ where $\vd_+=(b+a,b)$ with $a>0$ and $\partial^H$ is the connecting map in homology, given by \eqref{eq:connecting}, so that $\partial^H(\vd_+)=a.$

By the definition of fixed-points contribution, Definition~\ref{def:inhomterms}, the left-hand side equals
\[
\sum_{(\Gamma_+,\Gamma_-)\in B{(\lf,\vd)}}\frac{a(\Gamma_+)^2}{2h}\prod_{s\in\{+,-\}}\frac{I(\Gamma_s,\vec{a}|_{\lf_s},\vec{\epsilon}|_{\lf_s})}{|\Aut(\Gamma_s)|},
\]
where $B(\lf,\vd)$ is the collection of pairs of fixed-point graphs $\Gamma_\pm$ for moduli specifications $(\lf_\pm,\vd_\pm)$ as above with $a(\Gamma_+)=\partial^H(\vd_+)>0$, $e_+$ is the unique disk edge of $\Gamma_+$ and $\delta(e^+)=(a(\Gamma_+),0)$. We have
\[
I(\Gamma_s,\vec{a}|_{\lf_s},\vec{\epsilon}|_{\lf_s})=\frac{1}{|A^0_{\Gamma_s}|}\int_{\overline{\mm}_{\Gamma_s}}e_{\Gamma_s}^{-1}
\cdot\alpha_{\Gamma_s}^{\vec{a}|_{\lf_s},\vec{\epsilon}|_{\lf_s}},
\]
as in Definition \ref{def:inhomterms}.

Recall item \eqref{it:graphical} of Remark \ref{rmk:why_not_naive}, concerning the graphical representation of \eqref{eq:nice_tree_sum}. It allows us to write the right-hand side of \eqref{eq:2_vs_1_terms_cancellation_in_dd} as the sum
\[
\sum\frac{-|\delta(e)|}{-2u}\frac{I(\Gamma,\vec{a},\vec{\epsilon})}{|\Aut(\Gamma,e)|}
\]
over pairs $(\Gamma,e),$ where $\Gamma$ is a fixed-point graph for the specification $(\lf,d),$ $e$ a sphere edge of $\Gamma$ and $\Aut(\Gamma,e)$ is the group of automorphisms of $\Gamma$ which fix $e.$ Let $A{(\lf,\vd)}$ be the collection of these pairs.

There is an obvious bijection $q:B(\lf,\vd)\to A(\lf,\vd)$ obtained by sending $(\Gamma_+,\Gamma_-)$ to $(\Gamma,e)$, where $\Gamma$ is the result of gluing $\Gamma_\pm$ along their boundaries and $e$ is the new sphere edge. The inverse $q^{-1}(\Gamma,e)$ is obtained cutting $\Gamma$ along the equator of $e.$

Under this bijection, if $(\Gamma,e)=q(\Gamma_+,\Gamma_-),$ then
\[
|{\delta}(e)|=\partial^H(\delta(e_+)),\qquad |\Aut(\Gamma_+)||\Aut(\Gamma_-)|=|\Aut(\Gamma,e)|,
\]
and, by Observation \ref{obs:sphere_vs_2_disks_cont} below, we have
\[
I(\Gamma_+,\vec{a},\vec{\epsilon})I(\Gamma_-,\vec{a},\vec{\epsilon})(-(\partial^H\delta(e_+))^2)+(-|{\delta}(e)|)I(\Gamma,\vec{a},\vec{\epsilon})=0.
\]
Thus,
\[\frac{I(\Gamma_+,\vec{a},\vec{\epsilon})}{|\Aut(\Gamma_+)|}
\frac{I(\Gamma_-,\vec{a},\vec{\epsilon})}{{|\Aut(\Gamma_-)|}}(-(\partial^H\delta(e_+))^2)+
(-|{\delta}(e)|)\frac{I(\Gamma,\vec{a},\vec{\epsilon})}{{|\Aut(\Gamma,e)|}}=0.\]
Summing over $B(\lf,\vd)$ we obtain \eqref{eq:2_vs_1_terms_cancellation_in_dd}.
\end{proof}
\begin{obs}\label{obs:sphere_vs_2_disks_cont}
If
\[
\operatorname{he}\left(\mu,d\right)=\frac{1}{d}\frac{\mu}{d!}\left(\mu\frac{2u}{d}\right)^{-d}
\]
denotes the factor in $I\left(\Gamma,\vec{a},\vec{\epsilon}\right),$ coming from \eqref{eq:euler^-1}, corresponding to a disk edge (taking into account also the size $d$ group of geometric automorphisms)
and
\[
\operatorname{ed}\left(d\right)=\frac{1}{d}\left(-1\right)^{d}\frac{d^{2d}}{\left(2u\right)^{2d}\left(d!\right)^{2}}
\]
represents the contribution of a sphere edge, then
\begin{equation}
\operatorname{ed}\left(d\right)=(-d)\cdot \operatorname{he}\left(+1,d\right)\cdot \operatorname{he}\left(-1,d\right).\label{eq:ran's observation}
\end{equation}
\end{obs}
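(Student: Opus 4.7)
The statement is purely a numerical identity between rational functions in $u$ and $d$, so my plan is to verify it by direct computation, carefully tracking the sign coming from the product $(\mu)(-\mu)=-1$ appearing to the $d$-th power.

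First I would expand the product on the right-hand side of \eqref{eq:ran's observation}. Combining the two half-edge factors gives
\[
\operatorname{he}(+1,d)\cdot\operatorname{he}(-1,d)=\frac{1}{d^{2}(d!)^{2}}\cdot(+1)(-1)\cdot\left(\frac{2u}{d}\right)^{-d}\left(-\frac{2u}{d}\right)^{-d}.
\]
The sign factor $(+1)(-1)=-1$ comes from the explicit factors of $\mu$, and the product of the two powers of $\pm\frac{2u}{d}$ produces $(-1)^{-d}=(-1)^{d}$ together with $\left(\frac{2u}{d}\right)^{-2d}=\frac{d^{2d}}{(2u)^{2d}}$. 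Collecting,
\[
\operatorname{he}(+1,d)\cdot\operatorname{he}(-1,d)=\frac{-(-1)^{d}}{d^{2}(d!)^{2}}\cdot\frac{d^{2d}}{(2u)^{2d}}.
\]

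Next I would multiply by $(-d)$ and compare with $\operatorname{ed}(d)$. The extra factor $(-d)$ cancels one $d$ in the denominator and flips the overall sign, giving
\[
(-d)\cdot\operatorname{he}(+1,d)\cdot\operatorname{he}(-1,d)=\frac{(-1)^{d}}{d(d!)^{2}}\cdot\frac{d^{2d}}{(2u)^{2d}}=\frac{1}{d}(-1)^{d}\frac{d^{2d}}{(2u)^{2d}(d!)^{2}}=\operatorname{ed}(d),
\]
which is exactly the claimed equality. There is no real obstacle here; the only subtle point is bookkeeping the sign $(-1)^{d}$, which could be obtained either from the $\mu$ factors times $\mu^{-d}$ in each half-edge or, equivalently, by expanding $\left(-\frac{2u}{d}\right)^{-d}$. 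Since both routes give the same answer, the identity follows by a single line of algebra, which is why it deserves only the status of an observation used in the proof of Lemma~\ref{lem:(d,d)_vanishing}.
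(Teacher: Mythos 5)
Your computation is correct, and since the paper states this as an observation without proof (it is a one-line algebraic verification), your direct expansion is exactly the intended argument. Nothing to add.
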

\begin{figure}[t]
\centering
\includegraphics[scale=.5]{./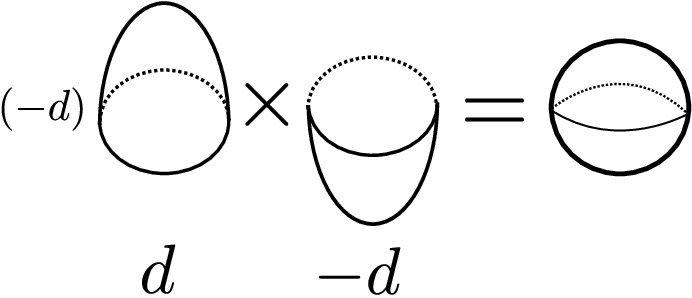}
\caption{A graphical representation of Observation \ref{obs:sphere_vs_2_disks_cont}.}
\label{fig:half_edges_vs_edges}
\end{figure}

\section{All genera definition by localization}\label{sec:high_genus}
In this section we \emph{define} via localization the stationary descendent integrals for all moduli specifications, using what should be the fixed-point formula expressing them. We will see that this definition agrees with the geometric definition for closed moduli specifications and for disk moduli specifications. We use the terminology of Sections \ref{subsec:mod_spec}, \ref{subsubsec:fpg} and \ref{subsec:fp_graph_cont}.

We begin with preliminary definitions of auxiliary objects.
\subsection{BC (Boundary contribution) graphs}
\begin{definition}\label{BC graphs}
A \emph{bare boundary contribution graph} $G=(V,E,L)$ is the following data:
\begin{enumerate}[(a)]
\item A set of vertices $V.$
\item A collection of edges $E=\vec{E}\cup \widetilde{E}$ between them.
\item A collection $L$ called the \emph{loops} which should be thought of as oriented loops. \end{enumerate}
This data is required to satisfy
\begin{enumerate}[(a)]
\item All edges in $\vec{E}$ are oriented, and any vertex has exactly one such incoming edge, and one such outgoing edge.
\item Every vertex touches a single edge of $\widetilde{E}.$ These edges are called the \emph{wavy edges}, they are not oriented and each touches two different vertices.
\end{enumerate}
From this data one may construct the set $F^{new}$ of oriented loops defined as $L$ together with the oriented closed paths of the graph $(V,\vec{E}).$
Another set of oriented loops which we define is $F^{old},$ made of $L$ and of all oriented loops which can be written as a sequence $(v_0,e_1,v_1,e_2,\ldots, e_{2m})$ (up to a cyclic change of order), under the following constraints. First, all edges $e_j$ which are directed are different. For an even $i,~e_i\in \widetilde{E},$ and it connects $v_{i-1}$ to $v_{(i(\text{mod}~2m))}.$ For an odd $i,~e_i\in \vec{E},$ and it connects $v_{i-1}$ to $v_i,$ in agreement with its direction. Elements of $F^{new}$ ($F^{old}$) are called new (old) faces.
We write $e\in f$ whenever the edge $e$ is a part of the closed path $f.$
\end{definition}

\begin{definition}
A \emph{boundary contribution graph} $G=(V,E,L,d),$ or a \emph{BC graph} for shortness, is a bare boundary contribution graph $G=(V,E,L)$ together with a \emph{perimeter function}
\[
d:F^{new}\cup F^{old}\to \Z,
\]
which associates to a face its perimeter. It is required that the perimeters of new faces will be non-zero, and that the sum of perimeters of old faces equals the sum of perimeter of new faces, in any connected component $H$ of $G.$ We write $\vec{d}$ for the collection of perimeters of faces, and we sometimes use this notation instead of writing the function $d.$

An isomorphism of BC graphs $G=(V,E,L,d),G'=(V',E',L',d')$ is a collection of bijections
\[
f^V:V\to V',\quad f^E:E\to E',\quad f^L:L\to L',
\]
which preserve incidence relations between edges and vertices, directions of directed edges and perimeters of faces. In particular, there are induced bijections between new and old faces of $G,G'.$

An \emph{enumeration} of a BC graph is an enumeration of its new faces by $1,\ldots,|F^{new}|,$ of its old faces by $1,\ldots, |F^{old}|,$ of its wavy edges by $1,\ldots,|\widetilde{E}|$, and a choice of orientation to each wavy edge. If we define isomorphisms and automorphisms in the expected way, then the automorphism group of an \emph{enumerated BC graph} is easily seen to be trivial.
\end{definition}
Observe that any element of $\vec{E}$ belongs to one new and one old face, while every element of $\widetilde{E}$ belongs to two old faces (or one old, but appears twice in that face).

\begin{definition}\label{def:conf_space}
A \emph{metric} on a BC graph $G$ is an association of a (signed) length $x_e\in\R$ for any $e\in\vec{E}$ such that
\begin{enumerate}[(a)]
\item The sum of lengths of the oriented edges of any circuit of the graph is an integer.
\item The sum of lengths of the oriented edges which belong to a new or old face is the perimeter of that face.
\item If $e\in\vec{E}$ is an edge of a new face $f$, then $x_ed(f)>0.$
\end{enumerate}
We denote by $W_G\subseteq \R^{\vec{E}}$ the space of metrics on $G$ and by $\overline{W}_G$ its closure in $\R^{\vec{E}}.$
\end{definition}
We consider the lengths as functions on $W_G,~x_e:W_G\to\R.$
\begin{defn}\label{def:form}
Let $\mathcal{E}\subseteq \vec{E}$ be a set of edges that together with $\widetilde{E}$ forms a spanning forest of $G.$ This means that $\mathcal{E}\sqcup\vec{E}$ do not contain any circuit, and any two vertices which belong to the same connected component of $G$ are connected by a path in $\mathcal{E}\sqcup\vec{E}.$
Write \[{\Omega}_G = \bigwedge_{e\in\mathcal{E}} dx_e.\]
\end{defn}
\begin{obs}
$\Omega_G$ is well-defined up to a sign.
\end{obs}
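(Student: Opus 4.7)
The plan is to verify that the only ambiguity in the definition of $\Omega_G$ comes from matroid-type exchanges among edge subsets extending the wavy matching to a spanning forest, and that each such exchange flips at most the sign of the wedge product. In particular, I want to identify $W_G$ locally with an affine space of dimension exactly $|\mathcal{E}|$, and show that the differentials $\{dx_e\}_{e\in \mathcal{E}}$ form a basis of its cotangent space.

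First I would check that on $W_G$ the differentials $\{dx_e\}_{e\in\vec{E}}$ satisfy precisely the linear relations coming from cycles of $G$: constraint (a) of Definition \ref{def:conf_space} forces $\sum_{e\in\vec{E}\cap\sigma}\epsilon_e^{\sigma}x_e$ to be locally constant for each circuit $\sigma$, so upon differentiation the tangent relations are indexed by the image of $Z_1(G;\R)$ under the projection $\pi:C_1(G;\R)\to\R^{\vec{E}}$. Since the wavy edges form a matching, $\pi$ is injective on $Z_1(G;\R)$, and the face relations from (b) are then redundant (old faces are circuits, and the directed-cycle new faces are circuits in $(V,\vec{E})$). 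Thus the cotangent space of $W_G$ has dimension $|\vec{E}|-(|E|-|V|+c)=|V|-c-|\widetilde{E}|=|\mathcal{E}|$, confirming by dimension count that $\{dx_e\}_{e\in\mathcal{E}}$ is a cotangent basis.

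Second, given two completions $\mathcal{E}$ and $\mathcal{E}'$, the standard matroid-base exchange axiom applied to the graphic matroid of $G$ (with $\widetilde{E}$ contracted) yields a sequence of elementary swaps connecting $\mathcal{E}\sqcup\widetilde{E}$ to $\mathcal{E}'\sqcup\widetilde{E}$ through spanning forests. In each swap an edge $e\in\mathcal{E}$ is replaced by $e'\in\vec{E}\setminus\mathcal{E}$; adjoining $e'$ to the forest $\mathcal{E}\sqcup\widetilde{E}$ closes a unique circuit $\sigma$ containing both $e$ and $e'$, and the corresponding relation
\[
\sum_{f\in\sigma\cap\vec{E}}\epsilon_f^{\sigma}\,dx_f=0
\]
has all coefficients $\epsilon_f^{\sigma}\in\{\pm 1\}$ because $\sigma$ is a simple cycle. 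Hence on $W_G$ one has $dx_{e'}=-\epsilon_{e}^{\sigma}\epsilon_{e'}^{\sigma}\,dx_e+(\text{terms in }dx_f,\ f\in\mathcal{E}\setminus\{e\})$, and substituting into $\bigwedge_{f\in\mathcal{E}'}dx_f$ shows that this wedge equals $\pm\bigwedge_{f\in\mathcal{E}}dx_f$. Iterating along the sequence of exchanges gives $\Omega_G^{\mathcal{E}'}=\pm\Omega_G^{\mathcal{E}}$.

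The only real subtlety I expect is verifying the matroid exchange step in the present (slightly non-standard) setting, where the spanning forest is required to contain the fixed wavy matching $\widetilde{E}$. The clean way to dispose of this is to contract each wavy edge to a point, turning the question into the classical statement that any two bases of the graphic matroid on $G/\widetilde{E}$ are connected by elementary exchanges; then the $\pm 1$ nature of the cycle coefficients in a graphic matroid gives the sign ambiguity and nothing more. Once this is in place the Observation follows immediately.
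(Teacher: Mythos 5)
Your argument is correct and takes essentially the same route as the paper: pass from one completion $\mathcal{E}$ to another by a sequence of elementary (matroid base) exchanges inside spanning forests containing $\widetilde{E}$, and at each exchange use the circuit created by the incoming edge together with constraint (a) of Definition \ref{def:conf_space} (circuit perimeters are integers, hence locally constant on $W_G$) to get a $\pm1$-coefficient relation among the $dx_e$'s, which flips the wedge by at most a sign. The dimension count in your second paragraph, identifying $\{dx_e\}_{e\in\mathcal{E}}$ as a cotangent basis of $W_G$, and the contraction of $\widetilde{E}$ to reduce to the classical graphic-matroid exchange statement, are reasonable elaborations but are not needed for well-definedness up to sign; the paper simply asserts the exchange sequence and proceeds directly.
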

\begin{proof}
Changing the order in which the wedge is taken results in a change of sign to $\Omega_G.$ Replacing~$\mathcal{E}$ by another set of edges with the same properties $\mathcal{E}'$ can be achieved in a sequence of steps
\[
\mathcal{E}=\mathcal{E}_0\to\mathcal{E}_1\to\cdots\to\mathcal{E}_r=\mathcal{E}',
\]
where for each $i$ we have $\mathcal{E}_{i+1}=\mathcal{E}_i\backslash\{e_i\}\cup\{e_{i+1}\}.$
Adding $e_{i+1}$ to $\mathcal{E}\sqcup\widetilde{E}$ creates a circuit in the graph which contains $e_i,e_{i+1},$ additional oriented edges $\{e_\alpha\}_{\alpha\in A}\subseteq\mathcal{E}\backslash\{e_i\},$ and some elements of $\widetilde{E}.$ Since any circuit has an integer perimeter, this perimeter is locally constant on $W_G.$ Thus, on $W_G$ we have
\[dx_{e_{i+1}}=-dx_{e_i}-\sum_{\alpha\in A} dx_\alpha,\]
hence,
\[
\bigwedge_{e\in\mathcal{E}_i} dx_e=\pm\left(\bigwedge_{e\in\mathcal{E}_i\backslash{e_i}} dx_e\right)\wedge\left(-dx_{e_i}-\sum_{\alpha\in A} dx_\alpha\right)=\pm\left(\bigwedge_{e\in\mathcal{E}_i\backslash{e_i}} dx_e\right)\wedge dx_{e_{i+1}}=\pm\bigwedge_{e\in\mathcal{E}_{i+1}} dx_e,
\]
and the proof follows.
\end{proof}

\begin{definition}\label{def:volume}
Define the \emph{volume} of a BC graph $G=(V,E,L,d)$ to be
\[\Vol_{G}=(-1)^{alt}\left|\left(\prod_{f\in F^{new}\backslash L}d(f)\right)\int_{\overline{W}_G}\Omega_G\right|,\]
where $alt$ is the number of edges of $\widetilde{E}$ which touch two new faces $f_1,f_2$ with $d(f_1)d(f_2)<0.$
\end{definition}
In the sequel we will study those volumes and we will see, in particular, that they are piecewise polynomial functions in the perimeters of the new and old faces.

\subsection{Morphisms between specifications and the compact form of the localization definition}
A moduli specification is \emph{pure} if there is no boundary of degree $0$ ($0$ is not in the image of $d$).

\begin{defn}\label{def:hom_between_specs}
A \emph{morphism} from a moduli specification $S^{new}$ to a moduli specification $S^{old}$
is a decorated graph $M=(V_b\sqcup V_w,E,H^{CB},\widetilde{E},g_s,\lf,\vd,d,\sigma,d^{old})$ such that
\begin{enumerate}[(a)]
\item $(V_b\sqcup V_w,E,\vd,g_s,\lf,\vd,d)$ is the moduli specification $S'.$
\item The \emph{wavy edges} $\widetilde{E}$ connect white vertices.
\item The \emph{contracted boundary half-edges} $H$ are half-edges which emanate from black vertices.
\item A \emph{wavy flag} is a pair of a wavy edge and an orientation of it.
For any white vertex $v,~\sigma_v$ is a cyclic order of the wavy flags which are incident to $v$ and oriented outward of $v.$
\item There is an involution $\tau$ on wavy flags defined by associating a given wavy flag the other wavy flag which corresponds to the same wavy edge with opposite orientation. The function $d^{old}$ is a mapping from the set of cycles of $\sigma^{-1}\circ\tau$ to $\zz.$ Here and afterwards we consider a white vertex which is not incident to a wavy edge as a cycle of $\sigma^{-1}\circ\tau$ of its own.
\item $S^{old}$ is the \emph{desingularization} of $M,$ where the desingularization is the moduli specification defined as follows.
\begin{enumerate}[\textbullet]
\item Associate a black vertex to any connected component of the graph $G=(V_b\sqcup V_w,E\sqcup\widetilde{E})$ (we consider $G$ as a graph in the usual sense, with vertices $V_b\sqcup V_w$ and edges $E\sqcup\widetilde{E}$).
\item For such a vertex $v'$ define
\[
\vd(v')=\sum_v \vd(v),\quad \lf(v')=\bigsqcup_v\lf(v),\quad g_s(v')=h_1(G)+\sum_v g_s(v),
\]
where the summation or union are over all black vertices of $G$ which belong to the connected component represented by $v'.$
\item We associate a white vertex for any cycle of $\sigma^{-1}\circ\tau,$ connect it to the black vertex $v$ which corresponds to the connected component which contains this cycle. Define, for a white vertex $w,~d(w)=d^{old}(c),$ where $c$ is the cycle which corresponds to $w.$
\item We associate a white vertex for any contracted boundary half-edge, define $d$ for such a vertex to be $0,$ and connect this vertex to the black vertex which represents the connected component containing this contracted boundary half-edge.
\end{enumerate}
\end{enumerate}

As usual, isomorphisms or automorphisms are graph isomorphisms or automorphisms which respect the additional data, and $\Aut(M)$ is the automorphism group of $M.$

Write $\Hom(S',S)$ for the set of isomorphism types of morphisms from $S'$ to $S.$
\end{defn}
\begin{definition}\label{def:BCndCBforMor}
One can associate a BC graph $G$ for any morphism \[M=(V_b\sqcup V_w,E,H^{CB},\widetilde{E},\vd,g_s,\lf,\vd,d,\sigma,d^{old})\in \Hom(S^{new},S^{old})\] as follows:
\begin{enumerate}[\textbullet]
\item Associate a vertex $v$ for any wavy flag $e_v$.
\item Connect two vertices which correspond to flags which are paired via $\tau$ by a wavy edge of~$G.$
\item Draw a directed edge $(v,u)$ if $\sigma(e_v)=e_u.$
\item Associate a loop to any white vertex without wavy edges.
\item So far we obtain a bare boundary contribution graph, whose new faces correspond white vertices of $M$ or equivalently of $S^{new},$ and the old faces correspond to the $\sigma^{-1}\circ\tau-$cycles of $M,$ or equivalently the white vertices of $S.$ Define $d$ for an old face to be $d^{old}$ of the corresponding cycle, and $d$ of a new face to be $d$ of the corresponding vertex.
\end{enumerate}
We denote $G$ by $BC(M).$

In addition, one can associate a degree, which we also denote by $d$ to a contracted boundary half-edge $h$ of $M$. Suppose $h$ is connected to a black vertex $v.$ Define
\[\delta(h)=\delta^{morph}(h)=d^+(v)-\sum_{w\in V_w(v)|d(w)>0}d(w),\]
where $V_w(v)$ is the set of white neighbors of $v.$
\end{definition}

\begin{defn}\label{def:OGW_loc}
Recall Definition \ref{def:inhomterms}. Let $S$ be a moduli specification, $\vec{a},\vec{\epsilon}$ be vectors of descendents and constraints. Define the \emph{open Gromov-Witten invariant} of $(S,\vec{a},\vec{\epsilon})$ as follows:
\begin{equation}\label{eq:ogw_compact}
\OGW(S) = \sum_{\text{$S'$ is pure}}\sum_{M\in \Hom(S',S)}\frac{1}{|\Aut(M)|}\frac{\Vol_{BC(M)}}{(-2u)^{|\widetilde{E}|}}\left(\prod_{h\in H^{CB}(M)}\frac{\delta^{morph}(h)}{2u}\right)I(S',\vec{a},\vec{\epsilon}).
\end{equation}
\end{defn}
Note that, although the summation is over infinitely many specifications, only for finitely many of them $\Hom(S',S)$ is non-empty. Indeed, for a given $S$ the only specifications $S'$ for which $\Hom(S',S)$ is non-empty must have the same set of underlying labels, same total degree and total genus bounded by that of $S.$ There are only finitely many such specifications.
Even when $\Hom(S',S)$ is non-empty, it could still happen that the summand will vanish because of the volume term.

\subsection{Equivalent definition in terms of graphs}
\begin{defn}\label{def:GraphsSnewSold}
A \emph{localization graph} is a tuple
\[\mathcal{G}=\left(V\sqcup V_\circ,\FFF,\delta,\gamma,\mu,\lambda,H^{CB},\widetilde{E},\sigma,d^{old}\right)\]
such that
\begin{enumerate}[(a)]
\item $\Gamma=\left(V\sqcup V_\circ,\FFF,\delta,\gamma,\mu,\lambda\right)$ is a fixed point graph. We denote $\Gamma$ by $FP(\mathcal{G}).$
\item $H^{CB}$ are half-edges which emanate from equators of $\Gamma$ and are called \emph{contracted boundary~edges}.
\item $\widetilde{E}$ are the \emph{wavy edges} which connect boundaries of $\Gamma.$ Wavy flags are wavy edges together with orientations.
\item $\sigma_v,$ for a boundary vertex $v\in V_\circ,$ is a cyclic order of wavy flags which emanate from $v.$ We define $\tau$ as in Definition \ref{def:hom_between_specs}.
\item $d^{old}$ is a function from cycles of $\sigma^{-1}\circ\tau$ to $\zz.$
\end{enumerate}

Given moduli specifications $S^{new},S^{old},$ we say that $\mathcal{G}$ is a fixed point graph of type $(S^{new},S^{old}),$ if $FP(\mathcal{G})$ is of type $S^{new}$ and the \emph{contraction} of $\mathcal{G}$ belongs to $\Hom(S^{new},S^{old}),$ where the contraction $\Contract(\mathcal{G})$ is the tuple $(V_b\sqcup V_w,E,H^{CB},\widetilde{E},g_s,\lf,\vd,d,\sigma,d^{old})$ defined as follows.

$(V_b\sqcup V_w,E,g_s,\lf,\vd,d)$ is the contraction of $FP(\mathcal{G})$ as defined in the end of Definition \ref{def:fpg}. In particular there is an identification between boundaries in $V_\circ$ and $V_w.$ With this identification we draw wavy edges, also denoted $\widetilde{E},$ between white vertices, and denote the induced cyclic orders also by $\sigma=\sigma^{\Contract(\mathcal{G})}.$ This also allows us to define $\tau=\tau^{\Contract(\mathcal{G})},$ and hence induce the function, which is still denoted $d^{old},$ on cycles of $\sigma^{-1}\circ\tau$ of $\Contract(\mathcal{G}).$
Finally, by the identification between $V_b$ and connected components of $FP(\mathcal{G}),$ we associate, for any contracted boundary half-edge $h\in H^{CB}(\mathcal{G})$ a contracted boundary half-edge, which emanates from the black vertex which corresponds to the connected component of $FP(\mathcal{G})$ which contains $h.$

Isomorphisms, automorphisms and the group $\Aut(\mathcal{G})$ are defined in the expected way.

We denote by $\Graphs(S',S)$ the set of isomorphism types of localization graphs of type $(S',S).$

We denote $BC(\Contract(\mathcal{G}))$ by $BC(\mathcal{G}).$
Finally, for a contracted boundary half-edge $h$ which is connected to an equator $v$ we set $\delta(h)=\delta^{graph}(h)=|\delta(v)|.$
\end{defn}
The following lemma provides an equivalent interpretation of $\OGW$ in terms of graph summation. We will consider it as an alternative definition.
\begin{lemma}\label{lem:OGW_loc_graphs}
Let $S$ be a moduli specification, $\vec{a},\vec{\epsilon}$ be vectors of descendents and constraints. We have
\begin{equation}\label{eq:ogw_graph_sum}
\OGW(S) = \sum_{\text{$S'$ is pure}}\sum_{\mathcal{G}\in \Graphs(S',S)}\frac{1}{|\Aut(\mathcal{G})|}\frac{\Vol_{BC(\mathcal{G})}}{(-2u)^{|\widetilde{E}|}}\left(\prod_{h\in H^{CB}(\mathcal{G})}\frac{\delta^{graph}(h)}{2u}\right)I(FP(\mathcal{G}),\vec{a},\vec{\epsilon}).
\end{equation}
\end{lemma}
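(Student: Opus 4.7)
The plan is to reorganize the sum on the right-hand side of \eqref{eq:ogw_graph_sum} by grouping localization graphs $\mathcal{G}$ according to their contractions $M = \Contract(\mathcal{G}) \in \Hom(S',S)$. The contraction collapses each connected component of the fixed-point graph $FP(\mathcal{G})$ to a single black vertex of the underlying moduli specification $S'$, while preserving the wavy edges, cyclic orders, and $d^{old}$. Its fibers parametrize refinements of $M$: for each connected component $S'_c$ of $S'$ one chooses a fixed-point graph $\Gamma_c$ of type $S'_c$, and one distributes the contracted boundary half-edges of $M$ attached to the corresponding black vertex among the equators of $\Gamma_c$.

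Next I would apply the defining relation from Definition \ref{def:inhomterms},
\[
I(S',\vec{a},\vec{\epsilon}) \;=\; |\Aut(S')| \sum_{\Gamma \text{ of type } S'} \frac{I(\Gamma,\vec{a},\vec{\epsilon})}{|\Aut(\Gamma)|},
\]
inside each term of \eqref{eq:ogw_compact} and interchange the order of summation. Combined with the multiplicativity of $I(S',\vec{a},\vec{\epsilon})$ over connected components from Lemma \ref{lem:lem}(c), this converts the outer sum over $M \in \Hom(S',S)$ (weighted by $I(S',\vec{a},\vec{\epsilon})$) into a sum over pairs $(M,\Gamma)$. A standard orbit-stabilizer argument should then relate $|\Aut(\mathcal{G})|$ for a localization graph $\mathcal{G}$ in the fiber over $M$ to $|\Aut(M)|\,|\Aut(\Gamma)|$, corrected by the stabilizer of any choice of attachment data for contracted boundary half-edges.

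Two further factor identifications are then needed. The volume and wavy-edge factors are equal because $BC(\mathcal{G}) = BC(\Contract(\mathcal{G}))$ by the definitions in \ref{def:BCndCBforMor} and \ref{def:GraphsSnewSold}, so $\Vol_{BC(\mathcal{G})} = \Vol_{BC(M)}$ and $|\widetilde{E}(\mathcal{G})| = |\widetilde{E}(M)|$ trivially. For the contracted-boundary degree factor, recall that for $h \in H^{CB}(M)$ attached to a black vertex $v$, $\delta^{morph}(h) = d^+(v) - \sum_{w \in V_w(v),\, d(w)>0} d(w)$, whereas in $\mathcal{G}$ the same half-edge is attached to a specific equator $v_\circ$ with $\delta^{graph}(h) = |\delta(v_\circ)|$. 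By condition (d) of Definition \ref{def:moduli_spec} applied to the refinement $\Gamma_c$, the total positive degree carried by equators of $\Gamma_c$ equals $\delta^{morph}(h)$, and the required identity of products is obtained by summing over all distributions of the contracted boundary half-edges across equators of $\Gamma_c$.

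The main obstacle I expect is precisely this last combinatorial step: verifying that, summed over attachments of contracted boundary half-edges to equators and weighted by the automorphism corrections, the products $\prod_h \delta^{graph}(h)$ together with $I(\Gamma_c)/|\Aut(\Gamma_c)|$ reorganize to give $\prod_h \delta^{morph}(h)$ times $I(S'_c)/|\Aut(S'_c)|$. This is a multinomial-type identity that requires careful tracking of how $\Aut(\mathcal{G})$ acts on the attachment data versus how $\Aut(M)$ permutes the half-edges emanating from each black vertex, and it is where all of the bookkeeping between the graph-level and morphism-level descriptions has to be reconciled.
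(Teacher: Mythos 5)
Your plan matches the paper's proof: both reorganize the sum via the map $\mathcal{G}\mapsto(\Contract(\mathcal{G}),FP(\mathcal{G}))$, expand $I(S',\vec{a},\vec{\epsilon})$ using Definition~\ref{def:inhomterms}, observe $BC(\mathcal{G})=BC(\Contract(\mathcal{G}))$ by definition, and reduce to an identity per fixed pair $(M,\Gamma)$ whose crux is that $\delta^{morph}(h)$ equals the sum of $|\delta(v_\circ)|$ over the equators of the relevant component (which follows directly from the definition of the contraction and its degree assignment, rather than from condition~(d) of Definition~\ref{def:moduli_spec} as you cite). The automorphism bookkeeping you flag as the main obstacle is exactly what the paper resolves by passing to enumerated objects with trivial automorphism groups, applying the orbit-stabilizer formulas of Observation~\ref{obs:orb_stab}, and running a short induction on the number of contracted boundary half-edges.
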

Definition \ref{def:OGW_loc} is a compact, renormalized packing of \eqref{eq:ogw_graph_sum}. The advantage of \eqref{eq:ogw_graph_sum} is that it can be described graphically in a way which generalizes nicely the localization graphs of closed $GW$ theory, see Figure \ref{fig:high_g_cont} for an illustration.
\begin{proof}
Write $\Pi(\mathcal{G})=(\Contract(\mathcal{G}),FP(\mathcal{G})).$
Then the RHS of \eqref{eq:ogw_graph_sum} can be written as
\[
\sum_{\text{$S'$ is pure}}\sum_{\substack{M\in \Hom(S',S)\\
\text{$\Gamma$ is a fixed-point graph for $S'$}}}
\sum_{\mathcal{G}\in\Pi^{-1}(M,\Gamma)}\frac{1}{|\Aut(\mathcal{G})|}\frac{\Vol_{BC(M)}}{(-2u)^{|\widetilde{E}|}}\left(\prod_{h\in H^{CB}(\mathcal{G})}\frac{\delta^{graph}(h)}{2u}\right)I(\Gamma,\vec{a},\vec{\epsilon}).
\]
On the other hand, using Definition \ref{def:inhomterms}, the RHS of \eqref{eq:ogw_compact} can be written as
\[
\OGW(S) = \sum_{\text{$S'$ is pure}}\sum_{M\in \Hom(S',S)}\frac{|\Aut(S')|}{|\Aut(M)|}\frac{\Vol_{BC(M)}}{(-2u)^{|\widetilde{E}|}}\left(\prod_{h\in H^{CB}(M)}\frac{\delta^{morph}(h)}{2u}\right)\sum_{\substack{\text{$\Gamma$ is a fixed-point}\\\text{graph for $S'$}}}\frac{I(\Gamma,\vec{a},\vec{\epsilon})}{|\Aut(\Gamma)|}.
\]
The lemma will follow if we could prove that for any $M\in \Hom(S',S)$ and a fixed-point graph $\Gamma$ for $S'$ we have
\begin{equation}\label{eq:equiv1}
\frac{|\Aut(S')|}{|\Aut(M)||\Aut(\Gamma)|}\left(\prod_{h\in H^{CB}(M)}\frac{\delta^{morph}(h)}{2u}\right)=
\sum_{\mathcal{G}\in\Pi^{-1}(M,\Gamma)}\frac{1}{|\Aut(\mathcal{G})|}\left(\prod_{h\in H^{CB}(\mathcal{G})}\frac{\delta^{graph}(h)}{2u}\right).
\end{equation}
Define an \emph{enumeration} of a finite set $A$ by a set $B$ to be a bijection from $A\to B.$
A moduli specification $S$ is \emph{enumerated} if for each connected component $S'$ and $a\in \zz$ the white vertices of degree $a$ are enumerated by $[m_a(S')],$ where $m_a(S')$ is the number of these white vertices, and, in addition, for each isomorphism type of connected moduli specification $S',$ the connected components of $S$ of this type are enumerated by $[m_{S'}(S)]$, where $ m_{S'}(S)$ is the number of these components.

An \emph{enumeration of a morphism} $M\in \Hom(S',S)$ is an enumeration of $S'$ and $S$ (where we identify white vertices of $S$ with cycles of $\sigma^{-1}\circ\tau$) together with an enumeration of $\widetilde{E}$ by $[|\widetilde{E}|],$ an enumeration of $H^{CB}(M)$ by $[|H^{CB}(M)|],$ and a choice of orientation for each element of $\widetilde{E}$.

An \emph{enumeration of a fixed point graph} $\Gamma$ is an enumeration of its contraction $S$ (where we identify boundaries of $\Gamma$ with white vertices of $S$) together with enumerations $V(\Gamma),~E(\Gamma)$ by $[|V(\Gamma)|],[|E(\Gamma)|]$ respectively. An enumeration of a localization graph $\mathcal{G}\in \Hom(S',S)$ is a pair of enumerations, one for $FP(\mathcal{G})$ and the other for $\Contract(\mathcal{G})$ which are compatible in the sense that they induce the same enumeration on $S'.$

We define isomorphisms and automorphisms and the group $\Aut$ of each of the above enumerated objects in the expected way, and write $\Enum(X)$ for the set of isomorphism types of enumerations of the object $X.$ We see that for each of the above enumerated objects the automorphism group is trivial, therefore, by the Orbit-Stabilizer theorem we have
\begin{obs}
\label{obs:orb_stab}
\begin{enumerate}[(a)]
\item\label{it:orb1} For a fixed point graph $\Gamma$ of type $S'$ we have
\[|\Aut(\Gamma)||\Enum(\Gamma)|=|\Aut(S')||V(\Gamma)|!|E(\Gamma)|!.\]
\item\label{it:orb2} For a morphism $M\in \Hom(S',S)$ we have
\[|\Aut(M)||\Enum(M)|=|\Aut(S')||\Aut(S)|2^{|\widetilde{E}(M)|}{|\widetilde{E}(M)|!}{|H^{CB}(M)|!}.\]
\item\label{it:orb3} We have
\[|\Aut(\mathcal{G})||\Enum(\mathcal{G})|={|\Aut(S')||\Aut(S)|2^{|\widetilde{E}(M)|}{|\widetilde{E}(M)|!}{|H^{CB}(M)|!}{|V(\Gamma)|!|E(\Gamma)|!}}.\]
\end{enumerate}
\end{obs}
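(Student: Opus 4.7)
\emph{Plan.} All three identities will follow from a uniform application of the orbit--stabilizer theorem to the action of $\Aut(X)$ on the set of raw (data-level) enumerations of $X$, once one verifies the single auxiliary claim that every moduli specification admits a unique isomorphism type of enumeration. Denote by $E_0(X)$ the set of tuples of enumeration/orientation data meeting the definition of an enumeration of $X$; then $\Enum(X) = E_0(X)/\Aut(X)$. The hypothesis recalled immediately before the observation --- that each enumerated object has trivial automorphism group --- is precisely the statement that the stabilizer of every point of $E_0(X)$ is trivial, i.e.\ the action is free. Hence every $\Aut(X)$-orbit on $E_0(X)$ has size $|\Aut(X)|$, and
\[
|\Aut(X)|\cdot|\Enum(X)| \;=\; |E_0(X)|.
\]
Each of (a), (b), (c) therefore reduces to a direct count of $|E_0(X)|$.

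\emph{The count.} By the product rule, $|E_0(X)|$ factors as a product over the independent pieces of enumeration data. For a fixed-point graph $\Gamma$ of type $S'$ the data are an enumeration of the contraction $S'$ together with enumerations of $V(\Gamma)$ and $E(\Gamma)$ by $[|V(\Gamma)|]$ and $[|E(\Gamma)|]$, so
\[
|E_0(\Gamma)| \;=\; |E_0(S')|\cdot|V(\Gamma)|!\cdot|E(\Gamma)|!.
\]
For a morphism $M\in\Hom(S',S)$ we multiply $|E_0(S')|\cdot|E_0(S)|$ by the $|\widetilde{E}|!$ choices labeling wavy edges, the $|H^{CB}|!$ choices labeling contracted-boundary half-edges, and the $2^{|\widetilde{E}|}$ orientation choices for the wavy edges. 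For a localization graph $\mathcal{G}$, the compatibility condition in Definition~\ref{def:GraphsSnewSold} says exactly that the enumerations of $S'$ induced from $FP(\mathcal{G})$ and from $\Contract(\mathcal{G})$ coincide, so only one copy of $|E_0(S')|$ appears, and the remaining factors combine to $|E_0(S)|\cdot|V(\Gamma)|!\cdot|E(\Gamma)|!\cdot|\widetilde{E}|!\cdot|H^{CB}|!\cdot 2^{|\widetilde{E}|}$.

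\emph{The one substantive step: $|E_0(S)|=|\Aut(S)|$ for every moduli specification $S$.} Equivalently, $\Aut(S)$ acts transitively on $E_0(S)$ --- note that freeness of this action is the trivial-stabilizer case of Step~1 already applied to the enumerated specification itself. By Definition~\ref{def:moduli_spec}, each connected component of $S$ is a ``star'': a single black vertex together with white leaves, each leaf decorated by an integer degree. Consequently, every permutation of the white leaves of a given degree attached to a given black vertex lies in $\Aut$ of that component, and every permutation of isomorphic connected components of $S$ lies in $\Aut(S)$. But a raw enumeration is by construction exactly such a collection of bijections, so any two enumerations differ by an element of $\Aut(S)$, proving transitivity. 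Combined with freeness this gives $|E_0(S)| = |\Aut(S)|$, as needed.

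Substituting the equality $|E_0(S')|=|\Aut(S')|$ (and likewise for $S$) into the three product formulas of Step~2 yields precisely the right-hand sides of (a), (b), (c). The only nonformal input is the transitivity claim of Step~3, which is immediate from the star shape of connected components; no genuine obstacle arises, and the entire proof is bookkeeping around the single instance of orbit--stabilizer displayed above.
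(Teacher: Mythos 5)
Your proposal is correct and follows essentially the same route as the paper: the paper likewise rigidifies each object by its enumeration data (noting the enumerated objects have trivial automorphisms), applies orbit--stabilizer to the $\Aut$-action on raw enumerations, and counts those enumerations factor by factor, using (as you do in your Step 3) that a moduli specification has exactly $|\Aut(S)|$ raw enumerations, all isomorphic. Your explicit verification of that last count via the star shape of the components is just a spelled-out version of the remark the paper makes without proof, so no substantive difference remains.
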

We also observe that for a given moduli specification $S$ there are precisely $|\Aut(S)|$ enumerations and they are all isomorphic.

Item \eqref{it:orb3} of the previous observation allows us to rewrite the RHS of \eqref{eq:equiv1} as
\begin{equation}\label{eq:equiv2}
\sum_{\mathcal{G}\in\Pi^{-1}(M,\Gamma)}
\frac{|\Enum(\mathcal{G})|}{|\Aut(S')||\Aut(S)|2^{|\widetilde{E}(M)|}{|\widetilde{E}(M)|!}{|H^{CB}(M)|!}{|V(\Gamma)|!|E(\Gamma)|!}}
\left(\prod_{h\in H^{CB}(\mathcal{G})}\frac{\delta^{graph}(h)}{2u}\right).
\end{equation}
Note that any enumeration of $\mathcal{G}\in\Pi^{-1}(M,\Gamma)$ induces an enumeration of $\Gamma$ and of $M.$ When $c=0,$ the opposite is also true: enumerations of $\Gamma$ and of $M$ allow us to reconstruct a unique $\mathcal{G}\in \Pi^{-1}(M,\Gamma)$ which induces the original enumerations on $\Gamma,M$ since the only difference between the data of $\mathcal{G}$ and of $(M,\Gamma)$ is how to identify boundaries of $\Gamma$ with white vertices of $M,$ and the enumeration gives us such an identification. Thus, when $c=0$ we have
\begin{equation}\label{eq:c=0}
\sum_{\mathcal{G}\in\Pi^{-1}(M,\Gamma)}|\Enum(\mathcal{G})|=|\Enum(\Gamma)||\Enum(M)|.
\end{equation}
For a general $c$ we have
\begin{equation}\label{eq:cnn0}
\sum_{\mathcal{G}\in\Pi^{-1}(M,\Gamma)}|\Enum(\mathcal{G})|\left(\prod_{h\in H^{CB}(\mathcal{G})}\frac{\delta^{graph}(h)}{2u}\right)=
|\Enum(\Gamma)||\Enum(M)|\left(\prod_{h\in H^{CB}(M)}\frac{\delta^{morph}(h)}{2u}\right).
\end{equation}
The proof is by simple induction. The case $c=0$ is just \eqref{eq:c=0}. If the claim holds for $c$ contracted boundary half-edges, then adding a new contracted boundary half-edge $h$ which belongs to a connected component $S'_0$ of $S'$ changes the RHS by $\frac{\delta^{morph}(h)}{2u}.$ The right-hand side changes, by definition, by the sum of $\frac{|\delta(v_\circ)|}{2u},$ where $v_\circ$ runs over all equators in the connected component $\Gamma_0$ of $\Gamma$ which corresponds to $S'_0.$ This sum is again, by definition, $\frac{\delta^{morph}(h)}{2u}.$
Plugging \eqref{eq:cnn0} into \eqref{eq:equiv2} we obtain
\begin{multline}\label{eq:equiv3}
\frac{|\Enum(\Gamma)||\Enum(M)|\left(\prod_{h\in H^{CB}(M)}\frac{\delta^{morph}(h)}{2u}\right)}{|\Aut(S')||\Aut(S)|2^{|\widetilde{E}(M)|}{|\widetilde{E}(M)|!}{|H^{CB}(M)|!}{|V(\Gamma)|!|E(\Gamma)|!}}=\\
=|\Aut(S')|\frac{|\Enum(\Gamma)|}{|\Aut(S')|{|V(\Gamma)|!|E(\Gamma)|!}}\frac{|\Enum(M)|\left(\prod_{h\in H^{CB}(M)}\frac{\delta^{morph}(h)}{2u}\right)}{|\Aut(S')||\Aut(S)|2^{|\widetilde{E}(M)|}{|\widetilde{E}(M)|!}{|H^{CB}(M)|!}}.
\end{multline}
Using the first two items of Observation \ref{obs:orb_stab} this expression gives exactly the LHS of \eqref{eq:equiv1}.
\end{proof}

\begin{figure}[t]
\centering
\includegraphics[scale=.6]{./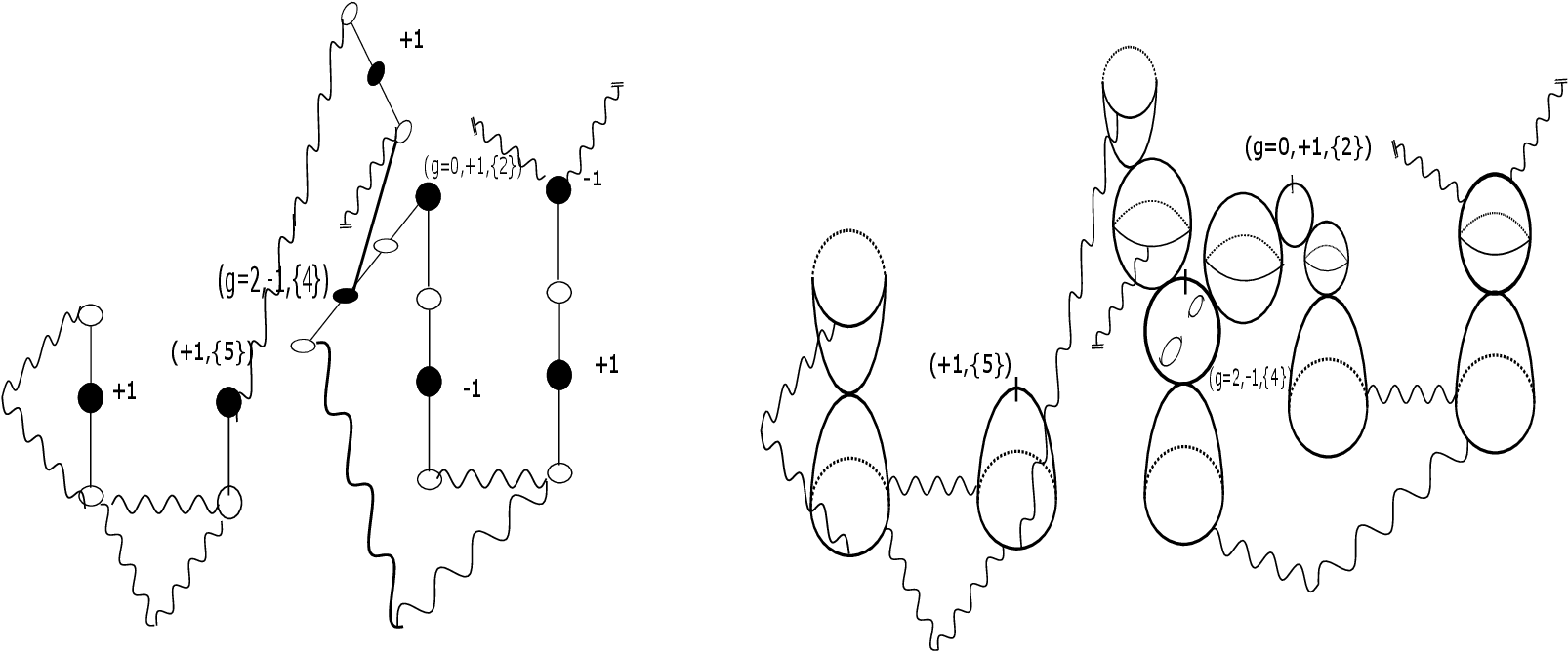}
\caption{A localization graph which contributes to a high genus invariant and the corresponding geometric stratum. Again for illustration reasons we do not contract degree $0$ components. Empty vertices stand for boundary and equator vertices. Next to the remaining vertices we write their genus, if they correspond to contracted component, $\mu$ and the markings.}
\label{fig:high_g_cont}
\end{figure}

\subsection{Agreement in $g=0$ and for closed topologies}
\begin{prop}\label{prop:agreement_of_defs}
Definition \ref{def:OGW_loc} when restricted to $g=0$ agrees with the formula of Theorem~\ref{thm:loc for CP1,RP1}.
\end{prop}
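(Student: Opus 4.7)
The plan is to match the sum in Definition \ref{def:OGW_loc} term-by-term with the genus $0$ formula \eqref{eq:nice_tree_sum} proved in Theorem \ref{thm:loc for CP1,RP1}. First I will classify morphisms $M\in\Hom(S',S)$ for $S=(\lf,\vd)$ a disk moduli specification of genus $0$. The desingularization identity $g_s(v')=h_1(G)+\sum g_s(v)$ combined with $g_s(S)=0$ forces the bipartite graph $G=(V_b\sqcup V_w, E\sqcup\widetilde{E})$ to be a tree and every $g_s(v)=0$. Since $h(S)=1$, the unique white vertex of $S$ must arise either from a single cycle of $\sigma^{-1}\circ\tau$ with $H^{CB}(M)=\emptyset$ (Case A), or from a single contracted-boundary half-edge with $V_w(M)=\emptyset$ (Case B).

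Case B is short: $V_w(M)=\emptyset$ combined with tree-ness forces $|V_b|=1$, so $S'$ is the single sphere moduli specification with $\vd(S')=\vd(S)$; a contracted-boundary white vertex has degree $0$, so the condition $d^+(S)-d^-(S)=\sum_w d(w)=0$ forces $d^+=d^-=:d$. The BC graph is then empty, so $\Vol_{BC(M)}=1$, $\delta^{morph}(h)=d$, $|\Aut(M)|=1$, and the contribution $\frac{d}{2u}I(\lf,d,\vec a,\vec\epsilon)$ matches the second term $\delta_{d^+-d^-}\frac{|\vd|}{2u}I(\lf,|\vd|,\vec a,\vec\epsilon)$ of $\OGW$ exactly.

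For Case A, purity and tree-ness give a canonical bijection $V_b(M)\leftrightarrow V_w(M)$, so $M$ amounts to a decorated tree $T\in\TTT(\lf,\vd)$ together with a cyclic order $\sigma_v$ of the wavy flags at each $v\in V_w$ (a ribbon tree automatically yields one $\sigma^{-1}\circ\tau$-cycle, with $d^{old}=\sum_v d_v$). The heart of the argument is the volume computation. At a white vertex $v$ of valence $\val(v)$, the new face is a $\sigma$-cycle of $\val(v)$ directed edges summing to $d_v:=d_v^+-d_v^-$, with each edge length constrained to have sign $\mathrm{sign}(d_v)$. Since the new faces at distinct white vertices involve disjoint directed edges, $W_G$ decouples as a product of simplices; choosing $\mathcal{E}$ by omitting exactly one edge per cycle gives a spanning tree of $G$ and yields
\[
\int_{\overline{W}_G}\Omega_G=\prod_{v\in V_w}\frac{|d_v|^{\val(v)-1}}{(\val(v)-1)!}.
\]
A parity argument identifies $(-1)^{alt}$ with $\mathrm{sign}\bigl(\prod_v d_v^{\val(v)}\bigr)$: a wavy edge $\{v,v'\}$ contributes to $alt$ iff $d_vd_{v'}<0$, and modulo $2$ this equals its contribution to $\sum_{v:d_v<0}\val(v)$. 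Combining everything gives $\Vol_{BC(M)}=\prod_v d_v^{\val(v)}/(\val(v)-1)!$.

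To conclude, for fixed $T\in\TTT(\lf,\vd)$ the morphisms $M$ refining $T$ correspond to cyclic orderings modulo $\Aut(T)$, so orbit-stabilizer gives
\[
\sum_{M\mapsto T}\frac{1}{|\Aut(M)|}=\frac{\prod_v(\val(v)-1)!}{|\Aut(T)|}.
\]
The factors $(\val(v)-1)!$ then cancel with those in $\Vol_{BC(M)}$, so the Case A contribution collapses to $\sum_{T\in\TTT(\lf,\vd)}\AAA(T,\vec a,\vec\epsilon)$, the first term of $\OGW$. The main obstacle will be the volume and sign computation: verifying that $W_G$ decouples over $V_w$ and that the prefactor $(-1)^{alt}$ is precisely what is needed to absorb the absolute value in Definition \ref{def:volume} and produce the signed product $\prod_v d_v^{\val(v)}$; once this is established, the remaining combinatorial matching is immediate.
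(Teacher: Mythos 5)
Your proposal is correct and follows essentially the same route as the paper: split morphisms into the one‑contracted‑boundary case (matching the $\delta_{d^+-d^-}$ term) and the no‑contracted‑boundary case, identify the latter BC graphs with trees-with-cyclic-orders, compute $\int_{\overline W_G}\Omega_G$ as a product of simplex volumes at the white vertices, and match the sign $(-1)^{alt}$ with the parity of $\sum_{v:\,d_v<0}\val(v)$. Your handling of the automorphism bookkeeping via orbit–stabilizer (converting $\sum_{M\mapsto T}1/|\Aut(M)|$ into $\prod_v(\val(v)-1)!/|\Aut(T)|$) is slightly more explicit than the paper, which states the combinatorial matching is straightforward, but the substance and key volume identity are identical.
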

\begin{proof}
The sums in Definition \ref{def:OGW_loc}, in the $g=0$ case are of two types. Either there is no contracted boundary half-edge, or there is a single contracted boundary half-edge, but no wavy edges.

The second case corresponds to the second summand in \eqref{eq:nice_tree_sum}, and in both formulas these terms have the same value.

In the first case, in Definition \ref{def:OGW_loc} there is a sum over BC graphs, which in this case can be identified with trees whose vertices correspond to new faces, edges to wavy edges, each vertex has a non-zero integer degree and a cyclic order of the edges from each vertex is given. On the other hand, in \eqref{eq:nice_tree_sum} the summation is over the exact same trees, with non-zero integral degrees for vertices, \emph{but without a cyclic order for edges which emanate from a vertex}.

Thus, there is a natural surjection $s$ from the collection of BC graphs which appear in $\OGW(\lf,\vd)$ and $\TTT\left(\lf,\vd\right),$ obtained by forgetting the cyclic orders at vertices.
The proposition would therefore follow if we could show that for any $T\in\TTT\left(\lf,\vd\right)$ we have
\begin{equation}\label{eq:vol_sum}\sum_{G\in s^{-1}(T)} \Vol_G=\prod_{v\in V}\left(d^{+}\left(v\right)-d^{-}\left(v\right)\right)^{\val\left(v\right)}.
\end{equation}
The sign of each summand in the LHS is $(-1)^{alt},$ where $alt$ is the number of edges between vertices of the tree $G$ which connect a vertex of positive degree to a vertex of negative degree. The sign of the RHS is the sum of valencies of vertices of negative degrees. These two numbers agree modulo~$2,$ hence the signs of the two sides of \eqref{eq:vol_sum} are the same.

Recall Definitions \ref{def:form}, \ref{def:volume}. When the tree $T$ is trivial, equation \eqref{eq:vol_sum} clearly holds. Otherwise, any $G\in s^{-1}(T)$ has no loops and has a single old face whose perimeter is the sum of perimeters of the new faces. If $f_v$ is the new face of $G$ which corresponds to the vertex $v$ of $T,$ and if we write $e\in f_v$ to specify that an oriented edge $e$ belongs to the new face $f_v,$ then the space $W_G$ is defined as the space
\[
\left\{(x_e)\in\R^{\vec{E}}\left|\forall v~\sum_{e\in f_v} x_e=d(f_v)~\text{and }x_ed(f_v)>0\right.\right\}.
\]
Thus, $W_G$ is identified with a product of simplices of the form \[\{(x_1,\ldots,x_r)| \sum x_i=d \text{ and all $x_i$ have the same sign}\}.\]
If we take $\mathcal{E}$ as in Definition \ref{def:form} as a set whose intersection with $\{e|e\in f_v\}$ is of cardinality $|\{e|e\in f_v\}|-1$ for any $v,$ we see that
\[
\left|\int_{\overline{W}_G}\Omega_G\right|=\prod_v|d(f_v)|^{|\{e|e\in f_v\}|-1}/(|\{e|e\in f_v\}|-1)!
\]
as the product of volumes of simplices. Multiplying by $\prod_{f\in F^{new}}d(f)=\prod_vd(f(v)),$ and summing over all $\prod_v(|\{e|e\in f_v\}|-1)!=\prod_v(\val(v)-1)!$ elements of $s^{-1}(T)$ we obtain
\[
\sum_{G\in s^{-1}(T)}|\Vol_G|=\prod_{f\in F^{new}}\left|d(f)\right|^{|\{e|e\in f\}|}=\left|\prod_{v\in V}\left(d^{+}\left(v\right)-d^{-}\left(v\right)\right)^{\val\left(v\right)}\right|,
\]
which completes the proof of equation \eqref{eq:vol_sum}.
%Now, working in the notations of Definitions \ref{def:conf_space}-\ref{def:volume}\[
%\sum_{G\in s^{-1}(T)} |\Vol_G|=|\int_{\bigcup_{G\in s^{-1}(T)}\overline{W}_G}\bigwedge_{i=1}^r d\theta_i|=
%|\int_{W}\bigwedge_{i=1}^r d\theta_i|,
%\]
%where \[W =\prod_{i=1}^r\left(\partial^{k(i,1)}{_u\times_u}\partial^{k(i,2)}\right),\] and the spaces $\overline{W}_G$ for $G\in s^{-1}(T)$ naturally embed in $W,$ cover $W$ and have intersections of positive codimensions. Clearly,
%\[|\int_{W}\bigwedge_{i=1}^r d\theta_i|=\prod_{i=1}^r|\int_{\partial^{k(i,1)}{_u\times_u}\partial^{k(i,2)}}d\theta_i|.\]
%\eqref{eq:vol_sum} will therefore follow from showing\[|\int_{\partial^{k(i,1)}{_u\times_u}\partial^{k(i,2)}}d\theta_i|=|d^{k(i,1),new}\cdot d^{k(i,2),new}|.\]
%For convenience we write $d_j=d^{k(i,j),new}$ for $j=1,2.$
%$\partial^{k(i,1)}{_u\times_u}\partial^{k(i,2)}$ can be identified with
%\[\{(z,w)|~z^{d_1}=w^{d_2},~\text{and }|z|=|w|=1\}\] which in turn can be parameterized as \[
%\{(v^{lcm(d_1,d_2)/d_1},\xi v^{lcm(d_1,d_2)/d_2})|~\xi^{gcd(d_1,d_2)}=1,~|v|=1\}.\]
%In addition, the pull back of $d\theta_i$ to $\{(v,\xi)|~\xi^{gcd(d_1,d_2)}=1,~|v|=1\}$ is
%\[lcm(d_1,d_2)\frac{dv}{2\pi iv}.\]
%Finally,
%\[\int_{\{(v,\xi)|~\xi^{gcd(d_1,d_2)=1},~|v|=1\}}lcm(d_1,d_2)\frac{dv}{2\pi iv}=
%gcd(v_1,v_2)\int_{|v|=1}lcm(d_1,d_2)\frac{dv}{2\pi iv}=d_1d_2.
%\]
%And the claim follows.
\end{proof}
\begin{rmk}\label{rmk:non_refined_numbers}
The genus $0$ case and the positive genus cases share the feature that the contribution of wavy edges (the $\Vol_G$ term) decouples from that of the fixed-point graphs. Still, the genus $0$ case is simpler than the general case as the wavy edge contribution factorizes as a product over wavy edges of the wavy edge term
\begin{equation}\label{eq:diff_vertices}
\frac{(d^+(w_1)-d^-(w_1))(d^+(w_2)-d^-(w_2))}{-2u},
\end{equation}
where the wavy edge $e$ connects $w_1$ and $w_2.$
It turns out that if one considers less refined intersection numbers, obtained by summing $\OGW(S,\vec{a},\vec{\epsilon})$ over all moduli specifications with fixed $g,\lf,\vd$ (but varying number of boundaries and boundary degrees), one obtains a formula which is a sum over graphs which are similar to localization graphs, but without cyclic orders. In this case the wavy edge contribution again factorizes as a product of edge weights, where wavy edges which connect different vertices have the weight \eqref{eq:diff_vertices}, while wavy edges which connect the same white vertex $w$ have the weight
\[-\frac{1}{2u}\binom{|d^+w-d^-w|}{2}.\]
More details, including a proof of this claim, will appear in the sequel.
\end{rmk}

Another important observation is that in the special case of closed moduli specifications, Definition \ref{def:OGW_loc} agrees with the geometric definition of stationary descendent integrals. More precisely, we have
\begin{thm}\label{lem:closed_fp}
For a closed connected moduli specification $S$ we have
\[
\OGW(S,\vec{a},\vec{\epsilon}) = \int_{\overline \mm_{g_s(S),\lf(S)}(|\vd(S)|)} \prod_{i\in\lf(S)} \psi_i^{a_i}\ev_i^*\pt.
\]
\end{thm}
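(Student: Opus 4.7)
The plan is first to show that the sum in Definition~\ref{def:OGW_loc} collapses to a single term, and then to identify this term with the standard Atiyah--Bott/Graber--Pandharipande virtual localization of $\int_{\overline{\mm}_{g_s(S),\lf(S)}(|\vd(S)|)}\prod_i\psi_i^{a_i}\ev_i^*\pt$ on the closed moduli of stable maps to $\CP^1$.

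First I would analyze which morphisms $M\in\Hom(S',S)$ with $S'$ pure can contribute when $S$ is closed and connected. By Definition~\ref{def:hom_between_specs}, the white vertices of the desingularization $S$ come from two sources: the cycles of $\sigma^{-1}\circ\tau$ (including singleton cycles coming from white vertices of $S'$ not touched by any wavy edge) and the contracted boundary half-edges in $H^{CB}(M)$. Since $S$ has no white vertices, both sources must be empty, forcing $V_w(S')=\emptyset$, $\widetilde{E}(M)=\emptyset$, and $H^{CB}(M)=\emptyset$. Without wavy edges, the graph $(V_b\sqcup V_w,E\sqcup\widetilde{E})$ has one component for each connected component of $S'$, so since $S$ is connected $S'$ must also be connected, giving $S'=S$ with trivial extra data. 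Hence there is a unique morphism $M$, and $|\Aut(M)|=|\Aut(S)|$.

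Next I would check that $\Vol_{BC(M)}=1$ for this trivial $M$. By Definition~\ref{def:BCndCBforMor}, $BC(M)$ has empty vertex, edge, loop and face sets, so the metric space $\overline{W}_G$ is a point, the form $\Omega_G$ is the empty wedge $1$, the $(-1)^{\mathrm{alt}}$ sign is $+1$, and the product over $F^{new}\setminus L$ is empty. Combined with the empty products over $\widetilde{E}$ and $H^{CB}$, formula~\eqref{eq:ogw_compact} reduces to
\[
\OGW(S,\vec{a},\vec{\epsilon})=\frac{I(S,\vec{a},\vec{\epsilon})}{|\Aut(S)|}=\sum_{\Gamma}\frac{I(\Gamma,\vec{a},\vec{\epsilon})}{|\Aut(\Gamma)|},
\]
where the sum runs over isomorphism types of fixed-point graphs $\Gamma$ of type $S$, by \eqref{eq:I_S}.

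Finally I would identify this with the closed virtual localization formula. Because $S$ has no boundaries, the fixed-point graphs $\Gamma$ for $S$ have $H=\emptyset$ and no boundary or equator vertices in $V_\circ$; they are precisely the Kontsevich localization graphs for $S^1$-fixed loci in the closed moduli $\overline{\mm}_{g_s(S),\lf(S)}(|\vd(S)|)$ of stable maps to $\CP^1$. With $H=\emptyset$, the last factor in \eqref{eq:euler^-1} disappears, and the remaining factors reproduce the standard graph contribution, which by Graber--Pandharipande equals the inverse of the equivariant Euler class of the virtual normal bundle to $\overline{\mm}_\Gamma\to\overline{\mm}_{g_s(S),\lf(S)}(|\vd(S)|)$. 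The class $\alpha_\Gamma^{\vec{a},\vec{\epsilon}}$ of \eqref{eq:alpha_term} matches the restriction to $\overline{\mm}_\Gamma$ of the integrand $\prod_i\psi_i^{a_i}\ev_i^*\rho_{\epsilon_i}$: the weight $\mu(v)\cdot 2u$ at each marking is precisely the pullback of the equivariant Poincar\'e dual $\rho_{\epsilon_i}$ at the fixed point $p_{\mu(v)}=p_{\epsilon_i}$, and the vanishing rule $\alpha_\Gamma^{\vec{a},\vec{\epsilon}}=0$ when some $\epsilon_i\neq\mu(v)$ reflects the vanishing of $\ev_i^*\rho_{\epsilon_i}$ at the opposite pole. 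Atiyah--Bott localization on $\overline{\mm}_{g_s(S),\lf(S)}(|\vd(S)|)$ then delivers the claimed equality.

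The only non-routine step is the bookkeeping in the first paragraph: one must carefully check that closedness of $S$ trivializes every combinatorial and every contracted-boundary degree of freedom appearing in $\Hom(S',S)$. Once that is done, the remainder is a direct appeal to the (fully proven) closed virtual localization theorem.
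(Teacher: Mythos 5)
Your proof is correct and takes the same route as the paper, which disposes of the statement in a single sentence by appealing to the virtual localization formula of Graber--Pandharipande. What you add is the explicit combinatorial reduction (closedness of $S$ forces $V_w(S')=\emptyset$, $\widetilde{E}=\emptyset$, $H^{CB}=\emptyset$, connectedness forces $S'=S$, and $\Vol_{BC(M)}=1$) that the paper leaves implicit; this bookkeeping is exactly what is needed to see that Definition~\ref{def:OGW_loc} literally reduces to the standard closed localization sum before the appeal to \cite{pandharipande-virtual-loc}.
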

\begin{proof}
This follows from the virtual localization formula \cite{pandharipande-virtual-loc}.
\end{proof}

\subsection{Main conjecture}\label{tmconj}
Theorem \ref{lem:closed_fp} and Proposition \ref{prop:agreement_of_defs} provide evidences to our main conjecture.
\begin{conj}\label{conj:high_genus_def_and_loc}
There exists a \emph{geometric definition} for $\OGW(S,\vec{a},\vec{\epsilon})$ for all moduli specifications~$S$. In particular, we have
\[\OGW(S,\vec{a},\vec{\epsilon}) = 0 \]
for all \emph{underdetermined} $(S,\vec{a},\vec{\epsilon})$.
\end{conj}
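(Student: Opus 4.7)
The plan is to extend the three-step architecture of the genus $0$ theory (moduli construction, coherent integrands, localization) to all genera, and use the localization formula as the bridge to the combinatorial formula in Definition~\ref{def:OGW_loc}.

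First, I would set up the foundations. The moduli space $\overline{\mm}_{g_s,\kf,\lf}(\vd)$ of higher-genus stable maps from bordered Riemann surfaces to $(\CP^1,\RP^1)$ must be constructed as an $S^1$-equivariant orbifold with corners, analogous to Proposition~\ref{prop:The-moduli-space}. The corner structure, in addition to the standard disk-nodal bubbling and the exceptional boundary of Proposition~\ref{prop:corners of moduli}, must also accommodate ``handle-gluing'' strata (shrinking non-separating real curves). A canonical orientation (generalizing the one built in Section~\ref{sec:or}) and a virtual fundamental class must then be constructed; this is the step I expect to be the main obstacle, because in higher genus the moduli is not transversally cut out and the real setting makes the virtual theory much more subtle than in the closed case (spin/Pin-structures on the source, choice of bounding chains, and compatibility with nodal degenerations all enter).

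Second, assuming these foundations, I would extend Definition~\ref{def:coherent integrand} of coherent integrand to the full boundary stratification. For each type of real codimension-$1$ boundary (disk-node, complex-conjugate node, contracted boundary, and handle-shrinking), one imposes a pullback condition via the appropriate forgetful map to a moduli space of strictly smaller dimension. An inductive construction analogous to Lemma~\ref{lem:inductive_construction_of_connection} should then produce equivariant connections on the $\mathbb L_i$'s yielding a coherent integrand $\alpha^{\vec a,\vec\epsilon}_{S}$ built from $\psi_i^{a_i}\ev_i^*\rho_{\epsilon_i}$. The associated equivariant integral over $\overline{\mm}$ cut out by the virtual class is then defined as $\OGW^{\mathrm{geo}}(S,\vec a,\vec\epsilon)$. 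Independence of the choice of coherent integrand should follow, as in Theorem~\ref{thm:int_nums_equal_tree_sum}, from a Stokes/transversality argument: any two choices differ by $D$ of something, and the boundary conditions force all boundary contributions to vanish when the integrand is of top degree.

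Third, I would prove the localization formula identifying $\OGW^{\mathrm{geo}}(S,\vec a,\vec\epsilon)$ with the combinatorial $\OGW(S,\vec a,\vec\epsilon)$ of Definition~\ref{def:OGW_loc}. The strategy is that of Section~\ref{sec:genus0-calc}: choose equivariant primitives $Q_c$ at each codimension of corner built from pullbacks of $d\theta$ at nodal evaluations and contracted-boundary evaluations, apply the corner fixed-point formula (Proposition~\ref{prop:simple fp formula}), and solve the resulting recursion. The new ingredient is the richer combinatorics, where disk-node and contracted-boundary corners interact; the resulting sum naturally organizes into a sum over enumerated tuples $(v_1,\ldots)$ of corners. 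Packaging this sum as a sum over BC graphs, identifying the contribution of a wavy edge with a volume integral over the simplex $\overline W_G$ (as in the proof of Proposition~\ref{prop:agreement_of_defs}, but now also integrating cycles produced by handle nodes), and recognizing fixed-point contributions as $I(\Gamma,\vec a,\vec\epsilon)$ via the closed virtual localization \cite{pandharipande-virtual-loc} applied fiberwise (together with the half-edge computation \eqref{eq:half edge euler}) should yield~\eqref{eq:ogw_graph_sum}. Finally, the vanishing in underdetermined cases is automatic on the geometric side, since equivariant integration vanishes on forms whose total de Rham degree is less than $\dim\overline{\mm}$; pulling this back through the proven equality gives the combinatorial vanishing assertion. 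The dominant difficulty throughout remains the virtual/orientation foundations in step one, with the handle corner contribution in step three as the most delicate combinatorial new point.
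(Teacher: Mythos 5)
This statement is labeled a \emph{conjecture}, and the paper does not prove it: Section~\ref{tmconj} explicitly defers to the sequel a ``sketch of proof \ldots modulo missing foundations,'' contingent on a rigorous theory of open virtual localization that does not yet exist. So there is no proof in the paper against which to compare your argument; at best one can compare your roadmap against the authors' announced strategy, and on that comparison your proposal is consistent with what the paper says it will do. You correctly identify the bottleneck --- the virtual/orientation foundations for the higher-genus real moduli spaces --- and your three-step architecture (moduli with corners, coherent integrands, corner localization recursion) mirrors the genus~$0$ proof that the authors say they will generalize.

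One caution worth flagging: the paper states that the sequel's sketch ``generalizes in a non-trivial way the proof of Theorem~\ref{thm:loc for CP1,RP1},'' and the place where your outline is thinnest is exactly the place that non-triviality lives. In genus~$0$ the recursion for corner contributions has a single type of standard corner and the solution is a clean sum over labeled trees (Lemma~\ref{lem:anzats}), which by Cayley's theorem collapses to the simple $\prod_v (d^+(v)-d^-(v))^{\val(v)}$ amplitude. In higher genus the boundary stratification has disk-node, handle-node, and contracted-boundary corners interacting, and the combinatorial bookkeeping that replaces the tree sum involves the BC graphs of Definition~\ref{BC graphs}, their cyclic orders at white vertices, the configuration spaces $\overline{W}_G$ with integrality constraints on circuits (Definition~\ref{def:conf_space}), and the signed volume $\Vol_G$ of Definition~\ref{def:volume}. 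Your proposal asserts that the recursion ``organizes into a sum over BC graphs'' and that wavy-edge contributions become the volume integrals, but it does not explain why the recursion should produce the specific polytope volumes $\int_{\overline{W}_G}\Omega_G$, nor how the $\sigma^{-1}\circ\tau$ cycle structure (which encodes how boundary components merge when nodes are smoothed) emerges from the corner stratification. That identification is the new combinatorial content, and it is where the bulk of the actual work in a proof (beyond foundations) would have to be. The vanishing assertion for underdetermined $(S,\vec a,\vec\epsilon)$ is then indeed an immediate consequence of having a geometric definition, as you say, matching the degree-reasoning used at the end of the proof of Theorem~\ref{thm:int_nums_equal_tree_sum}.
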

A pair $(S,\vec{a})$ is \emph{underdetermined} if the degree of the integrand,
\[
\operatorname{deg}(S) = \sum_{i \in \lf} 2\,(a_i + 1),
\]
is less than the expected dimension of the moduli,
\[\operatorname{vdim}_{\rr}(S) = 2d + 2g-2 + 2\,|\lf|.\]
In the sequel we will sketch a proof that under ideal, though non-existent, transversality assumptions, the conjecture holds. The sketch generalizes in a non-trivial way the proof of Theorem \ref{thm:loc for CP1,RP1}, and we believe that when open virtual localization will be rigourously defined, it will be automatic to promote the sketch of proof to a full proof.

\section{Canonical orientations for the moduli spaces and their properties}\label{sec:or}
This section is devoted to defining and understanding the properties of the orientations we have used throughout the paper.
We will analyze the orientation of the moduli of maps from disks to $(\CP^1,\RP^1)$ using a natural decomposition of this moduli into chambers of full dimension. The chambers will be described in terms of certain types of maps. These maps will also play a role in the map decomposition property of $\OGW(S),$ that will be discussed in the sequel. The section is built as follows. We first describe the maps we need. We then use them to define an orientation for each chamber. We show that the orientations glue to the whole moduli, and then, again using the language of maps, analyze the behavior of the orientations with respect to degenerations.
\subsection{Decorated maps}
For a moduli specification $S$ write $\Sigma(S)$ for the topological marked surface whose label set is $\lf(S)$ and whose topological type is the one dictated by $S$.
\begin{definition}\label{def:maps}
Let $S$ be a moduli specification, and $\kf\subset\Univ$ a finite set.
A \emph{rigid decorated $(S,\kf)$-map} $\mathfrak{G}$ (or a rigid decorated $S-$map when $\kf=\emptyset$) is the following data.
\begin{enumerate}[(a)]
\item A collection of smooth simple \emph{loops and arcs} drawn on $\Sigma=\Sigma(S)$ which cut $\Sigma$ into open \emph{regions}. We call the
    boundary components of $\Sigma(S)$ the \emph{boundary loops}. The other loops are called the \emph{internal loops}. The internal loops and arcs are pairwise non-intersecting.
\item The internal loops do not intersect $\partial\Sigma(S),$ the arcs intersect $\partial\Sigma$ exactly in their two endpoints.
\item Any region $\mathcal{R}$ has a well defined \emph{sign} $s(\mathcal{R})\in\{\pm1\}.$
Neighboring regions, which are regions whose closures intersect, have opposite signs.
A region is \emph{positive} if its sign is positive and otherwise is \emph{negative}.
\item Any boundary component of the closure of a region is endowed a perimeter (circumference) $p\in \N,$ in particular any internal loop $\mathcal{C}$ is endowed a perimeter $p(\mathcal{C}).$ If $\mathcal{C}$ is a boundary component of $\Sigma(S)$ which touches no arc, then the product of its perimeter and the sign of the unique region it touches equals the degree of the corresponding boundary of $S.$
\item The \emph{degree} of a region is $(q,0)\in\Z^2$ for a positive region and $(0,q)\in\Z^2$ for a negative one, where $q$ is the sum of perimeters of the region's boundary components.
The sum of degrees of regions in the connected component of $\Sigma$ which corresponds to the connected moduli specification $S'$ is $\vd(S').$
\item The internal markings are distinct and do not lie on arcs or internal loops, and for each marking it is specified to which region it belongs.
\item  There are $|\kf|$ distinct boundary markings, labeled by $\kf$ on the boundary components. They differ from the endpoints of the arcs.
\end{enumerate}
We call $\Sigma=\Sigma(S)$ the underlying surface of the map.

An isomorphism of rigid decorated maps $\mathfrak{G}_1,\mathfrak{G}_2$ is a homeomorphism of their underlying surfaces, which induces bijections between regions of $\mathfrak{G}_1$ and regions of $\mathfrak{G}_2,$ loops of $\mathfrak{G}_1$ and loops of $\mathfrak{G}_2,$ arcs of $\mathfrak{G}_1$ and arcs of $\mathfrak{G}_2,$ boundary segments of the underlying surface of $\mathfrak{G}_1$ and boundary segments of the underlying surface of $\mathfrak{G}_2,$ and endpoints of arcs of $\mathfrak{G}_1$ and endpoints of arcs of $\mathfrak{G}_2,$ which respects the (internal and boundary) markings
and such that inclusion relations and all discrete parameters: sign, perimeter, orientation (for a boundary segment) are preserved.

A \emph{decorated ($(S,\kf)$- or $S-$) map} is an isomorphism class of rigid decorated ($(S,\kf)$- or $S-$) maps. We shall sometimes use the term 'decorated map' for a rigid representative of a decorated map, and we will use the same notations for maps as for rigid decorated maps.

Write $\Maps_{S,\kf}$ for the collection of decorated $(S,\kf)$-maps and $\Maps_S$ for the collection of decorated $S-$maps. We write $\Maps_{0,k,l,\vd}$ for $\Maps_{S,k},$ where $S$ is the disk moduli specification $([l],\vd)$ and $\kf=[k].$

\end{definition}
Throughout this section it will sometimes be convenient to work with $\kf=[k]$ or $\lf=[l].$ In such cases we write $\oCM_{0,k,l}(\vd)$ for $\oCM_{0,\kf,\lf}(\vd),~\oCM_{0,0,0}$ for $\oCM_{0,\emptyset,\emptyset}(\vd)$ etc. Moduli points will be denoted by $(\Sigma,u),$ where $\Sigma$ is a nodal marked disk and $u$ is the map to $\CP^1.$

\subsection{Chambers and orientation}
It is well known that for any relative class $\vd=(d_{+},d_{-})\neq\vec{0}$ the moduli space $\oCM_{0,0,0}(\vd)$ is orientable (see \cite{FOOO-II} for a much more general theorem; see also \cite{PZ}). A relatively simple consequence is
\begin{thm}\label{thm:moduli_is_orientable}
Suppose $k,l,\vd=(d_{+},d_{-})$ satisfy the stability constraint \eqref{eq:stability},
then the moduli $\oCM_{0,k,l}(\vd)$ is orientable.
\end{thm}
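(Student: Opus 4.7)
The strategy is to orient $\oCM_{0,k,l}(\vd)$ by decomposing its top stratum into chambers $\mathcal{R}_\mathfrak{G}$ indexed by decorated maps $\mathfrak{G}\in\Maps_{S,\kf}$, explicitly orienting each chamber using the combinatorial data of $\mathfrak{G}$, and checking that these orientations agree on their common codimension-one walls. Specializing to $k=l=0$ should recover the orientability statement cited from \cite{FOOO-II,PZ}.

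First, any point $(\Sigma,u)$ in the top stratum of $\oCM_{0,k,l}(\vd)$, where $\Sigma$ is smooth and $u$ is transverse to $\RP^1\subset\CP^1$, determines a decorated map $\mathfrak{G}(\Sigma,u)$. The internal loops and arcs are the connected components of $u^{-1}(\RP^1)\setminus\partial\Sigma$; the regions are the connected components of $\Sigma\setminus u^{-1}(\RP^1)$ signed $\pm$ according to which hemisphere $u$ sends them into; the perimeters are the unsigned degrees of $u|_{\partial}$ on the boundary components of each region; the markings carry along their labels and their region or arc memberships. The resulting assignment $(\Sigma,u)\mapsto\mathfrak{G}(\Sigma,u)$ is locally constant on the top stratum, so its level sets $\mathcal{R}_\mathfrak{G}$ are open and cover the top stratum.

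Second, I will orient each chamber $\mathcal{R}_\mathfrak{G}$ explicitly. A point of $\mathcal{R}_\mathfrak{G}$ is specified, up to $\Aut(\mathfrak{G})$, by (a) the conformal type of each region, identified via $u$ with a planar domain in the upper or lower hemisphere of $\CP^1$ and therefore carrying the complex orientation, twisted by the sign $s(\mathcal R)$ on negative regions; (b) the lengths of the loops and arcs, constrained by the prescribed perimeters to a linear subspace of $\mathbb{R}^{\{\text{loops, arcs}\}}$ that carries the standard Lebesgue orientation; (c) the positions of the internal markings inside their regions and of the boundary markings on their arcs, oriented using the complex orientation on the region interior and the $\RP^1$-orientation on $\partial\Sigma$, respectively. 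The wedge product of these oriented factors, taken in a fixed order, defines an orientation on $\mathcal{R}_\mathfrak{G}$.

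The key technical point, and the main obstacle, is to verify that two adjacent chambers induce the same orientation on the codimension-one wall between them. Walls correspond to elementary degenerations of the decorated map: an arc becoming tangent to $\RP^1$, two arcs or loops merging or nucleating, or a region pinching. For each wall type, one writes a local transverse slice, identifies a normal coordinate, and computes the sign of the orientation comparison; this is a finite case-by-case verification, closely parallel to the orientation analyses carried out in \cite{mod2hom} and at the end of the proof of Lemma \ref{lem:projection formula} of the present paper. Once this compatibility is established, the chamber orientations assemble into an orientation of the top stratum, which is an open dense subset of the interior of $\oCM_{0,k,l}(\vd)$; because the tangent bundle of the orbifold with corners extends smoothly across the lower strata, this orientation extends uniquely to an orientation of $\oCM_{0,k,l}(\vd)$, proving the theorem.
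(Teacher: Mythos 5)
Your proposal takes a genuinely different route from the paper's. The paper proves orientability quickly and indirectly: for $\vd=\vec{0}$ it uses $\oCM_{0,k,l}(\vec{0})\simeq\RP^1\times\oCM_{0,k,l}$; for $\vd\neq\vec{0}$ it shows $\CM_{0,k,l}(\vd)$ is orientable by inducting down along forgetful maps to the unmarked case (cited to \cite{FOOO-II,PZ}), noting the forgetful fibers are punctured disks or circles, then extends to $\oCM$ since the added strata are either codimension $\geq 2$ or genuine boundary. Your chamber-by-chamber approach is not wasted work, though --- it is essentially the strategy the paper pursues later for Proposition~\ref{prop:orientations glue}, where a \emph{specific} orientation is constructed on each chamber (via branching-and-marking coordinates, equation~\eqref{eq:or}) and then shown to glue across walls. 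The trade-off is clear: the paper's Theorem proof is cheap but existential, while your route is constructive and yields a canonical orientation, at the cost of the wall-crossing analysis you only sketch.

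A few points need tightening if you want this to stand as a complete proof. First, your chamber orientation differs from the paper's B\&M scheme. In particular, your factor (b), ``Lebesgue orientation on the subspace of lengths,'' depends on an ordering of the loops and arcs and is therefore only well-defined up to sign; the paper's $\Omega_G$ (Definition~\ref{def:form}) is similarly only defined up to sign, and the paper goes to some effort to package this ambiguity. You would need a rule that fixes this sign consistently across chambers, or at least shows the ambiguity cancels in the wall-crossing comparison --- and the wall-crossing check is precisely where your parametrization and the paper's would lead to different computations, so you cannot just cite the paper's Proposition~\ref{prop:orientations glue} as carried out. Second, your argument assumes the chamber decomposition is nontrivial, which requires $\vd\neq\vec{0}$; the degenerate case $\vd=\vec{0}$ (where the underlying disk moduli is a factor) needs to be treated separately, as the paper does. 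Third, the final extension step deserves one more sentence: you must distinguish between the codimension-one walls separating chambers in the interior (where you check compatibility), the codimension~$\geq 2$ nodal strata (which automatically preserve orientability), and the actual boundary faces of the orbifold with corners (where no further check is needed since a manifold with boundary is orientable iff its interior is).
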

Indeed, $\oCM_{0,k,l}(\vec{0})\simeq\RP^1\times\oCM_{0,k,l},$ where $\oCM_{0,k,l}$ is the moduli of stable marked disk. $\oCM_{0,k,l}$ is orientable. See, for example, \cite[Section 2.5]{PST14}.
One possible proof for the case $\vd\neq\vec{0}$ is as follows. $\CM_{0,k,l}(\vd)$ is orientable, as can be seen by iteratively forgetting the markings and noting that the fibers of the forgetful maps are either punctured disks or punctured circles, both are orientable. $\oCM_{0,k,l}(\vd)$ is obtained from $\CM_{0,k,l}(\vd)$ first by adding strata of codimension at least $2$ which are not the topological boundary of the moduli. Adding them cannot affect orientability. We then add the boundary, which contains codimension $1$ strata, but adding boundary does not affect orientation as well.

Recall that an orientable manifold or orbifold $M$ with boundary or corners induces orientation on its boundary. Let $o_M\in\det(M)^\times/\R_+$ denote the orientation of $M.$ For any $p\in\partial M,$ which does not belong to any corner of codimension at least $2$ we have the short exact sequence
\begin{equation}\label{eq:induced_or_to_bdrt}
0\to \iota^*T_p\partial M\to T_pM\to N\to 0\Rightarrow \det(N)\otimes \det(T\partial M)\simeq \det(M),
\end{equation}
where $N$ is the normal and $\iota:\partial M\to M$ the inclusion. The \emph{induced orientation} on $\partial M$ at $p$ is the element $o_{\partial M}\in \det(T\partial M)/\R_+$ such that under the isomorphism \eqref{eq:induced_or_to_bdrt} we have
\[
o_N\otimes o_{\partial M}=\iota^*o_M,
\]
where $o_N$ is the orientation on $N$ as an outward pointing normal.

\begin{definition}\label{def:assoc_map}
A generic $[u:(\Sigma,\partial\Sigma)\to(\CP^1,\RP^1)]\in\oCM_{0,k,l}(\vd)$ defines an \emph{associated decorated map} $u^{-1}(\RP^1)\subseteq\Sigma.$ This decorated map belongs to $\Maps_{0,k,l,\vd}$ and has a canonical \emph{metric} for boundary segments, loops and arcs, $\ell=\ell_{(\Sigma,u)}$ defined by $\ell(\gamma)=|\int_\gamma u^*d\theta|,$ where $\gamma$ is a boundary segment, loop or an arc.
\end{definition}
\begin{nn}\label{nn:chambers}
For $\mathfrak{G}\in \Maps_{0,k,l,\vd},$ denote by $\CM_{0,k,l}(\vd)^{\mathfrak{G}}$ the subset of the coarse moduli space of $\CM_{0,k,l}(\vd)$ made of elements whose associated decorated map is $\mathfrak{G}.$ The \emph{chamber} of $\mathfrak{G}$ is the closure $\oCM_{0,k,l}(\vd)^{\mathfrak{G}}.$
\end{nn}

The next observation follows from standard complex analysis and dimensional arguments.
\begin{obs}\label{obs:chambers}
For $\mathfrak{G}\in \Maps_{0,k,l,\vd},~\CM_{0,k,l}(\vd)^{\mathfrak{G}}$ is an open subspace of full dimension of the coarse $\oCM_{0,k,l}(\vd).$ Moreover, the space
\[
\oCM_{0,k,l}(\vd)\backslash\bigcup_{\mathfrak{G}\in \Maps_{0,k,l,\vd}}\CM_{0,k,l}(\vd)^{\mathfrak{G}}
\]
is covered by finitely many real suborbifolds of codimension $1.$
\end{obs}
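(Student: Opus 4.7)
The plan is to decouple the two claims: (i) stability of $\mathfrak{G}(u)$ under small deformations, which yields openness and full dimension of each chamber, and (ii) a classification of the walls crossed as $\mathfrak{G}(u)$ changes, which gives the codimension-one cover of the complement. I call $[u:\Sigma\to\CP^1]$ \emph{admissible} if the domain $\Sigma$ is smooth (no nodes), $u$ meets $\RP^1$ transversely at every interior preimage, $u|_{\partial\Sigma}$ has no critical point mapping to $\{p_+,p_-\}$, no internal marking lies on $u^{-1}(\RP^1)$, and no boundary marking lies at an arc endpoint of $u^{-1}(\RP^1)$. For such $u$ the preimage $u^{-1}(\RP^1)$ is a smooth properly embedded $1$-submanifold of $\Sigma$; assigning its complementary regions the signs of the hemispheres into which they map, and recording the integer perimeters $\int_\gamma u^*d\theta$, produces a well-defined $\mathfrak{G}(u)\in\Maps_{0,k,l,\vd}$ as in Definition \ref{def:assoc_map}.

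Openness and full dimension follow from two observations. First, admissibility is a $C^\infty$-open condition; for a continuous $1$-parameter family $u_t$ of admissible maps, Thom's first isotopy lemma applied to the parametric preimage $\{(z,t):u_t(z)\in\RP^1\}$ yields an ambient isotopy of $\Sigma$ carrying $u_0^{-1}(\RP^1)$ to $u_t^{-1}(\RP^1)$. The perimeters depend continuously on $t$ and are integer valued, hence locally constant; signs, marking regions and cyclic combinatorial data are preserved by the isotopy. Thus $\mathfrak{G}(u_t)$ is locally constant and $\CM_{0,k,l}(\vd)^{\mathfrak{G}}$ is open in $\oCM_{0,k,l}(\vd)$. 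Second, since $\CM_{0,k,l}(\vd)$ is open in $\oCM_{0,k,l}(\vd)$ and admissible maps are dense in $\CM_{0,k,l}(\vd)$ (see the wall analysis below), every nonempty chamber has the dimension of the ambient moduli.

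To cover the complement $\oCM_{0,k,l}(\vd)\setminus\bigcup_{\mathfrak{G}}\CM_{0,k,l}(\vd)^{\mathfrak{G}}$ by finitely many codimension-one suborbifolds, I would take the union of: (a) the boundary $\partial\oCM_{0,k,l}(\vd)$, a finite union of corner strata of real codimension one by Proposition \ref{prop:corners of moduli}; (b) the non-transversality locus $T\subset\CM_{0,k,l}(\vd)$ of smooth-domain maps with at least one tangential interior intersection with $\RP^1$, or a critical boundary value at $p_\pm$; (c) for each $i\in[l]$ the evaluation divisor $\ev_i^{-1}(\RP^1)$, and for each boundary marking $j\in[k]$ the analogous pullback of $\{p_+,p_-\}$, each a codimension-one suborbifold because $\RP^1\subset\CP^1$ and $\{p_\pm\}\subset\RP^1$ are codimension one and the evaluation maps are submersive on the smooth stratum. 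The cover (a)--(c) is finite: finitely many boundary strata, a single $T$, and $l+k$ evaluation divisors.

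The main obstacle is (b): verifying that the tangency/boundary-critical locus is a smooth codimension-one suborbifold rather than a deeper singular set. I would handle this by a jet-transversality argument on the universal curve: the condition ``$du_z$ is parallel to $T_{u(z)}\RP^1$ at some interior $z\in u^{-1}(\RP^1)$'' cuts out the vanishing locus of a single transverse section of a line bundle over the universal preimage of $\RP^1$, which is a bundle of compact $1$-manifolds over an open substack of $\CM_{0,k,l}(\vd)$. Pushing forward gives a codimension-one suborbifold; the analogous argument works for boundary critical values and for boundary-marking incidence with arc endpoints. Once (b) is established, density of admissible maps (hence full dimension in paragraph two) follows as well, completing the proof.
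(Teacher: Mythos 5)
Your decomposition into an admissibility condition plus a wall classification is the right shape, but there is a concrete error in the walls you list: the conditions involving $p_\pm$ are vacuous. Since $u(\partial\Sigma)\subset\RP^1$ and $p_\pm\notin\RP^1$, the requirement ``$u|_{\partial\Sigma}$ has no critical point mapping to $\{p_+,p_-\}$'' never fails, and the set $\ev_j^{-1}(\{p_+,p_-\})$ for a boundary marking $j\in[k]$ is empty. What you need instead is the observation that the arc endpoints of the decorated map are precisely the critical points of $u|_{\partial\Sigma}$, i.e. boundary ramification points, whose branch values lie at generic points of $\RP^1$, not at the poles.

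As a result your list (a)--(c) omits two actual walls. First, the locus where $u|_{\partial\Sigma}$ has a higher-order critical point, equivalently where two boundary ramification points collide: crossing it contracts an arc or fuses two arcs at their common endpoint, so the decorated map changes (Definition \ref{def:maps} forbids arcs sharing an endpoint), yet this locus is not covered by (a), (b) or (c). Second, the locus where a boundary marking coincides with a boundary ramification point, which is what ``lying at an arc endpoint'' means; this is not cut out by $\ev_j$ against any fixed subset of $\RP^1$, since the arc endpoint moves with $u$, and needs a different description (e.g., as the vanishing locus of $d(u|_{\partial\Sigma})$ at the marking). These two are precisely the first two walls discussed in the paper's proof of Proposition \ref{prop:orientations glue} (``a boundary marking moves from one region to another'' and ``contracting the boundary interval between two boundary ramification points''). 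The paper's intended route, indicated by the surrounding text in Section \ref{sec:or}, is cleaner: generically ramification is simple with distinct branch values, the branch points and marking positions are local (real or holomorphic) coordinates, the chamber is determined by the hemisphere of each complex branch point, the cyclic order of the real ones, and the combinatorics of the nesting, and the complement of the chambers is exactly the union of the loci where a branch coordinate hits $\RP^1$, where two branch coordinates coincide, where a marking hits $u^{-1}(\RP^1)$ or a boundary ramification point, or where the domain is nodal --- all manifestly covered by finitely many codimension-one suborbifolds, with no need for the parametric Thom isotopy argument.
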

\begin{ex}
The moduli of degree $(d,0)$ maps consists of a single chamber -- the one of the decorated map without loops or arcs, except the boundary of the underlying disk, whose perimeter is $d.$ See Figure \ref{fig:moduli}, left picture, for the case $d=2$.

The moduli of maps of degree $(1,1)$ consists of a single chamber -- of the decorated map with a single arc and no loops, such that one region is positive, the other is negative and the perimeters of their boundaries are $1.$
This moduli is topologically a cylinder $\RP^1\times [0,2\pi],$ see the right picture in Figure~\ref{fig:moduli}. A map is completely determined by the images its two ramification points on $\RP^1,~u_\pm,$ chosen in a way that in the domain curve the oriented boundary arc between the preimage of $u_-$ to that of $u_+$ is in the boundary of the positive (the one which maps to the upper hemisphere).

One boundary of the moduli (the exceptional boundary) is obtained when $u_-\to u_+$ from the left, which results in a contracted boundary component, or equivalently, a degree one sphere with a single marked point constrained to lay in $\RP^1.$
The other boundary is obtained when $u_+\to u_-$ from the left. The result is a map from a pair of disks, one maps to upper hemisphere (degree $(1,0)$) and the other to lower hemisphere (degree $(0,1)$). The moduli is parameterized by the image of the node, which is an arbitrary point of $\RP^1.$ Thus, the two boundaries are indeed topologically~$S^1.$

The moduli of maps of degree $(2,1)$ consists of two chambers. One has the boundary loop and an internal loop, both of perimeter $1.$ These loops are both boundaries of a region which is a positive cylinder. The other region is a negative disk.
The second chamber corresponds to a decorated map with a single arc but no loops. It divides the disk into a positive topological disk region with two corners whose perimeter is $2,$ and a negative one of perimeter $1.$
\begin{figure}[t]
\centering
\includegraphics[scale=.4]{./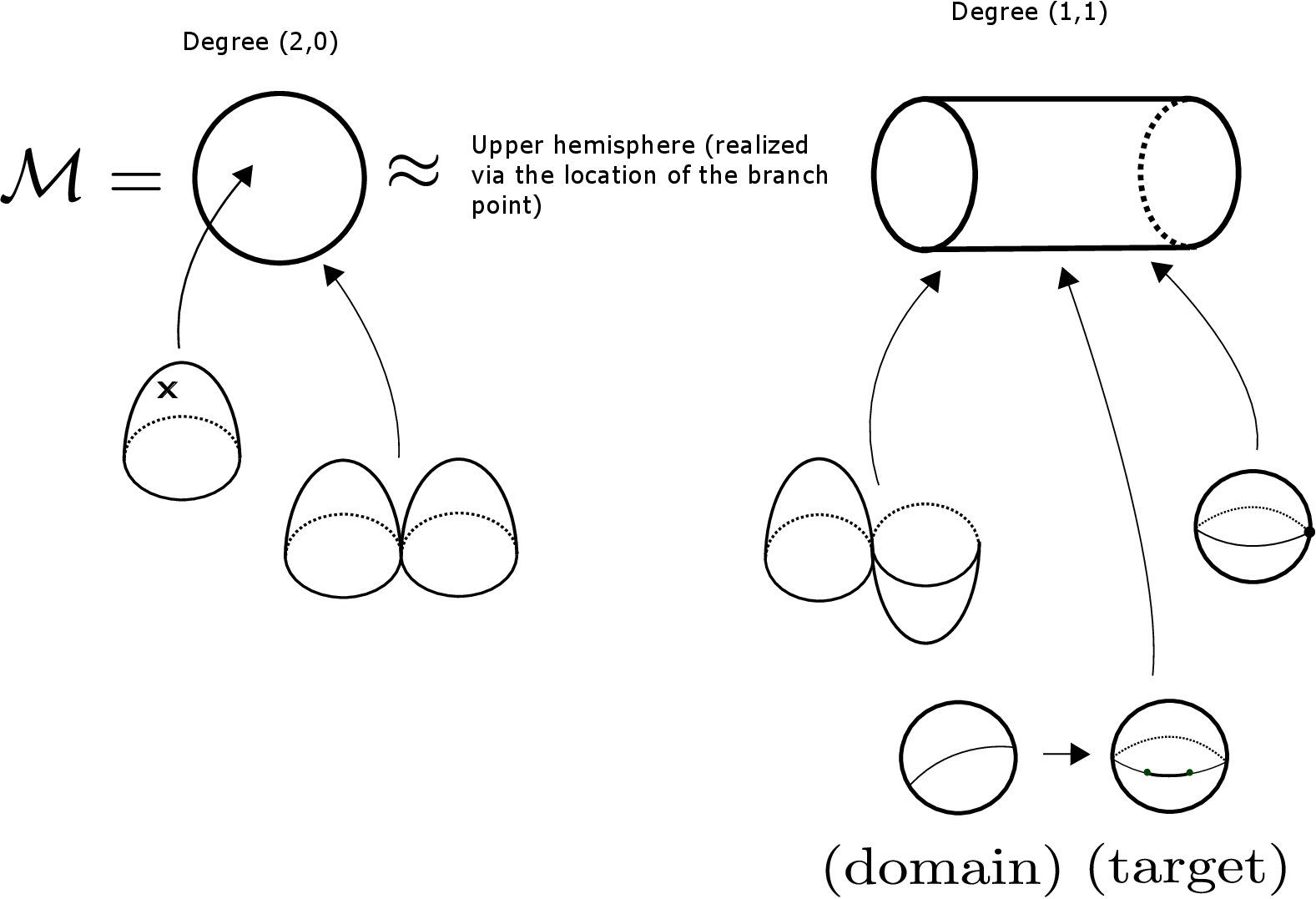}
\caption{An illustration of the moduli spaces for genus $0,$ degrees $(2,0),~(1,1)$ and no internal markings. $\CM_{0,0,0}(2,0)$ is made of a single chamber, without arcs and loops. The moduli is equivalent to a disk via the location of the branch point. $\CM_{0,0,0}(1,1)$ is equivalent to a cylinder. There is a single chamber, which corresponds to a decorated map with a single arc and no loops.}
\label{fig:moduli}
\end{figure}
\end{ex}

We now turn to define the orientation of $\oCM_{0,k,l}(\vd).$ Although Theorem \ref{thm:moduli_is_orientable} guarantees the existence of such an orientation to each moduli separately, in order to define invariants we must choose a specific orientation for each moduli, and our calculations required to understand the relations between different orientations. Our approach resembles the approach of \cite{Comb}, where the orientation of the moduli space of graded surfaces with boundary is constructed using Strebel's stratification and its boundary behaviour is explored. We assume $\vd\neq\vec{0}.$ We define the orientation by explicitly writing an expression for the orientations of the chambers of decorated maps. There are alternative ways to define the orientation, but this definition is best suited for our needs.

Observe that a generic $u:(\Sigma,\partial\Sigma)\to(\CP^1,\RP^1)$ of degree $\vd$ has only simple ramification points with distinct images under $u,$ called the \emph{branch points} and moreover it has $l_u$ internal ramification points and $k_u$ boundary ramification points where
\begin{equation}\label{eq:num_ramification_pts}2l_u+k_u = 2(g+d_{+}+d_{-})-2.\end{equation}
Indeed, the doubled map $u_\C:\Sigma_\C\to \CP^1$ generically has $2l_u+k_u$ ramification points on the one hand, and by Riemann-Hurwitz this number equals $2(g+d_{+}+d_{-})-2.$ See \cite{PZ}, Section 5, for more details.

In fact, the same argument shows the following slightly stronger claim.
\begin{obs}\label{obs:num_ramification_pts_in_region}
The number of (complex) branch points inside a region $\mathcal{R}$ of genus $g$ (the genus is the doubled genus of $\mathcal{R}$ thought of as a topological surface with boundary) which has $b$ boundary components, the $i^{th}$ one covers $\RP^1$ $d_i$ times is
\[
g-1+\sum_{i=1}^b |d_i|.
\]
\end{obs}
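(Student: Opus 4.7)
The plan is to reduce the statement to a Riemann--Hurwitz computation on the doubled region. First I would form $\mathcal{R}_\C$ by gluing $\mathcal{R}$ to a conjugate copy along $\partial\mathcal{R}$ via the Schwarz reflection principle; by the definition of doubled genus, $\mathcal{R}_\C$ is a closed Riemann surface of genus $g$. The restriction of $u$ to $\mathcal{R}$ lands in one closed hemisphere (upper if $\mathcal{R}$ is positive, lower otherwise) and extends to a holomorphic map $u_\C\colon \mathcal{R}_\C\to \CP^1$ by postcomposing on the mirror copy with the standard conjugation of $\CP^1$.

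Next I would determine the degree of $u_\C$. Since the $i$-th boundary component of $\mathcal{R}$ covers $\RP^1$ as an $|d_i|$-sheeted covering, counting pre-images of a regular value in the open hemisphere shows $\deg u_\C = \sum_{i=1}^b |d_i|$. Applying Riemann--Hurwitz to $u_\C$ then gives
\[
2g - 2 \;=\; -2\sum_{i=1}^b |d_i| \,+\, R,
\]
where $R$ is the total (simple, for generic $u$) ramification of $u_\C$.

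Finally I would identify $R$ with twice the number $n$ of complex branch points inside $\mathcal{R}$. Complex branch points of $u$ inside $\mathcal{R}$ correspond, by definition, to interior ramification points of $u|_\mathcal{R}$ whose images avoid $\RP^1$; each such point is paired with a mirror partner in the conjugate half of $\mathcal{R}_\C$, jointly contributing $2$ to $R$. Real ramification points of $u_\C$, if any, lie on $\partial\mathcal{R}\subset \mathcal{R}_\C$ and so are neither inside $\mathcal{R}$ nor counted among the complex branch points. Hence $R = 2n$, and solving yields $n = g - 1 + \sum_{i=1}^b |d_i|$.

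This is essentially a routine Riemann--Hurwitz argument on the double, of the same flavour as the one already used to justify \eqref{eq:num_ramification_pts}; I do not expect any serious obstacle. The one point that deserves explicit mention is that ``complex branch points inside $\mathcal{R}$'' excludes any real ramification points that may sit on $\partial\mathcal{R}$, which is what makes the clean identification $R=2n$ possible.
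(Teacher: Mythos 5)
Your overall strategy — double the region, compute the degree of the doubled map, apply Riemann--Hurwitz, and pair off interior ramifications with their conjugates — is exactly the argument the paper invokes when it writes ``the same argument shows the following slightly stronger claim,'' so the route is the right one and the degree and genus bookkeeping are correct.

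However, the justification you give for the crucial identity $R=2n$ does not actually establish it. You write that real ramification points of $u_\C$, if any, lie on $\partial\mathcal{R}$ ``and so are neither inside $\mathcal{R}$ nor counted among the complex branch points. Hence $R=2n$.'' That is a non-sequitur: a real ramification point on $\partial\mathcal{R}$ would still contribute to $R$, so if any existed you would get $R>2n$ and the formula would fail. What is actually needed is an argument that $u_\C$ has \emph{no} ramification points on $\partial\mathcal{R}\subset\mathcal{R}_\C$, and this requires a word of care, because the original map $u$ \emph{does} generically have boundary ramification points on $\partial\Sigma$ (the endpoints of the arcs), and these sit on the topological boundary of the closures of the adjacent regions. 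The resolution is that such a point is always a \emph{corner} of $\overline{\mathcal{R}}$ (angle $\pi/2$, since $u^{-1}(\RP^1)$ has a transverse crossing there), and the holomorphic structure on $\overline{\mathcal{R}}$ that makes $u|_{\overline{\mathcal{R}}}$ a map to the closed hemisphere is the one obtained by uniformizing the corner away — equivalently, pulling back the complex structure by $u$ — under which $u$ is a local biholomorphism at the former corner, hence unramified. Ramifications of $u$ at interior points of arcs or loops are excluded because they would force the arcs/loops of the decorated map to cross, contradicting Definition~\ref{def:maps}. With these two observations in place, $u_\C$ is indeed unramified along $\partial\mathcal{R}$ and $R=2n$ follows, giving the stated formula.
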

Other ramification types or nodes appear in positive codimension. Since any automorphism of a stable map must take branch points to branch points it is straightforward to verify that the locus of orbifold points in the moduli is of real codimension at least $2.$

It is a classical fact that the branch points form local holomorphic coordinates to an open dense subset of the moduli of maps of degree $d$ from $\CP^1$ to $\CP^1$ (\cite{FantechiRahul}, page 17, proof of Proposition $2,$ shows an all genus claim).  The same holds for maps of degree $\vd$ from a disk to $(\CP^1,\RP^1).$
Let $U$ be an open subset of $\CM_{0,k,l}(\vd),$ all of whose points correspond to maps with only simple ramifications and no automorphisms. Assume that $U$ is of the form $V\cap\For_{marking}^{-1}(U'),$ where $V \subseteq \CM_{0,k,l}(\vd),~U'\subset\CM_{0,0,0}(\vd)$ are open, $U'$ is contractible and $\For_{marking}$ is the map which forgets all markings.
Local coordinates for $U$  can be taken to be
\[
\left((W_i)_{i=1}^{l_u}, (Y_i)_{i=1}^{k_u},(Z_i)_{i=1}^l,(X_i)_{i=1}^k\right):U\to(\CP^1\backslash\RP^1)^{l_u}\times(\RP^1)^{k_u}\times\mathbb{H}^l\times\partial\mathbb{H}^k,
\]
where $W_i\in\CP^1$ is the $i^{th}$ complex branch point, $Y_i\in\RP^1$ is the $i^{th}$ real branch point, and $Z_i,X_j$ are defined as follows. We assume that $U'$ is small enough so that there are smooth functions
\[
P:U'\to\RP^1,\quad Q:U'\to\CP^1\backslash\RP^1
\]
which satisfy that $P(\Sigma,u),Q(\Sigma,u)$ are never branch points of
\[
u:(\Sigma,\partial\Sigma)\to(\CP^1,\RP^1).
\]
Such functions can be found for all $U'$ small enough, and any $U'$ as above can be covered by smaller open sets for which such functions $P,Q$ can be found. Define the smooth maps $\widetilde{P},\widetilde{Q}$ from $U'$ to the universal curve restricted to $U'$ by the requirement \[
\widetilde{P}(\Sigma,u)\in\partial\Sigma,\quad u(\widetilde{P}(\Sigma,u))=P(\Sigma,u),\quad\widetilde{Q}(\Sigma,u)\in\Sigma\backslash\partial\Sigma,\quad u(\widetilde{Q}(\Sigma,u))=Q(\Sigma,u).
\]
There is a unique holomorphic biholomorphism $\phi_{(\Sigma,u)}$ which takes $(\Sigma,\partial\Sigma)$ to $(\overline{\mathbb{H}},\partial\mathbb{H}),$ where $\mathbb{H}$ is the upper hemisphere, and which takes $\widetilde{P}$ to $0$ and $\widetilde{Q}$ to $+\sqrt{-1}\in\mathbb{H}.$ The images of the internal and boundary markings under $\phi_{[(\Sigma,u)]}$ are the functions $X_i,Z_j$ respectively. We call them the \emph{locations} of the markings in the domain disk (identified with $\overline{\mathbb{H}}$).

We call these coordinates the \emph{branching and marking coordinates (B$\&$M coordinates)}.
\begin{rmk}
Even when $\kf,\lf=\emptyset $ the branch points give only a local description of the moduli, and are not sufficient for reconstructing a map to $(\CP^1,\RP^1)$. For instance, even in the generic case an additional discrete data is needed for reconstruction: how do the different sheets which are preimages of neighborhoods of the branch point glue to give the whole surface. %In addition, given a map, its branch points are not numbered, and a local numbering will not extend globally.
\end{rmk}

For a decorated map $\mathfrak{G}$ let $s_\mathfrak{G}$ be $-1$ when $k_u=0$ and $d_{-}>d_{+}.$ Otherwise put $s_\mathfrak{G}=1.$
We say that $Y_i$ is \emph{negative} if the region on the left of the \emph{ramification} point $y_i\in u^{-1}(Y_i)$ is negative, otherwise it is \emph{positive}. We similarly call the corresponding ramification points $y_i\in u^{-1}(Y_i)$ positive or negative. The points $y_i\in u^{-1}(Y_i)$ are ordered along the boundary of the disk in a way that a negative point follows a positive one, and a positive point follows a negative one.

The B\&M coordinates and an order $\pi$ on the boundary marked points determine orientations $\mathfrak{o}_\mathfrak{G}^{\pi}$ on $\oCM_{0,k,l}(\vd)^{\mathfrak{G}}$ given by\begin{equation}\label{eq:or}\R_+s_\mathfrak{G}\bigwedge \frac{\partial}{\partial X_i}\otimes\bigwedge \frac{\partial}{\partial Y_i} \otimes\bigwedge \frac{\sqrt{-1}}{2}\frac{\partial}{\partial Z_i}\wedge \frac{\partial}{\partial \bar{Z}_i}\otimes\bigwedge \frac{\sqrt{-1}}{2}\frac{\partial}{\partial W_i}\wedge \frac{\partial}{\partial \bar{W}_i},\end{equation}
where $\frac{\partial}{\partial X_i}$ are ordered in reverse $\pi$ order (from the last to the first, with respect to $\pi),$
and $\frac{\partial}{\partial Y_i}$ are ordered according to
the cyclic order of the ramification points $y_i$ %u^{-1}(Y_i)
 on the boundary of the domain disk, \emph{starting from a positive point}. The orientations of all $1-$forms are the canonical ones, defined by the oriented domain and target surfaces, and the positive orientation on $\RP^1,$ the one induced by the upper hemisphere. When $k\leq 1$ we omit $\pi$ from the notation.
 \begin{rmk}\label{rmk:Y_real_or_ang}In expression \eqref{eq:or} we consider $W_i$ to be complex and $Y_i$ to be real, via the embedding $(\mathbb{C},\mathbb{R})\hookrightarrow(\CP^1,\RP^1),$ and use the upper half-plane model for the domain disk.
Later, for some purposes it will be more convenient to consider $\RP^1$ as the boundary of the unit disk in $\mathbb{C}\subset\CP^1,$ with its angular parametrization. The transition between the two coordinate system changes \eqref{eq:or} by an irrelevant positive function of the $Y_i$ variables. Thus, even in this case we will consider the explicit expression \eqref{eq:or} as the orientation expression.
 \end{rmk}

Some straightforward corollaries of the definitions are
\begin{obs}\label{indep_of_P_n_Q}
The above orientation is independent of the choices of $P,Q.$
\end{obs}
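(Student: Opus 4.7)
The plan is the following. Let $(P^{(1)},Q^{(1)})$ and $(P^{(2)},Q^{(2)})$ be two admissible choices, and let $\phi^{(j)}_{(\Sigma,u)}\colon (\Sigma,\partial\Sigma)\to(\overline{\mathbb{H}},\partial\mathbb{H})$ be the associated normalized biholomorphisms. Since both are biholomorphisms of $(\overline{\mathbb{H}},\partial\mathbb{H})$, their composition $\psi_{(\Sigma,u)}:=\phi^{(2)}_{(\Sigma,u)}\circ(\phi^{(1)}_{(\Sigma,u)})^{-1}$ lies in $PSL(2,\mathbb{R})$. Crucially, $P,Q$ are specified only on $U'\subset \CM_{0,0,0}(\vd)$ — they depend on the underlying map and not on the markings — so $\psi$ depends smoothly on the branching data $(W,Y)$ alone.

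Consequently the new location coordinates satisfy $X_i^{(2)}=\psi(X_i^{(1)})$ and $Z_i^{(2)}=\psi(Z_i^{(1)})$, while $W_i,Y_i$ are intrinsic to the target and unchanged. The Jacobian of the coordinate change $(W,Y,Z,X)\mapsto (W,Y,Z',X')$ is therefore block lower triangular with identity on the $(W,Y)$ blocks, and the remaining $(Z,X)$ block is itself diagonal since each marking's new coordinate depends only on its own old coordinate and on $(W,Y)$.

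The main step is to check that each diagonal entry of the $(Z,X)$ block contributes positively to \eqref{eq:or}. For a boundary marking $X_i$, the relevant entry is the real derivative $\psi'(X_i^{(1)})$, which is positive because elements of $PSL(2,\mathbb{R})$ preserve the orientation of $\partial\mathbb{H}\simeq\RP^1$ induced from $\mathbb{H}$. For an internal marking $Z_i$, the map $\psi$ is holomorphic in $Z$, so the factor $\tfrac{\sqrt{-1}}{2}\,dZ_i\wedge d\bar Z_i$ is multiplied by $|\psi'(Z_i^{(1)})|^2>0$. The remaining data entering \eqref{eq:or} — the sign $s_{\mathfrak{G}}$, the cyclic ordering of the $Y_i$ by the ramification points on $\partial\Sigma$ starting from a positive one, and the $\pi$-ordering of the $X_i$ — depend only on $\mathfrak{G}$ and $\pi$, not on $(P,Q)$.

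Thus the Jacobian determinant is strictly positive, and the orientation \eqref{eq:or} is independent of $(P,Q)$. The only point requiring a moment's care is the positivity of $\psi'$ on $\partial\mathbb{H}$, which follows directly from the identification of the biholomorphism group of the upper half-plane with $PSL(2,\mathbb{R})$ and the fact that orientation-preserving maps of $\mathbb{H}$ restrict to orientation-preserving maps of $\partial\mathbb{H}$.
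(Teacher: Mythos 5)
Your proof is correct and takes the same approach as the paper, which disposes of the claim in a single sentence by noting that changing $(P,Q)$ alters the orientation expression \eqref{eq:or} by a positive function; you supply the missing details, namely that the change of normalization is realized by a $(W,Y)$-dependent element of $PSL(2,\mathbb{R})$, which acts on the $Z_i$ by a conformal factor and on the $X_i$ with positive real derivative, so the Jacobian is positive.
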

Indeed, changing the functions P,Q amounts to changing the coordinate orientation expression by a positive function.

Under $z\to1/z,$ positive and negative regions are interchanged and if there are no real branchings also $s_\mathfrak{G}$ is flipped, therefore
\begin{obs}\label{obs:behavior of orientation under inversion}
Fix an order $\pi$ on the boundary markings. The map $z\to1/z$ on the target $\CP^1$ induces a map
\[\oCM_{0,k,l}(\vd)\to\oCM_{0,k,l}(d_{-},d_{+})\] by composition.
This map is orientation reversing, with respect to the orientations of \eqref{eq:or}.
\end{obs}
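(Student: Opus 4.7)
The plan is to work in local B\&M coordinates on a chamber $\CM_{0,k,l}(\vd)^{\mathfrak{G}}$ and track how each factor in the orientation expression \eqref{eq:or} transforms under the inversion map $\iota:[u]\mapsto[1/u]$. First I would invoke Observation \ref{indep_of_P_n_Q} to make a convenient choice of the auxiliary functions $P,Q$: if we fix $P,Q$ for the source chamber and use $P'=1/P,\,Q'=1/Q$ for the image chamber, then the trivializations $\widetilde{P},\widetilde{Q}$ of the universal curve and hence the identification of $(\Sigma,\partial\Sigma)$ with $(\overline{\mathbb{H}},\partial\mathbb{H})$ agree on both sides. Consequently the marking coordinates $X_i$ and $Z_i$ are preserved by $\iota^*$, and contribute no sign.

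Next I would analyze the branch-point coordinates. The complex branch points transform by $W_i\mapsto 1/W_i$, a biholomorphism of $\CP^1\setminus\RP^1$, so each factor $\tfrac{\sqrt{-1}}{2}\tfrac{\partial}{\partial W_i}\wedge\tfrac{\partial}{\partial \bar W_i}$ is rescaled by a positive function; thus complex branch points contribute $+1$. The real branch points transform by $Y_i\mapsto 1/Y_i$, a diffeomorphism of $\RP^1$ with negative derivative in the coordinates of Remark \ref{rmk:Y_real_or_ang}, producing a factor of $(-1)^{k_u}$ on $\bigwedge\tfrac{\partial}{\partial Y_i}$. In addition, because $\iota$ swaps positive and negative regions, every ramification point on $\partial\Sigma$ changes sign, so the convention of starting the cyclic order from a positive point forces a cyclic shift by one position on the ordered set of $k_u$ real branch points. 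This contributes an extra sign $(-1)^{k_u-1}$. Multiplying, the $Y_i$-block picks up a total sign of $(-1)^{2k_u-1}=-1$ whenever $k_u>0$.

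Finally I would examine the prefactor $s_{\mathfrak{G}}$. When $k_u>0$ one has $s_{\mathfrak{G}}=+1$ both before and after inversion, so $s_{\mathfrak{G}}$ contributes no sign and the overall change is $-1$, as required. When $k_u=0$ I would use the topological observation that a smooth map $S^1\to\RP^1$ without critical points is a covering of nonzero degree, which precludes $d_+=d_-$; in particular this case only occurs when $d_+\neq d_-$. Then $s_{\mathfrak{G}}=-1$ iff $d_->d_+$, and since the degrees are swapped by $\iota$, the sign $s_{\mathfrak{G}}$ flips, again giving total sign $-1$. In both cases $\iota^*\mathfrak{o}_\mathfrak{G}^\pi=-\mathfrak{o}_{\iota(\mathfrak{G})}^\pi$, proving the observation.

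The main subtlety is the combinatorial book-keeping in the $k_u>0$ case, in particular verifying that the ordering convention on the real branch points is well-defined up to positive scalar (shifting the base-point between two positive points amounts to a cyclic shift by two, which is an even permutation) and then checking that the sign-swap of all ramification points corresponds to exactly one cyclic shift. The secondary subtlety is the dichotomy in the definition of $s_\mathfrak{G}$, which must be handled separately when $k_u=0$; the topological exclusion of the case $d_+=d_-=0$-branching is essential to make the sign-flip of $s_\mathfrak{G}$ compensate for the absence of any $Y_i$-contribution.
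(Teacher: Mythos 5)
Your proof is correct and follows the same line of reasoning as the paper's terse one-line justification preceding the observation (positive and negative regions swap under $z\mapsto 1/z$, which shifts the base point of the cyclic $Y_i$-ordering when $k_u>0$, while $s_{\mathfrak{G}}$ flips when $k_u=0$). You have carefully supplied the details the paper leaves implicit — the choice $P'=1/P$, $Q'=1/Q$ trivializing the $X_i,Z_i$ contributions, the positivity of the $W_i$-Jacobian, the $(-1)^{k_u}\cdot(-1)^{k_u-1}=-1$ bookkeeping for the real branch points, and the observation that $k_u=0$ forces $d_+\neq d_-$ so that $s_\mathfrak{G}$ indeed flips.
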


In addition, since also in the closed case the branch points form local holomorphic coordinates on an open dense subset, we have
\begin{obs}\label{obs:or_agree_in_closed_case}
In the closed case, the orientation \eqref{eq:or} agrees with the standard complex orientation of the moduli of maps.
\end{obs}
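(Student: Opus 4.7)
The plan is to verify this by directly unpacking the definition of the orientation formula \eqref{eq:or} in the closed setting and comparing it to the standard complex orientation. First, I would observe that in the closed case $k=0$ (no boundary markings) and $k_u=0$ (no real branch points, since there is no $\mathbb{RP}^1$ to intersect). Consequently the factors $\bigwedge \partial/\partial X_i$ and $\bigwedge \partial/\partial Y_i$ in \eqref{eq:or} are empty, and the sign $s_\mathfrak{G}$ is $+1$ by default (the condition ``$d_-  > d_+$'' is vacuous without the splitting $\vd=(d_+,d_-)$ that comes from the presence of $\mathbb{RP}^1$). Thus \eqref{eq:or} reduces to
\[
\mathbb{R}_+\,\bigwedge_{i=1}^{l}\frac{\sqrt{-1}}{2}\frac{\partial}{\partial Z_i}\wedge\frac{\partial}{\partial \bar{Z}_i}\;\otimes\;\bigwedge_{i=1}^{l_u}\frac{\sqrt{-1}}{2}\frac{\partial}{\partial W_i}\wedge\frac{\partial}{\partial \bar{W}_i}.
\]

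Next, I would invoke the classical fact referenced just above the observation (see \cite{FantechiRahul}, pg.~17): on the open dense locus $U \subset \overline{\mathcal{M}}_{0,l}(d)$ parameterizing maps with $l_u = 2(g+d) - 2 = 2d-2$ distinct simple branch points and no automorphisms, the branch points $W_1,\ldots,W_{l_u}$ depend holomorphically on the stable map and, together with the marking locations $Z_1,\ldots,Z_l$ in the uniformized domain, form a system of local holomorphic coordinates on $U$. The holomorphicity of the $Z_i$ in this setup follows from the holomorphicity of the universal curve together with the fact that the uniformization map to $\mathbb{CP}^1$ normalized by the holomorphic sections $\widetilde{P}, \widetilde{Q}$ is holomorphic in the moduli parameters.

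Finally, I would apply the elementary linear-algebra fact that on any complex manifold with local holomorphic coordinates $\zeta_1,\ldots,\zeta_n$, the standard complex orientation (the one for which $\partial_{x_1},\partial_{y_1},\ldots,\partial_{x_n},\partial_{y_n}$ is a positively oriented real basis, where $\zeta_j=x_j+\sqrt{-1}\,y_j$) is represented by $\bigwedge_j \tfrac{\sqrt{-1}}{2}\partial/\partial \zeta_j \wedge \partial/\partial \bar{\zeta}_j$. Applying this with $(\zeta_i) = (Z_1,\ldots,Z_l,W_1,\ldots,W_{l_u})$ identifies the reduced form of \eqref{eq:or} with the complex orientation on $U$. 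Since $U$ is dense and both orientations extend continuously to all of $\overline{\mathcal{M}}_{0,l}(d)$, equality holds globally. There is no real obstacle here -- the statement is essentially a bookkeeping verification -- the only point demanding care is checking that the various sign and normalization conventions built into \eqref{eq:or} (in particular the absence of any $s_\mathfrak{G}=-1$ contribution) are consistent with the complex convention in the closed setting.
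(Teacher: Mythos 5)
Your proposal is correct and fills in, at the level of detail, exactly the argument the paper tacitly invokes: the paper simply precedes the Observation with the remark that the branch points form local holomorphic coordinates in the closed case as well, leaving the bookkeeping to the reader. Your unpacking -- that the $X_i$ and $Y_i$ factors are absent, that $s_\mathfrak{G}=1$, and that a wedge of factors of the form $\tfrac{\sqrt{-1}}{2}\,\partial_{\zeta}\wedge\partial_{\bar\zeta}$ over holomorphic coordinates $\zeta$ is precisely the complex orientation -- is the intended reasoning, and the density argument to extend from the locus of simple branch points to the whole moduli is a standard final step.
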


Let $\mathfrak{G}'$ be the decorated map obtained from $\mathfrak{G}$ by erasing the last internal or boundary marking. Let $\pi'$ be the order on the remaining boundary points induced from $\pi.$
Consider the fiber of the forgetful map
\[\For:\CM_{0,k,l}(\vd)^{\mathfrak{G}}\to\CM_{0,k-\varepsilon_k,l-\varepsilon_l}(\vd)^{\mathfrak{G}'},\]
where $\varepsilon_k=1,\varepsilon_l=0$ if we forget the last boundary marking, and otherwise $\varepsilon_k=1,\varepsilon_l=0.$
The fiber is identified with an interval embedded in the boundary or the whole boundary if $\varepsilon _k=1,$ and otherwise it is a surface embedded in the domain disk.
In both cases, if $U'\subseteq \CM_{0,k-\varepsilon_k,l-\varepsilon_l}(\vd)^{\mathfrak{G}'}$ is an open set with B\&M coordinates, then on $U=\For^{-1}(U')$ one may define B\&M coordinates by $W_i^U=W_i^{U'}\circ \For,~Y_i^U=Y_i^{U'}\circ \For$ and using $\widetilde{P}^U=\widetilde{P}^{U'}\circ \For,~\widetilde{Q}^U=\widetilde{Q}^{U'}\circ \For.$ Thus, for all $Z_i$ except $Z_l,$ when $\varepsilon_l=1,$ it holds that $Z_i^U=Z_i^{U'}\circ \For.$ Similarly for $X_i.$ The additional coordinate $Z_l$ or $X_k$ may be taken as a coordinate of the fiber of $\For.$ With these identifications of coordinates, and the orientation \eqref{eq:or}, the following holds.
\begin{obs}\label{obs:behavior of orientation under forgetful}
The orientation of the fiber, $o_{Fiber},$ induced by the equation \[o_{Fiber}\otimes \For^*\mathfrak{o}_\mathfrak{G'}^{\pi'}=\mathfrak{o}_\mathfrak{G}^{\pi},\] agrees with the one induced from the embedding of the fiber in the domain disk or its boundary.
\end{obs}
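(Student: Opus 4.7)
The plan is to read off the orientation directly from the coordinate expression \eqref{eq:or}, exploiting the compatibility of B\&M coordinates on $U$ with those on $U'$ already spelled out in the paragraph preceding the observation. On $U=\For^{-1}(U')$ every branch coordinate $W_i,Y_i$ and every marking coordinate $Z_i,X_j$ with $i\neq l,\,j\neq k$ is literally the pullback of the corresponding coordinate on $U'$, so the only genuinely new coordinate is the one of the forgotten marking. Moreover, the discrete data $s_\mathfrak{G}$ and $s_{\mathfrak{G}'}$ coincide, because forgetting a marking alters neither the number $k_u$ of boundary ramification points nor the degree $\vd$.

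First I will handle the internal case ($\varepsilon_l=1,\ \varepsilon_k=0$). In \eqref{eq:or} the relevant fiber factor $\tfrac{\sqrt{-1}}{2}\,dZ_l\wedge d\bar Z_l$ sits inside the $Z$-wedge, to the right of everything that is pulled back from the base except the $W$-factors. Since this is a $2$-form it commutes with every other factor under the wedge product, so it can be moved to the far left with no sign change. Using $s_\mathfrak{G}=s_{\mathfrak{G}'}$, the relation $o_{Fiber}\otimes \For^*\mathfrak{o}_{\mathfrak{G}'}^{\pi'}=\mathfrak{o}_\mathfrak{G}^\pi$ gives $o_{Fiber}=\tfrac{\sqrt{-1}}{2}\,dZ_l\wedge d\bar Z_l$, which is exactly the complex orientation on the fiber viewed as an open subset of the domain disk $\mathbb{H}$.

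For the boundary case ($\varepsilon_k=1,\ \varepsilon_l=0$) I will take the forgotten marking to be the $\pi$-last one (which is really what \emph{``the last boundary marking''} means here once $\pi$ is fixed). Because the $X$-factors in \eqref{eq:or} are arranged in reverse $\pi$-order, the factor $\partial/\partial X_k$ is already the leftmost entry of the entire wedge product, so no sign arises in pulling it out. Combined with $s_\mathfrak{G}=s_{\mathfrak{G}'}$ this yields $o_{Fiber}=\partial/\partial X_k$, i.e.\ the positive direction on $\partial\mathbb{H}$, which is the standard boundary orientation induced on the fiber by the embedding into $\partial\Sigma$.

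The only delicate point, and the reason the reverse-$\pi$ ordering appears in \eqref{eq:or} at all, is the sign bookkeeping in the boundary case: had the $X$-factors been written in the natural $\pi$-order, pulling $dX_k$ to the front would have introduced a sign $(-1)^{k-1}$, and the observation would fail. So the main content of the proof is simply to verify that the chosen conventions (reverse $\pi$-order of $X$'s, placement of $W$'s after $Z$'s, and local constancy of $s_\mathfrak{G}$ under forgetful maps) conspire to leave no residual sign. No deep obstacle is anticipated, but I will spell out the two cases separately to ensure each matches the natural boundary/complex orientation on the fiber.
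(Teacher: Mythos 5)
Your proof is correct and is precisely the direct verification from the definitions that the paper intends when it calls this a ``straightforward corollary.'' The two key points you isolate — that the B\&M coordinates on $U$ are pulled back from $U'$ except for the coordinate of the forgotten marking, and that $s_\mathfrak{G}=s_{\mathfrak{G}'}$ because $k_u$ and $\vd$ are untouched by forgetting — are exactly what makes the two halves of the wedge in \eqref{eq:or} match up factor-by-factor, leaving only $\frac{\sqrt{-1}}{2}\frac{\partial}{\partial Z_l}\wedge\frac{\partial}{\partial \bar Z_l}$ (degree two, hence commutes freely to the front) in the internal case, or the leftmost $\frac{\partial}{\partial X}$-factor (already in front by the reverse-$\pi$ convention) in the boundary case. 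The only cosmetic slip is that you write $dZ_l\wedge d\bar Z_l$ where \eqref{eq:or} is phrased in terms of tangent vectors $\frac{\partial}{\partial Z_l}\wedge\frac{\partial}{\partial\bar Z_l}$; this is purely notational and does not affect the argument.

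Your remark that the reverse-$\pi$ ordering is what makes the boundary case sign-free is a nice observation, though note that it is really a choice about which ``last'' marking one forgets: the convention makes the $\pi$-last $X$-factor leftmost, so forgetting the $\pi$-last boundary marking gives $o_{Fiber}=\frac{\partial}{\partial X}$ with no residual sign, which is the natural reading of the statement.
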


The final result of this section is
\begin{prop}\label{prop:orientations glue}
$\mathfrak{o}_\mathfrak{G}^{\pi}$ glue to give a consistent orientation $\mathfrak{o}^{\pi}$ on $\oCM_{0,k,l}(\vd).$
 \end{prop}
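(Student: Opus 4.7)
My plan is to reduce the problem to checking orientation compatibility across each type of codimension-$1$ wall separating adjacent chambers, then verify the matching pointwise via the B\&M coordinates. By Observation \ref{obs:chambers}, the chambers cover $\oCM_{0,k,l}(\vd)$ modulo a finite union of codimension-$1$ suborbifolds, so it suffices to show: for every smooth interior point $p$ of a wall between two chambers $\oCM^{\mathfrak{G}}$ and $\oCM^{\mathfrak{G}'}$, the orientations $\mathfrak{o}_\mathfrak{G}^{\pi}$ and $\mathfrak{o}_{\mathfrak{G}'}^{\pi}$ extend to compatible orientations on a neighborhood of $p$. Since the moduli space is a smooth oriented orbifold along such interior codimension-$1$ loci (Theorem \ref{thm:moduli_is_orientable}), matching across codimension-$2$ strata is then automatic.

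The codimension-$1$ transitions between chambers come in two types. Type (B) consists of \emph{marking walls}: an internal marking crosses a component of $u^{-1}(\RP^1)$ (thereby changing its region assignment), or a boundary marking crosses a real branch point or an endpoint of an arc. Across such a wall all B\&M coordinates vary smoothly; the multiset of real branch points, their cyclic order on $\partial\Sigma$, the abstract order $\pi$, and the factor $s_\mathfrak{G}$ all remain unchanged, so the orientation form \eqref{eq:or} extends manifestly. Type (A) consists of \emph{collision walls}, where a complex branch point $W$ meets $\RP^1$ and splits into two adjacent real branch points $Y', Y''$ (or vice versa). This is the delicate case: the coordinates $(W,\bar W)$ on one side and $(Y',Y'')$ on the other cannot be smoothly identified by a diffeomorphism in these naive charts.

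To handle type (A), I would pass to the elementary symmetric functions $S = W+\bar W = Y' + Y''$ and $P = W\bar W = Y'Y''$, together with a signed square root $\sigma = \operatorname{sgn}(\text{side}) \cdot \sqrt{|S^2 - 4P|}$, chosen to be negative on the complex side and positive on the real side. Then $(S,\sigma)$ are smooth real coordinates on a neighborhood of the wall point in the orbifold, while the other B\&M coordinates are unaffected. A direct computation then yields
\[
\tfrac{i}{2}\, dW \wedge d\bar W \;=\; \tfrac{1}{4}\, dS \wedge d\sigma
\qquad\text{and}\qquad
dY' \wedge dY'' \;=\; \tfrac{1}{2}\, dS \wedge d\sigma
\]
in the parameter-space order $Y' < Y''$. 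Since both sides yield a positive multiple of $dS\wedge d\sigma$, the two contributions to \eqref{eq:or} match up to a positive factor, \emph{provided} that the cyclic-order rule of \eqref{eq:or} places $Y'$ and $Y''$ in the wedge in the same order as their cyclic positions on $\partial\Sigma$. The main obstacle of the proof is this final bookkeeping: one must check that the recipe ``starting from a positive point'' is well-defined modulo the parity of $k_u$ and the sign $s_\mathfrak{G}$, and that at a collision wall the two newly created branch points -- which have opposite signs by the alternating-sign rule -- are inserted into the cyclic order so that the positive factor above survives. The compatibility of $s_\mathfrak{G}$ itself is the subtlest point, since $s_\mathfrak{G}$ can change precisely at walls where $k_u$ transitions between $0$ and $2$ in the $d_- > d_+$ regime; the sign flip coming from $s_\mathfrak{G}$ must then be shown to cancel exactly against the change in cyclic-order conventions induced by the appearance of the new pair $(Y',Y'')$.
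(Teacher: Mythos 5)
Your overall structure follows the paper's: reduce to checking orientation compatibility across codimension-$1$ walls between chambers, dispose of the marking walls as trivial, and concentrate on the collision wall where a complex branch point $W$ becomes a pair of real branch points. The symmetric-function coordinates $(S,\sigma)$ are a legitimate alternative to the paper's direct manipulation of $(x,y)$ vs.\ $(A,B)$ and would in principle give a cleaner local chart straddling the wall. However, as written the proposal does not actually close the argument, and one of the stated identities carries a sign error.

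First the sign: with your convention $\sigma<0$ on the complex side, write $W=x+\sqrt{-1}\,y$ in the upper half-plane so $y>0$ and $\sigma=-2y$, $S=2x$. Then $dS\wedge d\sigma=-4\,dx\wedge dy$, while $\tfrac{\sqrt{-1}}{2}\,dW\wedge d\bar W=dx\wedge dy$, so in fact $\tfrac{\sqrt{-1}}{2}\,dW\wedge d\bar W=-\tfrac14\,dS\wedge d\sigma$, not $+\tfrac14$. (Your stated formula only holds if $W$ is in the lower hemisphere, which is precisely the case the paper handles by the inversion $z\mapsto 1/z$ rather than directly.) So under your conventions the two sides yield \emph{opposite-sign} multiples of $dS\wedge d\sigma$ — which is exactly the situation in which gluing demands an additional sign from the ordering of $Y',Y''$ in \eqref{eq:or}.

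That additional sign is the genuine content of the proof, and you flag it but leave it unresolved. The missing geometric fact is this: the two newly created ramification points $a$ (positive) and $b$ (negative) appear in the cyclic order of \eqref{eq:or} as $a$ then $b$, but their images under $u$ appear in the \emph{opposite} order on $\RP^1$: $u(b)<u(a)$. So the wedge $dY_a\wedge dY_b$ prescribed by \eqref{eq:or} equals $-\,dY'\wedge dY''$ in your parameter-space order $Y'<Y''$. This reversal is what supplies the compensating sign. Your proposal treats this as "bookkeeping to be checked" but never performs the check, and the check is the heart of the argument.

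Finally, the $s_\mathfrak{G}$ compatibility you call "the subtlest point" is in fact a non-issue at a collision wall and the paper does not need any cancellation there. When $W$ lies in the upper hemisphere and the number of arcs passes from $0$ to $1$, the positive region containing $W$ is the unique region touching $\partial\Sigma$, and since each internal loop contributes equally to $d_+$ and $d_-$ while $\partial\Sigma$ contributes only to the degree of the region it bounds, one has $d_+-d_-=p(\partial\Sigma)>0$. Hence $s_{\mathfrak{G}_1}=s_{\mathfrak{G}_2}=1$ and $\tilde{\mathfrak{o}}$ is unchanged across the wall. The mirror case where $W$ lies in the lower hemisphere (in which $s_\mathfrak{G}$ could flip) is precisely the one the paper handles by applying the inversion $z\mapsto1/z$ and Observation \ref{obs:behavior of orientation under inversion}, rather than arguing directly.
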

\begin{proof}
Consider $\mathfrak{G}\in \Maps_{0,k,l,\vd},$ and let $U$ be as above. Then $U\cap \oCM_{0,k,l}(\vd)^{\mathfrak{G}}$ is an open dense subset of $\oCM_{0,k,l}(\vd)^{\mathfrak{G}}.$
The ramification picture allows us to understand codimension $1$ boundaries of the moduli and more generally of the specific chambers.
\emph{Walls} or codimension $1$ boundaries of the chambers which are not boundaries of the moduli, occur when there is a change in the topology of the decorated maps which does not involve a change in the topology of the underlying surface. In codimension $1$ it happens in one of the following cases. The simplest case is when a boundary marking moves from one region to another. It is straightforward, from the explicit dependence of the orientations on the boundary markings, to see that the orientations of the corresponding chambers glue.

The more interesting cases are when either two regions meet, or when one region splits into two, or when a region hits the boundary. These cases can be equivalently described in terms of ramification points, as can be seen directly from the decorated map picture, or by counting arguments using Observation \ref{obs:num_ramification_pts_in_region}: from this point of view a wall is described by a ramification point which hits a boundary of a region, or by contracting the boundary interval between two boundary ramification points. In the first case the point will actually hit two boundaries (which are different at least locally) and will map to $\RP^1.$ In the second the two ramification points will unite to create a single ramification point of order $3.$ When the internal ramification point hits the boundary of the surface, if the result is a true wall between chambers (and not a boundary component of the moduli), then after crossing that wall the internal ramification point splits into two boundary ramification points.
The local pictures appear in Figure \ref{fig:local_pic}.
\begin{figure}[t]
\centering
\includegraphics[scale=.4]{./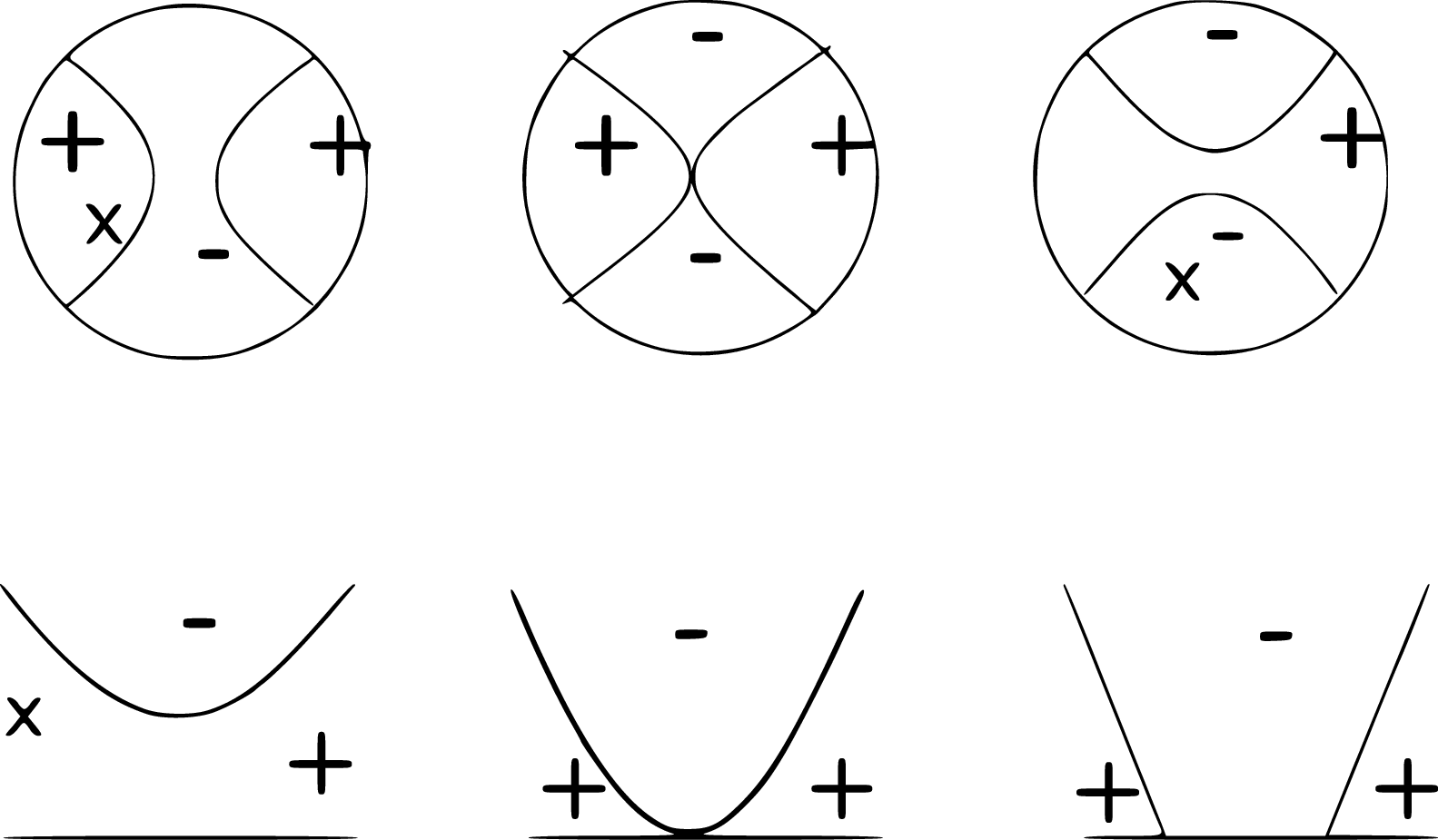}
\caption{Local picture of the change in the topology of decorated maps. In the upper row we zoom in the neighborhood of an internal ramification point (denoted by $\times$). In the left picture, this point approaches a boundary of a positive region. In the middle it hits that boundary, and simultaneously other boundary hits it as well, in the same point. On the right the ramification point moves to a neighboring negative region, and the topology of the decorated map change. In the lower row an internal ramification point $\times$ approaches the boundary of the underlying surface. The middle picture describes the moment it hits, the right picture shows how it "splits" into two boundary ramification points.}
\label{fig:local_pic}
\end{figure}

In order to show that $\mathfrak{o}^{\pi}$ exists it is enough to show that when two chambers share a common wall the orientation varies continuously between them. By the definition of orientation and Observation \ref{obs:behavior of orientation under forgetful}, the claim holds for $\oCM_{0,k,l}(\vd)$ if and only if it holds for $\oCM_{0,0,0}(\vd).$ Let $\mathfrak{G}_1,\mathfrak{G}_2$ be two decorated maps which represent neighboring chambers. In case they have the same number of arcs, then the passage between $\oCM_{0,0,0}(\vd)^{\mathfrak{G}_1},\oCM_{0,0,0}(\vd)^{\mathfrak{G}_2}$ is the result of a ramification point which moves from one region to another one and crosses a loop or an arc. See again Figure \ref{fig:local_pic}, upper row. Locally near a generic point of the wall between the chambers we can use the same set of B\&M coordinates (in fact, only branching coordinates as $k=l=0$) to see that the orientation expression does not change.

Suppose now that $\mathfrak{G}_1,\mathfrak{G}_2$ differ in their number of arcs.
The difference must be one, so suppose $\mathfrak{G}_1$ has $m$ arcs, and $\mathfrak{G}_2$ has $m+1.$
Geometrically we are in the scenario that one complex ramification point, which belongs to a region of $\mathfrak{G}_1$ touching the boundary, hits the boundary. The local picture is as in Figure \ref{fig:local_pic}, lower row.

Suppose this is the point with coordinate $W=W_j.$ Assume first that $W$ is in the upper hemisphere, which we now consider in the upper half-plane model. Let $W=x+iy,~x,y\in\R.$
We have \[\mathfrak{o}_{\mathfrak{G_1}} = \frac{\sqrt{-1}}{2}\frac{\partial}{\partial W_1}\wedge \frac{\partial}{\partial\bar{W}_1}\wedge \tilde{\mathfrak{o}}=-\frac{\partial}{\partial y}\wedge \frac{\partial}{\partial x}\wedge \tilde{\mathfrak{o}},\]
where $\tilde{\mathfrak{o}}$ is the remaining expression for the orientation.
The outward normal to the corresponding boundary of $\oCM_{0,k,l}(\vd)^{\mathfrak{G}_1}$ is $-\frac{\partial}{\partial y}.$ The induced orientation on the wall is therefore $\frac{\partial}{\partial x}\wedge \tilde{\mathfrak{o}}.$
After hitting the boundary, the ramification point "splits" into two real \emph{ramification} points, which we denote by $a=Y_{2m+1},b=Y_{2m+2},$ where $a$ is to the left of $b,$ and $a$ is positive (the region to its left is positive), while $b$ is negative. After applying the map, the corresponding \emph{branch points}, $A,B$ are in opposite order -- $B$ is to the left of $A.$
See Figure \ref{fig:localpicII}.
\begin{figure}[t]
\centering
\includegraphics[scale=.5]{./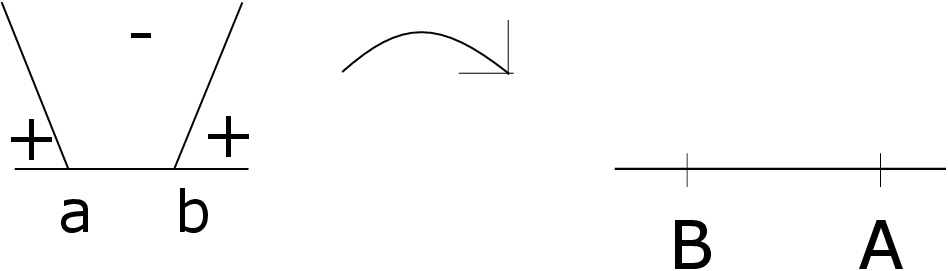}
\caption{Local picture near consecutive boundary ramification points, in the domain (left) and the target (right). Under the map $a\to A,b\to B.$ The horizontal segments in the left picture, which stand for boundary segments in the domain, are mapped to the horizontal segment from $B$ to $A$ (in $\RP^1$) on the right. The two arcs on the left map to the segments on the left of $B$ and on the right of $A.$}
\label{fig:localpicII}
\end{figure}
We can thus write
\[
\mathfrak{o}_{\mathfrak{G_2}} =\frac{\partial}{\partial A}\wedge \frac{\partial}{\partial B}\wedge \tilde{\mathfrak{o}}=-\left(\frac{\partial}{\partial B}-\frac{\partial}{\partial A}\right)\wedge \frac{\partial}{\partial A}\wedge \tilde{\mathfrak{o}},
\]
where $\tilde{\mathfrak{o}}$ is as before.
The outward normal for the boundary, from the $\oCM_{0,k,l}(\vd)^{\mathfrak{G}_2}$ side is $(\frac{\partial}{\partial B}-\frac{\partial}{\partial A}),$ since $A-B$ becomes smaller as we get closer to the wall $B=A.$ Thus, the orientation on the wall, induced by the orientation of the chamber of $\mathfrak{G}_2$ is $-\frac{\partial}{\partial A}\wedge \tilde{\mathfrak{o}}.$ Since along the wall $\frac{\partial}{\partial A}$ can be identified with $\frac{\partial}{\partial x},$ we see that the two chambers induce opposite orientations on the wall, and thus agree. The case that $W$ is in the lower hemisphere follows similarly, or can be deduced from this case by applying inversion $z\to 1/z$ on the target, which interchanges positive and negative regions.
\end{proof}

\begin{rmk}
The loci in the moduli where a ramification point hits the boundary, are either a wall between two chambers, as considered in Proposition \ref{prop:orientations glue}, or a true boundary of the moduli.
The way to distinguish between these two cases is as follows. By following the number of complex ramification points in each region, using Observation \ref{obs:num_ramification_pts_in_region}, we see that whenever a ramification point hits the boundary either a boundary point of another \emph{region} collides with the boundary of the disk (and in the same point), or two boundary points of the disk collide. The former case gives rise to a change in chambers, the latter give rise to the codimension $1$ boundary discussed below. See also Figure \ref{fig:local_pic_nodal} for the second case.
\end{rmk}
\begin{figure}[t]
\centering
\includegraphics[scale=.45]{./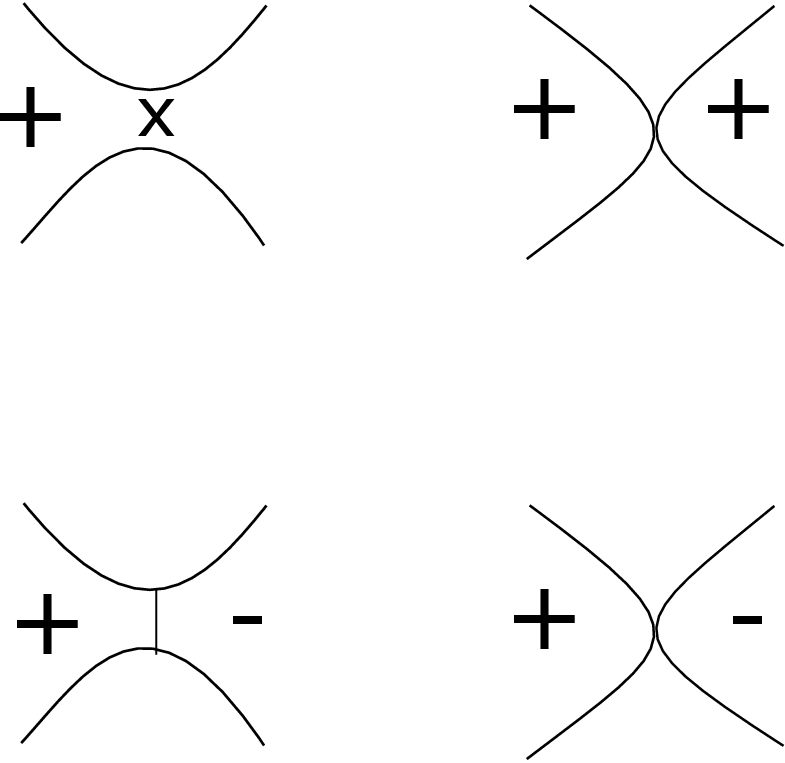}
\caption{Local picture of the creation of boundary nodes.
In the upper row a ramification point, denoted $\times$, approaches two (locally) different boundary components of the underlying domain. This results in a boundary node between two regions of the same sign. The lower row describes an arc which contracts to a point and creates a boundary nodes between regions of different signs.}
\label{fig:local_pic_nodal}
\end{figure}

\subsection{Properties of the orientation $\mathfrak{o}^\pi$}
The goal of this section is to understand the behaviour of the orientation $\mathfrak{o}^\pi$ when restricted to nodal strata.
The section is divided into three parts. The first considers the induced orientation on real codimension one standard boundary strata, which parameterizes maps from nodal disks with a boundary node. The second considers strata of nodal disks with an internal node. The last handles the exceptional boundary stratum of the moduli, where the boundary of the disk contracts to a point which is mapped to $\RP^1.$

\subsubsection{Induced orientation to standard boundary strata of codimension $1$}
Suppose now $k=0$ and consider a real codimension $1$ boundary component
\begin{equation*}
\bb=\CM_{0,\sstar_{1},\lf_{1}}\left(\vd_{1}\right)\times_{L}\CM_{0,\sstar_{2},\lf_{2}}\left(\vd_{2}\right)\xrightarrow{i_{\bb}}\partial\oCM_{0,\emptyset,\lf}\left(\vd\right)
\end{equation*}
The component $\bb$ is endowed with an orientation $\mathfrak{o}_\bb$ coming from the orientation on $\CM_{0,\emptyset,\lf}\left(\vd\right)$ and also with a map
\[
f_\bb=(f_1\times f_2)\circ g:\bb\to\CM_1^{\DIAMOND}\times\CM_2^{\DIAMOND},
\]
where $f_i$ is the map of \eqref{eq:f_i}, defined by forgetting the node, $\CM_i^{\DIAMOND}= \CM_{0,\emptyset,\lf_{i}}\left(\vd_{i}\right),$ and $g:\bb\hookrightarrow\CM_{0,\sstar_{1},\lf_{1}}\left(\vd_{1}\right)\times\CM_{0,\sstar_{2},\lf_{2}}\left(\vd_{2}\right)$ is the structure map. Moreover, $\bb$ is endowed with an evaluation map at the node, $\ev_\sstar:\bb\to\RP^1.$ Its generic fiber $F$ is of real codimension $1$ and the map $\ev_\sstar|_F$ is a submersion away from a finite number of points. $F$ is endowed with two orientations. One is the ratio $\mathfrak{o}_F=\frac{\mathfrak{o}_\bb}{f_\bb^*(\mathfrak{o}_{0,\emptyset,\lf_1;\vd_1}\boxtimes\mathfrak{o}_{0,\emptyset,\lf_2;\vd_2})},$ defined using the canonical isomorphism
\[
\det T_pF\otimes f_\bb^*\det(T_{f_\bb(p)}(\CM_1\times\CM_2))\simeq \det T_p\bb,
\]
which follows from the exact sequence
\[
0\to T_pF\to T_p\bb\to f_\bb^*T_{f_\bb(p)}(\CM_1\times\CM_2)\to0.
\]
The other orientation is defined away from points where $\ev_\sstar|_F$ is not submersion by \[d \ev_\sstar\mathfrak{o}'_F=\frac{\partial}{\partial \theta}.\]
One may compare the two orientations.

Assume now that $\vd_1,\vd_2\neq0$ and let $(\Sigma,u)\in\bb.$
Generically the node is not a ramification point for either of the disks. Therefore it touches a single region in each disk which has a definite sign, defined as usual according to whether this region maps to the positive or negative hemisphere.
\begin{lemma}\label{lem:induced for bdry node}
With the above notations, consider a $(\Sigma,u)\in\bb,$ where the node of $\Sigma$ is not a ramification point. The orientations $\mathfrak{o}_F,\mathfrak{o}'_F$ induced on the fiber of the map which forgets the node agree, locally near $(\Sigma,u),$ precisely if the two regions which touch the half nodes $\sstar_i$ have the same sign.
\end{lemma}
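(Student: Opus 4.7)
The plan is to compute both orientations in local B\&M coordinates near $(\Sigma,u)$. Let $\mathfrak{G}_j$ denote the decorated map of $(\Sigma_j,u_j)$ for $j=1,2$, let $\mathcal{R}_j$ denote the region of $\mathfrak{G}_j$ meeting the half-node $\sstar_j$, and set $s_j=s(\mathcal{R}_j)$. Let $\mathfrak{G}$ be the decorated map of a nearby interior point of $\CM_{0,\emptyset,\lf}(\vd)$ obtained by smoothing the node. Applying Riemann-Hurwitz region-by-region, as in Observation \ref{obs:num_ramification_pts_in_region}, shows that the smoothing always introduces exactly two extra ramification units near $p=u(n)\in\RP^1$, and a local model analysis shows this happens in precisely two topological ways: if $s_1=s_2$, then $\mathfrak{G}$ acquires a single interior branch point $W_0$ in the hemisphere of the common sign, with $W_0\to p$ as the interior point approaches $\bb$; if $s_1\neq s_2$, then $\mathfrak{G}$ acquires a pair of boundary branch points $Y_0^+>Y_0^-$ on $\RP^1$ near $p$, with $Y_0^+$ positive, $Y_0^-$ negative, and both tending to $p$ at $\bb$.

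One would then fix B\&M coordinates on a neighborhood $U\subset\CM_{0,\emptyset,\lf}(\vd)$ of the interior points approaching $(\Sigma,u)$. With compatible choices of the auxiliary functions $P,Q$ on the two factors $\CM_j^{\DIAMOND}$, the B\&M coordinates of the smoothed disk split into those pulled back from $\CM_1^{\DIAMOND}\times\CM_2^{\DIAMOND}$ together with new coordinates localized at the neck. The pulled-back coordinates produce, up to a combinatorial sign $\epsilon$ accounting for the interleaving of the two cyclic sequences of boundary ramification points into a single cyclic sequence for $\mathfrak{G}$, the product orientation times $s_{\mathfrak{G}_1}s_{\mathfrak{G}_2}$. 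The new coordinates are parameterized by $W_0=\theta+\sqrt{-1}\,\rho$ with $\rho>0$ on the interior side in the same-sign case, and by $Y_0^\pm=\theta\pm\tau$ with $\tau>0$ on the interior side in the different-sign case; in both cases $\theta$ restricts on $\bb$ to the evaluation at the node, so $\partial_\theta$ pushes forward to $\partial/\partial\theta$ on $L=\RP^1$, while the outward normal at $\bb$ is $-\partial_\rho$ or $-\partial_\tau$ respectively.

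The same-sign calculation is direct: the new complex coordinate contributes $\tfrac{\sqrt{-1}}{2}\,\partial_{W_0}\wedge\partial_{\bar W_0}$, a positive real multiple of $\partial_\theta\wedge\partial_\rho$, and since no new boundary ramifications are introduced, $s_\mathfrak{G}=s_{\mathfrak{G}_1}s_{\mathfrak{G}_2}\cdot\epsilon$; extracting the outward normal $-\partial_\rho$ yields $\mathfrak{o}_F=+\partial_\theta=\mathfrak{o}'_F$. In the different-sign case, the Jacobian of $(\theta,\tau)\mapsto(\theta+\tau,\theta-\tau)$ being $-2$ gives $\partial_{Y_0^+}\wedge\partial_{Y_0^-}=-\tfrac{1}{2}\,\partial_\theta\wedge\partial_\tau$, while the two new boundary ramifications, together with a possible change in $s_\mathfrak{G}$ relative to $s_{\mathfrak{G}_1}s_{\mathfrak{G}_2}$ stemming from the ``$d_-^{(j)}>d_+^{(j)}$'' part of the definition of $s$, conspire, after correctly repositioning the ``start from a positive point'' base of the cyclic ordering, to produce a further sign of $-1$. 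The two signs combine to flip the orientation, giving $\mathfrak{o}_F=-\mathfrak{o}'_F$.

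The main obstacle is sign bookkeeping from three sources: the global factor $s_\mathfrak{G}$; the interleaving sign $\epsilon$ of the cyclic orderings from the two disks when joined at the node; and, in the different-sign case, the placement of the new $Y_0^\pm$ relative to the starting ``positive'' point of the cyclic ordering on $\mathfrak{G}$. Particular attention is needed in the degenerate subcases where one or both of $\mathfrak{G}_j$ has no boundary ramifications, for then $s_{\mathfrak{G}_j}$ is controlled by the subtler condition on $d_\pm^{(j)}$ and the relation between $s_\mathfrak{G}$ and $s_{\mathfrak{G}_1}s_{\mathfrak{G}_2}$ must be verified subcase by subcase; once all such subcases are checked, the accumulated signs collapse to yield the stated equivalence.
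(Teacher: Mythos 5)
Your overall plan is the same as the paper's: identify the two local degeneration types (an interior branch point hitting $\rr\pp^1$ for the same-sign case, an arc in the decorated map contracting for the different-sign case), set up B\&M coordinates near the degeneration, and compare the outward normal / induced orientation with the pulled-back product orientation. The same-sign half of your argument is essentially correct. The different-sign half, however, has a genuine gap, in fact more than one.

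First, the sign accounting is internally inconsistent as stated: you claim a $-1$ from the Jacobian of $(\theta,\tau)\mapsto(\theta+\tau,\theta-\tau)$ and a further $-1$ from the combinatorics of the cyclic ordering and $s_{\mathfrak{G}}$, and then conclude that ``the two signs combine to flip the orientation''; but $(-1)\cdot(-1)=+1$, so the signs you name do not produce the flip you claim. Second, and more structurally, your local $(\theta,\tau)$ chart does not capture the right boundary-defining function. The codimension-one corner is cut out by $\ell(\alpha)=0$, and the paper shows that $\ell(\alpha)$ is, modulo $1$, a \emph{global} linear function $-(Y_1-Y_{2s}+\sum_{i\ne 1,2s}a_iY_i)$ of all the real branch coordinates (the integer coefficients $a_i$ come from the integrality of region perimeters). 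Consequently the outward normal must be written as $\partial/\partial Y_1-\partial/\partial Y_{2s}+\sum a_i\,\partial/\partial Y_i$, not as $\pm\partial_\tau$; your convention that $\tau>0$ on the interior with outward normal $-\partial_\tau$ is also inconsistent with the paper's convention $Y_1<Y_{2s}$ for the positive/negative pair, so either the normal direction or the $\pm$-labeling of $Y_0^\pm$ is wrong. Third, the crucial $-1$ in the paper's proof comes from a very specific place: after peeling off the normal and pushing forward the remaining real branch coordinates to the two factors, the first factor's cyclic ordering starts from a positive point as required, but the second factor's (the $Y_2,\dots,Y_{2s-1}$ block) starts from the \emph{negative} point $Y_2$, which is exactly where the sign appears. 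You correctly flag ``the placement of the new $Y_0^\pm$ relative to the starting positive point'' and the $s_{\mathfrak{G}}$ subcases as the main obstacle, but you do not actually carry out this bookkeeping; the paper's proof cannot be shortcut by working purely locally at the neck, precisely because both the boundary-defining function and the cyclic-ordering convention are global.
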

\begin{proof}
Let $U$ be a neighborhood of $ (\Sigma,u)$ small enough such that $U\subseteq\oCM_{0,\emptyset,\lf}\left(\vd\right)^{\mathfrak{G}}$ for a fixed decorated map $\mathfrak{G}.$ Denote by $\mathfrak{G}_i$ the associated decorated map of the projection of $U\cap\bb$ to the $\CM_i^{\DIAMOND}$ component. Consider first the case that the signs of the two regions near the node are the same.
The topological picture near $U\cap\bb$ is that some ramification point approaches the boundary, and its corresponding branch point approaches $\RP^1,$ see the upper row of Figure \ref{fig:local_pic_nodal}.

By using Observation \ref{obs:behavior of orientation under inversion} it is enough to handle the case where the two regions touching the node are positive. Let $W_1$ be the coordinate of the branch point which hits the boundary and thus creates the node. One may write
\begin{equation}\label{eq:or_1}\mathfrak{o}_{\mathfrak{G}} = \frac{\sqrt{-1}}{2}\frac{\partial}{\partial W_1}\wedge \frac{\partial}{\partial \bar{W}_1}\wedge \tilde{\mathfrak{o}},\end{equation}
where $\tilde{\mathfrak{o}}$ is the rest of the orientation expression \eqref{eq:or} in the B\&M coordinates. We can write \[\frac{\sqrt{-1}}{2}\frac{\partial}{\partial W_1}\wedge \frac{\partial}{\partial \bar{W}_1}=\frac{\partial}{\partial x_1}\wedge \frac{\partial}{\partial y_1}=-\frac{\partial}{\partial y_1}\wedge \frac{\partial}{\partial x_1},\] where $W_1$ is considered in the upper half-plane, $x_1=Re(W_1),~y_1=Im(W_1).$ As $W_1$ approaches the boundary, $-\frac{\partial}{\partial y_1}$ can be taken as the external normal, while $\frac{\partial x_1}{\partial\theta}>0.$ From equations \eqref{eq:or},\eqref{eq:or_1} it is evident that
\[(f_\bb)_*\left(\tilde{\mathfrak{o}}|_{\bb\cap U}\right) = (\mathfrak{o}_{0,\emptyset,\lf_1;\vd_1}\boxtimes\mathfrak{o}_{0,\emptyset,\lf_2;\vd_2})|_{f_\bb(\bb\cap U)}.\] Thus, in this case $\mathfrak{o}_F=\mathfrak{o}'_F.$

We now compare the two orientations in case the signs of the two regions touching the node are opposite. Assume that there are $2r$ real ramification points. We show that in this case the ratio is $-1.$ By the usual inversion trick we see that proving this claim for $(d_{+},d_{-})$ is equivalent for proving for the pair $(d_{-},d_{+}).$ We may assume $d_{+}\geq d_{-},$ so that$s_\mathfrak{G}=1.$
From the point of view of topology of decorated maps, the case under consideration occurs when some arc between two real ramification points contracts to a point. Recall from Definition \ref{def:assoc_map} that arcs and boundary segments between ramification points have an intrinsic length.
Let $y_i$ be the ramification point in the preimages of the real branch point $Y_i.$ We choose the indexing so that for $i<2r~y_{i+1}$ follows $y_i$ cyclically according to the orientation of the domain, and $y_{2i}$ are negative points for $i\in[r].$  We will use angular parametrization for $\RP^1$\begin{equation}\label{eq:Y_angular}\alpha\to w_0 e^{2\pi i\alpha}\end{equation} for some $w_0$ with $|w_0|=1,$ and consider $Y_i$ to belong to the interval $[0,2\pi).$
Assume it is the arc $\alpha$ from $y_1,$ which is positive, to $y_{2s},$ which is negative, that contracts to yield a node, as in the lower row of Figure \ref{fig:local_pic_nodal}. Consider $(\Sigma,u)\in\bb,$ and a neighborhood $U$ of it such that $U\cap\oCM_{0,k,l}(\vd)\subset\oCM_{0,k,l}(\vd)^{\mathfrak{G}}.$ Suppose $U$ is small enough that we can choose the coordinates $Y_i$ in a way that for $i<2r$
\begin{equation}\label{eq:Y_diff}
Y_{i}-Y_{i+1}=%\{\ell(\overline{y_iy_{i+1}})\},
\begin{cases}\{\ell(\overline{y_iy_{i+1}})\},&\text{if $y_i$ is positive},\\
-\{\ell(\overline{y_iy_{i+1}})\},&\text{if $y_i$ is negative},
\end{cases}
\end{equation}
where $\overline{y_iy_{i+1}}$ is the positively oriented boundary segment from $y_i$ to $y_{i+1},$ and $\{\}$ stands for fractional part (the requirement on $U$ being small enough amount to the existence of $w_0$ as in \eqref{eq:Y_angular} which makes the sequence $(Y_i)_{i\in[2r]}\subset[0,2\pi)$ monotonically increasing. That would guarantee property \eqref{eq:Y_diff}).
In this case, since boundary components of regions have integer perimeters, it is easy to see that the length $\ell(\alpha)$ equals, modulo $1,$ to
\[
-\left(Y_1-Y_{2s}+\sum_{i\neq 1,2s} a_i Y_i\right) \mod 1,
\]
for some integers $a_i,$ see the example in Figure \ref{fig:ex}. $\ell(\alpha)$ is a coordinate for the inward normal, therefore the outward normal can locally be written as
\[
\frac{\partial}{\partial N}=\frac{\partial}{\partial Y_1}-\frac{\partial}{\partial Y_{2s}}+\sum_{i\neq 1,2s} a_i \frac{\partial}{\partial Y_i}.
\]
Now, the orientation $\mathfrak{o}_{0,\emptyset,\lf;\vd}$ can be locally written as
\[
\frac{\partial}{\partial Y_1}\wedge \frac{\partial}{\partial Y_{2s}}\bigwedge_{i\neq 1,2s}\frac{\partial}{\partial Y_i}\wedge \mathfrak{o}'\wedge\mathfrak{o}''=\left(\frac{\partial}{\partial Y_1}-\frac{\partial}{\partial Y_{2s}}+\sum_{i\neq 1,2s} a_i \frac{\partial}{\partial Y_i}\right)\wedge \frac{\partial}{\partial Y_{2s}}\bigwedge_{i\neq 1,2s}\frac{\partial}{\partial Y_i}\wedge \mathfrak{o}'\wedge\mathfrak{o}'',
\]
where the order of taking the wedge is just the numerical order, $\mathfrak{o}',\mathfrak{o}''$ stand for the wedge of terms which come from complex branch points or internal marked points, in the two disk components. We assume that the disk with the first $\lf_1$ points is the one which touches the node in a positive region. The induced orientation on $\bb$ is therefore \[\frac{\partial}{\partial Y_{2s}}\bigwedge_{i\neq 1,2s}\frac{\partial}{\partial Y_i}\wedge \mathfrak{o}'\wedge\mathfrak{o}''.\] As $\frac{\partial}{\partial Y_{2s}}$ and $\frac{\partial}{\partial \theta}$ point to the same direction, it remains to check that %(we omit $f_\bb^*$ from the notation since the B\&M coordinates give rise to a natural lift, as in the previous case)
\begin{equation}\label{eq:or56}
(f_\bb)_*\left(\bigwedge_{i\neq 1,2s}\frac{\partial}{\partial Y_i}\wedge \mathfrak{o}'\wedge\mathfrak{o}''\right)=-\mathfrak{o}_{0,\emptyset,\lf_1;\vd_1}\boxtimes\mathfrak{o}_{0,\emptyset,\lf_2;\vd_2}.
\end{equation}
We can write
\[
\bigwedge_{i\neq 1,2s}\frac{\partial}{\partial Y_i}\wedge \mathfrak{o}'\wedge\mathfrak{o}''=\left(\bigwedge_{i=2}^{2s-1}\frac{\partial}{\partial Y_i}\wedge \mathfrak{o}''\right)\wedge\left(\bigwedge_{i>2s}\frac{\partial}{\partial Y_i}\mathfrak{o}'\right),
\]
the wedge is taken again in the numerical order.
Now, with $\Prr_i:\CM_1^{\DIAMOND}\times\CM_2^{\DIAMOND}\to\CM_i^{\DIAMOND},$ we have

\[(\Prr_1\circ f_\bb)_*\left(\bigwedge_{i>2s}\frac{\partial}{\partial Y_i}\wedge\mathfrak{o}'\right)=\mathfrak{o}_{0,\emptyset,\lf_1;\vd_1},\] since if there are $Y_i$ with $i>2s,$ they are taken in the correct order, as $Y_{2s+1}$ is positive, while if not, then it must be that $d'_1>d'_2,$ and again we find agreement, since the node touches the corresponding disk component in a positive region.
Similarly, \[(\Prr_2\circ f_\bb)_*\left(\bigwedge_{i=2}^{2s-1}\frac{\partial}{\partial Y_i}\wedge \mathfrak{o}''\right)=-\mathfrak{o}_{0,\emptyset,\lf_2;\vd_2},\]
since if $s>1,$ the wedge is taken starting from a negative point $Y_2,$ while if $s=1$ an extra sign $s_{\mathfrak{G}'}$ appears in the definition of $\mathfrak{o}_{0,\emptyset,\lf_2;\vd_2}^{\mathfrak{G}'}$ for any decorated map $\mathfrak{G}'$ without real branches. Equation~\eqref{eq:or56} follows.
\end{proof}
\begin{figure}[t]
\centering
\begin{tikzpicture}[scale=1]

\draw[very thick] (0,0) ellipse (5 and 3);
\draw (-3,2.4) to[out=-80,in=70] (-3.5,-2.162);
\draw (-1,2.95) to[out=-110,in=100] (-1.7,-2.81);
\draw (1.5,2.85) to[out=-135,in=100] (0.3,-3);
\draw (2.5,2.6) to[out=-90,in=175] (4.98,0.2);
\draw (2,-2.75) to[out=95,in=-175] (4.97,-0.35);

\coordinate [label=90: $y_9$] (B) at (-3,2.4);
\coordinate [label=-90: $y_{10}$] (B) at (-3.5,-2.162);
\coordinate [label=90: $y_8$] (B) at (-1,2.95);
\coordinate [label=-90: $y_1$] (B) at (-1.7,-2.81);
\coordinate [label=90: $y_7$] (B) at (1.5,2.85);
\coordinate [label=-90: $y_2$] (B) at (0.3,-3);
\coordinate [label=90: $y_6$] (B) at (2.5,2.6);
\coordinate [label=0: $y_5$] (B) at (4.98,0.2);
\coordinate [label=-90: $y_3$] (B) at (2,-2.75);
\coordinate [label=0: $y_4$] (B) at (4.97,-0.35);

\coordinate [label=center: $\scriptstyle -$] (B) at (-4,-0.7);
\coordinate [label=center: $\scriptstyle 8$] (B) at (-4,0.7);
\coordinate [label=center: $\scriptstyle +$] (B) at (-2.5,-1);
\coordinate [label=center: $\scriptstyle 6$] (B) at (-2.3,1);
\coordinate [label=center: $\scriptstyle -$] (B) at (-0.9,-1);
\coordinate [label=center: $\scriptstyle 9$] (B) at (-0.7,1);
\coordinate [label=center: $\scriptstyle +$] (B) at (1.4,-1);
\coordinate [label=center: $\scriptstyle 4$] (B) at (1.8,1);
\coordinate [label=center: $\scriptstyle -$] (B) at (3.4,1.6);
\coordinate [label=center: $\scriptstyle 4$] (B) at (4,1);
\coordinate [label=center: $\scriptstyle -$] (B) at (2.8,-1.9);
\coordinate [label=center: $\scriptstyle 7$] (B) at (3.7,-1.2);

\end{tikzpicture}
\caption{We consider a map with several arcs which divide the disk into domains, some positive, and some negative. The perimeter of each domain also appears in the figure. Lengths of arcs, modulo $1$, can be calculated using the lengths of boundary segments between the ramification points. $Y_i$ is the branching coordinate of $y_i.$ For example, modulo $1,$ the length of boundary segment from $y_3$ to $y_4$ is $Y_3-Y_4,$ the length of the arc from $y_3$ to $y_4$ is $Y_4-Y_3,$ and the length of the arc from $y_1$ to $y_8$ is
$-Y_1+Y_8-Y_7+Y_2-
(-(Y_3-Y_2+Y_5-Y_4+Y_7-Y_6 - (Y_4-Y_3+Y_6-Y_5))).$ }
\label{fig:ex}
\end{figure}
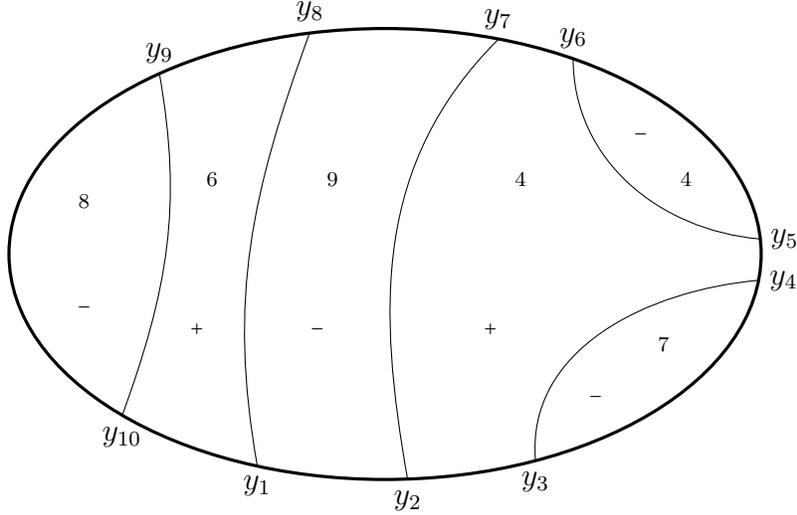

Still working with the same notations, we have the following simple proposition.
Let $\bb$ parameterize stable maps from a domain made of two nodal disks, one is of degree $0$ and contains the $i^{th}$ internal marking, and the other of non-zero degree, and contains the other markings. Thus, we can identify $\CM_{0,\sstar_{1},\{i\}}\left(\vec{0}\right)$ with a point, and then
\[
\bb=\CM_{0,\sstar_{1},\{i\}}\left(\vec{0}\right)\times_L\CM_{0,\sstar_{2},\lf\setminus\{i\}}\left(\vd\right)
\]
canonically inherits a canonical $S^1-$bundle structure
\[
f_2:{\bb}\to\CM_2^{\DIAMOND}=\CM_{0,\emptyset,\lf\setminus\{i\}}\left(\vd\right)
\]
whose fiber at $(\Sigma_2,u_2)$ is $\partial\Sigma_2.$ Let $\mathfrak{o}_F$ be the orientation of the fiber which agrees, via this identification, with the orientation of the boundary of the \emph{domain} disk $\Sigma_2.$
\begin{prop}\label{prop:deg0 bubbling}
The orientations $\mathfrak{o}_\bb,~\mathfrak{o}_F\wedge f_2^*\mathfrak{o}_{\CM_2^{\DIAMOND}}$ agree. Hence, for any fiber $F$ of $f_2:\bb\to\CM_2^{\DIAMOND},$ oriented by $\mathfrak{o}_F,$ we have
\[
\int_F\ev_{\sstar_2}^*d\theta =d_{+}-d_{-}.
\]
\end{prop}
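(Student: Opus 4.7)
The plan is to prove the orientation identity by a direct local computation in B\&M coordinates near a generic point of $\bb$, and then deduce the integral formula from the standard fact that $u_2|_{\partial\Sigma_2}$ has homological degree $d_+-d_-$.

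I would begin by fixing a generic point $[(\Sigma,u)]\in\bb$, corresponding to a nodal configuration with a degree-$\vec{0}$ bubble carrying the $i$-th internal marking attached at a smooth boundary point $q\in\partial\Sigma_2$. On a neighborhood $U\subset\oCM_{0,\emptyset,\lf}(\vd)$, pick B\&M coordinates as in \eqref{eq:or}, with the auxiliary functions $P,Q$ chosen away from the bubble. Locally the moduli is a product of (an open in) $\CM_2^{\DIAMOND}$ with a neighborhood of $q$ in the closed upper half-plane model of the smoothed domain disk; the position $Z_i = x_i+\sqrt{-1}\,y_i$ of the $i$-th marking plays the role of the gluing/smoothing parameter, with $\bb\cap U=\{y_i=0\}$ and outward normal $-\partial/\partial y_i$. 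Crucially, since the bubble carries no branch points, no boundary markings, and preserves the total degree, the sign $s_\mathfrak{G}$ and all orderings in \eqref{eq:or} are preserved under the bubble-forgetting map $f_2$, and every remaining B\&M factor pulls back from the corresponding factor of $\mathfrak{o}_{\CM_2^{\DIAMOND}}$.

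Because each internal-marking block $\partial/\partial x_j\wedge\partial/\partial y_j$ and each complex-branch block has even degree, one may write
\[
\mathfrak{o}_{\mathfrak{G}}^{\pi} \;=\; \mathfrak{o}' \wedge \frac{\partial}{\partial x_i}\wedge \frac{\partial}{\partial y_i},
\]
where $\mathfrak{o}'=f_2^*\mathfrak{o}_{\CM_2^{\DIAMOND}}$ has degree $D-2$, with $D=\dim_\R\oCM_{0,\emptyset,\lf}(\vd)=2(d_++d_-+|\lf|-1)$. The convention $o_\xx = o_N\wedge o_{\partial\xx}$ applied to the outward normal $-\partial/\partial y_i$, after rearranging, gives $\mathfrak{o}_\bb=(-1)^D\,\mathfrak{o}'\wedge \partial/\partial x_i$. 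Meanwhile $\mathfrak{o}_F=\partial/\partial x_i$ agrees, via the Cayley transform, with the positive direction on $\partial\Sigma_2$, so $\mathfrak{o}_F\wedge f_2^*\mathfrak{o}_{\CM_2^{\DIAMOND}} = (-1)^{D-2}\mathfrak{o}'\wedge\partial/\partial x_i$. Since $D$ is even, both expressions reduce to $\mathfrak{o}'\wedge\partial/\partial x_i$, establishing the orientation identity.

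For the integral, under the identification $F\cong\partial\Sigma_2$ the evaluation $\ev_{\sstar_2}|_F$ coincides with $u_2|_{\partial\Sigma_2}:\partial\Sigma_2\to\RP^1$. By \eqref{eq:connecting} and $u_{2*}[\Sigma_2,\partial\Sigma_2]=\vd$, the homological degree of this restriction is $\partial^H(\vd)=d_+-d_-$; combined with $\int_{\RP^1}d\theta=1$, this gives $\int_F\ev_{\sstar_2}^*d\theta=d_+-d_-$. The only real obstacle is the bookkeeping in the coordinate setup—in particular, verifying that the B\&M orientation of $\CM_2^{\DIAMOND}$ is recovered from $\mathfrak{o}_{\mathfrak{G}}^{\pi}$ by deleting the single factor $\partial/\partial x_i\wedge\partial/\partial y_i$—after which the parity check $D\equiv 0\pmod 2$ closes the argument.
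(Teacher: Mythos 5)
Your proof is correct and follows essentially the same approach as the paper: both work in B\&M coordinates near a point of $\bb$, identify $Z_i=x_i+\sqrt{-1}\,y_i$ as the local boundary-defining coordinate with $-\partial/\partial y_i$ the outward normal and $\partial/\partial x_i$ the fiber direction, and then read off the sign. The paper's version of the proof is much terser (it merely refers back to the computation in the lemma on standard boundary strata and "mentions the differences"), whereas you spell out the wedge-rearrangement; your final parity observation is actually unnecessary since $(-1)^D=(-1)^{D-2}$ holds for all $D$, not only even ones, but this is harmless. For the degree count you deduce $\deg(\ev_{\sstar_2}|_F)=d_+-d_-$ directly from $\partial^H$ applied to $u_{2*}[\Sigma_2,\partial\Sigma_2]$, while the paper fixes the ambiguous sign by checking that $\ev_{\sstar_2}|_F$ is orientation-preserving over positive regions and orientation-reversing over negative ones; these are the same fact stated homologically versus locally, and either justification is fine.
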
%
\begin{proof}
The proof of the first part is similar to the proof of Lemma \ref{lem:induced for bdry node}, the case when a ramification point hits the boundary, we only mention the differences: this time it is a marked point whose coordinate is $Z_i$ which hits the boundary, rather than a branch point. Writing $Z_i=x_i+\sqrt{-1}y_i,$ we see that $-y_i$ corresponds to the external normal, and $x_i$ is the location of the node in the domain, seen from the second component (of non-zero degree). Hence $\frac{\partial}{\partial x_i}$ is oriented as $\mathfrak{o}_F.$

The degree of ${\ev_{\sstar_2}}|_F:F\to\RP^1$ is $\pm(d_{+}-d_{-}).$ In order to determine the sign, observe that when the node is in a positive region, ${\ev_{\sstar_2}}|_F$ takes positively oriented vectors, with respect to $\mathfrak{o}_F,$ to positively oriented vectors, with respect to $\frac{\partial}{\partial \theta}.$ In the negative regions positively oriented vectors are taken negatively oriented vectors. The sign is thus $+$ and the 'Hence' part follows.
\end{proof}

\subsubsection{Induced orientation to strata of one internal node}
Let $i_{\ii}:\ii\hookrightarrow\oCM_{0,\kf,\lf}(\vd)$ be the stratum
\begin{equation*}
\ii=\CM_{0,\lf_{c}\cup\{\sstar_c\}}\left(d_c\right)\times_{\CP^1}\CM_{0,\kf,\lf_{o}\cup\{\sstar_o\}}\left(\vd_{o}\right)=:\CM_c\times_{\CP^1}\CM_o.
\end{equation*}
we assume $\vd_o=(d^{+}_o,d^{-}_o)\neq (0,0).$ Then $\mathcal{N}_\ii,$ the normal bundle of $\CM_\ii$ in $\oCM_{0,\kf,\lf}(\vd)$, is canonically
\begin{equation}\label{eq:normal in internal node}
 \mathcal{N}_\ii\simeq\CL^\vee_c\times\CL^\vee_o,
\end{equation}
where $\CL^\vee_c$ is the tangent at the node $\sstar_c$ in the closed component and $\CL^\vee_o$ is the tangent at $\sstar_o$ in the open component, see, for example, \cite{PZ2}. %\footnote{Ran: add citation. note that maybe it's more accurate to describe via universal curve. Anyway, a point where it's described is https://arxiv.org/pdf/1504.06617.pdf page 9. a proof of a lemma similar to below is in math/1507.06633v2} In particular it carries a canonical complex orientation.
$\ii$ is induced an orientation $\mathfrak{o}^\pi_\Gamma$ from the orientations $\mathfrak{o}^\pi_{o},\mathfrak{o}_{c}$ on $\CM_o,\CM_c$ respectively (which, as mentioned above, agree with the standard complex orientation on the moduli $\oCM_{c}$), and the complex orientation of $\CP^1$ by
\begin{equation}\label{eq:fibered product-internal node}
0\to T_{(\Sigma,u)}\ii\to T_{(\Sigma_c,u_c)}\CM_{c}\boxplus T_{(\Sigma_o,u_o)}\CM_{o}\to T_{u(\sstar)}\CP^1\to 0,
\end{equation}
for any $(\Sigma,u)\in\ii$ with a sphere component $(\Sigma_c,u_c),$ a disk component $(\Sigma_o,u_o),$ and a node $\sstar.$
\begin{lemma}\label{lem:induced for internal nodes}
The orientation $o_{\mathcal{N}_\ii}^{\pi}$ induced on $\mathcal{N}_\ii$ by the sequence
\[0\to T\ii\to i_\ii^*T\oCM_{0,\kf,\lf}(\vd)\to\mathcal{N}_\ii\to 0\]
and the orientations $\mathfrak{o}_\ii^{\pi},\mathfrak{o}_{0,\kf,\lf;\vd}$
agrees with the complex orientation of $\mathcal{N}_\ii.$
\end{lemma}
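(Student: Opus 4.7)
By continuity of the orientations on the orientable moduli, the ratio between $o_{\mathcal{N}_\ii}^\pi$ and the complex orientation on $\mathcal{N}_\ii$ is locally constant on $\ii$, so it suffices to verify the claim at a single generic $p\in\ii$. Observation \ref{obs:behavior of orientation under inversion} shows that the inversion $z\mapsto 1/z$ on the target reverses $\mathfrak{o}_{0,\kf,\lf;\vd}$ and similarly reverses $\mathfrak{o}_\ii^\pi$ (since $\mathfrak{o}_c$ is complex hence preserved, $\mathfrak{o}_o^\pi$ reverses, and $\mathfrak{o}_{\CP^1}$ is preserved by holomorphicity of $z\mapsto 1/z$), while $\mathcal{N}_\ii\simeq\CL_c^\vee\otimes\CL_o^\vee$ and its complex orientation are untouched, so $o_{\mathcal{N}_\ii}^\pi$ is unchanged. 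I therefore reduce to the case where $q=u(\sstar)$ lies in the upper hemisphere, the node is unramified on both components, and $(\Sigma,u)$ has trivial automorphism group.

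Near $p$ the versal deformation splits canonically as $T_p\ii\oplus\mathcal{N}_\ii$, with a complex smoothing parameter $s\in\C$ transverse to $\ii$. Choose B\&M coordinates on $\CM_c$ and $\CM_o$ using $\sstar_c$ and $\sstar_o$ as markings, and replace the domain-position coordinate of $\sstar_{c/o}$ by $q_{c/o}=\ev_{\sstar_{c/o}}$; this change is holomorphic with nonvanishing derivative $a_{c/o}=du_{c/o}(\sstar_{c/o})$ at the non-ramified node, hence orientation-preserving. With $u_c(x)=q+a_cx+O(x^2)$ and $u_o(y)=q+a_oy+O(y^2)$, the standard plumbing $\{xy=s\}$ carries a smoothed holomorphic map of Laurent form $u(x)=q+a_cx+a_os/x+\cdots$; solving $u'(x)=0$ produces two new critical values
\[
W^n_{\pm}=q\pm 2\sqrt{a_ca_os}+O(s),
\]
which, by Riemann-Hurwitz, together with the persistent branches of $\Sigma_c$ and $\Sigma_o$ exhaust the smoothed B\&M complex-branch coordinates. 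The persistent markings, real branches and boundary markings identify directly between the normalized and smoothed pictures.

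Setting $\delta=W^n_+-W^n_-$, a short calculation yields
\[
dq\wedge d\bar q\wedge ds\wedge d\bar s=\frac{|\delta|^2}{64|a_ca_o|^2}\,dW^n_+\wedge d\bar W^n_+\wedge dW^n_-\wedge d\bar W^n_-,
\]
a positive multiple, so the complex orientation on the $(q,s)$-plane agrees with the complex orientation on the $(W^n_+,W^n_-)$-plane. Substituting into \eqref{eq:or} for the smoothed surface, the persistent-coordinate block reassembles into $\mathfrak{o}_\ii^\pi$ (using $q_c=q+q_-$, $q_o=q-q_-$ and the division by $\mathfrak{o}_{\CP^1}=(\sqrt{-1}/2)\partial q_-\wedge\partial\bar q_-$ from the exact sequence \eqref{eq:fibered product-internal node}), while the $(W^n_+,W^n_-)$-block transforms into $(\sqrt{-1}/2)ds\wedge d\bar s$ on $\mathcal{N}_\ii$. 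A check confirms $s_\mathfrak{G}=s_{\mathfrak{G}_o}$ (since $k_u=k_u^o$ and $d^+-d^-=d^+_o-d^-_o$) and that the cyclic order of real-ramification points on $\partial\Sigma$ is inherited unchanged from $\Sigma_o$, so no additional signs arise. This yields $o_{\mathcal{N}_\ii}^\pi=(\sqrt{-1}/2)ds\wedge d\bar s$, matching the complex orientation. The principal obstacle is the rigorous justification of the plumbing description as a local chart for the full stable-map moduli near $\ii$, together with the verification that the coordinate change above is real-analytic with positive Jacobian uniformly as $s\to 0$.
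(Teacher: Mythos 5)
Your argument is essentially a plumbing-coordinate version of what the paper does with colliding branch points, and the Jacobian computation $(q,s)\mapsto(W^n_+,W^n_-)$ being complex-orientation-preserving is correct (the real Jacobian is $4|a_ca_o|/|s|>0$, matching your $|\delta|^2/(64|a_ca_o|^2)$ with $\delta=4\sqrt{a_ca_os}$). The reduction via inversion, the Riemann--Hurwitz count of the two new branch points, and the reassembly of the persistent block modulo the $\CP^1$ factor all track the paper's proof of the case $\vd_c\neq 0$.

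However, there is a genuine gap. Your reduction step assumes ``the node is unramified on both components,'' which silently excludes $d_c=0$. When the closed component is contracted, the map $u_c$ is constant, so $a_c=du_c(\sstar_c)=0$, your coordinate change $Z_{\sstar_c}\mapsto q_c$ degenerates, and no new branch points $W^n_\pm$ are created by the smoothing---the Riemann--Hurwitz count also changes, since a contracted sphere contributes no branches. The paper handles this case separately by reducing to a contracted $\oCM_{0,3}$ with two markings, taking the normal direction to be $\partial/\partial Z_l-\partial/\partial Z_{l-1}$ and identifying it (holomorphically) with $\CL_o^\vee$ via the position of the node in the open component. Your proof as written does not cover this case, and it cannot be recovered by ``genericity'' within $\ii$, since for a given stratum $\ii$ the degree $d_c$ is fixed and the node is ramified at \emph{every} point when $d_c=0$. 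You should supply a separate argument for this stratum, for instance along the lines of the paper's, or by relating $s$ directly to $Z_l-Z_{l-1}$ in the universal curve. The issue you flag about making the Laurent plumbing model rigorous for the stable-map moduli is a real but standard concern (cf.\ gluing analysis), and the paper's alternative of phrasing things in terms of $W_1=W_2$ and the trivialization $\partial/\partial W_1-\partial/\partial W_2$ sidesteps it at the cost of implicitly working in the double cover branched along $\ii$ where the branch points are labeled; either way the argument is holomorphic so the complex orientations match.
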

\begin{proof}
In case we work in moduli space of maps to $\CP^1$ from Riemann surfaces without boundary, the corresponding claim is known to hold.
For the open case, we work in the B\&M coordinates. Assume first $\vd_c\neq 0.$ It will be convenient to assume $\kf=[k],\lf=[l].$

By the definition of the orientations we have that the induced orientation on the fiber of $\For_l,$ the map forgetting the $l^{th}$ internal marked point, agrees with the complex orientation induced from the domain surface (or equivalently from the universal curve). If there are markings on the closed components, using the commutativity of
\begin{equation}\label{eq:normal under forgetful of internal}
\xymatrix{
\ii \ar[r]\ar[d]& \oCM_{0,k,l}(\vd) \ar[d] \\
\ii' \ar[r] & \oCM_{0,k,l-1}(\vd),
}
\end{equation}
where $\ii'=\For_l(\ii),$
and the fact that, since $\For_{l}$ does not contract the closed component, \eqref{eq:normal in internal node} is compatible with $\For_l,$  we see that the claim for $l$ points is equivalent to the one with $l-1$ points.
So we assume there are no markings on the closed component.

By using Observation \ref{obs:behavior of orientation under inversion}, we may assume without loss of generality that the node is in the upper hemisphere. An internal bubbling is then the result of a collision between two branch points in the upper hemisphere. In fact, a more accurate statement holds. Fix a map $(\Sigma,u),$ and let $W_1,W_2\in\CP^1$ be two very close branch points in the upper hemisphere. Let $V\subset \CP^1$ be a small common contractible neighborhood containing no other branch points. Consider $u^{-1}(V)\subset \Sigma,$ and $w_i=u^{-1}(W_i)$ the corresponding ramification points.

Let $\gamma:[0,1]\to V$ be a path with endpoints $\gamma(0)=W_1,~\gamma(1)=W_2.$ Define \[\Gamma:[0,1]\to U\subset\oCM_{0,\kf,\lf}(\vd),\] where $U$ is an open neighborhood of $(\Sigma,u)$ for which B\&M coordinates can be found, to be the path in the moduli with $\Gamma(0)=(\Sigma,u),$ and $\Gamma(t)$ characterized by having all B\&M coordinates the same as those of $(\Sigma,u)$ except for $W_1(\Gamma(t))=\gamma(t).$ Let $(\Sigma',u')=\Gamma(1).$ When $V$ is small enough there is no difficulty in finding such $\gamma,U,\Gamma.$

If $w_1,w_2$ are not in the same connected component of $u^{-1}(V),$ then the moduli point $(\Sigma',u')$ has a ramification of type $(12)(34)$ over $W_1$ (see \cite{OP06} for definitions of the different ramification types, \cite{KLZ15} is another helpful reference). Suppose now that they are in the same connected component of $u^{-1}(V).$ Let $\tilde\gamma$ be a path in $u^{-1}(V),$ connecting $w_1$ to $w_2,$ and we may take $\gamma=u\circ\tilde\gamma$ to be its projection to $V.$ Recall that a for a generic $P\in\CP^1$ in the upper hemisphere $|u^{-1}(P)|=d_{+}.$ If the number of connected components of $u^{-1}(\gamma)$ is $d_{+}-2,$ then $(\Sigma',u')$ has a ramification of type $(123)$ at $W_1.$ The complementary case, when the number of connected components of $u^{-1}(\gamma)$ is $d_{+}-1,$ is the one which gives a nodal map $(\Sigma',u'),$ when $W_1,W_2$ collide.

With this topological observation, the proof of the lemma, in case $d_c\neq0,$ is now simple.
Let $W_1,W_2$ be the coordinates of the branch points that collide and create the node. We identify the upper hemisphere with the upper half-plane. One may write
\[
\mathfrak{o}_{\mathfrak{G}} = \left(\frac{\sqrt{-1}}{2}\frac{\partial}{\partial W_1}\wedge \frac{\partial}{\partial \bar{W}_1}\right)\wedge\left(\frac{\sqrt{-1}}{2}\frac{\partial}{\partial W_2}\wedge \frac{\partial}{\partial \bar{W}_2}\right) \wedge\tilde{\mathfrak{o}},
\]
where $\tilde{\mathfrak{o}}$ is the rest of the orientation expression \eqref{eq:or} in the B\&M coordinates. Locally $\ii$ is the locus $W_1=W_2.$ Thus locally the normal bundle has a complex trivialization given by the section \[\frac{\partial}{\partial W_1}-\frac{\partial}{\partial W_2},\] and the complex orientation induced by this trivialization agrees with the complex orientation of $\mathcal{N}_\ii$ defined by \eqref{eq:normal in internal node}.
In addition,
\[
\left.\left(\frac{\sqrt{-1}}{2}\frac{\partial}{\partial W_2}\wedge \frac{\partial}{\partial \bar{W}_2}\wedge\tilde{\mathfrak{o}}\right)\right|_{\ii\cap U}
\]
is identified with $\mathfrak{o}^\pi_\ii,$ by \eqref{eq:fibered product-internal node} and the definition of the canonical orientations. This settles the case $d_c\neq 0.$

Suppose now $d_c=0.$ In this case there are at least two markings on the closed component. By applying the same trick of forgetting markings on the closed component, we see that it is enough to prove the claim when the closed component has exactly two markings. Assume one of them is the $l^{th}$ marking, and the other marking is the $(l-1)^{th}.$ Locally near a generic point of $\ii$ we can consider B\&M coordinates. Locally a dense open subset of $\ii$ is specified by the equation $Z_l=Z_{l-1},$ and can be described by the same B\&M coordinates, only without $Z_l.$ Denote by $\tilde{\mathfrak{o}}$ the orientation expression \eqref{eq:or} in these new B\&M coordinates. The normal bundle is locally trivialized by (the image of) \[\frac{\partial}{\partial Z_l}-\frac{\partial}{\partial Z_{l-1}}.\] Comparing orientation expressions we see that $\tilde{\mathfrak{o}}$ will agree with the orientation induced from
the fibered product
\[\CM_{0,\{l-1,l,\sstar_c\}}\left(0\right)\times_{\CP^1}\CM_{0,k,[l-2]\cup\{\sstar_o\}}\left(\vd\right)\simeq \CM_{0,k,[l-2]\cup\{\sstar_o\}}\left(\vd\right),\]
precisely if the orientation on the normal bundle defined using \eqref{eq:normal in internal node}, where $\CL^\vee_o,\CL^\vee_c$ are given their natural complex orientations, equals the complex orientation defined on the normal using the local section \[\frac{\partial}{\partial Z_l}-\frac{\partial}{\partial Z_{l-1}}.\]
But, by identifying a small neighborhood of the node $\sstar_o,$ located at $Z_{l-1},$ with a small neighborhood of $0$ in its tangent line, we see that $\frac{\partial}{\partial Z_l}- \frac{\partial}{\partial Z_{l-1}}$ can also be identified as a complex trivialization of $\CL^\vee_o$ at the node. In addition $\CL_c^\vee$ is a trivial complex line. Putting together we see that the two orientations for the normal bundle agree.
\end{proof}

\subsubsection{Induced orientation to the exceptional boundary}
Suppose $\kf=\emptyset$ and $d_{+}=d_{-}=d.$ Consider the exceptional boundary component
 \[\ee
\hookrightarrow\oCM_{0,\emptyset,\lf}(d,d)\]
which parameterizes maps from domains with a contracted boundary, elements of $\ee$ are equivalent to closed maps of degree $(d,d)$ with markings $\lf\sqcup\star,$ the last one $\star,$ is constrained to $\RP^1,$ so we can also write
\[\ee=\ev_{\star}^{-1}\left(L\right)\subset{\oCM}_{0,\lf\sqcup\star}\left(d\right).\] The generic fiber of the forgetful map $f=\For_{\star},$ is canonically a $d-$cover of $\RP^1,$ and is induced an orientation $\frac{\partial}{\partial \phi}=(d\ev_\star)^{-1} \frac{\partial}{\partial\theta}$ defined by pulling back the canonical orientation of $\RP^1$ along this cover.

\begin{lemma}\label{lem:contracted boundary}
The orientation induced on $\ee$ by $\mathfrak{o}_{0,\lf,(d,d)}$ is \[-f^*\mathfrak{o}_c\otimes \frac{\partial}{\partial \phi},\] where $\mathfrak{o}_c$ is the complex orientation of the moduli ${\oCM}_{0,\lf}\left(d\right).$% and $f:\ee\to{\oCM}_{0,\lf}\left(d\right)$ the map which forgets the node.

In particular, integrating $d\theta$ along the fiber of $f,$ with its induced orientation, gives $-d.$
\end{lemma}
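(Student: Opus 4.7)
Proof plan: The plan is to verify the orientation identity by a direct computation in the branching and marking (B\&M) coordinates of Section \ref{sec:or} on a chamber meeting $\ee$ generically, and then deduce the integral formula.

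First, I would analyze the local structure near a generic point $p \in \ee$. Since the boundary of the disk contracts to a point of $\RP^1$ at $p$, nearby disk-maps have a small boundary oval whose image in $\RP^1$ is a contractible loop of degree zero; this forces exactly two real branch points $y_1$ (positive) and $y_2$ (negative) on the boundary in the generic case, together with $l_u = 2d-2$ internal complex branch points by \eqref{eq:num_ramification_pts}. In B\&M coordinates $(Y_1, Y_2, Z_1, \ldots, Z_l, W_1, \ldots, W_{2d-2})$, the chamber orientation \eqref{eq:or} reads
\[
\mathfrak{o}_{0,\lf,(d,d)} = \frac{\partial}{\partial Y_1} \wedge \frac{\partial}{\partial Y_2} \wedge \rho,
\]
where $\rho$ is the wedge of the standard complex-orientation bivectors in the $Z_i$ and $W_j$ variables, and $s_\mathfrak{G} = 1$ because $k_u > 0$. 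On $\ee$ one has $Y_1 = Y_2 = \ev_\star$, and the residual coordinates $(Y_1, Z_i, W_j)$ identify a neighborhood of $p$ in $\ee$ with a neighborhood in $\ev_\star^{-1}(\RP^1) \subset \oCM_{0, \lf \sqcup \star}(d)$, on which $(Z_i, W_j)$ are precisely the complex B\&M coordinates pulled back from $\oCM_{0,\lf}(d)$ along $f$; thus $\rho = f^*\mathfrak{o}_c$.

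Next I would identify the outward normal and compute the induced orientation. Introducing $v = Y_2 - Y_1$, the locus $\ee$ is $\{v = 0\}$ and $\partial/\partial Y_1 \wedge \partial/\partial Y_2 = \partial/\partial Y_1 \wedge \partial/\partial v$. Analysing the decorated-map chamber near $\ee$---modelled on the $(1,1)$ example of Figure~\ref{fig:moduli}, in which the exceptional boundary is reached as $u_-$ approaches $u_+$ \emph{from the left}---shows that the interior of the moduli lies on the side $\{v < 0\}$, so the outward normal is $+\partial/\partial v$. Solving $\nu \wedge \mathfrak{o}_\ee = \mathfrak{o}_{0,\lf,(d,d)}$ with $\nu = \partial/\partial v$ yields
\[
\mathfrak{o}_\ee = -\frac{\partial}{\partial Y_1} \wedge \rho.
\]
Finally, along the fiber of $f$ the coordinate $Y_1$ agrees with $\ev_\star$, which is a coordinate on $\RP^1$ compatible with $d\theta$, so $\partial/\partial Y_1 = (d\ev_\star)^{-1}(\partial/\partial\theta) = \partial/\partial\phi$. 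Since $\dim_\R f^*\mathfrak{o}_c$ is even, commuting $\partial/\partial\phi$ past $f^*\mathfrak{o}_c$ introduces no sign, and we conclude $\mathfrak{o}_\ee = -\partial/\partial\phi \wedge f^*\mathfrak{o}_c = -f^*\mathfrak{o}_c \otimes \partial/\partial\phi$. The ``in particular'' statement is then immediate: the induced fiber orientation is $-\partial/\partial\phi$, and since $\ev_\star|_F \colon u_c^{-1}(\RP^1) \to \RP^1$ is a complex-oriented $d$-fold cover, $\int_F \ev_\star^* d\theta = -d$.

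The main difficulty I anticipate is pinning down which side of $\{Y_1 = Y_2\}$ is the interior of the moduli, i.e.\ fixing the sign of the outward normal. This requires a careful geometric understanding of how the contracted boundary at $p$ opens into a small boundary oval in a neighborhood of $p$, taking into account the conventions for labelling positive versus negative real branch points and the cyclic ordering of $y_1, y_2$ on the disk boundary. I would likely proceed by first reducing to the $(1,1)$ case---where the cylinder picture of Figure~\ref{fig:moduli} makes the interior side manifest---and then propagating the sign to general $(d,d)$ by a deformation argument within the chamber, for instance by bubbling off the additional branch points to a stable degeneration of $(1,1)$ type.
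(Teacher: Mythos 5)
Your proposal is correct and follows essentially the same approach as the paper: pass to the B\&M coordinates on a chamber whose closure meets $\ee$ (which forces exactly two real ramification points, the endpoints of a single arc), write the chamber orientation \eqref{eq:or}, identify $\ee$ as $\{Y_1=Y_2\}$, determine the outward normal, and identify the remaining expression with $f^*\mathfrak{o}_c\otimes\partial/\partial\phi$ up to sign. The one place where you diverge is precisely the step you flag as difficult --- determining on which side of $\{Y_1=Y_2\}$ the interior of the moduli lies. The paper settles it with a self-contained geometric observation: near $\ee$ the arc length approaches a positive integer while both boundary segments approach length $0$, so $u(\partial\Sigma)$ is a small segment of $\RP^1$ covered twice with opposite orientations; the orientation conventions force this covered segment to run from $Y_2$ to $Y_1$, giving $Y_1-Y_2>0$ in the interior and outward normal $\frac{\partial}{\partial Y_2}-\frac{\partial}{\partial Y_1}$ (which is the same boundary co-orientation as your $+\partial/\partial v$). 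You instead propose to read the sign off the explicit $(1,1)$ cylinder picture of Figure \ref{fig:moduli} (``$u_-\to u_+$ from the left'') and then propagate by deforming within the chamber; this is consistent with the paper's sign and would work, but the paper's version is more directly verifiable and does not require the additional propagation argument. The rest --- the even-dimensionality argument to commute $\partial/\partial\phi$ past $f^*\mathfrak{o}_c$, and the degree-$d$ covering computation for the ``in particular'' --- matches the paper.
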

\begin{proof}
Chambers whose closure intersects $\ee$ in codimension $0$ are those whose associated decorated maps have a single arc, from the positive ramification point $x$ to the negative $y.$ Choose such a chamber, and let $X,Y$ be the coordinates of the corresponding branch points in angular parametrization as in Lemma \ref{lem:induced for bdry node}. We choose the angular parametrization so that near $\ee$ it holds that \[|X-Y|=\ell(\overline{xy}),\] the length of the boundary segment from $x$ to $y.$ The orientation near $\ee$ can be written as $\frac{\partial}{\partial X}\wedge \frac{\partial}{\partial Y}\wedge \tilde{\mathfrak{o}}=-(\frac{\partial}{\partial Y}-\frac{\partial}{\partial X})\wedge \frac{\partial}{\partial X}\wedge \tilde{\mathfrak{o}},$ where $\tilde{\mathfrak{o}}$ is the wedge of the terms correspond to the remaining complex branch points. Near $\ee$ the length of the arc between $x,y$ is close to some positive integer $\leq d,$ while the length of the boundary segment between them is close to $0.$ Thus, only a small fraction of $\RP^1$ is covered by the boundary, and is in fact covered twice, with opposite orientations. The covered segment, in the image, is oriented from $Y$ to $X,$ meaning that $X$ is its right endpoint, hence near $\ee$ its length is given by $X-Y.$ Thus, $\frac{\partial}{\partial Y}-\frac{\partial}{\partial X}$ is an outward normal. The induced orientation on $\ee$ is therefore $-\frac{\partial}{\partial X}\wedge \tilde{\mathfrak{o}}.$ Since on $\ee$ the vectors $d\ev_\star\frac{\partial}{\partial X}$ and $\frac{\partial}{\partial \theta}$ point to the same direction, the main statement of the lemma follows.

The 'In particular' follows by integration: for a generic $(\Sigma,u)\in\oCM_{0,\lf}(d),$ the fiber $f^{-1}(\Sigma,u)$ is canonically identified with the subset $u^{-1}(\RP^1)\subset\Sigma,$ which is a union of loops. The degree of
\[u|_{u^{-1}(\RP^1)}:u^{-1}(\RP^1)\to\RP^1\]
is $\pm d.$ The sign is determined by the previous paragraph to be $-1.$
\end{proof}

\end{document}